\documentclass[11pt, a4paper, leqno]{amsart}
\usepackage{tikz}
\usepackage[utf8]{inputenc}
 
\usepackage[euler-digits]{eulervm}

\usepackage{amsfonts, amsthm, amssymb, amsmath}
\usepackage{changepage}
\usepackage{booktabs}
\usepackage{float}
\usepackage{graphicx}
\usepackage{xcolor,colortbl}
\usepackage{mathtools}
\usepackage{mathrsfs,array}
\usepackage{eucal,fullpage,times,color,enumerate,accents}
\usepackage{url}
\usepackage{comment}
\usepackage{array}

\usepackage{hyperref}
\hypersetup{
  colorlinks   = true,          
  urlcolor     = blue,          
  linkcolor    = teal,          
  citecolor   = orange             
}

\usepackage{color}
\usepackage{mathrsfs}
\usepackage{amssymb}
\usepackage{tikz}
\usepackage{tikz-cd}
\usepackage{bm}
\usepackage{enumerate}
\usetikzlibrary{calc}
\usepackage{subfigure}
\usepackage{placeins}

\theoremstyle{plain}
\newtheorem{theorem}{Theorem}[section]

\theoremstyle{definition}
\newtheorem{example}[theorem]{Example}
\newtheorem{observation}[theorem]{Observation}

\newtheorem{quasi-theorem}[theorem]{Quasi-Theorem}

\newtheorem{rem1}[theorem]{Remark}
\newenvironment{remark}{\begin{rem1}}{\end{rem1}}

\newtheorem{not1}[theorem]{Notation}

\theoremstyle{plain}

\newtheorem{corollary}[theorem]{Corollary}

\theoremstyle{definition}
\newtheorem{definition}[theorem]{Definition}

\theoremstyle{plain}
\newtheorem{lemma}[theorem]{Lemma}
\newtheorem{proposition}[theorem]{Proposition}
\numberwithin{equation}{section}
\newtheorem*{proposition*}{Proposition}
\newtheorem*{definition*}{Definition}
\newtheorem*{remark*}{Remark}
\newtheorem*{observation*}{Observation}
\theoremstyle{plain}
\newtheorem*{lemma*}{Lemma}
\newtheorem*{theorem*}{Theorem}
\newtheorem*{question*}{Question}
\newtheorem*{corollary*}{Corollary}
\newtheorem*{claim*}{Claim}

\usepackage[noend]{algorithmic}

\usepackage[linesnumbered,longend,ruled,vlined]{algorithm2e}

\usepackage{minted}
\setminted{breaklines=true}
\definecolor{lbcolor}{rgb}{0.9,0.9,0.9}
\setminted{bgcolor=lbcolor}
\setminted{fontsize=\footnotesize}

\usepackage{tabularx,float,capt-of}

\newcommand{\tmfloatcontents}{}
\newlength{\tmfloatwidth}
\newcommand{\tmfloat}[5]{
	\renewcommand{\tmfloatcontents}{#4}
	\setlength{\tmfloatwidth}{\widthof{\tmfloatcontents}+1in}
	\ifthenelse{\equal{#2}{small}}
	{\setlength{\tmfloatwidth}{0.45\linewidth}}
	{\setlength{\tmfloatwidth}{\linewidth}}
	\begin{minipage}[#1]{\tmfloatwidth}
		\begin{center}
			\tmfloatcontents
			\captionof{#3}{#5}
		\end{center}
\end{minipage}}

\title{Cohomology Vanishing, Koszul Cohomology and Multigraded Regularity on Projective Varieties}
\author{ Raneeta Dutta}
\address{Department of Mathematics, University of Kansas,
Snow Hall, 1460 Jayhawk Blvd, Lawrence, Kansas 66045, USA}
\email{rdutta2021@ku.edu}

\begin{document}

\begin{abstract}
In this article, we prove a general cohomology vanishing theorem on arbitrary projective varieties within the framework of multigraded Castelnuovo--Mumford regularity. In particular, we apply this vaishing theorem to prove the vanishing of Koszul cohomology groups $K_{p,q}(X;F,L)$, where $L=B_1^{w_1}\otimes\cdots\otimes B_t^{w_t}$, the line bundles $B_1,\ldots,B_t$ are globally generated, and $F$ is a vector bundle.  We also introduce the $K_{p,q}$-hierarchy, providing a unified perspective on the vanishing criteria for Properties $(N_{p})$ and $(M_{q})$ while shedding light on the vanishing of mixed-weight syzygies. These results generalize earlier work in \cite{HeringSchenckSmith}, \cite{GallegoPurnaprajnaII}, and \cite{Basu}.  Furthermore, we give a complete description of minimal multigraded regularities of line bundles on arbitrary products of projective spaces. We also recover Green's vanishing theorem for projective space via a direct regularity argument, giving an alternative proof.  Finally, we compute the complete graded Betti table of the canonical image of a hyperelliptic curve by combining our vanishing theorem with Green's duality. 
\end{abstract} 

\maketitle
\vspace{-0.8cm}

\tableofcontents

\section{\textbf{Introduction}}
\label{section_introduction}
The topic of Koszul cohomology groups and syzygies has attracted a lot of attention among geometers and algebraists alike for the last 45 years. Much of the result have been regarding property $N_p$ and more recently on the so called property $M_q$. There are variation of these themes in the multigraded setting. In this article, we prove a general cohomology vanishing theorem on arbitrary projective varieties within the framework of multigraded Castelnuovo--Mumford regularity.
In particular, we study syzygies related to globally generated line bundle $L$ on a projective variety $X$. The methods in this paper illustrates multigraded regularity serves a useful technical tool for understanding the structure of minimal free resolutions and the interplay between geometry, cohomology, and syzygies.

We need the following set up to discuss our results. 

Let $L$ be a globally generated line bundle on a projective variety $X$. The complete linear series $|L|$ defines a morphism
$
\varphi_L:X\longrightarrow \mathbb P(H^0(X,L))=\mathbb P^r,$ where $r=h^0(X,L)-1.
$
Let
$S:=\operatorname{Sym}^{\bullet}(H^0(X,L))$
be the homogeneous coordinate ring of $\mathbb P^r$, and let
$$
R:=R(X,L)=\bigoplus_{j\ge0}H^0(X,L^j)$$
be the section ring of $L$. Then $R$ is naturally a graded $S$-module with minimal graded free resolution
\[
\cdots\longrightarrow E_{i+1}\longrightarrow E_i\longrightarrow\cdots
\longrightarrow E_1\longrightarrow E_0\longrightarrow R\longrightarrow0.
\]

A theorem of Green identifies the terms of this resolution as
\[
E_i=\bigoplus_{j\ge0}
K_{i,j}(X,L)\otimes S(-i-j),
\]
 Thus, the weight-$j$ syzygies at the $i$-th stage are encoded by the Koszul cohomology groups $K_{i,j}(X,L)$.

Associated to $L$ is the evaluation sequence
\[
0
\longrightarrow
M_L
\longrightarrow
H^0(X,L)\otimes\mathcal O_X
\overset{\mathrm{ev}_L}{\longrightarrow}
L
\longrightarrow
0,
\]
where $M_L$ denotes the syzygy bundle of $L$. A fundamental feature of $M_L$ is that the vanishing of suitable cohomology groups involving its tensor powers governs the vanishing of Koszul cohomology groups, and consequently the syzygies of the image $\varphi_L(X)$.

The property $(N_p)$ of a line bundle $L$ describes the beginning of the minimal resolution by answering when the first few terms in $E_{\bullet}$ are as simple as possible.

\begin{definition}
A globally generated line bundle $L$ on a projective variety $X$ satisfies property $(N_p)$, where $p\ge0$, if
\[
E_0=S,
\qquad
E_i=\bigoplus S(-i-1),
\quad
1\le i\le p.
\]
\end{definition}

If $L$ is ample, then property $(N_p)$ implies that the first $p$ steps of the resolution are linear. In general, $\varphi_L(X)$ is projectively normal if and only if $L$ satisfies $(N_0)$ and $\varphi_L(X)$ is normal.

Multigraded regularity was introduced in \cite{MaclaganSmith} and further studied by Hering–Schenck–Smith in \cite{HeringSchenckSmith}. They proved Property $(N_{p})$ of line bundles of the form
\[
L = B^{\vec{w}} := B_1^{w_1}\otimes \cdots \otimes B_t^{w_t},
\qquad \vec{w}\in \mathbb{N}^t,
\]
 in terms of conditions on the weight vector $\vec{w}$ where $B_{1},\cdots, B_{t}$ are fixed $t$ globally generated line bundles on a projective variety $X$.

Most of the results concern property $(N_{p})$ which deal with beginning of the resolution until the $p$th stage. But in order to get more understanding of the resolution one has to see what happens at the end of the resolution as well, this motivates the following notion of the property $(M_q)$.

\begin{definition}
A globally generated line bundle $L$ on a projective variety $X$ is said to satisfy property $(M_q)$ if
\[
K_{p,1}(X,L)=0
\qquad\text{for all } p\ge r-q.
\]
\end{definition}

 While property $(N_p)$ has been studied extensively over the past several decades, comparatively less is known about property $(M_q)$, particularly in higher dimensions. Apart from earlier work on curves \cite{GreenLazarsfeldI} and abelian varieties \cite{AproduLombardi}, the first systematic study for higher-dimensional varieties was carried out by Basu, who established $(M_q)$ criteria for powers of ample line bundles and adjoint line bundles on several classes of varieties.

In this regard, we recall the definition of multigraded regularity.

\begin{definition}[Multigraded Castelnuovo–Mumford regularity]
Let $B_1,\dots,B_t$ be fixed globally generated line bundles on a projective variety $X$. Let $\mathscr{F}$ be a coherent sheaf on $X$, and let $F$ be a vector bundle. We say that $\mathscr{F}$ is $F$-regular (with respect to $B_1,\dots,B_t$) if
\[
H^i\left(X,\mathscr{F}\otimes F\otimes B^{-\vec{u}}\right)=0
\quad \text{for all } i>0 \text{ and all } \vec{u}\in \mathbb{N}^t \text{ with } |\vec{u}|=i.
\]
\end{definition}
    
{\textbf{Minimal Multigraded Regularity on Products of Projective Spaces:}} Understanding multigraded regularity explicitly is a subtle problem, even for products of projective spaces. We investigate the multigraded regularity of line bundles on arbitrary products of projective spaces. Our first main result provides a complete and explicit description of the \emph{minimal} multigraded regularities in this setting. More precisely, we show that the \emph{minimal regularity vectors} ( see page ) are naturally indexed by the permutations of the factors, yielding exactly $n!$ minimal regularity vectors for $\mathbb{P}^{m_1}\times\cdots\times\mathbb{P}^{m_n}$.

\begin{theorem}
\label{thm:minimal-regularity} ( see Theorem \ref{theorem:minimal-regularity} )
Let
$
X=\mathbb{P}^{m_1}\times \mathbb{P}^{m_2}\times \cdots \times \mathbb{P}^{m_n},$
and let $B_1,\dots,B_n$ denote the pullbacks of 
$\mathcal{O}_{\mathbb{P}^{m_i}}(1)$ under the natural projections.
For $\vec{a}=(a_1,\dots,a_n)\in \mathbb{Z}^n$, set
\[
L=B^{\vec{a}}=B_1^{a_1}\otimes \cdots \otimes B_n^{a_n}.
\]

Then the minimal $\vec{r}=(r_1,\dots,r_n)\in \mathbb{Z}^n$ for which 
$L$ is $B^{\vec{r}}$-regular (with respect to $B=(B_1,\dots,B_n)$) 
are indexed by permutations of $\{1,\dots,n\}$. More precisely, for each permutation $\sigma\in S_n$, the corresponding 
minimal regularity vector $\vec{r}^{(\sigma)}$ is given by
\[
r^{(\sigma)}_{\sigma(i)}
=
-a_{\sigma(i)} + \sum_{j>i} m_{\sigma(j)},
\qquad \text{for } i=1,\dots,n.
\]

\end{theorem}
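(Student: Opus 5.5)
The plan is to turn $B^{\vec r}$-regularity of $L$ into an explicit combinatorial condition on the integer vector $\vec b:=\vec a+\vec r$, and then to find the minimal solutions of that condition. By the Künneth formula and Bott's formula on each factor,
$$H^i\bigl(X,\mathcal O(c_1,\dots,c_n)\bigr)\neq 0$$
holds exactly when three things are true. First, no $c_j$ lies in $[-m_j,-1]$. Second, letting $T=\{j: c_j\le -m_j-1\}$, we have $c_j\ge 0$ for $j\notin T$. Third, $i=\sum_{j\in T}m_j$.

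Regularity asks that $H^i(X,B^{\vec b-\vec u})=0$ for all $i>0$ and all $\vec u\in\mathbb N^n$ with $|\vec u|=i$. This fails precisely when there are a nonempty $T$ and a $\vec u\ge 0$ with the following properties:
\begin{itemize}
\item $u_j\ge b_j+m_j+1$ for $j\in T$;
\item $0\le u_j\le b_j$ for $j\notin T$, which in particular requires $b_j\ge 0$ there;
\item $|\vec u|=\sum_{j\in T}m_j$.
\end{itemize}
Since the coordinates of $\vec u$ can be moved freely within these ranges, such a $\vec u$ exists if and only if $b_j\ge0$ for all $j\notin T$ and
$$\sum_{j\in T}\max(0,\,b_j+m_j+1)\;\le\;\sum_{j\in T}m_j\;\le\;\sum_{j\in T}\max(0,\,b_j+m_j+1)+\sum_{j\notin T}b_j .$$
So the first step is to record the following criterion: $L$ is $B^{\vec r}$-regular if and only if, for every nonempty $T$ with $b_j\ge 0$ off $T$, this two-sided inequality fails. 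This is the only place where geometry enters.

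From the criterion I will derive three facts.

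(i) Regularity is upward closed in $\vec b$. I will check this directly from the criterion, or alternatively cite the standard Mumford-type lemma that $F$-regular implies $F\otimes B_j$-regular, stated earlier in the paper.

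(ii) Each $\vec b^{(\sigma)}$, defined by $b_{\sigma(i)}=\sum_{j>i}m_{\sigma(j)}$, is regular. All its entries are $\ge0$, so every nonempty $T$ is admissible and each $\max$ equals $b_j+m_j+1$. Let $k$ be the index of $T$ that comes first in the $\sigma$-order. Then $b_k+m_k+1$ alone already exceeds $\sum_{j\in T}m_j$, because $b_k$ is the sum of the $m$'s over all later indices. Hence the left inequality fails and no $T$ is bad.

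(iii) The vectors $\vec b^{(\sigma)}$ are pairwise incomparable, and none of them can be lowered in any coordinate. For incomparability, compare the positions where two permutations first differ. For non-lowerability, take $\vec b^{(\sigma)}-e_{\sigma(i)}$ and exhibit an explicit bad set, namely $T=\{\sigma(1),\dots,\sigma(i)\}$ with $u$ concentrated appropriately; this uses that the lower bound $\sum_{j\in T}m_j$ is then attained.

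The main step, and the one I expect to be the obstacle, is showing that every regular $\vec b$ dominates some $\vec b^{(\sigma)}$. I will build $\sigma$ from the end, greedily. Suppose indices $S=\{\sigma(n),\dots,\sigma(k+1)\}$ have already been chosen with $b_{\sigma(l)}\ge\sum_{j>l}m_{\sigma(j)}$. I need some $j\notin S$ with $b_j\ge\sum_{s\in S}m_s$.

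Suppose instead that every $j\notin S$ has $b_j<\sum_S m_s$. I will take $T=S^c$ and try to show that the criterion is violated, which contradicts regularity. The difficulty is that the criterion involves the actual values $b_s$ for $s\in S$, which may be large. To handle this, I will first use (i) in reverse. Namely, I will replace $b_s$ on $S$ by their minimal allowed values $\sum_{j>l}m_{\sigma(j)}$; a bad $T$ for the lowered vector then yields a bad configuration for the original only if I run the argument on a cleverly chosen subset. So I would try, first, to pick $T=S^c\cup S'$ for a suitable initial segment $S'$ of $S$. This should balance $\sum_{T^c}b_j$ against $\sum_T m_j$, and I will check both inequalities of the criterion directly. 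The base case $S=\emptyset$ is the clean instance: if all $b_j<0$, then $T=\{1,\dots,n\}$ gives $\sum_j\max(0,b_j+m_j+1)\le\sum_j m_j\le\sum_j\max(0,b_j+m_j+1)$, since the right-hand sum over $T^c$ is empty and the first inequality holds termwise. So $\vec b$ is not regular.

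Translating back via $\vec r=\vec b-\vec a$ then gives exactly the formula $r^{(\sigma)}_{\sigma(i)}=-a_{\sigma(i)}+\sum_{j>i}m_{\sigma(j)}$ stated in Theorem \ref{thm:minimal-regularity}.
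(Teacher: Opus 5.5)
Your plan cannot be completed, because for $n\ge 2$ the statement is false, and your own framework shows this once the criterion is stated correctly.

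For $j\in T$ the coordinate $u_j$ is only bounded \emph{below} (by $b_j+m_j+1$). So $|\vec u|$ can be made arbitrarily large, and the right-hand inequality in your criterion is spurious. The correct criterion is: $\vec b=\vec a+\vec r$ fails to be regular iff some nonempty $T$ has $b_j\ge 0$ for $j\notin T$ and $\sum_{j\in T}\max(0,b_j+m_j+1)\le\sum_{j\in T}m_j$. Your base case even asserts the spurious inequality, which for negative $b_j$ holds only if every $b_j=-1$.

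Now observe that your step (ii) never uses the special shape of $\vec b^{(\sigma)}$. For \emph{every} $\vec b\ge\vec 0$, each $j\in T$ contributes $b_j+m_j+1\ge m_j+1$, so the left side exceeds $\sum_{j\in T}m_j$ by at least $|T|$ and no $T$ is bad. Conversely, if $N=\{j: b_j<0\}\neq\emptyset$, then $T=N$ is bad. Take $u_j=m_j$ on $N$ and $u_j=0$ elsewhere; the K\"unneth summand $\bigotimes_{j\in N}H^{m_j}(\mathcal{O}(b_j-m_j))\otimes\bigotimes_{j\notin N}H^0(\mathcal{O}(b_j))$ is then nonzero.

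Hence $L$ is $B^{\vec r}$-regular iff $\vec r\ge-\vec a$. The minimal regularity vector is \emph{unique}, namely $-\vec a$. Every $\vec r^{(\sigma)}$ is regular but exceeds it by $\sum_{j>1}m_{\sigma(j)}>0$ in the $\sigma(1)$-coordinate.

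Concretely, $\mathcal{O}_{\mathbb{P}^1\times\mathbb{P}^1}$ is $B^{(0,0)}$-regular. Indeed $H^1(\mathcal{O}(-1,0))$, $H^1(\mathcal{O}(0,-1))$, $H^2(\mathcal{O}(-2,0))$, $H^2(\mathcal{O}(-1,-1))$ and $H^2(\mathcal{O}(0,-2))$ all vanish, whereas the statement makes $(1,0)$ and $(0,1)$ minimal. This is the standard fact that $\mathcal{O}_X$ is $\mathcal{O}_X$-regular on a product of projective spaces. The paper itself later uses that $B^{(a,b)}$ is $B^{(-a,-b)}$-regular on $\mathbb{P}^1\times\mathbb{P}^1$.

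Accordingly, the two steps you singled out are exactly where the argument breaks.
\begin{itemize}
\item In (iii), for $i<n$ the lowered vector $\vec b^{(\sigma)}-\vec e_{\sigma(i)}$ is still $\ge\vec 0$, hence still regular. Your proposed bad set $T=\{\sigma(1),\dots,\sigma(i)\}$ does not attain the bound: for $i=1$ the left side is $m_1+\cdots+m_n>m_{\sigma(1)}$. Only lowering the $\sigma(n)$-coordinate, which is $0$ in $\vec b^{(\sigma)}$, destroys regularity.
\item The domination step is false outright. $\vec b=\vec 0$ is regular but dominates no $\vec b^{(\sigma)}$. Your greedy construction stalls at the second stage, where it needs some $b_j\ge m_{\sigma(n)}\ge 1$, and no choice of $T=S^c\cup S'$ can rescue it, since no bad $T$ exists.
\end{itemize}

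The paper's proof fails at the corresponding point. It replaces ``for all $\vec u$ with $|\vec u|=k$'' by one ``extremal'' $\vec u$, and so extracts from top-degree vanishing the condition $r_i\ge\sum_{j\ne i}m_j-a_i$ for some $i$. That condition is not necessary: for $\vec r=-\vec a$ and $|\vec u|=\dim X$, some $u_i\le m_i$, so $H^{\dim X}(\mathcal{O}(-\vec u))=0$. The permutation count is then built on these incorrect conditions. What your corrected criterion does prove is the true statement: $\vec r\ge-\vec a$ is necessary and sufficient.
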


Besides furnishing an effective computational tool for determining multigraded regularity (see Remark \ref{remark_geometry-of-minimal-regularity}), this theorem reveals an uniform combinatorial and geometric structure underlying multigraded regularity on products of projective spaces, making it a result of independent interest.

In addition, Section \ref{section-5} investigates the behavior of multigraded regularity under restriction to hypersurfaces and complete intersections.

{\textbf{General Cohomology Vanishing Theorem:}} We now come to the main result of this paper, which is set in the framework of multigraded regularity. This result provides a general cohomology vanishing which acts as a single unifying mechanism from which several corollaries on syzygies follow.

\begin{theorem} (\textbf{Main Theorem}, see Theorem \ref{Theorem_6.1}) \par
\label{general_theorem}
  Let $X$ be a projective variety and let $\vec{m} \in \mathbb{Z}^t$ and $\vec{w_1},...,\vec{w}_{k+1} \in \mathbb{N}^t$ be arbitrary vectors. Let $F$ be any vector bundle on $X$ and $F$ is $B^{\vec{m}-\vec{e_j}}$-regular for all $1\le j\le t$. Then the following is true: $$H^1(M_{B^{\vec{w_{1}}}} \otimes M_{B^{\vec{w_2}}} \otimes...\otimes M_{B^{\vec{w}_{k+1}}} \otimes F \otimes B^{\vec{m_{k}}})=0$$ when $\vec{m_k}\ge \vec{m} + (k-1)\vec{\delta}$ and
  $$H^i({M_{B^{\vec{w_1}}}} \otimes...\otimes {M_{B^{\vec{w_{k+1}}}}} \otimes F \otimes {B^{\vec{m_k}}})=0, \text{ for all } i > 0$$
  when $\vec{m_k}\ge \vec{m}+k\vec{\delta}$.
\end{theorem}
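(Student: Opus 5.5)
The plan is to argue by induction on $k$, peeling off one syzygy bundle at a time with the evaluation sequence. The whole argument rests on a single \emph{propagation lemma}:
\begin{quote}
if a coherent sheaf $G$ is $B^{\vec a}$-regular, then $M_{B^{\vec w}}\otimes G$ is $B^{\vec a+\vec\delta}$-regular for every $\vec w\in\mathbb N^t$.
\end{quote}
I will also use the two standard properties of multigraded regularity established by Maclagan--Smith and Hering--Schenck--Smith, which I take as known:
\begin{enumerate}[(a)]
\item if $G$ is $B^{\vec a}$-regular, then it is $B^{\vec a+\vec v}$-regular for all $\vec v\in\mathbb N^t$;
\item if $G$ is $B^{\vec a}$-regular, then the multiplication maps $H^0(G\otimes B^{\vec a+\vec v})\otimes H^0(B^{\vec w})\to H^0(G\otimes B^{\vec a+\vec v+\vec w})$ are surjective for $\vec v,\vec w\in\mathbb N^t$. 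This is obtained by factoring $B^{\vec w}$ into the $B^{\vec e_j}$ and using surjectivity one generator at a time.
\end{enumerate}
Throughout I read $\vec\delta$ as the vector with $\vec\delta-\vec u\ge 0$ for all $|\vec u|=1$, typically $(1,\dots,1)$.

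\emph{Proof of the propagation lemma.} Tensor $0\to M_{B^{\vec w}}\to H^0(B^{\vec w})\otimes\mathcal O_X\to B^{\vec w}\to 0$ with $G\otimes B^{\vec a+\vec\delta-\vec u}$, where $|\vec u|=i$.
\begin{itemize}
\item For $i\ge 2$, $H^i$ of the middle term and $H^{i-1}(G\otimes B^{\vec a+\vec\delta-\vec u+\vec w})$ must vanish. For the latter, choose $\vec u'\le\vec u$ with $|\vec u'|=i-1$. Then $\vec\delta-\vec u+\vec w=\vec v-\vec u'$ with $\vec v=\vec\delta+\vec w-(\vec u-\vec u')\ge 0$, so the vanishing follows from (a).
\item For $i=1$ the twist $\vec\delta-\vec u$ is effective, so $H^1$ of the middle term vanishes by (a). What remains is surjectivity of the multiplication map, which is (b).
\end{itemize}

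\emph{Induction.} Put $G_k=M_{B^{\vec w_2}}\otimes\cdots\otimes M_{B^{\vec w_{k+1}}}\otimes F$. Iterating the lemma from the hypothesis on $F$ shows that $G_k$ is regular at level roughly $\vec m+(k-1)\vec\delta$. Applying the evaluation sequence for $B^{\vec w_1}$ once more then gives both conclusions:
\begin{itemize}
\item The $H^1$-vanishing at $\vec m_k\ge\vec m+(k-1)\vec\delta$ needs only the $i=1$ part of the argument: $H^1(G_k\otimes B^{\vec m_k})=0$ together with surjectivity of $H^0(G_k\otimes B^{\vec m_k})\otimes H^0(B^{\vec w_1})\to H^0(G_k\otimes B^{\vec m_k+\vec w_1})$.
\item The vanishing of all $H^i$ needs the extra $\vec\delta$. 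This is because the $H^{i-1}$ terms of $G_k\otimes B^{\vec m_k+\vec w_1}$ and the $H^i(G_k\otimes B^{\vec m_k})$ terms require full regularity at $\vec m_k-\vec\delta$, in the sense that all higher cohomology of $G_k\otimes B^{\vec m_k}$ vanishes.
\end{itemize}

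\emph{Main obstacle: the base case and the hypothesis on $F$.} The hypothesis is that $F$ is $B^{\vec m-\vec e_j}$-regular for each $j$ separately, rather than $B^{\vec m-\vec\delta}$-regular. Two things have to be checked:
\begin{itemize}
\item \emph{Surjectivity.} I must verify that the $t$ separate regularity statements suffice for the $H^1$-surjectivity step. Each factor $B^{\vec e_j}$ of $B^{\vec w_1}$ should be handled using the regularity at $\vec m-\vec e_j$. The first thing to try is the Mumford-type argument: the Koszul complex on sections of $B_j$ shows that $H^0(F\otimes B^{\vec m-\vec e_j+\vec v})\otimes H^0(B_j)\to H^0(F\otimes B^{\vec m+\vec v})$ is onto.
\item \emph{Bookkeeping.} I must check that the propagation lemma can be run from this weaker starting point, landing exactly at the bounds $\vec m+(k-1)\vec\delta$ and $\vec m+k\vec\delta$.
\end{itemize}
If the general lemma loses a $\vec\delta$ at the first step, I will prove the $k=0$ case directly from the $t$ hypotheses and start the induction from $G_1=M_{B^{\vec w_{k+1}}}\otimes F$.
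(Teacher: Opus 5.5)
Your architecture is the paper's. Your propagation lemma is its Regularity Lemma~II, iterated as Proposition~III. Your last step (tensoring the evaluation sequence of $B^{\vec w_1}$ with $G_k\otimes B^{\vec m_k}$ and using $H^1$-vanishing plus surjectivity of multiplication) is exactly how the paper finishes.

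\textbf{The gap.} The one step specific to this theorem's hypothesis is flagged but not proved: you need that $M_{B^{\vec w}}\otimes F$ is $B^{\vec m}$-regular, using all $t$ hypotheses together. This is the paper's Regularity Lemma~IV. Without it, running the propagation lemma from a single hypothesis ``$F$ is $B^{\vec m-\vec e_j}$-regular'' only makes $G_k$ regular at $\vec m+k\vec\delta-\vec e_j$. For $t\ge 2$ none of these levels is $\le \vec m+(k-1)\vec\delta$. So your argument does not reach the $H^1$ statement at its stated bound, which is the one feeding the Koszul, $(N_p)$ and $(M_q)$ corollaries. (The second statement, at $\vec m+k\vec\delta$, does survive this naive route.)

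Your proposed remedy also aims at the wrong place. The surjectivity needed at the base is $H^0(B^{\vec w})\otimes H^0(F\otimes B^{\vec m-\vec e_j})\to H^0(F\otimes B^{\vec m-\vec e_j+\vec w})$. That is just property (b) for the $B^{\vec m-\vec e_j}$-regularity of $F$ with the same $j$; no factoring of $B^{\vec w_1}$ into the $B_j$ and no Koszul argument on sections of $B_j$ is needed. The place where the $t$ hypotheses really must be combined is the $H^{i-1}$ term for $i\ge 2$, which your obstacle discussion does not mention.

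\textbf{The repair} is already inside your proof of the propagation lemma. For fixed $\vec u$ with $|\vec u|=i$, that proof uses regularity of $G$ only at the single level $\vec a+\vec\delta-\vec e_j$, where $\vec e_j=\vec u-\vec u'$ if $i\ge 2$ and $\vec e_j=\vec u$ if $i=1$.
The middle term needs only regularity at $\vec a+\vec\delta$, which follows from any one of these levels. The left term is
\[
H^{i-1}\bigl(G\otimes B^{\vec a+\vec\delta-\vec u+\vec w}\bigr)=H^{i-1}\bigl(G\otimes B^{(\vec a+\vec\delta-\vec e_j)+\vec w-\vec u'}\bigr),\qquad |\vec u'|=i-1,
\]
which vanishes by regularity at $\vec a+\vec\delta-\vec e_j$. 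The $i=1$ surjectivity is (b) at that same level.

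So the lemma holds under the weaker hypothesis ``$G$ is $B^{\vec a+\vec\delta-\vec e_j}$-regular for every $j$''. With $G=F$ and $\vec a+\vec\delta=\vec m$, this is exactly the base you need. The paper gets it by feeding $0\to M_L\otimes F\to H^0(L)\otimes F\to L\otimes F\to 0$ into the Hering--Schenck--Smith kernel lemma, which is the same computation. Then $G_k$ is $B^{\vec m+(k-1)\vec\delta}$-regular for $k\ge 1$, and your final step goes through.

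One minor correction: the all-$H^i$ vanishing does not actually need the extra $\vec\delta$. Once $G_k$ is $B^{\vec m+(k-1)\vec\delta}$-regular, your evaluation-sequence argument kills every $H^i$, $i>0$, already for $\vec m_k\ge\vec m+(k-1)\vec\delta$. Since the theorem only asks for $\vec m_k\ge \vec m+k\vec\delta$, this is harmless.
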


\textbf{Vanishing of Koszul Cohomology of all weights:} We now start with an important corollay of the main theorem \ref{general_theorem}. This result utilizes positivity conditions on line bundles in order to achieve the vanishing of Koszul Cohomologies of all weights $q\ge 2$. 

 \begin{corollary} \label{Kp,q_vanishing}
 (see Corollary \ref{CorollaryKoszulCohomology})
    Let $X$ be a projective variety and $F$ be any vector bundle on $X$ which is $B^{\vec{m}-\vec{e_j}}$-regular for some $\vec{m}\in \mathbb{N}^t$ and for all $1\le j \le t$. Let $L=B^{\vec{w}}$ be any line bundle on $X$ where $\vec{w}\in \mathbb{N}^t$. Then the Koszul Cohomology group $K_{p,q}(X,F;L)=0$ when $(q-1)\vec{w}\ge \vec{m}+(p-1)\vec{\delta}$. 
\end{corollary}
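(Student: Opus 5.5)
The plan is to reduce the vanishing of $K_{p,q}(X,F;L)$ to an $H^1$-vanishing for a tensor power of the syzygy bundle $M_L$, and then read that vanishing off from the Main Theorem \ref{general_theorem}.

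\textbf{Step 1: the standard reduction.} Taking exterior powers of the evaluation sequence $0\to M_L\to H^0(L)\otimes\mathcal O_X\to L\to0$ gives
\[
0\to \wedge^{p+1}M_L\to \wedge^{p+1}H^0(L)\otimes\mathcal O_X\to \wedge^p M_L\otimes L\to 0 .
\]
Tensoring with $F\otimes L^{q-1}$ and taking cohomology identifies
\[
K_{p,q}(X,F;L)=\operatorname{coker}\Bigl(\wedge^{p+1}H^0(L)\otimes H^0(F\otimes L^{q-1})\to H^0(\wedge^pM_L\otimes F\otimes L^{q})\Bigr).
\]
This cokernel injects into $H^1(\wedge^{p+1}M_L\otimes F\otimes L^{q-1})$, so it suffices to show that this $H^1$ vanishes.

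\textbf{Step 2: from exterior powers to tensor powers.} In characteristic zero, $\wedge^{p+1}M_L$ is a direct summand of $M_L^{\otimes(p+1)}$. It is therefore enough to prove
\[
H^1\bigl(M_L^{\otimes (p+1)}\otimes F\otimes L^{q-1}\bigr)=0 .
\]
If we are not in characteristic zero, I would instead run the filtration and induction argument of the Main Theorem directly on exterior powers. This should go through because the same exact sequences are available.

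\textbf{Step 3: apply the Main Theorem.} We apply Theorem \ref{general_theorem} with $k=p$, all $\vec{w}_i=\vec{w}$ (so every factor is $M_{B^{\vec w}}=M_L$), and $B^{\vec m_k}=L^{q-1}=B^{(q-1)\vec w}$, that is, $\vec m_p=(q-1)\vec w$. Its $H^1$ statement requires $\vec m_p\ge \vec m+(p-1)\vec\delta$, which is exactly the hypothesis $(q-1)\vec w\ge \vec m+(p-1)\vec\delta$. The regularity hypothesis that $F$ is $B^{\vec m-\vec e_j}$-regular for all $j$ is also assumed. Hence the $H^1$ vanishes, and so $K_{p,q}(X,F;L)=0$.

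\textbf{Expected obstacle.} The main point to check is the index bookkeeping. The $p$-th Koszul group involves $\wedge^{p+1}M_L$, which is $k+1$ factors with $k=p$, and this must match the Main Theorem's convention together with the shift $(k-1)\vec\delta$ in its $H^1$ clause. I would also confirm that the case $q\ge 2$ is covered, so that $(q-1)\vec w\in\mathbb N^t$. The case $q=1$ falls under the hypothesis only if $\vec m+(p-1)\vec\delta\le 0$, and the same argument handles it. I would also double-check the edge case $p=0$: there $\wedge^1 M_L=M_L$ and the required bound becomes $\vec m-\vec\delta$. This should follow either from the $k=0$ case of the Main Theorem or directly from the regularity of $F$ via the filtration of $M_L$ by the $M_{B_j}$.
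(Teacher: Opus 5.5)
For $p\ge 1$ your argument is exactly the paper's proof of Corollary~\ref{CorollaryKoszulCohomology}. You view $K_{p,q}(X,F;L)$ inside $H^1(\wedge^{p+1}M_L\otimes F\otimes L^{q-1})$, pass to $M_L^{\otimes(p+1)}$ (a summand in characteristic zero), and apply Theorem~\ref{Theorem_6.1} with $k=p$, all $\vec w_i=\vec w$, and $\vec m_p=(q-1)\vec w$. You also do not need $(q-1)\vec w\in\mathbb{N}^t$, because the theorem only asks that $\vec m_k\in\mathbb{Z}^t$ satisfy the inequality.

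\textbf{Where the proposal fails: the case $p=0$.} Theorem~\ref{Theorem_6.1} is stated only for $k\ge 1$. The paper remarks right after it that the $k=0$ version fails under $B^{\vec m-\vec e_j}$-regularity. No filtration argument can rescue it, because for $t\ge 2$ the $p=0$ case of the statement is false.

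Here is a counterexample.
\begin{itemize}
\item Take $X=\mathbb{P}^n$, $t=2$, $B_1=B_2=\mathcal{O}(1)$, so that $B^{\vec u}=\mathcal{O}(|\vec u|)$.
\item $F=\mathcal{O}(-2)$ is $B^{\vec m-\vec e_j}$-regular for $\vec m=(2,1)$ and $j=1,2$, since $H^i(F\otimes B^{\vec m-\vec e_j}\otimes B^{-\vec u})=H^i(\mathcal{O}(-i))=0$ whenever $|\vec u|=i>0$.
\item Take $\vec w=(1,0)$, so $L=\mathcal{O}(1)$, and $p=0$, $q=2$. Then $(q-1)\vec w=(1,0)=\vec m-\vec\delta$, so the hypothesis holds.
\item Nevertheless
\[ K_{0,2}(X,F;L)=\operatorname{coker}\bigl(H^0(\mathcal{O}(1))\otimes H^0(\mathcal{O}(-1))\to H^0(\mathcal{O})\bigr)\cong\mathbb{C}. \]
\end{itemize}
The same happens with independent bundles: on $\mathbb{P}^1\times\mathbb{P}^1$ with $B_1=\mathcal{O}(1,0)$, $B_2=\mathcal{O}(1,1)$, $F=\mathcal{O}(-2,1)$, $\vec m=(2,1)$ and $\vec w=(1,0)$, one gets $K_{0,2}\cong\mathbb{C}^2$.

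So the statement, your proof and the paper's proof should all be restricted to $p\ge 1$. At $p=0$, Theorem~\ref{Theorem_3.7}(ii) gives $K_{0,q}(X,F;L)=0$ only when $(q-1)\vec w\ge\vec m-\vec e_j$ for some $j$. The bound $(q-1)\vec w\ge\vec m-\vec\delta$ does hold when $F$ is $B^{\vec m-\vec\delta}$-regular, for instance when $t=1$.
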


One of the notable consequences of Corollary \ref{Kp,q_vanishing} is that it reveals a natural hierarchy for vanishing of Koszul Cohomology groups of different weights. For a fixed stage $p$ of $E_{\bullet}$ the vanishing criterion 
$$
\vec{w}\ge \frac{\vec{m}+(p-1)\vec{\delta}}{q-1}
$$
shows that the positivity $(\vec{w})$ required for the vanishing of $K_{p,q}(X,F;L)$ decreases as the weight $q$ increases. So, if we denote the space of line bundles satisfying the above inequality as $R_{p,q}$, then it follows that for fixed $p$, 
\begin{equation}
\label{hierarchy_q}
    R_{p,q-1}\subset R_{p,q}\subset R_{p,q+1}
\end{equation}
 And similarly, if we keep the weight $q$ fixed and let the index $p$ vary, then also we get 
 \begin{equation}
 \label{hierarchy_p}
     R_{p+1,q}\subset R_{p,q}\subset R_{p-1,q}
 \end{equation}
We call the inequalities (\ref{hierarchy_q}) and (\ref{hierarchy_p}) the $q-$\textbf{hierarchy} and $p-$\textbf{hierarchy}, respectively.

The following figure illustrates the $K_{p,q}$-hierarchy when we specialize our results for $t=2$ and $F=\mathcal{O}_{X}$.

 \begin{figure}[ht!]
 \begin{adjustwidth*}{}{-0.5em} 
  \begin{minipage}{0.3\textwidth}
\begin{tikzpicture}[row sep = 1.3, column sep = 0.7]
\fill[cyan!15!] (2,3) rectangle (4, 4);
\fill[green!15!] (2-0.5,3-0.5) rectangle (2, 4);
\fill[green!15!] (2-0.5,3-0.5) rectangle (4, 3);
\fill[yellow!90!] (2-1.5,3-1.5) rectangle (2-0.5, 4);
\fill[yellow!90!] (2-1.5,3-1.5) rectangle (4, 3-0.5);
\draw[<->] (-.7, 1) -- (4, 1);
\node[scale=0.8] at (4, 0.5) {$B_{1}$};
\draw[<->] (0, 1-.7) -- (0, 4);
\node[scale=0.8] at (-0.5, 4) {$B_{2}$};
\draw[->] (2, 3) -- (2, 4);
\draw[->] (2, 3) -- (4, 3);
\draw[->] (2-0.5, 3-0.5) -- (2-0.5, 4);
\draw[->] (2-0.5, 3-0.5) -- (4, 3-0.5);
\draw[->] (0.5, 2-0.5) -- (0.5, 4);
\draw[->] (0.5, 2-0.5) -- (4, 2-0.5);
\node[scale=0.8] at (2.75, 3.5) {$(N_{p})$};
\node[scale=0.8] at (2, 0.5) {$K_{p, q}(X, L) = 0$};
\node[scale=0.8] at (5.75, 3) {$q = 2, \ \vec{w} \geq \vec{m}+(p-1)\vec{\delta}$};
\node[scale=0.8] at (5.75, 3-0.5) {$  q = 3,  \vec{w} \geq \dfrac{\vec{m}}{2}+\dfrac{(p-1)}{2}\vec{\delta}$};
\node[scale=0.8] at (5.75, 3-1.5) {$q = 4, \vec{w} \geq \dfrac{\vec{m}}{3}+\dfrac{(p-1)}{3}\vec{\delta}$};
\draw[->, dashed] (0, 1) -- (2-0.05, 3-0.05);
\node[scale=0.75] at (1+0.2, 2-0.75) {$\vec{m}+(p-1)\vec{\delta}$};
\end{tikzpicture}
\end{minipage}
\hspace{3.25cm}
 \begin{minipage}{0.3\textwidth}
\begin{tikzpicture}[row sep = 1.3, column sep = 0.7]
\fill[cyan!15!] (2,3) rectangle (4, 4);
\fill[green!15!] (2-0.75,3-0.75) rectangle (2, 4);
\fill[green!15!] (2-0.75,3-0.75) rectangle (4, 3);
\fill[yellow!90!] (2-1.5,3-1.5) rectangle (2-0.75, 4);
\fill[yellow!90!] (2-1.5,3-1.5) rectangle (4, 3-0.75);
\draw[<->] (-.7, 1) -- (4, 1);
\node[scale=0.8] at (4, 0.5) {$B_{1}$};
\draw[<->] (0, 1-.7) -- (0, 4);
\node[scale=0.8] at (-0.5, 4) {$B_{2}$};
\draw[->] (2, 3) -- (2, 4);
\draw[->] (2, 3) -- (4, 3);
\draw[->] (2-0.75, 3-0.75) -- (2-0.75, 4);
\draw[->] (2-0.75, 3-0.75) -- (4, 3-0.75);
\draw[->] (2-1.5, 3-1.5) -- (2-1.5, 4);
\draw[->] (2-1.5, 3-1.5) -- (4, 3-1.5);
\node[scale=0.8] at (3, 3.5) {$(K_{p, q} = 0)$};
\node[scale=0.8] at (3, 2.5) {$(K_{p-1, q} = 0)$};
\node[scale=0.8] at (3, 1.75) {$(K_{p-2, q})$};
\node[scale=0.8] at (2, 0.5) {$K_{p, q}(X, L) = 0$};
\node[scale=0.8] at (5.75, 3) {$ \vec{w} \geq \dfrac{\vec{m}+(p-1)\vec{\delta}}{q-1}$};
\node[scale=0.8] at (5.75, 3-0.75) {$\vec{w} \geq \dfrac{\vec{m}+((p-1)-1)\vec{\delta}}{q-1}$};
\node[scale=0.8] at (5.75, 3-1.5) {$\vec{w} \geq \dfrac{\vec{m}+((p-2)-1)\vec{\delta}}{q-1}$};
\draw[->, dashed] (2-1.5, 3-1.5) -- (2-0.75, 3-0.75);
\node[scale=0.75] at (2-0.75-0.45, 3-0.75) {$\vec{\delta}$};
\draw[->, dashed] (2-0.75, 3-0.75) -- (2, 3);
\node[scale=0.75] at (2-0.45, 3) {$\vec{\delta}$};
\end{tikzpicture}
\end{minipage}
\caption{\emph{Hierarchy of vanishing regions for Koszul cohomology groups ${K_{p,q}(X,L)}$ of various weights $q$, for fixed stage $p$, with $L = B^{\vec{w}}$ and $F=\mathcal{O}_X$ }}
\label{IntBound}
\end{adjustwidth*}
\end{figure}

Also, we demonstrate in Example \ref{Example: Sharpness of Higher Koszul Cohomology} that our bound mentioned above is sharp for weight $3$ syzygies of the canonical line bundle on a  hyperelliptic curve, which in turn shows the inclusion $R_{g-2,3}\subsetneq R_{g-3,3}$. Thus, the inclusion mentioned above is in general strict.

.

\paragraph{\textbf{Slope interpretation:}}
The vanishing criterion of Corollary~\ref{CorollaryKoszulCohomology} also admits a natural geometric interpretation. In the one-dimensional setting, the condition $(q-1)w \ge m+(p-1)\delta$ determines a family of straight lines in the $(w,p)$-plane, parametrized by the weights $q$. For each fixed coordinate, these boundary lines have slope $1/(q-1)$. As the weight $q$ increases, the slopes decrease and the corresponding vanishing regions become progressively larger, as illustrated in Figure~8. The appeal of this interpretation lies in the fact that it reduces the seemingly intricate multigraded setting to a simple two-dimensional picture, where the hierarchy of vanishing is governed by the elementary notion of the slope of a line.

An alternative interpretation of Corollary \ref{Kp,q_vanishing} yields explicit vanishing criteria for Koszul cohomology on products of projective spaces and complete intersections. As a special case, we obtain explicit criteria for complete intersections in products of projective spaces (Corollaries \ref{corollary_koszul_prod_projsp} and \ref{Syz_complete_intersection}).

\textbf{New proof of a theorem of Mark Green:} As a further application of Corollary~\ref{CorollaryKoszulCohomology}, we recover a classical vanishing theorem of Mark Green for Koszul cohomology on projective space (Theorem \ref{theorem:green-koszul-cohomology-projective-space}). More precisely, we show that Green's theorem
\[
K_{p,q}\bigl(\mathbb{P}^{r},\mathcal{O}_{\mathbb{P}^{r}}(k),\mathcal{O}_{\mathbb{P}^{r}}(d)\bigr)=0
\qquad \text{whenever } k+(q-1)d\ge p,
\]
arises as a direct specialization of our general vanishing criterion. While Green's original proof relies on a detailed analysis of the Koszul complex through homological and representation-theoretic techniques, our derivation follows formally from Castelnuovo--Mumford regularity. This further illustrates the unifying nature of the multigraded regularity framework developed in this paper.

\textbf{Property $(M_q)$: Vanishing of weight-one syzygies from the tail of the resolution:} The Theorem \ref{general_theorem} also yields criteria for Property $(M_q)$, which governs the vanishing of weight-one syzygies from the tail of the minimal free resolution. While Corollary~\ref{Kp,q_vanishing} is most effective for vector bundles with sufficient positivity, Green’s Koszul duality allows us to extend these results to situations with weaker positivity assumptions.

We now specialize this general framework to obtain a criterion for Property $(M_{q})$. 

\begin{corollary} (see Corollary \ref{CorollaryM_q})
\label{corollaryMq}
Let $X$ be a projective variety of dimension $n\ge 2$ with rational singularities and whose canonical sheaf $\omega_X$ is a line bundle. Let $L=B^{\vec{w}}$ be an ample line bundle on $X$ with $\vec{w}\in \mathbb{N}^t$. Moreover, assume that $\omega_X$ is $B^{\vec{m}-\vec{e_j}}$-regular for all $1 \le j \le t$ for some $\vec{m}\in \mathbb{N}^t$. Then $L$ satisfies Property $(M_q)$ if
\[
(n-1)\vec{w}\ge \vec{m}+(q-n-1)\vec{\delta}.
\]
\end{corollary}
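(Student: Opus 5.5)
Property $(M_q)$ asks that $K_{p,1}(X,L)=0$ for all $p\ge r-q$, where $r=h^0(X,L)-1$. I would convert these weight-one groups into groups of higher weight through Green's duality theorem, and then apply Corollary \ref{Kp,q_vanishing} with $F=\omega_X$, the hypothesis on $\omega_X$ being exactly the regularity input that corollary requires.

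\textbf{Step 1 (vanishing needed for duality).} Since $L$ is ample and globally generated and $X$ has rational singularities with $\omega_X$ invertible, I will check that $H^i(X,L^{j})=0$ for $1\le i\le n-1$ and $j\ge 0$. For $j\ge 1$ this should follow from Kawamata--Viehweg (or Grauert--Riemenschneider) vanishing on a resolution, pushed down using rationality of the singularities. For $j=0$ the statement reads $H^i(\mathcal O_X)=0$, which is not automatic. If that turns out to be needed, I expect the relevant form of Green's duality only to require $H^i(X,L^{q-i})=0$ in the relevant range with $q-i\ge 1$, so that the $j=0$ case never arises.

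\textbf{Step 2 (duality).} With this vanishing in place, Green's duality gives
\[
K_{p,1}(X,L)^{\vee}\cong K_{r-n-p,\,n}(X,\omega_X;L).
\]
Let $p'=r-n-p$ denote the dual index. If $p\ge r-q$, then $p'\le q-n$. Note that the weight on the dual side is $n$. The required vanishing is therefore $K_{p',n}(X,\omega_X;L)=0$ for all $p'\le q-n$; for $p'<0$ the group is trivially zero.

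\textbf{Step 3 (apply the corollary).} Corollary \ref{Kp,q_vanishing}, applied with $F=\omega_X$, weight $n$ and stage $p'$, gives $K_{p',n}(X,\omega_X;L)=0$ whenever
\[
(n-1)\vec w\ \ge\ \vec m+(p'-1)\vec\delta .
\]
Because $\vec\delta\ge 0$, the right-hand side is largest at $p'=q-n$, where it equals $\vec m+(q-n-1)\vec\delta$. This is exactly the hypothesis of the statement, which then covers every $p'\le q-n$. This is the $p$-hierarchy (\ref{hierarchy_p}) in action.

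\textbf{Main obstacle.} I expect the delicate point to be Step 1, namely justifying Green's duality on a singular $X$. One must make sure that the duality theorem is used in the form requiring only $\omega_X$ invertible (Cohen--Macaulay follows from rational singularities) together with the intermediate cohomology vanishing. One must also handle the $j=0$ issue, either by the index bookkeeping above or by noting that only positive powers of $L$ enter. The hypothesis $n\ge 2$ ensures that the dual weight is at least $2$, which places us in the range where Corollary \ref{Kp,q_vanishing} applies. The remaining steps are index bookkeeping.
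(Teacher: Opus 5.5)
Your strategy matches the paper's. You trade the weight-one groups for $K_{p',n}(X,\omega_X;L)$ with $p'=r-n-p\le q-n$, then apply the main vanishing theorem with $F=\omega_X$ and twist $L^{n-1}$, the worst case being $p'=q-n$. The paper does the same via Lemma~\ref{LemmaAprodu} and Theorem~\ref{Theorem_6.1}. However, Steps 1--2 have a genuine gap: the two-sided isomorphism $K_{p,1}(X,L)^{\vee}\cong K_{r-n-p,n}(X,\omega_X;L)$ is not available under the hypotheses, and neither of your escape routes works.

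In weight one, Green's duality requires $H^i(X,L^{1-i})=0$ and $H^i(X,L^{-i})=0$ for $1\le i\le n-1$. At $i=1$ this is exactly $H^1(\mathcal{O}_X)=0$. Since $q=1$, there is no $i\ge 1$ with $q-i\ge 1$, so no index bookkeeping removes this condition. The isomorphism genuinely fails without it. On an abelian surface with $L$ ample and globally generated, $K_{0,1}(X,L)=0$, whereas $K_{r-2,2}(X,\omega_X;L)^{\vee}\cong H^1(M_L)\cong H^0(L)\otimes H^1(\mathcal{O}_X)\neq 0$. Moreover, Kawamata--Viehweg does not give $H^i(X,L^{j})=0$ for $j\ge 1$; it controls $\omega_X\otimes L^{j}$. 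For instance, $H^1(C\times\mathbb{P}^1,A\boxtimes\mathcal{O}(1))\neq 0$ for a base point free pencil $A$ of degree $\le g(C)$ on a curve of genus $\ge 2$. In any case, those positive-power vanishings are not what duality needs.

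The repair is to use only one direction, which needs no hypothesis on $H^1(\mathcal{O}_X)$. With $V=H^0(X,L)$, we have $K_{p,1}(X,L)=\operatorname{coker}\bigl(\wedge^{p+1}V\to H^0(\wedge^{p}M_L\otimes L)\bigr)$, which injects into $H^1(\wedge^{p+1}M_L)$. Twist the sequences $0\to\wedge^{a+1}M_L\to\wedge^{a+1}V\otimes\mathcal{O}_X\to\wedge^{a}M_L\otimes L\to 0$ by negative powers of $L$. Use $H^i(X,L^{-j})=0$ for $1\le i\le n-1$, $j\ge 1$; this is what Kawamata--Viehweg on a resolution actually gives, via rationality of the singularities. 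The result is $H^1(\wedge^{p+1}M_L)\cong H^{n-1}(\wedge^{p+n-1}M_L\otimes L^{2-n})$.

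Next, $\wedge^{p+n-1}M_L\cong\wedge^{r-n-p+1}M_L^{\vee}\otimes L^{-1}$, and Serre duality on the Cohen--Macaulay $X$ gives $H^1(\wedge^{p+1}M_L)\cong H^1(\wedge^{r-n-p+1}M_L\otimes\omega_X\otimes L^{n-1})^{\vee}=K_{r-n-p,n}(X,\omega_X;L)^{\vee}$. The last equality holds because $H^1(\omega_X\otimes L^{n-1})=0$. This, not the size of the dual weight, is where $n\ge 2$ is used. The one-sided statement is exactly what Lemma~\ref{LemmaAprodu} supplies in the paper.

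With this repair your Step 3 goes through, and your direct route even gains something. For $p>r-n$ the right-hand side is either $H^1(\omega_X\otimes L^{n-1})^{\vee}=0$ or involves $\wedge^{<0}M_L=0$, so $K_{p,1}(X,L)=0$ comes for free. You therefore avoid Green's $K_{p,1}$ theorem, which the paper invokes for that range (its proof is written for smooth $X$). The paper's Appendix leaves the singular version of that theorem open.

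One caveat you share with the paper: when $q=n$ the only case is $p'=0$. There Corollary~\ref{Kp,q_vanishing} at $p=0$ rests on Theorem~\ref{Theorem_6.1} with $k=0$, which the remark after that theorem does not cover unless $t=1$. For $q>n$ the case $p'=0$ follows directly from Theorem~\ref{Theorem_3.7}(ii), since then $(n-1)\vec{w}\ge\vec{m}$.
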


This result extends Theorem 2.2 of Basu in \cite{Basu} to the multigraded case.

 The examples in Section 10 show that our result on Property $(M_{q})$ is sharp. We show that the sharpness is governed by the gonality of curves in the corresponding linear systems, which reflects the deep relationship between Property $(M_q)$ and the Gonality Conjecture where this conjecture is extended to certain higher dimensional varieties in \cite{Basu}.

\textbf{Property $\mathbf{(N_{p})}:$} The vanishing theorem provides information across the entire resolution. In particular, we obtain the following multigraded criterion for Property $(N_p)$, that tells about the linear strands from the beginning.  This Corollary recovers a variant of result of Hering-Schenck-Smith in \cite{HeringSchenckSmith} when we specialize Theorem \ref{general_theorem} to $F=\mathcal{O}_{X}$.

\begin{corollary} ( see Corollary \ref{Corollary_Np-property})
\label{Corollary-Np-property}
    Let $X$ be a projective variety and let $\vec{m}\in \mathbb{Z}^t$ be such that $\mathcal{O}_{X}$ is $B^{\vec{m}-\vec{e_{j}}}$-regular for $1\le j \le t$. Let $L=B^{\vec{w}}$ be any line bundle for some $\vec{w} \in \mathbb{N}^{t}$. Then for $p\ge 1, L$ satisfies property $(N_p)-$ if $\vec{w} \ge \vec{m}+ (p-1)\vec{\delta}$. 
\end{corollary}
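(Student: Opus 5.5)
The plan is to reduce Property $(N_p)$ to the vanishing of $K_{i,q}(X,L)$ for $q\ge 2$ and $i\le p$, and then apply Corollary \ref{Kp,q_vanishing} with $F=\mathcal{O}_X$.

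\textbf{Normal generation.} First I would check $(N_0)$, i.e.\ that $H^0(L)^{\otimes}$ surjects onto $H^0(L^j)$ for all $j\ge 1$. By the Koszul description this is $K_{0,q}(X,L)=0$ for $q\ge 2$. It also follows from the standard evaluation-sequence argument. Tensor $0\to M_L\to H^0(L)\otimes\mathcal{O}_X\to L\to 0$ with $L^{j-1}$; surjectivity of $H^0(L)\otimes H^0(L^{j-1})\to H^0(L^j)$ then follows from $H^1(M_L\otimes L^{j-1})=0$. Since $M_L=M_{B^{\vec w}}$, the case $k=0$ of Theorem \ref{general_theorem} gives this vanishing once $(j-1)\vec w\ge \vec m-\vec\delta$. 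This holds for $j\ge 2$ because $\vec w\ge \vec m+(p-1)\vec\delta\ge \vec m$ and $\vec\delta\ge 0$.

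\textbf{Higher syzygies.} Next I would use the standard identification
\[
K_{i,q}(X,L)\cong \operatorname{coker}\bigl(\wedge^{i+1}H^0(L)\otimes H^0(L^{q-1})\to H^0(\wedge^i M_L\otimes L^{q})\bigr),
\]
or, more simply, the vanishing $H^1(\wedge^{i+1}M_L\otimes L^{q-1})=0$ which forces it. In characteristic zero, $\wedge^{i+1}M_L$ is a direct summand of $M_L^{\otimes(i+1)}$. So it suffices to show
\[
H^1\bigl(M_{B^{\vec w}}^{\otimes (i+1)}\otimes B^{(q-1)\vec w}\bigr)=0.
\]
Theorem \ref{general_theorem} with $k=i$ and $\vec w_1=\dots=\vec w_{i+1}=\vec w$ gives this when $(q-1)\vec w\ge \vec m+(i-1)\vec\delta$. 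This is exactly Corollary \ref{Kp,q_vanishing} with $F=\mathcal{O}_X$.

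\textbf{Checking the range.} For $1\le i\le p$ and $q\ge 2$, we have $(q-1)\vec w\ge \vec w\ge \vec m+(p-1)\vec\delta\ge \vec m+(i-1)\vec\delta$, using $\vec w,\vec\delta\ge 0$. Hence $K_{i,q}(X,L)=0$ for all $q\ge 2$ and $i\le p$. Together with $K_{i,0}=0$ for $i\ge 1$, which is automatic, this says exactly that $E_i=\bigoplus S(-i-1)$ for $1\le i\le p$, and $E_0=S$ by normal generation.

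\textbf{Main obstacle.} I expect the delicate point to be the bookkeeping that makes the $k=0$ case of Theorem \ref{general_theorem} cover $(N_0)$ through the bound $\vec m-\vec\delta$. One also has to confirm that $\vec\delta\in\mathbb{N}^t$, presumably $(1,\dots,1)$, so that the monotonicity steps are valid. If characteristic-zero splitting of $\wedge^{i+1}M_L$ is unavailable, I would instead invoke Corollary \ref{Kp,q_vanishing} directly as a black box.
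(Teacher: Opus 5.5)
Your proposal is correct and follows essentially the paper's proof. You reduce $(N_p)$ to $H^1(M_L^{\otimes(i+1)}\otimes L^{j})=0$ for $0\le i\le p$, $j\ge 1$, apply Theorem~\ref{Theorem_6.1} with $F=\mathcal{O}_X$ and all $\vec w_i=\vec w$, and use monotonicity in $i$ and $j$.

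The one point to tidy is your $i=0$ (normal generation) step, which the paper's own proof glosses over in the same way. Theorem~\ref{Theorem_6.1} is stated only for $k\ge 1$, and the remark after it warns that the $k=0$ case with bound $\vec m-\vec\delta$ fails in general. You should instead cite Lemma~\ref{LemmaIV}: $M_L$ is $B^{\vec m}$-regular, hence $H^1(M_L\otimes L^{j-1})=0$, since $(j-1)\vec w\ge\vec m$ for $j\ge 2$. Alternatively, apply Theorem~\ref{Theorem_3.7}(ii) directly to $\mathcal{O}_X$ to get surjectivity of $H^0(L)\otimes H^0(L^{j-1})\to H^0(L^{j})$.
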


In connection with Property $(N_p)$, we also obtain a reformulation (Theorem~\ref{theorem-HSS-general}) of the well-known theorem of Hering, Schenck, and Smith \cite{HeringSchenckSmith} on Property $(N_p)$ (Corollary~\ref{corollary-N_p-HSS}). Though the technique differs slightly from that of Theorem~\ref{general_theorem}, this reformulation gives a direct generalization of their result on Property $(N_{p})$ by utilizing the full scope of their technique.
More precisely, this reformulation shows that the proof of their theorem is not just limited to the particular line bundle $\mathcal{O}_X$ but also applies to every line bundle of the form $L=B^{\vec{v}}$, where $\vec{v}\ge\vec{0}$. A detailed study of this reformulation is presented in Section~\ref{section_9_reformulation_of_HSS}.

As a further application of our criterion for Property $(N_p)$, we obtain the following explicit multigraded bound for varieties with trivial canonical bundle. 

\begin{corollary} (see Corollary \ref{Corollary_Np-propertyII-2})
\label{Corollary_Np-propertyII}
Let $X$ be a smooth projective variety of dimension $n$ with $\omega_{X}=\mathcal{O}_{X}$. 
Let $B_{1},\cdots,B_{t}$ be fixed ample and base point free line bundles on $X$, where $t\ge 2$. 
Let $L=B^{\vec{w}}$ be an ample and base point free line bundle on $X$ for some $\vec{w}\in \mathbb{N}^{t}$. 
Then $L$ satisfies Property $(N_p)$ if
\[
\vec{w}\ge (p+n)\vec{\delta}.
\]
\end{corollary}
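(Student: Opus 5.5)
Since the Corollary is stated as a consequence of the $(N_p)$ criterion, I would deduce it from Corollary~\ref{Corollary-Np-property} with $F=\mathcal{O}_X$. It then suffices to find $\vec{m}\in\mathbb{Z}^t$ such that $\mathcal{O}_X$ is $B^{\vec{m}-\vec{e_j}}$-regular for every $1\le j\le t$ and
\[
\vec{m}+(p-1)\vec{\delta}\le (p+n)\vec{\delta}.
\]
I will take $\vec{m}=(n+1)\vec{\delta}$. Then $\vec{m}+(p-1)\vec{\delta}=(p+n)\vec{\delta}$, so the hypothesis $\vec{w}\ge(p+n)\vec{\delta}$ is exactly what Corollary~\ref{Corollary-Np-property} needs. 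Here $\vec{\delta}=(1,\dots,1)$, as the statement's notation indicates.

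The main step is the regularity check. For a fixed $j$, I need
\[
H^i\bigl(X,B^{(n+1)\vec{\delta}-\vec{e_j}-\vec{u}}\bigr)=0 \quad\text{for all } i>0 \text{ and all } \vec{u}\in\mathbb{N}^t \text{ with } |\vec{u}|=i.
\]
Groups with $i>n$ vanish for dimension reasons, so only $1\le i\le n$ matters. In that range each coordinate of $\vec{u}$ is at most $i\le n$. Hence every coordinate of $(n+1)\vec{\delta}-\vec{e_j}-\vec{u}$ is at least $n+1-1-n=0$. It cannot be zero in every coordinate: that would force $\vec{u}=(n+1)\vec{\delta}-\vec{e_j}$, so $i=|\vec{u}|=t(n+1)-1>n$ because $t\ge 2$. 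This is where the hypothesis $t\ge2$ enters.

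So the exponent vector $\vec{v}:=(n+1)\vec{\delta}-\vec{e_j}-\vec{u}$ satisfies $\vec{v}\ge 0$ and $\vec{v}\ne 0$. Because each $B_k$ is ample and base point free, $B^{\vec{v}}$ is ample. Since $\omega_X=\mathcal{O}_X$, we have $B^{\vec{v}}=\omega_X\otimes B^{\vec{v}}$, and Kodaira vanishing on the smooth projective variety $X$ gives $H^i(X,B^{\vec{v}})=0$ for all $i>0$. This establishes the required regularity of $\mathcal{O}_X$.

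I expect the only delicate point to be the edge case where some coordinate of $\vec{v}$ is zero. There I need ampleness of the whole product $B^{\vec{v}}$, not of each factor, and that is guaranteed because each $B_k$ is ample and at least one exponent is positive. I would also confirm that Corollary~\ref{Corollary-Np-property} allows $\vec{m}\in\mathbb{Z}^t$, so that $\vec{m}=(n+1)\vec{\delta}$ is admissible. Finally I would treat $p=0$ separately: for $p\ge1$ the Corollary applies directly, and $(N_0)$ follows from $(N_1)$ under the stronger bound, or from the $p=1$ case, since $(n+1)\vec{\delta}\le\vec{w}$ in either situation.
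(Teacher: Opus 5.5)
Your argument is correct for $p\ge1$ and is essentially the paper's proof. The paper also takes $\vec{m}=(n+1)\vec{\delta}$ and applies Corollary~\ref{Corollary_Np-property}. The only difference is that it first gets $B^{n\vec{\delta}}$-regularity of $\mathcal{O}_X=\omega_X$ from Kodaira vanishing (Lemma~\ref{canonical-line-bundle-lemma}) and then shifts to $B^{(n+1)\vec{\delta}-\vec{e}_j}$-regularity via Theorem~\ref{Theorem_3.7}(i), whereas you check the shifted regularity directly with the same Kodaira argument.

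Your final remark on $p=0$ is wrong, though. For $p=0$ the hypothesis is only $\vec{w}\ge n\vec{\delta}$, which does not give $(n+1)\vec{\delta}\le\vec{w}$, so you cannot deduce $(N_0)$ from the $p=1$ case. The paper's proof does not treat $p=0$ either, since Corollary~\ref{Corollary_Np-property} is stated only for $p\ge1$.

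If you want to cover $p=0$, argue directly. Your Kodaira computation with exponent $n\vec{\delta}-\vec{u}$ shows that $\mathcal{O}_X$ is $B^{n\vec{\delta}}$-regular. For $1\le i\le n$ that exponent is $\ge\vec{0}$ and has coordinate sum $tn-i\ge n>0$, because $t\ge2$. Then Theorem~\ref{Theorem_3.7}(ii), with $\vec{u}=j\vec{w}-n\vec{\delta}$ and $\vec{v}=\vec{w}$, gives surjectivity of $H^0(L^{j})\otimes H^0(L)\to H^0(L^{j+1})$ for all $j\ge1$. By induction on $j$ this makes $S\to R$ surjective, which is $(N_0)$.
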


This criterion applies to abelian varieties, K3 surfaces and Calabi--Yau varieties. Furthermore, when $t=1$, Corollary~\ref{Corollary_Np-propertyII} generalizes Corollary~1.6 of Gallego--Purnaprajna \cite{GallegoPurnaprajnaI} for Calabi--Yau $n$-folds, thereby placing their result in a broader multigraded framework.  The Calabi-Yau case in the multigraded setting will be investigated further in the subsequent article which study the optimality of the above bound.

The examples in Section 7 show that our result is sharp, establishing that the bounds cannot, in general, be improved. Finally, if we specialize more general results of Gallego--Purnaprajna in \cite{GallegoPurnaprajnaIII,GallegoPurnaprajnaIV} to product of projective spaces, the multigraded regularity approach developed in this paper, yields substantially stronger bounds for property $(N_p)$ than their bounds.

\textbf{Betti-Table of Canonical Image of Hyperelliptic Curves} In Section \ref{Appendix_B}, we determine the complete graded Betti table of the canonical image of a hyperelliptic curve $C$ of genus $g$. While the higher-weight Koszul cohomology in weight $3$ is taken care of by our main vanishing theorem, demonstrating the sharpness of Corollary \ref{CorollaryKoszulCohomology} for $q=3$, the remaining rows are discovered using Green’s duality together with an independent geometric analysis of the double cover structure $C \to \mathbb{P}^1$. In particular, the weight $1$ and weight $2$ Koszul cohomology groups are computed explicitly via this factorization, allowing us to assemble the full Betti table. To the best of our knowledge, this complete description of the Betti table of $(C,\omega_C)$ is not previously available in the literature.

\section{\textbf{Notations And Conventions}}
\label{section_2}
\begin{itemize}
    \item[(i)] We work over a field of characteristic zero in this paper. We denote it by $\mathbb{C}$. 
    \item[(ii)] For us $\mathbb{N}$ is the set of all non negative integers. 
    \item[(iii)] We denote a t-tuple $(w_1,\cdots,w_t)\in \mathbb{N}^t$ by the vector $\vec{w}$. And we say $\vec{w}=(w_1,\cdots,w_t)$ is greater than or equal to $\vec{v}=(v_1,\cdots,v_t)$ if each $w_i\ge v_i$ for all $1\le i \le t$ and we denote it by $\vec{w}\ge \vec{v}$. 
    \item[(iv)] Most of the times, we use the phrase " Multigraded Regularity" for " Multigraded Castelnuovo-Mumford Regularity" in this paper.
\end{itemize}

\section{\textbf{Background}}
\label{section_background}

\subsection{Castelnuovo--Mumford Regularity:}

We begin this section by recalling the well-known notion of Castelnuovo--Mumford regularity, whose generalization forms the foundation for the proof of the main theorem of this paper.

\begin{definition}

\textbf{(Castelnuovo--Mumford Regularity of a coherent sheaf):}
Let $\mathcal{F}$ be a coherent sheaf on a projective space $\mathbb{P}$ and $m$ be an integer. We say that $\mathcal{F}$ is $m$-regular if
$$
H^{i}(\mathbb{P}, \mathcal{F}(m-i))=0
$$
for all $i>0$.

\end{definition}

Equivalently, the above cohomology vanishing condition can be expressed as
$$
H^{i}(\mathcal{F}\otimes \mathcal{O}_{\mathbb{P}}(1)^{\otimes (m-i)})=0.
$$
This definition of Castelnuovo--Mumford regularity gives the notion of regularity with respect to a globally generated and ample line bundle on an arbitrary projective variety. Throughout the paper, we use the terms \emph{globally generated} and \emph{base point free} interchangeably when referring to a line bundle.

\begin{definition}
\label{Definition_3.2}
\textbf{(Regularity with respect to a globally generated and ample line bundle):}
Let $X$ be a projective variety and $B$ a globally generated and ample line bundle on $X$. A coherent sheaf $\mathcal{F}$ on $X$ is said to be $m$-regular with respect to the line bundle $B$ if

\begin{center}
$H^i(X, \mathcal{F}\otimes B^{m-i})=0$ for $i>0$.
\end{center}

\end{definition}

We conclude this subsection by recalling the following well-known theorem on regularity, first proved by Mumford for projective space in \cite{MumfordCurves}.

\begin{theorem}
\label{Theorem_3.3}

Let $\mathcal{F}$ be a coherent sheaf on a projective variety $X$ which is $m$-regular with respect to a globally generated and ample line bundle $B$. Then for every $k\ge 0$,

\begin{itemize}
    \item[(i)] The maps
    $$
    H^0(X,\mathcal{F}\otimes B^{\otimes m}) \otimes H^0(X,B^{\otimes k})
    \rightarrow
    H^0(X,\mathcal{F}\otimes B^{\otimes(m+k)})
    $$
    are surjective.

    \item[(ii)] $\mathcal{F}$ is $(m+k)$-regular with respect to $B$.

    \item[(iii)] $\mathcal{F}\otimes B^{\otimes(m+k)}$ is globally generated.
\end{itemize}

\end{theorem}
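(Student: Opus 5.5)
The plan is to run Mumford's classical argument, with the Koszul complex of the sections of $B$ restricted to $X$. Write $V=H^0(X,B)$ and $N=\dim V$. Since $B$ is globally generated, the evaluation map $V\otimes\mathcal{O}_X\to B$ is surjective. Hence the Koszul complex
\[
0\to \wedge^{N}V\otimes B^{-N}\to\cdots\to\wedge^2V\otimes B^{-2}\to V\otimes B^{-1}\to\mathcal{O}_X\to 0
\]
is exact on $X$. It is locally split, so it stays exact after tensoring with $\mathcal{F}\otimes B^{m+1}$ for any coherent $\mathcal{F}$. I will prove (ii) first, then (i), then (iii).

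For (ii), it suffices by induction on $k$ to show that $\mathcal{F}$ is $(m+1)$-regular, i.e. $H^i(\mathcal{F}\otimes B^{m+1-i})=0$ for $i>0$. Tensor the complex above with $\mathcal{F}\otimes B^{m+1-i}$. Its terms become $\wedge^jV\otimes\mathcal{F}\otimes B^{m+1-i-j}$, and the right end is $\mathcal{F}\otimes B^{m+1-i}$. Break the long exact sequence into short exact sequences and chase cohomology; this shows that $H^i(\mathcal{F}\otimes B^{m+1-i})$ vanishes once $H^{i+j-1}(\mathcal{F}\otimes B^{m+1-i-j})=0$ for all $j\ge1$. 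Putting $\ell=i+j-1\ge i\ge1$, the group in question is $H^{\ell}(\mathcal{F}\otimes B^{m-\ell})$, which vanishes by $m$-regularity.

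For (i), again reduce to $k=1$. Surjectivity of $H^0(\mathcal{F}\otimes B^{m})\otimes V\to H^0(\mathcal{F}\otimes B^{m+1})$ comes from the same complex tensored with $\mathcal{F}\otimes B^{m+1}$. The obstruction to surjectivity on global sections lies in the groups $H^{j-1}(\wedge^jV\otimes\mathcal{F}\otimes B^{m+1-j})$ for $j\ge2$, and these vanish by $m$-regularity. The general case then follows by composing: $H^0(\mathcal{F}\otimes B^m)\otimes V^{\otimes k}$ surjects onto $H^0(\mathcal{F}\otimes B^{m+k})$, using (ii) at each step, and this map factors through $H^0(\mathcal{F}\otimes B^m)\otimes H^0(B^k)$.

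For (iii), use ampleness of $B$: by Serre, $\mathcal{F}\otimes B^{m+k+N'}$ is globally generated for $N'\gg0$. By (i) applied to the $(m+k)$-regular sheaf $\mathcal{F}$, every section of $\mathcal{F}\otimes B^{m+k+N'}$ is a combination of products $s\cdot t$ with $s\in H^0(\mathcal{F}\otimes B^{m+k})$ and $t\in H^0(B^{N'})$. Hence the sections of $\mathcal{F}\otimes B^{m+k}$ generate it at every point.

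The main obstacle I expect is the diagram chase in (ii), specifically getting the indices right. The Koszul complex on $X$ may have length $N>\dim X$, but the terms with $i+j-1>\dim X$ vanish automatically. Ampleness of $B$ enters only in (iii).
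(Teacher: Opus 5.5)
Your argument is correct and complete. The individual steps all check out:
\begin{itemize}
\item the exactness and local splitness of the Koszul complex of $V\otimes\mathcal{O}_X\to B$;
\item the chase deducing $H^i(\mathcal{F}\otimes B^{m+1-i})=0$ from the vanishing of $H^{\ell}(\mathcal{F}\otimes B^{m-\ell})$ for $\ell\ge i$;
\item the analogous chase giving surjectivity of $H^0(\mathcal{F}\otimes B^{m})\otimes V\to H^0(\mathcal{F}\otimes B^{m+1})$;
\item the factorization through $V^{\otimes k}\to H^0(B^{\otimes k})$;
\item the Serre-theorem argument for (iii).
\end{itemize}

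The paper gives no proof of this theorem. It only recalls it with a reference to Mumford \cite{MumfordCurves}, so the relevant comparison is with that source. The classical argument works on $\mathbb{P}^n$ and inducts on dimension. It restricts to a hyperplane $H$ avoiding the associated points of $\mathcal{F}$ and uses $0\to\mathcal{F}(-1)\to\mathcal{F}\to\mathcal{F}|_H\to 0$. To get the statement for a globally generated ample $B$ on an arbitrary $X$ from that, you would push $\mathcal{F}$ forward along $\varphi_{|B|}$, which is finite precisely because $B$ is ample, and transfer regularity via the projection formula.

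Your Koszul-complex route is the standard modern proof (e.g.\ in Lazarsfeld's \emph{Positivity in Algebraic Geometry I}). It works directly on $X$, with no general-position choices and no finiteness of $\varphi_{|B|}$. It also makes visible that (i) and (ii) hold for any globally generated $B$, with ampleness entering only through Serre's theorem in (iii), as you observed.

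One small polish for (iii): let $\mathcal{G}\subseteq\mathcal{F}\otimes B^{m+k}$ be the subsheaf generated by global sections. Surjectivity from (i) shows that every global section of the globally generated sheaf $\mathcal{F}\otimes B^{m+k+N'}$ lies in $\mathcal{G}\otimes B^{N'}$. Hence $\mathcal{G}\otimes B^{N'}=\mathcal{F}\otimes B^{m+k+N'}$, and twisting back by $B^{-N'}$ gives $\mathcal{G}=\mathcal{F}\otimes B^{m+k}$.
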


\subsection{Multigraded Castelnuovo--Mumford Regularity:}

Multigraded regularity was first introduced by D. Maclagan and G. G. Smith in \cite{MaclaganSmith}. We follow the setup introduced in \cite{HeringSchenckSmith} to define multigraded regularity.

Fix a collection of globally generated line bundles $B_1,\ldots,B_t$ on $X$. For $\vec{u}:=(u_1,\ldots,u_t)\in \mathbb{Z}^t$, we set
\[
B^{\vec{u}}:=B_1^{u_1}\otimes B_2^{u_2}\otimes\cdots\otimes B_t^{u_t},
\]
and let
\[
\mathfrak{B}=\left\{B^{\vec{u}}\mid \vec{u}\in\mathbb{N}^t\right\}.
\]
Thus, for every $\vec{v}\in\mathbb{N}^t$, we have $B^{\vec{v}}\in\mathfrak{B}$.

As usual, we denote the standard basis of $\mathbb{Z}^t$ by $\vec{e_1},\ldots,\vec{e_t}$, so that $B^{\vec{e_j}}=B_j$.

\begin{definition}
\label{Definition_3.4}

\textbf{(Multigraded Regularity of a Coherent Sheaf with respect to globally generated line bundles $\mathbf{B_1},\ldots,\mathbf{B_t}$):}
Let $\mathcal{F}$ be a coherent sheaf on $X$ and let $E$ be a vector bundle on $X$. We say that $\mathcal{F}$ is $E$-regular with respect to $B_1,\ldots,B_t$ if
\[
H^i(X,\mathcal{F}\otimes E\otimes B^{-\vec{u}})=0
\]
for all $i>0$ and for all $\vec{u}\in\mathbb{N}^t$ such that $|\vec{u}|:=u_1+\cdots+u_t=i$.

\begin{remark}
Throughout this article, whenever we say that a coherent sheaf $\mathcal{F}$ is $E$-regular, we always mean regularity with respect to the fixed line bundles $B_1,\ldots,B_t$. Hence, we will not mention them explicitly.
\end{remark}

\begin{remark}
In \cite{HeringSchenckSmith}, the authors define $L$-regularity, where $L$ is any line bundle. Here, we extend that notion to an arbitrary vector bundle $E$.
\end{remark}

\end{definition}

We conclude this subsection by recalling the following theorem on multigraded regularity from \cite[\textsection~2]{HeringSchenckSmith}. It is the multigraded analogue of Theorem \ref{Theorem_3.3}.

\begin{theorem}
\label{Theorem_3.7}

Let $\mathcal{F}$ be a coherent sheaf on a projective variety $X$ and let $E$ be a vector bundle on $X$ such that $\mathcal{F}$ is $E$-regular. Then the following hold.

\begin{itemize}
    \item[(i)] $\mathcal{F}$ is $E\otimes B^{\vec{u}}$-regular for every $\vec{u}\in\mathbb{N}^t$.

    \item[(ii)] The maps
    \[
    H^0(X,\mathcal{F}\otimes E\otimes B^{\vec{u}})
    \otimes
    H^0(X,B^{\vec{v}})
    \longrightarrow
    H^0(X,\mathcal{F}\otimes E\otimes B^{\vec{u}+\vec{v}})
    \]
    are surjective for all $\vec{u},\vec{v}\in\mathbb{N}^t$.

    \item[(iii)] $\mathcal{F}\otimes E\otimes B^{\vec{u}}$ is globally generated, provided that there exists a $\vec{w}\in\mathbb{N}^t$ such that $B^{\vec{w}}$ is ample.
\end{itemize}

\end{theorem}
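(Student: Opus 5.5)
The plan is to follow the Mumford-type argument of \cite[\textsection~2]{HeringSchenckSmith}, adapted so that the twisting bundle $E$ is an arbitrary vector bundle rather than a line bundle. Part (i) is proved first, then (ii) is deduced from (i) via a Koszul complex, and (iii) follows from (ii).

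\textbf{Part (i).} By induction on $|\vec{u}|$ it suffices to show that $\mathcal{F}$ is $E\otimes B_j$-regular for each $j$.

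Since $B_j$ is globally generated, choose a finite-dimensional subspace $V\subseteq H^0(X,B_j)$ generating $B_j$, with $\dim V=r+1$. This gives the exact Koszul complex
\[
0\to \textstyle\bigwedge^{r+1}V\otimes B_j^{-r}\to\cdots\to \bigwedge^{2}V\otimes B_j^{-1}\to V\otimes\mathcal{O}_X\to B_j\to 0.
\]
Tensoring with the vector bundle $E$ preserves exactness, and tensoring the bundle complex with $\mathcal{F}$ does as well. Then tensor with $\mathcal{F}\otimes E\otimes B^{-\vec{u}}$, where $|\vec{u}|=i$.

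Chasing cohomology through the resulting long exact sequence, the group $H^i(\mathcal{F}\otimes E\otimes B_j\otimes B^{-\vec{u}})$ is controlled by the groups
\[
H^{i+\ell}\bigl(\mathcal{F}\otimes E\otimes B^{-\vec{u}-\ell\vec{e_j}}\bigr),\qquad \ell\ge 0.
\]
Each of these vanishes by the hypothesis, since $|\vec{u}+\ell\vec{e_j}|=i+\ell$. The standard splitting into short exact sequences makes this precise.

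\textbf{Part (ii).} It suffices to treat $\vec{v}=\vec{e_j}$ and iterate, using (i) to keep the regularity hypothesis as $\vec{u}$ grows.

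Take the first stage of the Koszul complex, $0\to M\to V\otimes\mathcal{O}\to B_j\to 0$, and tensor it with $\mathcal{F}\otimes E\otimes B^{\vec{u}}$. Surjectivity on $H^0$ then reduces to the vanishing of $H^1(M\otimes\mathcal{F}\otimes E\otimes B^{\vec{u}})$. This in turn is obtained by resolving $M$ through the rest of the Koszul complex: we need
\[
H^{\ell}\bigl(\mathcal{F}\otimes E\otimes B^{\vec{u}-\ell\vec{e_j}}\bigr)=0\qquad\text{for } \ell\ge 1.
\]
These vanishings hold by regularity, after using (i) to reduce to $\vec{u}=\vec{0}$.

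\textbf{Part (iii).} If $B^{\vec{w}}$ is ample, then Serre vanishing applied to $\mathcal{F}\otimes E\otimes B^{\vec{u}+N\vec{w}}$ gives global generation for $N\gg0$. Surjectivity (ii) with $\vec{v}=N\vec{w}$ then descends global generation back to $\mathcal{F}\otimes E\otimes B^{\vec{u}}$: the images of $H^0(\mathcal{F}\otimes E\otimes B^{\vec u})\otimes H^0(B^{N\vec{w}})$ generate the stalks, and $B^{N\vec w}$ is globally generated.

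\textbf{Expected obstacle.} The main difficulty I anticipate is the bookkeeping in the diagram chase for (i) and (ii). Each Koszul term shifts both the cohomological degree and $|\vec{u}|$ by the same amount, and the argument works only because these two shifts stay matched, exactly as the definition requires. I would check this matching carefully at every step of the chase.
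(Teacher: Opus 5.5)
Your proposal is correct and is essentially the standard Mumford-type argument of Hering--Schenck--Smith, \S2: the exact Koszul complex of a generating subspace of $H^0(B_j)$, a degree-matched chase for (i) and (ii), and then Serre's theorem combined with (ii) for (iii); the paper does not reprove this result but only cites it. Your write-up also justifies the paper's tacit extension from a line bundle $L$ to an arbitrary vector bundle $E$, since the Koszul complex is locally split and the induction only replaces $E$ by the vector bundle $E\otimes B_j$; the one cosmetic point is that (iii) uses Serre's global generation theorem rather than Serre vanishing, and global generation of $B^{N\vec{w}}$ is not actually needed there.
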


\subsection{\textbf{Multigraded Regularity as a Generalization of Regularity:}}

As discussed above, multigraded regularity may be viewed as a natural generalization of regularity. Since both notions occur extensively in the literature, it is useful to understand how they are related, both technically and geometrically.

Let $\mathcal{F}$ be a coherent sheaf on a projective variety $X$. Suppose that $\mathcal{F}$ is $r$-regular with respect to a globally generated line bundle $B$. Then
\[
H^i(\mathcal{F}\otimes B^{\otimes r}\otimes B^{\otimes(-i)})=0
\]
for all $i>0$. Thus, by Definition \ref{Definition_3.4}, $\mathcal{F}$ is $B^r$-regular with respect to the line bundle $B$, where $L=B^r$.

Now consider the multigraded setting. Suppose that $\mathcal{F}$ is $L=B^{\vec{r}}$-regular with respect to the fixed line bundles $B_1,\ldots,B_t$, where $\vec{r}=(r_1,\ldots,r_t)\in\mathbb{Z}^t$. Then
\[
H^i(\mathcal{F}\otimes B^{\vec{r}}\otimes B^{-\vec{u}})=0
\]
for all $i>0$, where $\vec{u}=(u_1,\ldots,u_t)\in\mathbb{N}^t$ satisfies $|\vec{u}|=i$. Equivalently,
\[
H^i\!\left(\mathcal{F}\otimes B_1^{\,r_1-u_1}\otimes\cdots\otimes B_t^{\,r_t-u_t}\right)=0
\]
for all $i>0$ with $|\vec{u}|=u_1+\cdots+u_t=i$. Using our notation, this vanishing can be written more compactly as
\[
H^i(\mathcal{F}\otimes B^{\vec{r}-\vec{u}})=0
\]
for all $i>0$ and all $\vec{u}$ satisfying $|\vec{u}|=i$.

When $t=1$, this recovers the notion of regularity given in Definition \ref{Definition_3.2}. We will see some examples of multigraded regularity in the next section.

\subsection{$\mathbf{(N_p)}$ \textbf{-Property of a Line Bundle:}}

Property $(N_p)$ was introduced to study the equations defining a projective variety and the syzygies among those equations.

Let $L$ be a globally generated line bundle on a projective variety $X$. Then $L$ determines a morphism
\[
\varphi_{L}:X\longrightarrow\mathbb{P}(H^0(X,L)).
\]
Let
\[
S=\operatorname{Sym}^{\bullet}H^0(X,L)
\]
be the symmetric algebra on $H^0(X,L)$. Thus $S$ is the homogeneous coordinate ring of the projective space $\mathbb{P}(H^0(X,L))=\mathbb{P}^r$, where $h^0(X,L)=r+1$.

Consider the section ring
\[
R=\bigoplus_{m\ge0}H^0(X,L^{\otimes m})
\]
associated to $L$. Then $R$ is naturally an $S$-graded module. Let
$E_{\bullet}=E_{\bullet}(L)$ denote the minimal graded free resolution of the $S$-graded module $R$ in (\ref{Equation_3.1}):

\begin{equation}
\label{Equation_3.1}
\begin{tikzcd}
\cdots \ar[r] & E_{i+1} \ar[r] & E_i \ar[r] & \cdots \ar[r] & E_1 \ar[r] & E_0 \ar[r] & R \ar[r] & 0.
\end{tikzcd}
\end{equation}

Each $E_i$ is a direct sum of twists of $S$:
\[
E_i=\bigoplus_{j\ge1}S(-i-j)^{\beta_{i,i+j}}.
\]
We refer to the summand $S(-i-j)^{\beta_{i,i+j}}$ as the \textbf{weight $j$ syzygy} of $L$ at the \textbf{$i$th position}.

\vspace{\baselineskip}

We say that the line bundle $L$ satisfies \textbf{Property} $\mathbf{(N_p)}$ for $p\in\mathbb{N}$ if
\[
E_0=S
\quad\text{and}\quad
E_i=\bigoplus S(-i-1)
\quad\text{for all }1\le i\le p.
\]

Thus, $L$ satisfies Property $(N_p)$ precisely when the natural map $S\to R$ is surjective and only the \textbf{weight one syzygies} occur in the modules $E_1,\ldots,E_p$. Equivalently, the homogeneous ideal of $\varphi_L(X)$ is generated by quadrics, and the first $p$ steps of its minimal graded free resolution are linear.

When $L$ is ample and globally generated, Property $(N_0)$ implies that $L$ is very ample. Furthermore, if $X$ is a smooth projective variety, then $L$ satisfies Property $(N_0)$ if and only if the embedding of $X$ by $L$ is projectively normal. This characterization need not hold for singular varieties.

\subsection{Vanishing of Weight One Syzygies or $\mathbf{(M_q)}$ property of a Line Bundle:} The $(M_q)$ property was first introduced by M. Green and R. Lazarsfeld in \cite{GreenLazarsfeldI}, in attainment to give a complete picture of a minimal graded resolution of a line bundle $L$. Complementary to the $(N_p)$ property, which addresses what happens to the beginning of the resolution, $(M_q)$ property deals with the end of a minimal resolution. \par 

Using the notation established in the previous subsection, we say that a globally generated line bundle $L$ on a projective variety $X$ satisfies \textbf{Property} $\mathbf{(M_q)}$ if the \textbf{weight one syzygy} vanishes at the \textbf{$i$th position} $E_i$ for every $i\ge r-q$ in the minimal graded free resolution (\ref{Equation_3.1}). Equivalently,
\[
E_i=\bigoplus_{j\ge2}S(-i-j)^{\beta_{i,i+j}}
\]
for all $i\ge r-q$.

If $L$ is very ample and embeds $X$ as a projectively normal variety in $\mathbb{P}^r$, then Property $(M_q)$ provides direct information about the syzygies of $X$, equivalently, about the minimal graded free resolution of the homogeneous coordinate ring $S_{X/\mathbb{P}^r}$.

\subsection{Relation between Koszul Cohomology and Property $(M_q)$:}

Let $L$ be a globally generated line bundle on a projective variety $X$. The \textit{kernel bundle}, denoted by $M_L$, is defined as the kernel of the evaluation map associated to $L$:
\begin{center}
\[
M_L:=\ker\!\left(H^0(X,L)\otimes\mathcal{O}_X\longrightarrow L\right).
\]
\end{center}

Since $L$ is a line bundle, $\operatorname{rank}(M_L)=h^0(X,L)-1=r$. Let $E$ be a vector bundle on $X$. From the short exact sequence
\begin{equation}
\label{Equation_3.2}
\begin{tikzcd}
0 \arrow[r] & M_L \arrow[r] & {H^0(X,L)\otimes \mathcal{O}_X} \arrow[r] & L \arrow[r] & 0
\end{tikzcd}
\end{equation}
we obtain the induced short exact sequence on exterior powers:
\[
\begin{tikzcd}
0 \arrow[r] & \wedge^{p+1} M_L \arrow[r] &
\wedge^{p+1}(H^0(X,L)\otimes\mathcal{O}_X)
\arrow[r] &
\wedge^{p}M_L\otimes L
\arrow[r] &
0.
\end{tikzcd}
\]

Tensoring the above sequence with the vector bundle $L^{q-1}\otimes E$ and passing to the associated long exact sequence in cohomology, we define the \textbf{twisted Koszul cohomology groups} by
\[
K_{p,q}(X,E;L):=
\ker\!\left(
H^1(X,\wedge^{p+1}M_L\otimes L^{q-1}\otimes E)
\xrightarrow{\varphi_{p,q,E,L}}
\wedge^{p+1}H^0(X,L)\otimes H^1(X,L^{q-1}\otimes E)
\right),
\]
where $p,q$ are nonnegative integers.

The term \emph{twisted} refers to the twist by the vector bundle $E$. When $E\simeq\mathcal{O}_X$, that is, in the untwisted case, we simply write
\[
K_{p,q}(X,L):=K_{p,q}(X,\mathcal{O}_X;L),
\]
and refer to these groups as the \textbf{Koszul cohomology groups}.

The Betti table associated to the minimal graded free resolution of an ample and globally generated line bundle $L$ on $X$ exhibits a direct relationship between the Koszul cohomology groups and the graded Betti numbers:
\[
\dim K_{p,q}(X,L)=\beta_{p,p+q}.
\]
Consequently, a line bundle $L$ satisfies Property $(M_q)$ if and only if
\[
\beta_{p,p+1}=0
\quad\text{for all }p\ge r-q,
\]
or equivalently,
\[
K_{p,1}(X,L)=0
\quad\text{for all }p\ge r-q.
\]

\section{\textbf{ Multigraded Regularity on Products of Projective Spaces}} 
\label{section_4_multireg_product_projectivespace}

Now we present some examples of multigraded regularity. The first natural example is the product
$\mathbb{P}^{m}\times\mathbb{P}^{n}$. Since
\[
\mathrm{Pic}(\mathbb{P}^{m})\cong\mathbb{Z},
\]
the Picard group is generated by the single base point free line bundle
$\mathcal{O}_{\mathbb{P}^{m}}(1)$. Thus, to obtain a variety whose Picard group has more than one base point free generator, it is natural to consider the product
$\mathbb{P}^{m}\times\mathbb{P}^{n}$: where $\mathrm{Pic}(\mathbb{P}^{m}\times\mathbb{P}^{n})=\mathbb{Z}\oplus \mathbb{Z}$ and the two base point free generators are pullbacks of $\mathcal{O}_{\mathbb{P}^{m}}(1)$ and $\mathcal{O}_{\mathbb{P}^{n}}(1)$ under projections. 

The multigraded regularity region for line bundles on
$\mathbb{P}^{m}\times\mathbb{P}^{n}$ can be computed explicitly, as shown below. Although multigraded Castelnuovo--Mumford regularity has been extensively studied in the literature (see, for example, \cite{MaclaganSmith}), this explicit computation of the minimal multigraded regularity region for the line bundle $B^{(a,b)}$ does not appear to have been recorded in this form.

\begin{example}
\label{PmxPn_example}
Let $X = \mathbb{P}^{m} \times \mathbb{P}^{n}$ with projections 
$p_1 : X \to \mathbb{P}^{m}$ and $p_2 : X \to \mathbb{P}^{n}$, and define
\[
B_1 = p_1^*(\mathcal{O}_{\mathbb{P}^{m}}(1)), \quad 
B_2 = p_2^*(\mathcal{O}_{\mathbb{P}^{n}}(1)), \quad B=(B_1,B_2).
\]

For $(a,b)\in \mathbb{Z}^2$, let $L = B^{(a,b)} = B_1^a \otimes B_2^b$.  
As in the previous example, we compute the minimal multigraded regularities 
of $L$ with respect to $B$.

Since $\dim(X) = m+n$, if $L$ is $B^{(r,s)}$–regular, then for every 
$(u,v)\in\mathbb{N}^2$ with $u+v = i$ and $1 \le i \le m+n$, we have
\[
H^i(L \otimes B^{(r,s)} \otimes B^{(-u,-v)}) = 0.
\]

As before, this is equivalent to the vanishing
\[
H^i(\mathcal{O}_{\mathbb{P}^{m}\times \mathbb{P}^{n}}(c,d)) = 0, 
\quad \text{where } c = a+r-u, \ d = b+s-v.
\]

By the K\"unneth formula,
\[
H^i(\mathcal{O}_{\mathbb{P}^{m}\times \mathbb{P}^{n}}(c,d))
= \bigoplus_{p+q=i} H^p(\mathcal{O}_{\mathbb{P}^{m}}(c)) \otimes 
H^q(\mathcal{O}_{\mathbb{P}^{n}}(d)).
\]

Crucially, only the cohomology groups corresponding to $i = m+n,\, n,\, m$ can be non-vanishing; all other groups vanish by the virtue of the K\"unneth formula, as it accounts for all partitions of $i$ into two non-negative integers.  
This gives rise to three conditions that must be simultaneously satisfied to ensure vanishing for all $i$.

\paragraph{\textbf{Vanishing of $H^{m+n}$}}
Since $H^i(\mathcal{O}_{\mathbb{P}^{r}}(t)) = 0$ for $1 \le i \le r-1$, 
the K\"unneth formula yields
\[
H^{m+n}(\mathcal{O}_{\mathbb{P}^{m} \times \mathbb{P}^{n}}(c,d))
= H^m(\mathcal{O}_{\mathbb{P}^{m}}(c)) \otimes H^n(\mathcal{O}_{\mathbb{P}^{n}}(d)).
\]
By Serre duality, $H^r(\mathcal{O}_{\mathbb{P}^{r}}(t)) = 0$ if and only if $t > -r-1$.  
It follows that
\[
H^{m+n}(\mathcal{O}_{\mathbb{P}^{m} \times \mathbb{P}^{n}}(c,d)) = 0 
\quad \text{if and only if} \quad 
c > -m-1 \ \text{or} \ d > -n-1.
\]
Applying $c = a+r-u$, $d = b+s-v$ with $u+v = m+n$, we obtain
\[
r > -a + n - 1 \quad \text{or} \quad s > -b + m - 1.
\]

\paragraph{\textbf{Vanishing of $H^{n}$}}
By the K\"unneth formula,
\[
H^{n}(\mathcal{O}_{\mathbb{P}^{m} \times \mathbb{P}^{n}}(c,d))
= H^0(\mathcal{O}_{\mathbb{P}^{m}}(c)) \otimes H^n(\mathcal{O}_{\mathbb{P}^{n}}(d)),
\]
which vanishes if and only if
\[
c < 0 \ \text{or} \ d > -n-1.
\]
Substituting $c = a+r-u$, $d = b+s-v$ with $u+v = n$, we obtain
\[
r < -a \quad \text{or} \quad s > -b-1.
\]

\paragraph{\textbf{Vanishing of $H^{m}$}}
Similarly,
\[
H^{m}(\mathcal{O}_{\mathbb{P}^{m} \times \mathbb{P}^{n}}(c,d))
= H^m(\mathcal{O}_{\mathbb{P}^{m}}(c)) \otimes H^0(\mathcal{O}_{\mathbb{P}^{n}}(d)),
\]
which vanishes if and only if
\[
c > -m-1 \ \text{or} \ d < 0.
\]
Applying $c = a+r-u$, $d = b+s-v$ with $u+v = m$, we obtain
\[
r > -a-1 \quad \text{or} \quad s < -b.
\]

\paragraph{\textbf{Minimal regularities}}
Combining the three conditions above, the line bundle $L = B^{(a,b)}$ is $B^{(r,s)}$–regular if and only if
\[
\text{(i) } r > -a + n - 1 \ \text{and} \ s > -b - 1 
\quad \text{or} \quad
\text{(ii) } r > -a - 1 \ \text{and} \ s > -b + m - 1.
\]

Passing to the minimal integer values satisfying these inequalities, we obtain
\[
\bm{(r,s) \ge (-a+n,-b)} \quad \text{or} \quad \bm{(r,s) \ge (-a,-b+m)}.
\]

In particular, the minimal regularities of $L$ are
\[
\bm{(-a+n,-b)} \quad \text{and} \quad \bm{(-a,-b+m)}.
\]
\end{example}

   Below we demonstrate the multigraded regularity of $L = \mathcal{O}_{\mathbb{P}^{1} \times \mathbb{P}^{1}}(2,3)$ with respect to $B$ as defined before, via the following lattice points of the shaded convex set.

\begin{example}[Lattice picture for minimal regularity]
Consider the line bundle
\[
L = \mathcal{O}_{\mathbb{P}^{1} \times \mathbb{P}^{1}}(2,3)
\]
with respect to $B=(B_1,B_2)$. The minimal $B$-regularity vectors are
\[
(-2,-2) \quad \text{and} \quad (-1,-3).
\]

We visualize the multigraded regularity using lattice points in $\mathbb{Z}^2$. 
The shaded convex region represents all $(r,s)$ such that $L$ is $B^{(r,s)}$-regular. 
The green point marks the minimal vector $(-2,-3)$, while the red point marks 
the second minimal vector $(-1,-2)$.

\begin{small}
\begin{figure}[ht!]
\begin{center}
\begin{tikzpicture}[scale=0.7]

\fill[gray!30!] (-1,-3) rectangle (5,5); 
\fill[gray!30!] (-2,-2) rectangle (5,5); 

\draw[thick,->] (-3,0) -- (6,0) node[right] {$r$};
\draw[thick,->] (0,-4) -- (0,6) node[above] {$s$};

\draw[thick, red] (-1,-3) -- (-1,5); 
\draw[thick, red] (-2,-2) -- (-2,5); 
\draw[thick, red] (-1,-3) -- (6,-3); 
\draw[thick, red] (-2,-2) -- (6,-2); 

\foreach \x in {-2,-1,0,1,2,3,4,5} {
    \foreach \y in {-3,-2,-1,0,1,2,3,4,5} {
        \filldraw[color=black] (\x,\y) circle (2pt);
    }
}

\foreach \y in {-2,-1,0,1,2,3,4,5} {
    \filldraw[color=orange] (-1,\y) circle (2pt); 
    \filldraw[color=orange] (-2,\y) circle (2pt); 
}
\foreach \x in {-2,-1,0,1,2,3,4,5} {
    \filldraw[color=orange] (\x,-3) circle (2pt); 
    \filldraw[color=orange] (\x,-2) circle (2pt); 
}

\filldraw[green] (-1,-3) circle (4pt)
    node[below, scale=0.9] {$(-1,-3)$};

\filldraw[blue] (-2,-2) circle (4pt)
    node[above left, scale=0.9] {$(-2,-2)$};

\filldraw[red] (-2,-3) circle (4pt);

\draw[color=red,->] (-1,-3.1) -- (-1,6);
\draw[color=red,->] (-1.1,-3) -- (6,-3);
\draw[color=red,->] (-2,-2.1) -- (-2,6);
\draw[color=red,->] (-2.1,-2) -- (6,-2);

\node[scale=0.9] at (6.2,-3.2) {$s=-3$};
\node[scale=0.9] at (-1,6.25) {$r=-1$};
\node[scale=0.9] at (6.2,-2.2) {$s=-2$};
\node[scale=0.9] at (-2.5,6.25) {$r=-2$};

\end{tikzpicture}
\end{center}
\caption{Multigraded regularity lattice for $\mathcal{O}_{\mathbb{P}^{1} \times \mathbb{P}^{1}}(2,3)$. The green and blue points indicate minimal regularities $(-1,-3)$ and $(-2,-2)$. Bold red lines show axes corresponding to these minimal regularities, orange points represent admissible points along these axes, and gray shading indicates all regularity vectors, the red bold point is an example of a non-admissible point.}
\label{regularityP1xP1_refined}
\end{figure}
\end{small}
\end{example}

The preceding example already exhibits the essential features of 
multigraded regularity on a product of two projective spaces. 
However, the structure becomes significantly richer when one 
passes to products with more factors. In particular, while the two-factor case leads to a small number 
of vanishing conditions arising from the K\"unneth decomposition, 
the situation for three factors demonstrates the explicit pattern for minimal multigraded regularities. 

We illustrate this phenomenon in the case of a product of three 
projective spaces.

\begin{example}
\label{PmxPnxPl-example}
Consider 
\[
X=\mathbb{P}^{m} \times \mathbb{P}^{n} \times \mathbb{P}^{\ell},
\]
and let $B_1,B_2,B_3$ denote the pullbacks of 
$\mathcal{O}_{\mathbb{P}^{m}}(1)$, 
$\mathcal{O}_{\mathbb{P}^{n}}(1)$, and 
$\mathcal{O}_{\mathbb{P}^{\ell}}(1)$, respectively. 
For $(a,b,c)\in \mathbb{Z}^{3}$, set $L=B^{(a,b,c)}$.

We outline the computation of the minimal 
$(r,s,t)\in \mathbb{Z}^{3}$ such that $L$ is 
$B^{(r,s,t)}$-regular. Equivalently, one requires the vanishing
\[
H^i\big(L \otimes B^{(r,s,t)} \otimes B^{(-u,-v,-w)}\big)=0
\]
for all $(u,v,w)\in \mathbb{N}^3$ with $u+v+w=i$ and for all $1\le i \le m+n+l$.
Combining these conditions, one obtains a finite collection 
of $6$ minimal regularity vectors. More precisely, the minimal 
$(r,s,t)$ for which $L$ is $B^{(r,s,t)}$-regular are given by

\[
\begin{aligned}
&(-a+n+\ell,-b+\ell,-c), \quad 
(-a+n+\ell,-b,-c+n), \\
&(-a+\ell,-b+m+\ell,-c), \quad 
(-a,-b+m+\ell,-c+m), \\
&(-a,-b+m,-c+m+n), \quad 
(-a+n,-b,-c+m+n).
\end{aligned}
 \]
\end{example}

It is clear that as the number of factors increases, 
the combinatorics of partitions of the cohomological index becomes 
progressively more intricate, leading to a larger collection of vanishing 
conditions and minimal regularity vectors. These examples indicate the existence of a uniform pattern describing 
the minimal multigraded regularity for arbitrary products of projective 
spaces. We now formulate this in general. 

\begin{theorem}
\label{theorem:minimal-regularity}
Let 
\[
X=\mathbb{P}^{m_1}\times \mathbb{P}^{m_2}\times \cdots \times \mathbb{P}^{m_n},
\]
and let $B_1,\dots,B_n$ denote the pullbacks of 
$\mathcal{O}_{\mathbb{P}^{m_i}}(1)$ under the natural projections. 
For $\vec{a}=(a_1,\dots,a_n)\in \mathbb{Z}^n$, set 
\[
L=B^{\vec{a}}=B_1^{a_1}\otimes \cdots \otimes B_n^{a_n}.
\]

Then the minimal $\vec{r}=(r_1,\dots,r_n)\in \mathbb{Z}^n$ for which 
$L$ is $B^{\vec{r}}$-regular (with respect to $B=(B_1,\dots,B_n)$) 
are indexed by permutations of $\{1,\dots,n\}$. 

More precisely, for each permutation $\sigma\in S_n$, the corresponding 
minimal regularity vector $\vec{r}^{(\sigma)}$ is given by
\begin{equation}
\label{min_reg_product_proj_space}
    r^{(\sigma)}_{\sigma(i)}
=
-a_{\sigma(i)} + \sum_{j>i} m_{\sigma(j)},
\qquad \text{for } i=1,\dots,n.
\end{equation}
In particular, there are exactly $n!$ minimal regularity vectors.
\end{theorem}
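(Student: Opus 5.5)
The plan is to reduce everything, via the Künneth formula, to a purely combinatorial condition on the integer vector $\vec c:=\vec a+\vec r$, and then to analyse that condition permutation by permutation, as in Examples \ref{PmxPn_example} and \ref{PmxPnxPl-example}.

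\textbf{Step 1: Künneth reduction.} For $\vec u\in\mathbb N^n$ with $|\vec u|=i$ we have
\[
H^i\bigl(L\otimes B^{\vec r-\vec u}\bigr)=\bigoplus_{p_1+\cdots+p_n=i}\ \bigotimes_{k=1}^n H^{p_k}\bigl(\mathcal O_{\mathbb P^{m_k}}(c_k-u_k)\bigr).
\]
On $\mathbb P^{m_k}$ only $H^0$ and $H^{m_k}$ can be nonzero. So a summand is nonzero only for a nonempty subset $S\subseteq\{1,\dots,n\}$ with $i=m_S:=\sum_{k\in S}m_k$, together with:
\begin{itemize}
\item[(a)] $c_k-u_k\le -m_k-1$ for every $k\in S$;
\item[(b)] $c_k-u_k\ge 0$ for every $k\notin S$.
\end{itemize}
Hence $L$ is $B^{\vec r}$-regular if and only if, for every nonempty $S$, no $\vec u\ge 0$ with $|\vec u|=m_S$ satisfies (a) and (b) simultaneously. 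Minimizing the amount of $|\vec u|$ spent, such a $\vec u$ exists exactly when $c_k\ge 0$ for all $k\notin S$ and
\[
\sum_{k\in S}\max(0,\,c_k+m_k+1)\ \le\ m_S .
\]
This gives an explicit finite family of linear ``or''-conditions on $\vec c$, one for each $S$. It is the $n$-factor analogue of the three conditions for $H^{m+n}$, $H^n$, $H^m$ found in Example \ref{PmxPn_example}.

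\textbf{Step 2: the vectors $\vec r^{(\sigma)}$ are regular.} Fix $\sigma\in S_n$ and put $c_{\sigma(i)}=\sum_{j>i}m_{\sigma(j)}\ge 0$. For each $S$, I would check directly that the inequality in Step 1 fails. Since every $c_k\ge 0$, each term satisfies $c_k+m_k+1>m_k$, so the sum over $S$ exceeds $m_S$. Hence no bad $\vec u$ exists, and regularity of each $\vec r^{(\sigma)}$ follows.

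\textbf{Step 3: minimality and completeness (the main obstacle).} One must show two things:
\begin{itemize}
\item[(i)] every regular $\vec r$ dominates some $\vec r^{(\sigma)}$;
\item[(ii)] no $\vec r^{(\sigma)}-\vec e_k$ is regular.
\end{itemize}
The intended strategy is an induction on $n$, choosing $\sigma(n)$ to be a coordinate where $c_k$ is smallest. One then applies the condition for $S=\{\sigma(n)\}$ and, more generally, for the nested sets $S_i=\{\sigma(i),\dots,\sigma(n)\}$. This should force $c_{\sigma(i)}\ge m_{S_{i+1}}$, mirroring how the $H^n$ and $H^m$ conditions produced the two corners $(-a+n,-b)$ and $(-a,-b+m)$ in Example \ref{PmxPn_example}.

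This is exactly where I expect trouble. The inequality of Step 1 appears to remain satisfiable-free even for smaller $\vec c$, for instance $\vec c=0$. So before writing the induction I would first recheck, by hand, the case $n=2$ against Example \ref{PmxPn_example}. The purpose is to pin down precisely which sign and convention in the definition of $B^{\vec r}$-regularity produces the corners $\sum_{j>i}m_{\sigma(j)}$. Only then would I run the nested-set induction with that convention. Finally, I would count the distinct resulting vectors to get $n!$, assuming the $m_k\ge 1$ make the vectors for different $\sigma$ pairwise distinct.
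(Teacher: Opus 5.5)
Your Steps 1 and 2 are correct. Step 3, however, cannot be completed by any induction: under the paper's definition of $E$-regularity the minimality claim is false for $n\ge 2$, and your own Step 1 criterion proves this. The argument of Step 2 uses nothing but $c_k\ge 0$, so every $\vec r$ with $\vec a+\vec r\ge\vec 0$ is regular. Conversely, suppose $N:=\{k : c_k<0\}\neq\emptyset$. Take $S=N$ and $\vec u=\sum_{k\in N}m_k\vec e_k$. Then $c_k-u_k\le -m_k-1$ for $k\in N$, and $c_k-u_k=c_k\ge 0$ for $k\notin N$. So the K\"unneth summand $\bigotimes_{k\in N}H^{m_k}\otimes\bigotimes_{k\notin N}H^0$ of $H^{m_N}(L\otimes B^{\vec r-\vec u})$ is nonzero. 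Hence
\[
L=B^{\vec a}\ \text{is } B^{\vec r}\text{-regular}\iff \vec r\ge -\vec a ,
\]
so there is exactly one minimal vector, namely $-\vec a$. For $n\ge 2$ the vector $\vec r^{(\sigma)}-\vec e_{\sigma(1)}$ is still regular: there $c_{\sigma(1)}=\sum_{j>1}m_{\sigma(j)}-1\ge 0$ and all other $c_k\ge 0$. So your claim (ii) is false. Concretely, $\mathcal O_{\mathbb P^1\times\mathbb P^1}$ is $B^{(0,0)}$-regular, since $H^1(\mathcal O(-1,0))$, $H^1(\mathcal O(0,-1))$, $H^2(\mathcal O(-2,0))$, $H^2(\mathcal O(-1,-1))$ and $H^2(\mathcal O(0,-2))$ all vanish by K\"unneth. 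The statement instead predicts the minimal vectors $(1,0)$ and $(0,1)$.

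Your planned recheck against Example~\ref{PmxPn_example} will therefore not uncover a convention that produces the corners $\sum_{j>i}m_{\sigma(j)}$. That example and the paper's proof of this theorem contain the same error. For each $J$, they obtain conditions such as $r_j\ge\sum_{k\notin J}m_k-a_j$ or $r_i\le -a_i-1$ by evaluating each alternative of the Serre-duality disjunction at its own worst $\vec u$ (all of $|\vec u|$ on coordinate $j$, resp.\ $u_i=0$). This replaces $\forall\vec u\,\bigl(P_1(\vec u)\vee P_2(\vec u)\bigr)$ by $\bigl(\forall\vec u\,P_1(\vec u)\bigr)\vee\bigl(\forall\vec u\,P_2(\vec u)\bigr)$, which is sufficient but not necessary. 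The permutation bookkeeping then merely selects one strengthened alternative per level. So the vectors $\vec r^{(\sigma)}$ are regular (your Step 2) but not minimal, and the count $n!$ is wrong: there is a single minimal vector. For $n=1$ the formula does give the true answer $-a$. The paper is also internally inconsistent here. The $(M_q)$ sharpness example on $\mathbb P^1\times\mathbb P^1$ in Section~\ref{sharpness_of_property_Mq} uses the correct fact that $B^{(a,b)}$ is $B^{(-a,-b)}$-regular. Likewise, the point $(-2,-3)$ marked non-admissible for $\mathcal O(2,3)$ in Figure~\ref{regularityP1xP1_refined} is in fact regular. The right conclusion of your approach is the corrected statement: the regularity region of $B^{\vec a}$ is $-\vec a+\mathbb N^n$.
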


\begin{proof}
By Definition~\ref{Definition_3.4}, the line bundle $L$ is $B^{\vec{r}}$-regular if and only if 
\[
H^k\left(L \otimes B^{\vec{r}} \otimes B^{-\vec{u}}\right) = 0
\]
for every $\vec{u} = (u_1, \ldots, u_n) \in \mathbb{N}^n$ with $|\vec{u}| = k$, where $1 \le k \le \dim X = \sum_{i=1}^{n} m_i$. Setting $c_i = a_i + r_i - u_i$, this is equivalent to requiring that 
\[
H^k\left(X, \mathcal{O}_X(c_1, \ldots, c_n)\right) = 0
\]
for all admissible $k$ and $\vec{u}$. Applying the K\"unneth formula yields
\[
H^k(X, \mathcal{O}_X(c_1, \ldots, c_n)) = \bigoplus_{\varepsilon_1 + \cdots + \varepsilon_n = k} \bigotimes_{i=1}^{n} H^{\varepsilon_i}(\mathbb{P}^{m_i}, \mathcal{O}_{\mathbb{P}^{m_i}}(c_i)).
\]

Since
\[
H^{q}(\mathbb P^{m_i},\mathcal O_{\mathbb P^{m_i}}(c_i))=0
\]
for \(0<q<m_i\), the only possible nonzero contributions in the
K\"unneth decomposition occur when each
\(\varepsilon_i\) is either \(0\) or \(m_i\).
Therefore, the possible nonvanishing degrees are precisely of the form
\[
k=\sum_{j\in J}m_j,
\]
where \(J\subseteq\{1,\ldots,n\}\) is nonempty.

We analyze the corresponding vanishing conditions according to the number of factors contributing top cohomology.

\paragraph{\textbf{The case of $n$ factors ($k = \sum_{i=1}^n m_i:$  { $|J|=n$)}}}
This corresponds to the top cohomological degree where $\varepsilon_i = m_i$ for all $i$. By Serre duality, the tensor product vanishes if and only if $c_i > -m_i - 1$ for at least one index $i$. Considering the extremal case \(u_i=m_i\), this provides $\binom{n}{n} = 1$ compound condition consisting of $n$ alternative inequalities:
\[
r_1 \ge \sum_{i=2}^n m_i - a_1 \quad \text{or} \quad \cdots \quad \text{or} \quad r_n \ge \sum_{j=1}^{n-1} m_j - a_n.
\]

In a compact form the above condition can be expressed as: 

\[
\bigcup_{i=1}^n \left( r_i \ge \sum_{j \neq i} m_j - a_i \right).
\]

\paragraph{\textbf{The case of $n-1$ factors ($k = \sum_{j \neq i} m_j:$ {$|J|=n-1$)}}}
Here, exactly one factor has $\varepsilon_i = 0$ and the remaining $n-1$ factors have $\varepsilon_j = m_j$. For each chosen index $i$, the summand vanishes if $c_i < 0$ or if $c_j > -m_j - 1$ for some $j \neq i$. Since the vanishing conditions must hold for every admissible
\(\vec{u} \), it is enough to test the extremal choice that makes the
inequalities weakest. For each $i \in \{1, \dots, n\}$, setting $J = \{1, \dots, n\} \setminus \{i\}$ yields $\binom{n}{n-1} = n$ conditions of the form:
\[
r_i \le -a_i - 1 \quad \text{or} \quad \left( \text{some } r_j \ge \sum_{k \neq i, j} m_k - a_j \text{ for } j \neq i \right).
\]
In a compact form it is therefore the following region:
\[
\left( r_i \le -a_i - 1 \right) \quad \cup \quad \bigcup_{j \neq i} \left( r_j \ge \sum_{k \neq i, j} m_k - a_j \right).
\]

\paragraph{\textbf{The general intermediate case of $p$ factors ($k = \sum_{j \in J} m_j$ with $|J| = p$):}}
For a fixed subset \(J\subseteq\{1,\ldots,n\}\) with \(|J|=p\),
the corresponding K\"unneth summand is
\[
\bigotimes_{j\in J}
H^{m_j}(\mathbb P^{m_j},\mathcal O(c_j))
\otimes
\bigotimes_{i\notin J}
H^0(\mathbb P^{m_i},\mathcal O(c_i)).
\]
Hence this summand vanishes if and only if either $c_j>-m_j-1$ for some \(j\in J\), or $c_i<0$ for some \(i\notin J\).
Evaluating at the extremal choice $u_j = m_j$ for $j \in J$ and $u_i = 0$ for $i \notin J$ yields a total of $\binom{n}{p}$ combinatorial conditions.
Precisely, for each subset $J$ of size $p$, maximizing the shifts over $u$ yields $\binom{n}{p}$ conditions:
\[
\bigcup_{j \in J} \left( r_j \ge \sum_{k \notin J} m_k - a_j \right) \quad \cup \quad \bigcup_{i \notin J} \left( r_i \le -a_i - 1 \right).
\]

\paragraph{\textbf{The case of $1$ factor ($k = m_i$:  $|J|=1)$}}
In the final step, only a single factor satisfies $\varepsilon_i = m_i$ while all other $\varepsilon_j = 0$. This yields $\binom{n}{1} = n$ final constraints. Setting $J = \{i\}$ for each $i \in \{1, \dots, n\}$ yields the final $\binom{n}{1} = n$ baseline conditions one for each $i$:
\[
\left( r_i \ge -a_i \right) \quad \cup \quad \bigcup_{j \neq i} \left( r_j \le -a_j - 1 \right).
\]

Combining these steps, the conditions that force the vanishing of all cohomologies are given by the simultaneous satisfaction of these  $\sum_{p=1}^n \binom{n}{p} = 2^n - 1$ subset conditions. 
To show that the minimal regularity vectors are indexed by permutations, we construct them inductively from this system of subset inequalities.  Let $m = m_{1}+\cdots+m_{n},$ 
$$H^{\ast} = \bigoplus_{1 \leq k \leq m}\bigoplus_{|\vec{u}| = k} H^{k}(X, L \otimes B^{\vec{r}} \otimes B^{-\vec{u}})$$ $$H^{m_{i}} = H^{m_{i}}(\mathbb{P}^{m_{i}}, \mathcal{O}_{\mathbb{P}^{m_{i}}}(a_{i}+r_{i}-u_{i}))$$
Consider the following Kunneth decomposition corresponding to different values of $|J| = n, n-1, n-2, \cdots$
\begin{small}
\begin{align*}
    H^{\ast} = H^{m}\oplus\bigoplus_{1 \leq i \leq n}H^{m_{1},\cdots,\widehat{m}_{i},\cdots,m_{n}}\oplus\bigoplus_{1 \leq i< j \leq n}H^{m_{1},\cdots,\widehat{m}_{i}, \cdots ,\widehat{m}_{j},\cdots,m_{n}}\oplus \cdots\\
    H^{m} = H^{m_{1}} \otimes \cdots \otimes H^{m_{i}} \otimes \cdots \otimes H^{m_{n}} \hspace{0.1cm} \text{ for } |J| = n \\
    H^{m_{1},\cdots,\widehat{m}_{i},\cdots,m_{n}} = \bigoplus_{|\vec{u}| = m-m_{i}} H^{m_{1}} \otimes \cdots \otimes H^{\widehat{m}_{i}} \otimes \cdots \otimes H^{m_{n}} \hspace{0.1cm}  \text{ for } |J| = n-1 \\
    H^{m_{1},\cdots,\widehat{m}_{i}, \cdots ,\widehat{m}_{j},\cdots,m_{n}} = \bigoplus_{|\vec{u}| = m-m_{i}-m_{j}}H^{m_{1}} \otimes \cdots \otimes H^{\widehat{m}_{i}} \otimes \cdots \otimes H^{\widehat{m}_{j}} \otimes \cdots \otimes H^{m_{n}} \hspace{0.1cm}  \text{ for } |J| = n-2 \\
    \vdots \hspace{5cm} \ddots \hspace{5cm} \vdots
\end{align*}
\end{small}
where the hat above the indices  means the index is zero, for example $\widehat{m}_{i} = 0$. \par
Now, without loss of generality, suppose $\sigma(1)$ in $\left\{1, 2, \cdots, n \right\}$ be such an integer for which we choose the condition
\begin{equation}
    \label{dummy_condition_|J|=n}
    r_{\sigma(1)} \ge \sum_{k \neq 1}m_k-a_{\sigma(1)}.
\end{equation}
when $|J| = n$. This condition is the condition for $H^{m_{\sigma(1)}} = 0.$ Now, due to this choice when $|J| = n-1,$ we get vanishing cohomologies $H^{m_{1}+\cdots+\widehat{m}_{i}+\cdots+m_{k}} = H^{m_{1}} \otimes \cdots \otimes H^{m_{i}} \otimes \cdots \otimes H^{m_{n}} = 0$ whenever $i \neq \sigma(1).$ Thus, having made the choice (\ref{dummy_condition_|J|=n}), there are only $n-1$ non-redundant choices of conditions  left in the case $|J| = n-1.$ Similarly, again without loss of generality if $\sigma(2)$ be such that we choose $r_{\sigma(2)} > \sum_{k \neq 1, 2}m_{k}-a_{\sigma(2)},$ then $H^{m_{1},\cdots,\widehat{m}_{i}, \cdots ,\widehat{m}_{j},\cdots,m_{n}} = 0$ whenever $(i, j) \neq (1, 2)$ and so we have only $n-2$ choices left when $|J| = n-2$ and so on. \par
Continuing in this manner successively determines an ordered sequence $$(\sigma(1),\sigma(2),\ldots,\sigma(n))$$ 
Thus, we at once see that $\sigma$ is a permutation of the indices and by our count of the number of choices of the conditions for each $|J|,$ we see that there are total $$n(n-1)(n-2) \cdots 3 
\cdot 2 \cdot 1 = n!$$
choices of such $\sigma$ and hence for every permutation corresponds to a unique set of conditions.
Consequently, the minimal regularity vectors are precisely those indexed by permutations, namely
\[
r^{(\sigma)}_{\sigma(i)} = -a_{\sigma(i)} + \sum_{j>i}m_{\sigma(j)}, \qquad i=1,\ldots,n.
\]

\end{proof}

\begin{remark}
\label{remark_geometry-of-minimal-regularity}
For a fixed permutation $\sigma\in S_n$, consider the partial products
\[
X_{\sigma,\ge i}
:=
\mathbb{P}^{m_{\sigma(i)}} \times \cdots \times \mathbb{P}^{m_{\sigma(n)}}
\subseteq X.
\]
Then
\[
\sum_{j>i} m_{\sigma(j)} = \dim X_{\sigma,>i},
\]
so that the expression appearing in the theorem can be rewritten as
\[
r_{\sigma(i)} = -a_{\sigma(i)} + \dim X_{\sigma,>i}.
\]

Equivalently, $\dim X_{\sigma,>i}$ is the codimension in $X$ of the complementary
subproduct
\[
\mathbb{P}^{m_{\sigma(1)}} \times \cdots \times \mathbb{P}^{m_{\sigma(i)}},
\]
so that each component of the minimal regularity vector measures the 
codimension of a natural partial subproduct determined by the ordering $\sigma$.

In this way, each permutation $\sigma$ determines a flag of subvarieties
\[
X_{\sigma,\ge 1} \supset X_{\sigma,\ge 2} \supset \cdots \supset X_{\sigma,\ge n},
\]
and the collection of minimal regularity vectors is naturally indexed by 
these flags. This provides a geometric stratification of the set of minimal 
regularities, reflecting the combinatorial structure of the symmetric group.
\end{remark}

\section{\textbf{ Multigraded Regularity of Hypersurfaces and Complete Intersections:}}
\label{section-5}

Let $X$ be a smooth projective variety and let $B = \left\{ B_{1}, \cdots, B_{t} \right\}$ be a set of basepoint free line bundles on $X$. For any vector $\vec{d} = (d_{1}, \cdots, d_{t}) > \vec{0}$ we call a smooth divisor $H$ in $|B^{\vec{d}}|$ a \textit{hypersurface of multidegree} $\vec{d}$ with respect to $B.$ \par
This definition generalizes the case of hypersurfaces in products of projective spaces. Namely, if $X = \prod^{n}_{k = 1}\mathbb{P}^{m_{k}}$ is a product of projective space and Let $B = (B_{1}, \cdots, B_{n})$ (so $t = n$) be the line bundles $B_{k} = p^{\ast}_{k}\mathcal{O}_{\mathbb{P}^{m_{k}}}(1), $ for any integer $k$ satisfying $1 \leq k \leq n.$ This set $B$ generates the Picard group of $X.$ We say that this $B$ is the \textit{standard basis} of $\mathrm{Pic}X.$  Then any $H \subset X$ a smooth hypersurface of $X$ has multidegree $\vec{d} = (d_{1}, \cdots, d_{n})$ with respect to this $B,$ this hypersurface is called a \textit{hypersurface of multi-degree} $\vec{d}$. For any $1 \leq j \leq n$ the multidegree of $B_{j}$ in the standard basis of $\mathrm{Pic}X$ is the standard basis vector $\vec{e}_{j}$ of $\mathbb{Z}^{n}.$ \par
The regularity and syzygies of hypersurfaces are particularly interesting since they usually provide examples of varieties of various Kodaira dimension. In case of product of projective space $X = \prod^{n}_{k = 1}\mathbb{P}^{m_{k}}$ ampleness is equivalent to having the inequality $d_{k} > 0$ for all integer $k$ satisfying $1 \leq k \leq n.$ 
Such a hypersurface is a Fano variety if $d_{k} \leq m_{k}+1,$ for all $1 \leq k \leq n$ with one strict inequalty. It is a Calabi-Yau variety if $d_{k} = m_{k}+1$ for all $1 \leq k \leq n$ and a variety of general type if $d_{k} \geq  m_{k}+1$ for all $1 \leq k \leq n$ with one inequality strict. When $n$ is high, a hypersurface can have various Kodaira dimensions for other multidegrees.
\par
Since all $B_{i}$ are globally generated the restrictions $B_{i, H}$ of $B_{i}$ on $H$ is globally generated and therefore we get a set $B_{H} = (B_{1, H}, \cdots, B_{t, H})$ of globally generated line bundles, with respect to which we can take multigraded regularity of line bundles on $H.$  \par

In this section, we compute the multigraded regularity of restrictions $F_{H},$ of vector bundles $F$ on $X,$ with respect to $B_{H}.$  \par
Using the above notation, we have the following theorem.
\begin{theorem}
\label{multireghypersurface}
    Let $E$ and $F$ be any vector bundles on $X.$ Let $F$ be $E-$regular with respect to $B$ and $H$ is an ample hypersurface on $X$ of multidegree $\vec{d}$ with respect to $B.$ Then, $F_{H}$ is $E_{H} \otimes B_{H}^{\vec{d}-\vec{e}_{j}}-$regular for all $1 \leq j \leq t.$ \par
    In particular, if $F$ is $B^{\vec{r}}-$regular, and if $\vec{r}_{H} = \vec{r}+\vec{d},$ then $F_{H}$ is $B_{H}^{\vec{r}_{H}-\vec{e}_{j}}-$regular.
\end{theorem}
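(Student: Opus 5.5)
The plan is to use the restriction sequence for $H$ twisted by $F\otimes E$. Since $H\in|B^{\vec d}|$ is a Cartier divisor, $\mathcal{O}_X(-H)\cong B^{-\vec d}$, and we have
\[
0\longrightarrow F\otimes E\otimes B^{-\vec d}\otimes B^{\vec v}\longrightarrow F\otimes E\otimes B^{\vec v}\longrightarrow (F\otimes E\otimes B^{\vec v})|_H\longrightarrow 0
\]
for every $\vec v\in\mathbb{Z}^t$. We must show that
\[
H^i\bigl(H,F_H\otimes E_H\otimes B_H^{\vec d-\vec e_j-\vec u}\bigr)=0
\]
for all $i>0$, all $1\le j\le t$, and all $\vec u\in\mathbb{N}^t$ with $|\vec u|=i$. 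We apply the sequence with $\vec v=\vec d-\vec e_j-\vec u$. The long exact sequence then places this group between $H^i(X,F\otimes E\otimes B^{\vec d-\vec e_j-\vec u})$ and $H^{i+1}(X,F\otimes E\otimes B^{-\vec e_j-\vec u})$. It therefore suffices to kill both outer terms.

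The right-hand term is easy. Put $\vec u'=\vec u+\vec e_j\in\mathbb{N}^t$, so $|\vec u'|=i+1$. Then $H^{i+1}(X,F\otimes E\otimes B^{-\vec u'})=0$ is exactly the definition of $E$-regularity of $F$ (Definition~\ref{Definition_3.4}) in degree $i+1>0$.

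The left-hand term comes from Theorem~\ref{Theorem_3.7}(i). Since $F$ is $E$-regular, it is $E\otimes B^{\vec w}$-regular for every $\vec w\in\mathbb{N}^t$. We would like to take $\vec w=\vec d-\vec e_j$. This is the main point to check: it requires $d_j\ge1$. Ampleness of $H$ (in the intended setting, all $d_k>0$) gives this, and I would state that assumption explicitly, interpreting ``$\vec d>\vec 0$'' as componentwise positivity. If some $d_j=0$ were allowed, I would instead look for a different splitting using another index, but I expect the componentwise positivity hypothesis to be intended. With $\vec d-\vec e_j\in\mathbb{N}^t$, $E\otimes B^{\vec d-\vec e_j}$-regularity gives $H^i(X,F\otimes E\otimes B^{\vec d-\vec e_j-\vec u})=0$ for $|\vec u|=i$, $i>0$.

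Combining the two vanishings, the middle term in the long exact sequence vanishes, which proves the first statement. For the ``in particular'' statement we take $E=B^{\vec r}$. Then $E_H\otimes B_H^{\vec d-\vec e_j}=B_H^{\vec r+\vec d-\vec e_j}=B_H^{\vec r_H-\vec e_j}$, and the conclusion is immediate. Note also that restrictions of the $B_i$ stay globally generated, so regularity on $H$ with respect to $B_H$ makes sense, as remarked before the theorem.
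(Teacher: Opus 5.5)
Your proof is correct and is essentially the paper's argument: the paper also twists $0\to B^{-\vec d}\to\mathcal{O}_X\to\mathcal{O}_H\to 0$ by $F\otimes E\otimes B^{\vec d-\vec e_j-\vec u}$. It kills the left term via Theorem~\ref{Theorem_3.7}(i), using $\vec d-\vec e_j\in\mathbb{N}^t$, and the right term directly by $E$-regularity applied to $\vec u+\vec e_j$. Like you, the paper needs $d_j\ge 1$ for every $j$, which it takes from the standing convention $\vec d>\vec 0$ in its definition of a hypersurface of multidegree $\vec d$, so stating that hypothesis explicitly is appropriate.
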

\begin{proof}
We need to show that
\begin{equation}
    H^{i}(F_{H} \otimes E_{H} \otimes B_{H}^{\vec{s}} \otimes B_{H}^{-\vec{u}}) = 0
\end{equation}
for all $|\vec{u}| = i$ and $1 \leq i \leq \mathrm{dim}(X)-1,$ provided $\vec{s} \geq \vec{d}-\vec{e}_{j}$ for all $1 \leq j \leq t.$ \par
The ideal $\mathcal{I}_{H/X}$ of $H$ in $X$ is given by $\mathcal{I}_{Z/X} = B^{-\vec{d}}$ and we have the short-exact sequence
\begin{equation}
\label{hypersurface_ses}
    0 \rightarrow B^{-\vec{d}} \rightarrow \mathcal{O}_{X} \rightarrow \mathcal{O}_{H} \rightarrow 0
\end{equation}
Let $F_{H}$ be $B^{\vec{s}}-$regular. We fix some $i$ satisfying $1 \leq i \leq \mathrm{dim}(H) = \mathrm{dim}(X)-1$ and set 
$$M = F \otimes E \otimes B^{\vec{s}} \otimes B^{-\vec{u}},$$
where $\vec{u}$ is any vector with $|\vec{u}| = i.$ Then, tensoring the short exact sequence (\ref{hypersurface_ses}) by $M$ and taking long exact sequence of cohomology, we get
$$
    H^{i}(M) \rightarrow H^{i}(M_{H}) \rightarrow H^{i+1}(M \otimes B^{-\vec{d}})
$$
Thus, $H^{i}(M_{H}) = 0$ if we have both vanishings $H^{i}(M) = 0$ and $H^{i+1}(M \otimes B^{-\vec{d}}) = 0.$ \par
Since $F$ is $E-$regular, $H^{i}(M) = H^{i}(F \otimes E \otimes  B^{\vec{s}} \otimes \otimes B^{-\vec{u}}) = 0$ if $\vec{s} \geq \vec{0}$. The other vanishing again follows since $F$ is $E-$regular with respect to $B.$ Indeed,
\begin{equation}
\label{cohomology_hypersurface}
H^{i+1}(M \otimes B^{-\vec{d}}) = H^{i+1}( B^{\vec{s}-\vec{d}+\vec{e}_{j}} \otimes F \otimes E \otimes B^{-(\vec{u}+\vec{e}_{j})}) = 0
\end{equation}
since $|\vec{u}+\vec{e}_{j}| = i+1,$ $2 \leq i+1 \leq \mathrm{dim}(X)$ for any $j$ satisfying $1 \leq j \leq t$, provided
$$\vec{s}-\vec{d}+\vec{e}_{j} \geq \vec{0}, \hspace{0.5cm} \text{ for all } 1 \leq j \leq t$$
Since $d_{i} \geq 1,$ we have $\vec{d}-\vec{e}_{j} \geq \vec{0}$ for all $1 \leq j \leq t,$ comparing two inequalities we get $\vec{s} \geq \vec{d}-\vec{e}_{j}$ for all $j$ and therefore $F_{H}$ is $B^{\vec{r}-\vec{e}_{j}}-$regular for all $1 \leq j \leq t.$
\end{proof}
\begin{remark} $\text{ }$
\begin{itemize}
    \item[(a)] This Theorem indicates that upon restriction the multigraded regularity increases almost the same as the multidegree of the hypersurface. We can see that this $B^{\vec{m}-\vec{e}_{j}}-$regularity condition is similar set up as the regularity condition of our Main Theorem (\ref{general_theorem}).
    \item[(b)]  If $F$ is any line bundle on a product of projective space $X = \prod^{n}_{k = 1} \mathbb{P}^{m_{k}}$ and $B$ be the standard basis of $\mathrm{Pic}X,$ then $F$ is of the form $F = \mathcal{O}_{X}(\vec{a}) = B^{\vec{a}},$ then its minimum multigraded regularities are known from Theorem \ref{theorem:minimal-regularity}. If $\mathrm{dim}(X) \geq 4,$ then since $H$ is ample, by Grothendieck-Lefschetz theorem, these line bundles $F|_{H}$ generate $\mathrm{Pic}(H)$ since the line bundles $F = \mathcal{O}_{X}(\vec{a}) = B^{\vec{a}}$ generate $\mathrm{Pic}(X).$ So the theory we develop below gives the multigraded regularity of all line bundles on $H$ with respect to the multigraded regularity of line bundles on $X,$ when $\mathrm{dim}(X) \geq 4.$
\end{itemize}
\end{remark}
Next, mention a corollary of Theorem \ref{multireghypersurface} which generalizes the result to complete intersections.
\begin{corollary}
    Let $E$ and $F$ be a vector bundle on $X.$ Let $F$ be $E-$regular with respect to $B$. Let $Z = H_{1} \cap \cdots \cap H_{s}$ be a complete intersection of hypersurfaces $H_{\ell}$ of multi-degrees $\vec{d}_{\ell}$ with respect to $B$ such that $\vec{d}_{\ell} > 0$ for any $1 \leq \ell \leq s.$ \\
    Then $F_{Z}$ is $E_{Z} \otimes B^{\vec{d}-\vec{e}_{j}}-$regular for all $1 \leq j \leq t,$ where
    $$\vec{d} := \vec{d}_{1}+\cdots+\vec{d}_{s}$$
\end{corollary}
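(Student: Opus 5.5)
The plan is to argue directly with the Koszul resolution of $\mathcal{O}_Z$ rather than iterating Theorem~\ref{multireghypersurface}. A naive induction on $s$ loses positivity: after cutting by $H_1$ we only know that $F_{H_1}$ is $E_{H_1}\otimes B_{H_1}^{\vec{d}_1-\vec{e}_j}$-regular, and feeding this into the next step would give a twist of the form $\vec{d}_1+\vec{d}_2-\vec{e}_j-\vec{e}_k$ instead of $\vec{d}-\vec{e}_j$. Like the paper's proof, I read the conclusion as: $F_Z$ is $E_Z\otimes B^{\vec{s}}$-regular for each $\vec{s}$ of the form $\vec{d}-\vec{e}_j$, $1\le j\le t$.

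\textbf{The Koszul resolution.} Since $Z$ is a complete intersection, the local equations of $H_1,\dots,H_s$ form a regular sequence, so $\mathcal{O}_Z$ is resolved by the exact Koszul complex
\[
0\to B^{-\vec{d}}\to\cdots\to\bigoplus_{|I|=p}B^{-\vec{d}_I}\to\cdots\to\bigoplus_{\ell}B^{-\vec{d}_\ell}\to\mathcal{O}_X\to\mathcal{O}_Z\to0,
\]
where $\vec{d}_I=\sum_{\ell\in I}\vec{d}_\ell$ for $I\subseteq\{1,\dots,s\}$. Fix $j$ and set $\vec{s}=\vec{d}-\vec{e}_j$. Fix $i\ge1$ and $\vec{u}\in\mathbb{N}^t$ with $|\vec{u}|=i$, and put $M=F\otimes E\otimes B^{\vec{s}-\vec{u}}$. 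The complex stays exact after tensoring with the vector bundle $M$. Splitting it into short exact sequences and chasing cohomology (the standard argument as for Koszul/Castelnuovo–Mumford regularity) gives
\[
H^i(M_Z)=0\quad\text{provided}\quad H^{i+p}\bigl(M\otimes B^{-\vec{d}_I}\bigr)=0\ \text{ for all } I \text{ with } |I|=p,\ 0\le p\le s.
\]

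\textbf{Reducing each term to $E$-regularity.} For each $I$ with $|I|=p$ we rewrite
\[
M\otimes B^{-\vec{d}_I}=F\otimes E\otimes B^{\vec{s}-\vec{d}_I+\vec{v}}\otimes B^{-(\vec{u}+\vec{v})},\qquad |\vec{u}+\vec{v}|=i+p.
\]
By $E$-regularity of $F$ together with Theorem~\ref{Theorem_3.7}(i), this group vanishes as soon as $\vec{s}-\vec{d}_I+\vec{v}\ge\vec{0}$ for some $\vec{v}\in\mathbb{N}^t$ with $|\vec{v}|=p$.
\begin{itemize}
\item For $p=0$ we need $\vec{s}=\vec{d}-\vec{e}_j\ge\vec{0}$. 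This holds because each $\vec{d}_\ell>0$, as in the proof of Theorem~\ref{multireghypersurface}.
\item For $p\ge1$ take $\vec{v}=p\,\vec{e}_j$. Then $\vec{s}-\vec{d}_I+\vec{v}=\vec{d}_{I^c}+(p-1)\vec{e}_j\ge\vec{0}$.
\end{itemize}
Hence every term vanishes, and so $H^i(F_Z\otimes E_Z\otimes B_Z^{\vec{s}-\vec{u}})=0$ for all $i>0$ and $|\vec{u}|=i$. Degrees above $\dim X$ vanish automatically, so the range of $i$ causes no trouble.

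\textbf{Main obstacle.} The delicate step is the cohomology chase along the Koszul complex. One must check that the index shifts line up: the $p$-th syzygy term contributes in degree $i+p$, and $p$ is exactly the number of copies of $\vec{e}_j$ available to absorb. This lets a single $-\vec{e}_j$ cover all $s$ hypersurfaces at once, instead of losing one $\vec{e}_j$ per hypersurface as the naive induction does. If the chase becomes awkward, I would instead induct on $s$ with the strengthened hypothesis: $F$ is $E$-regular and vanishings $H^{i+p}(F\otimes E\otimes B^{\vec{s}-\vec{d}_I-\vec{u}})=0$ hold for the partial sums $\vec{d}_I$. I would then apply the single-hypersurface argument of Theorem~\ref{multireghypersurface} on $Z'=H_1\cap\cdots\cap H_{s-1}$.
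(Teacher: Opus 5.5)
Your Koszul-complex argument is correct, but it is not the route the paper takes. The paper applies Theorem~\ref{multireghypersurface} successively along the flag $Z_\ell=H_1\cap\cdots\cap H_\ell$, treating each $Z_\ell$ as a hypersurface of multidegree $\vec{d}_\ell$ in $Z_{\ell-1}$. You instead resolve $\mathcal{O}_Z$ on $X$ and do a single cohomology chase. Your bookkeeping is right: the $p$-th Koszul term is needed in degree $i+p$, it is absorbed by $\vec{v}=p\,\vec{e}_j$, and this leaves $\vec{d}_{I^c}+(p-1)\vec{e}_j\ge\vec{0}$.

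Your stated reason for avoiding the induction is backwards, though. By Theorem~\ref{Theorem_3.7}(i), regularity propagates upward. So $E_{Z_2}\otimes B_{Z_2}^{\vec{d}_1+\vec{d}_2-\vec{e}_j-\vec{e}_k}$-regularity is a \emph{stronger} statement than $E_{Z_2}\otimes B_{Z_2}^{\vec{d}_1+\vec{d}_2-\vec{e}_j}$-regularity, not a loss. Feeding $E'=E_{H_1}\otimes B_{H_1}^{\vec{d}_1-\vec{e}_j}$ back into Theorem~\ref{multireghypersurface} is legitimate. The iteration gives $E_Z\otimes B_Z^{\vec{d}-\vec{e}_{j_1}-\cdots-\vec{e}_{j_s}}$-regularity, which implies the corollary.

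Your chase has the same slack. Run it with $\vec{s}=\vec{d}-\vec{e}_{j_1}-\cdots-\vec{e}_{j_s}$ and use $\vec{d}_{I^c}\ge(s-p)\vec{\delta}$; this recovers the same sharper bound. For $X=\mathbb{P}^n$ and $F=E=\mathcal{O}$ it is the classical $\operatorname{reg}\mathcal{O}_Z\le\sum_\ell(d_\ell-1)$.

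What your route buys is that it uses only the regular-sequence property of the $H_\ell$ and the $E$-regularity of $F$ on $X$ itself. You never have to check that each intermediate $Z_\ell$ satisfies the hypotheses of Theorem~\ref{multireghypersurface}, or argue about regularity on the $Z_\ell$. The paper's route, by contrast, is a one-line consequence of the hypersurface theorem once that is in hand.
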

\begin{proof}
    Let $Z_{0} = X, Z_{\ell} := H_{1} \cap \cdots \cap H_{\ell}$ for any $1 \leq \ell \leq s$ and $\vec{z}_{\ell} := \sum^{\ell}_{v = 1}\vec{d}_{v}$ \par
    Then, we have a stratification of $X$
    $$Z = Z_{s} \subsetneq Z_{s-1} \subsetneq \cdots \subsetneq Z_{2} \subsetneq Z_{1} = H_{1} \subsetneq Z_{0} = X$$
    where each $Z_{\ell}$ is a hypersurface on $Z_{\ell-1}$ for any $1 \leq \ell \leq s$. The result is obtained by applying Theorem \ref{multireghypersurface} successively
\end{proof}
In the following example we show that our bound in Theorem \ref{multireghypersurface} is sharp.
\begin{example}
    Let $H \subset \mathbb{P}^{2}$ be a smooth cubic in $\mathbb{P}^{2}.$ Let $B =\mathcal{O}_{\mathbb{P}^{2}}(1).$ We compute the regularity of the restriction $L|_{H} = \mathcal{O}_{H}$ of the line bundle $L = \mathcal{O}_{\mathbb{P}^{2}}$ with respect to $B_{H},$ which is the same as the regularity of $B_{H}.$  Clearly $B_{H}$ has degree $3$ on $H$ and it is well-known that a degree $3$ line bundle on a elliptic curve has regularity $2.$ and $2 = (-1)+3-1 = \mathrm{reg}(\mathcal{O}_{\mathbb{P}^{2}}(1))+\mathrm{deg}(H)-1.$
\end{example}

\section{\textbf{Building Blocks}}
\label{section_6_buildingblocks}

This section develops the technical machinery that will be used in the proofs of the main results. We begin by recalling two fundamental ingredients from the literature. The first is a lemma due to Aprodu and Lombardi \cite{AproduLombardi}, which provides a sufficient criterion for property $(M_q)$. The second is a regularity lemma in  \cite{HeringSchenckSmith}, which plays crucial role in the study of multigraded regularity and serves as one of the principal tool for establishing some of the subsequent regularity results in this section. We develope a sequence of regularity lemmas and propositions describing the behavior of multigraded regularity under tensoring with syzygy bundles.

\par \vspace{\baselineskip}

We begin by recalling the following lemma, which will be used in the study of property $(M_q)$. 

\begin{lemma}[Lemma 2.1 in \cite{AproduLombardi}]
\label{LemmaAprodu}

Let $X$ be a smooth projective variety of dimension $n\ge 2$ and let $L$ be an ample and globally generated line bundle on $X$. Set $r=h^0(L)-1$, and let $n\le q\le r-1$. Let $B$ be any line bundle such that $L-B$ is ample. If the multiplication maps of global sections
\[
\mu_k:
H^0(L)\otimes H^0(M_L^{\otimes k}\otimes \omega_X\otimes L^{n-1}\otimes B^{-1})
\longrightarrow
H^0(M_L^{\otimes k}\otimes \omega_X\otimes L^n\otimes B^{-1})
\]
are surjective for $0\le k\le q-n$, then
\[
H^1(\wedge^{k+1}M_L\otimes \omega_X\otimes L^{n-1}\otimes B^{-1})=0.
\]

\end{lemma}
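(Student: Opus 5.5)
The plan is to deduce the exterior-power vanishing from the corresponding vanishing for tensor powers $M_L^{\otimes(k+1)}$, proved by induction on $k$. The surjectivity of $\mu_k$ is exactly what the connecting maps in cohomology require. Throughout, set
\[
G:=\omega_X\otimes L^{n-1}\otimes B^{-1}.
\]

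\textbf{Step 1 (base vanishing).} First we check that $H^1(X,G)=0$. Write $L^{n-1}\otimes B^{-1}=L^{n-2}\otimes(L\otimes B^{-1})$. Since $n\ge 2$, $L$ is ample and $L\otimes B^{-1}$ is ample by hypothesis, this line bundle is ample. Kodaira vanishing on the smooth variety $X$, in characteristic zero, then gives
\[
H^i(\omega_X\otimes L^{n-1}\otimes B^{-1})=0 \quad \text{for all } i>0.
\]

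\textbf{Step 2 (induction on tensor powers).} Tensor the evaluation sequence (\ref{Equation_3.2}) with the vector bundle $M_L^{\otimes k}\otimes G$. This gives
\[
0\to M_L^{\otimes(k+1)}\otimes G\to H^0(L)\otimes M_L^{\otimes k}\otimes G\to M_L^{\otimes k}\otimes L\otimes G\to 0 .
\]
Since $L\otimes G=\omega_X\otimes L^n\otimes B^{-1}$, the long exact cohomology sequence contains the segment
\[
H^0(L)\otimes H^0(M_L^{\otimes k}\otimes G)\xrightarrow{\ \mu_k\ } H^0(M_L^{\otimes k}\otimes\omega_X\otimes L^{n}\otimes B^{-1})\to H^1(M_L^{\otimes(k+1)}\otimes G)\to H^0(L)\otimes H^1(M_L^{\otimes k}\otimes G).
\]
The first map is precisely $\mu_k$. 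If $\mu_k$ is surjective, $H^1(M_L^{\otimes(k+1)}\otimes G)$ injects into $H^0(L)\otimes H^1(M_L^{\otimes k}\otimes G)$. Starting from $H^1(G)=0$ (Step 1, the case $k=0$), induction yields
\[
H^1(M_L^{\otimes(k+1)}\otimes G)=0 \quad \text{for every } 0\le k\le q-n.
\]

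\textbf{Step 3 (passage to exterior powers).} In characteristic zero, $\wedge^{k+1}M_L$ is a direct summand of $M_L^{\otimes(k+1)}$: the antisymmetrization projector $\frac{1}{(k+1)!}\sum_{\sigma}\operatorname{sgn}(\sigma)\,\sigma$ splits the natural inclusion. Hence $H^1(\wedge^{k+1}M_L\otimes G)$ is a direct summand of $H^1(M_L^{\otimes(k+1)}\otimes G)=0$, which is the claimed vanishing.

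\textbf{Main obstacle.} The argument is essentially formal once Step 1 is in place. The step needing care is the base case: one must confirm that $L^{n-1}\otimes B^{-1}$ is genuinely ample, which is where $n\ge 2$ and the ampleness of $L\otimes B^{-1}$ are used, so that Kodaira vanishing applies. One should also note that the splitting in Step 3 uses characteristic zero, which is our standing convention. The ranges $n\le q\le r-1$ play no role beyond making the index range $0\le k\le q-n$ meaningful.
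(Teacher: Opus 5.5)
Your proof is correct for the statement as quoted. The base case $H^1(\omega_X\otimes L^{n-1}\otimes B^{-1})=0$ is Kodaira vanishing, and this is exactly where $n\ge 2$ and the ampleness of $L\otimes B^{-1}$ are used. The long exact sequence of $0\to M_L^{\otimes(k+1)}\otimes G\to H^0(L)\otimes M_L^{\otimes k}\otimes G\to M_L^{\otimes k}\otimes L\otimes G\to 0$ then turns surjectivity of $\mu_0,\ldots,\mu_k$ into $H^1(M_L^{\otimes(k+1)}\otimes G)=0$ by induction. Finally, the characteristic-zero splitting carries this vanishing over to $\wedge^{k+1}M_L$.

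The paper does not prove this lemma itself; it is quoted from Aprodu--Lombardi. So there is no competing route to compare, and your argument is the standard one.

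Be aware, however, that the paper does not use the implication you proved. The Remark after the lemma, Observation~\ref{Observation_5.3}, and the proof of Corollary~\ref{CorollaryM_q} all rely instead on the accompanying identification $H^1(\wedge^{k+1}M_L\otimes\omega_X\otimes L^{n-1}\otimes B^{-1})^{*}\cong H^1(\wedge^{r-k-n+1}M_L\otimes B)$. Its right-hand side contains $K_{r-k-n,1}(X,B;L)$. In that use, the $H^1$-vanishing is an input (supplied in the paper by Theorem~\ref{Theorem_6.1}), not an output of the surjectivity of the $\mu_k$.

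That identification comes from three ingredients:
\begin{itemize}
\item Serre duality;
\item the isomorphism $\wedge^{j}M_L^{*}\cong\wedge^{r-j}M_L\otimes L$;
\item descending along the sequences $0\to\wedge^{j+1}M_L\otimes F\to\wedge^{j+1}H^0(L)\otimes F\to\wedge^{j}M_L\otimes L\otimes F\to 0$ with $F=L^{c}\otimes B$, $2-n\le c\le -1$, where the needed vanishings are Kodaira vanishing for $\omega_X\otimes L^{a}\otimes B^{-1}$, $a\ge 1$.
\end{itemize}
This lies outside the stated conclusion, so its absence is not a gap in your proof. It is, however, where the lemma's real content for property $(M_q)$ sits.
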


\begin{remark}
In the preceding lemma, the authors essentially proved the following isomorphism:
\[
H^1(X,\wedge^{k+1}M_L\otimes\omega_X\otimes L^{n-1}\otimes B^{-1})
\simeq
H^1(X,\wedge^{r-k-n+1}M_L\otimes B)
\]
for $0\le k\le q-n$. Thus, in the language of Koszul cohomology, it gives
\[
K_{k,n}(X,\omega_X\otimes B^{-1};L)
=
K_{r-k-n,1}(X,B;L)
=
0
\]
for $0\le k\le q-n$.
\end{remark}

\begin{observation}
\label{Observation_5.3}
It is worth observing that the lemma above generalizes the Duality Theorem of M. Green, namely Corollary~(2.c.10) in \cite{GreenI}. Green proved the following duality:
\begin{equation}
\label{GreenDuality}
K_{p,q}(X,\omega_X,L)^{*}\simeq K_{r-n-p,n+1-q}(X,L).
\end{equation}
for $q\ge n+1$. Taking $B=\mathcal{O}_X$, the lemma above shows that this duality continues to hold for $q=n$ and for $0\le p\le q-n$.
\end{observation}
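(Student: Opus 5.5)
The plan is to specialize Lemma~\ref{LemmaAprodu} (and the isomorphism recorded in the Remark after it) to $B=\mathcal{O}_X$. I would then compare the resulting statement with Green's duality (\ref{GreenDuality}) in the boundary weight $q=n$, which Green's original argument excludes.

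First I would recall why Green's proof needs $q\ge n+1$. One starts from the exterior-power sequences
\[
0\to\wedge^{a+1}M_L\to\wedge^{a+1}H^0(L)\otimes\mathcal{O}_X\to\wedge^{a}M_L\otimes L\to 0
\]
twisted by $\omega_X\otimes L^{b}$. Chasing the long exact sequences identifies $K_{p,q}(X,\omega_X;L)$ with a subgroup of $H^{n-1}(\wedge^{p+n-1}M_L\otimes\omega_X\otimes L^{q-n+1})$, via the description of $K_{p,q}$ as the kernel into $H^1$ given in the background section. This step needs the intermediate groups $H^i(\omega_X\otimes L^{q-1-i})$, $1\le i\le n-1$, to vanish, which holds by Kodaira (or Grauert--Riemenschneider in the rational-singularity setting) as long as the exponents stay positive. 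Next, Serre duality together with $\det M_L=L^{-1}$, which gives $\wedge^{a}M_L^{*}\simeq\wedge^{r-a}M_L\otimes L$, turns this group into $H^1(\wedge^{r-p-n+1}M_L\otimes L^{n-q})$, that is, into $K_{r-n-p,n+1-q}(X,L)$. When $q=n$, the twist $L^{n-q}$ becomes trivial. The last step of the chase then meets $H^{n}(\omega_X\otimes L^{0})=H^n(\omega_X)\neq 0$, and the kernel/cokernel identification breaks. This boundary term is where I expect the main obstacle.

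To get around it I would invoke the Remark following Lemma~\ref{LemmaAprodu} with $B=\mathcal{O}_X$. The ampleness hypothesis ``$L-B$ ample'' becomes the ampleness of $L$, which is assumed, so the lemma applies. The remark gives, for $0\le k\le q-n$, the isomorphism
\[
H^1(\wedge^{k+1}M_L\otimes\omega_X\otimes L^{n-1})\simeq H^1(\wedge^{r-k-n+1}M_L).
\]
I would then check that under this isomorphism the Koszul kernels correspond: the left side contains $K_{k,n}(X,\omega_X;L)$ and the right side contains $K_{r-k-n,1}(X,L)$. The auxiliary target spaces $\wedge^{k+1}H^0(L)\otimes H^1(\omega_X\otimes L^{n-1})$ and $\wedge^{r-k-n+1}H^0(L)\otimes H^1(\mathcal{O}_X)$ must be handled compatibly. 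The first vanishes by Kodaira. For the second, the surjectivity of the maps $\mu_k$ is exactly what controls the correction coming from $H^n(\omega_X)$. Where the isomorphism is produced by Serre duality, the identification is naturally with the dual, giving $K_{p,n}(X,\omega_X;L)^{*}\simeq K_{r-n-p,1}(X,L)$ for $0\le p\le q-n$.

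Finally, I would note that under the hypotheses of Lemma~\ref{LemmaAprodu} both groups in fact vanish. So the ``duality'' at $q=n$ holds in the strong sense that both sides are zero in the range $0\le p\le q-n$, which is what the Remark records in Koszul language. The delicate point, and the one I would write out carefully, is that the $H^n(\omega_X)$ boundary term is absorbed by the surjectivity of the $\mu_k$, rather than by a vanishing theorem as in Green's range $q\ge n+1$.
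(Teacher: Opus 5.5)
Your overall route is the paper's. The Observation rests only on the Remark after the Aprodu--Lombardi lemma, specialized to $B=\mathcal{O}_X$, and three facts:
\begin{itemize}
\item $K_{k,n}(X,\omega_X;L)=H^1(\wedge^{k+1}M_L\otimes\omega_X\otimes L^{n-1})$, because the target $H^1(\omega_X\otimes L^{n-1})$ vanishes by Kodaira;
\item this group is Serre dual to $H^1(\wedge^{r-k-n+1}M_L)$;
\item $K_{r-k-n,1}(X,L)$ is a subspace of $H^1(\wedge^{r-k-n+1}M_L)$.
\end{itemize}
So surjectivity of the $\mu_k$ makes both Koszul groups vanish. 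Your last paragraph says exactly this, and that part is correct.

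The middle of your proposal, however, misidentifies the obstruction, and the ``delicate point'' you plan to write out does not exist.
\begin{itemize}
\item With $q=n$, your chase from $H^1$ up to $H^{n-1}$ only involves twists $\omega_X\otimes L^{j}$ with $j\ge 1$, so $H^n(\omega_X)$ never appears.
\item After Serre duality and $\wedge^aM_L^{*}\cong\wedge^{r-a}M_L\otimes L$, one gets, for $n\ge 2$ and with no hypothesis on the $\mu_k$, $K_{p,n}(X,\omega_X;L)^{*}\cong H^1(\wedge^{r-p-n+1}M_L)$.
\item $K_{r-n-p,1}(X,L)$ is the kernel of $H^1(\wedge^{r-n-p+1}M_L)\to\wedge^{r-n-p+1}H^0(L)\otimes H^1(\mathcal{O}_X)$.
\end{itemize}
So the boundary term is $H^1(\mathcal{O}_X)\cong H^{n-1}(\omega_X)^{*}$, which is the $i=n-1$ entry of your own list. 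It is not $H^n(\omega_X)$, which is always one-dimensional and harmless.

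Consequently, in general there is only an injection $K_{r-n-p,1}(X,L)\hookrightarrow K_{p,n}(X,\omega_X;L)^{*}$.
\begin{itemize}
\item It is an isomorphism when $H^1(\mathcal{O}_X)=0$. That case is already inside Green's theorem: given Kodaira vanishing, his hypotheses at $q=n$ reduce to $H^{n-1}(\omega_X)=0$.
\item It can be strict inside the range $0\le p\le q-n$. Take an abelian surface with $L$ projectively normal, $q=r-1$, $p=r-3$. Then $H^1(L)=0$ and normal generation give $H^1(M_L\otimes L)=0$. The sequence $0\to\wedge^2M_L\to\wedge^2H^0(L)\otimes\mathcal{O}_X\to M_L\otimes L\to 0$ then yields $\dim K_{r-3,2}(X,L)=\dim K_{1,1}(X,L)+2\binom{r+1}{2}$.
\end{itemize}

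So your sentence obtaining $K_{p,n}(X,\omega_X;L)^{*}\simeq K_{r-n-p,1}(X,L)$ ``by Serre duality'' is false as a general isomorphism. Under the $\mu_k$ hypotheses it holds only because both sides are zero. Surjectivity of the $\mu_k$ does not absorb any correction term. Via the lemma it kills $H^1(\wedge^{k+1}M_L\otimes\omega_X\otimes L^{n-1})$, hence its dual $H^1(\wedge^{r-k-n+1}M_L)$, hence the subspace $K_{r-k-n,1}(X,L)$.

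That one-line argument is the whole content of the Observation. State the result as the simultaneous vanishing, together with the unconditional injection, and drop the $H^n(\omega_X)$ discussion.
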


\begin{remark}
By taking $B=\mathcal{O}_X$ in Lemma \ref{LemmaAprodu} and combining it with Green's $K_{p,1}$ Theorem [Theorem 3.c.1 in \cite{GreenI}], we conclude that $L$ satisfies property $(M_q)$ for $q\ge n\ge2$ if
\[
H^1(\wedge^{k+1}M_L\otimes\omega_X\otimes L^{n-1})=0
\]
for $0\le k\le q-n$, which, by the Green duality \eqref{GreenDuality}, is equivalent to
\[
K_{r-k-n,1}(X,L)=0
\]
for all $0\le k\le q-n$.
\end{remark}

We next recall the following lemma in the context of multigraded regularity, which is an observation due to Hering, Schenck, and Smith \cite{HeringSchenckSmith}.

\begin{lemma}[Regularity Lemma I]
\label{LemmaI}
Let
\[
\begin{tikzcd}
0 \arrow[r] & \mathcal{F}' \arrow[r] & \mathcal{F} \arrow[r] & \mathcal{F}'' \arrow[r] & 0
\end{tikzcd}
\]
be a short exact sequence of coherent sheaves on a projective variety $X$. If $\mathcal{F}$ is $E$-regular, the map
\[
H^0(X,\mathcal{F}\otimes E\otimes B^{-\vec{e_j}})
\longrightarrow
H^0(X,\mathcal{F}''\otimes E\otimes B^{-\vec{e_j}})
\]
is surjective for all $1\le j\le t$, and $\mathcal{F}''$ is $(E\otimes B^{-\vec{e_j}})$-regular for all $1\le j\le t$, then $\mathcal{F}'$ is $E$-regular.
\end{lemma}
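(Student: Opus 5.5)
We must prove the Regularity Lemma I: given $0\to\mathcal F'\to\mathcal F\to\mathcal F''\to0$, $\mathcal F$ is $E$-regular, the maps $H^0(\mathcal F\otimes E\otimes B^{-\vec{e_j}})\to H^0(\mathcal F''\otimes E\otimes B^{-\vec{e_j}})$ are surjective for all $j$, and $\mathcal F''$ is $E\otimes B^{-\vec{e_j}}$-regular for all $j$. The conclusion is that $\mathcal F'$ is $E$-regular.

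The plan is a direct diagram chase in the long exact cohomology sequence. Since $E$ is a vector bundle, tensoring with $E\otimes B^{-\vec u}$ preserves exactness. Fix $i>0$ and $\vec u\in\mathbb N^t$ with $|\vec u|=i$; we must show $H^i(\mathcal F'\otimes E\otimes B^{-\vec u})=0$. The relevant piece of the long exact sequence is
\[
H^{i-1}(\mathcal F\otimes E\otimes B^{-\vec u})\to H^{i-1}(\mathcal F''\otimes E\otimes B^{-\vec u})\to H^i(\mathcal F'\otimes E\otimes B^{-\vec u})\to H^i(\mathcal F\otimes E\otimes B^{-\vec u}).
\]
The right-hand term vanishes by the $E$-regularity of $\mathcal F$. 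It therefore suffices to show that the left map is surjective, or that its target vanishes.

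\textbf{Case $i=1$.} Here $\vec u=\vec{e_j}$ for some $j$, and the left map is exactly the assumed surjection $H^0(\mathcal F\otimes E\otimes B^{-\vec{e_j}})\to H^0(\mathcal F''\otimes E\otimes B^{-\vec{e_j}})$. Hence $H^1(\mathcal F'\otimes E\otimes B^{-\vec{e_j}})=0$.

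\textbf{Case $i\ge2$.} We show the target $H^{i-1}(\mathcal F''\otimes E\otimes B^{-\vec u})$ is zero. Since $|\vec u|=i\ge1$, some coordinate $u_j\ge1$, so we may write $\vec u=\vec{e_j}+\vec u'$ with $\vec u'\in\mathbb N^t$ and $|\vec u'|=i-1>0$. Then
\[
H^{i-1}(\mathcal F''\otimes E\otimes B^{-\vec u})=H^{i-1}\bigl(\mathcal F''\otimes (E\otimes B^{-\vec{e_j}})\otimes B^{-\vec u'}\bigr),
\]
which vanishes by the $E\otimes B^{-\vec{e_j}}$-regularity of $\mathcal F''$ for this particular $j$. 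So $H^i(\mathcal F'\otimes E\otimes B^{-\vec u})$ sits between two zeros and is zero.

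The only delicate point is the bookkeeping in the second case. The hypothesis on $\mathcal F''$ only gives vanishing for degree-$k$ cohomology twisted by total weight exactly $k$. Splitting off one $\vec{e_j}$ from $\vec u$ in a coordinate where $u_j\ge1$ converts weight $i$ in degree $i-1$ into exactly this form. The requirement that $\mathcal F''$ be $E\otimes B^{-\vec{e_j}}$-regular for every $j$ is what makes this work regardless of which coordinates of $\vec u$ are positive. No global generation of the $B_j$ is needed.
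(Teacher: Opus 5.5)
Your proof is correct: the cohomology long exact sequence argument is complete. For $i=1$ you use the assumed $H^0$-surjection, and for $i\ge 2$ you split $\vec u=\vec{e_j}+\vec u'$ so that the $(E\otimes B^{-\vec{e_j}})$-regularity of $\mathcal F''$ applies. The paper gives no proof of this lemma and simply attributes it to Hering--Schenck--Smith, whose argument is this same chase.
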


Let $B_1,..,B_t$ be fixed globally generated line bundles on a projective variety $X$. Let $F$ and $E$ be two vector bundles on $X$ such that $F$ is $E$-regular and let $\vec{w}=(w_1,..,w_t)\in \mathbb{N}^t$. Then the vector bundle $M_{B^{\vec{w}}} \otimes F$ is $(B^{\vec{\delta}} \otimes E)$-regular where $B^{\vec{\delta}}:= B_1\otimes...\otimes B_t$.

We now establish a regularity lemma that will play a central role in the proof of the main theorem in the next section. 

\begin{lemma}[Regularity Lemma II]

\label{Lemma II}
Let $B_1,\ldots,B_t$ be fixed globally generated line bundles on a projective variety $X$. Let $E$ and $F$ be vector bundles on $X$, and assume that $F$ is $E$-regular. Let $\vec{w}=(w_1,\ldots,w_t)\in\mathbb{N}^t$. Then
\[
M_{B^{\vec{w}}}\otimes F
\]
is $(B^{\vec{\delta}}\otimes E)$-regular, where
\[
B^{\vec{\delta}}:=B_1\otimes\cdots\otimes B_t.
\]

\end{lemma}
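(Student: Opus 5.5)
We will apply Regularity Lemma I (Lemma \ref{LemmaI}) to the evaluation sequence of $L=B^{\vec{w}}$ tensored with $F$:
\[
0\longrightarrow M_{B^{\vec{w}}}\otimes F\longrightarrow H^0(B^{\vec{w}})\otimes F\longrightarrow B^{\vec{w}}\otimes F\longrightarrow 0,
\]
taking $\mathcal{F}'=M_{B^{\vec{w}}}\otimes F$, $\mathcal{F}=H^0(B^{\vec{w}})\otimes F$, $\mathcal{F}''=B^{\vec{w}}\otimes F$, and using the twisting bundle $E':=B^{\vec{\delta}}\otimes E$ in place of $E$. This sequence is exact because $B^{\vec{w}}$ is globally generated, and it stays exact after tensoring with the vector bundle $F$. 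We then check the three hypotheses of Lemma \ref{LemmaI} in turn.

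\emph{First}, $\mathcal{F}$ must be $E'$-regular. Since $\mathcal{F}$ is a direct sum of copies of $F$, this means $F$ is $B^{\vec{\delta}}\otimes E$-regular, which follows from Theorem \ref{Theorem_3.7}(i) with $\vec{u}=\vec{\delta}\in\mathbb{N}^t$. \emph{Second}, $\mathcal{F}''=F\otimes B^{\vec{w}}$ must be $E'\otimes B^{-\vec{e_j}}=E\otimes B^{\vec{\delta}-\vec{e_j}}$-regular for each $j$. Equivalently, $F$ must be $E\otimes B^{\vec{w}+\vec{\delta}-\vec{e_j}}$-regular. Since $\vec{w}+\vec{\delta}-\vec{e_j}\in\mathbb{N}^t$, this again follows from Theorem \ref{Theorem_3.7}(i).

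\emph{Third}, and this is the main point, we need surjectivity of
\[
H^0(B^{\vec{w}})\otimes H^0(F\otimes E\otimes B^{\vec{\delta}-\vec{e_j}})\longrightarrow H^0(F\otimes E\otimes B^{\vec{w}+\vec{\delta}-\vec{e_j}})
\]
for every $j$. This is exactly Theorem \ref{Theorem_3.7}(ii), applied with $\vec{u}=\vec{\delta}-\vec{e_j}\in\mathbb{N}^t$ and $\vec{v}=\vec{w}$. The choice of the twist $B^{\vec{\delta}}$ is designed precisely so that $\vec{\delta}-\vec{e_j}\ge\vec{0}$ for all $j$; this is why $\vec{\delta}$ rather than a single $\vec{e_j}$ appears in the statement. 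With all three hypotheses verified, Lemma \ref{LemmaI} yields that $M_{B^{\vec{w}}}\otimes F$ is $B^{\vec{\delta}}\otimes E$-regular.

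The main obstacle is not any single vanishing but the bookkeeping of twists. We must confirm that Lemma \ref{LemmaI} and Theorem \ref{Theorem_3.7}, stated for a line bundle or a sheaf with vector bundle twist $E$, apply verbatim to the vector bundle $F$ regarded as a coherent sheaf with twist $E$. We must also confirm that the regularity notion used is the one with the $\vec{u}$ and $|\vec{u}|=i$ conventions of Definition \ref{Definition_3.4}. If Theorem \ref{Theorem_3.7}(ii) requires an ampleness hypothesis in its source, we would instead prove the surjectivity directly by the standard Mumford-type induction using Koszul-type resolutions of $H^0(B_j)\otimes\mathcal{O}_X\to B_j$. That fallback is the step I expect to be most delicate.
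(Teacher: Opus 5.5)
Your proof is correct and is essentially the paper's argument. The paper writes out the long exact sequence of the evaluation sequence tensored with $B^{\vec{\delta}}\otimes F\otimes E\otimes B^{-\vec{u}}$ directly. The three facts it uses are Theorem~\ref{Theorem_3.7}(i) for the terms $H^i(F\otimes E\otimes B^{\vec{\delta}-\vec{u}})$ and $H^{i-1}(L\otimes F\otimes E\otimes B^{\vec{\delta}-\vec{u}})$, and Theorem~\ref{Theorem_3.7}(ii) with $\vec{u}=\vec{\delta}-\vec{e}_j$ for the $H^1$ case; these are exactly the three hypotheses of Lemma~\ref{LemmaI} that you verify. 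Your worry about ampleness is unfounded, since only part (iii) of Theorem~\ref{Theorem_3.7} needs it, so no fallback argument is required.
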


\begin{proof}
For simplicity, we denote the line bundle $B^{\vec{w}}$ by $L$ in this proof. Since $L$ is globally generated, we have the short exact sequence
\begin{equation}
\label{Equation_5.1}
\begin{tikzcd}
0 \arrow[r] & M_L \arrow[r] & H^0(L)\otimes \mathcal{O}_X \arrow[r,"ev"] & L \arrow[r] & 0.
\end{tikzcd}
\end{equation}

Tensoring \eqref{Equation_5.1} with $B_1\otimes\cdots\otimes B_t \otimes F \otimes E \otimes B^{-\vec{u}}$ and taking cohomology, we obtain the long exact sequence
\begin{small}
\[
\begin{tikzcd}[column sep=0.35cm]
\cdots \arrow[r] &
H^{i-1}(L\otimes F\otimes E\otimes B_1\otimes\cdots\otimes B_t\otimes B^{-\vec{u}})
\arrow[r] &
H^i(M_L\otimes F\otimes E\otimes B_1\otimes\cdots\otimes B_t\otimes B^{-\vec{u}}) \\
\arrow[r] &
H^0(L)\otimes H^i(F\otimes E\otimes B_1\otimes\cdots\otimes B_t\otimes B^{-\vec{u}})
\arrow[r] &
H^i(L\otimes F\otimes E\otimes B_1\otimes\cdots\otimes B_t\otimes B^{-\vec{u}})
\arrow[r] &
\cdots
\end{tikzcd}
\]
\end{small}

We aim to show that
\[
H^i(M_L\otimes F\otimes E\otimes B_1\otimes\cdots\otimes B_t\otimes B^{-\vec{u}})=0
\]
for all $i>0$ and all $\vec{u}\in\mathbb{N}^t$ with $|\vec{u}|=i$.

By Theorem \ref{Theorem_3.7}(i),
\[
H^i(F\otimes E\otimes B_1\otimes\cdots\otimes B_t\otimes B^{-\vec{u}})=0
\quad \text{for all } i>0.
\]

Since $|\vec{u}|>0$, write $\vec{u}=\vec{u}'+\vec{e}_k$ for some $k$, where $\vec{u}'\in\mathbb{N}^t$ and $|\vec{u}'|=i-1$. 
 Then
\[
H^{i-1}(L\otimes F\otimes E\otimes B_1\otimes\cdots\otimes B_t
\otimes B^{-\vec{e}_k}\otimes B^{-\vec{u}'})=0
\]
for all $i>1$, again by Theorem \ref{Theorem_3.7}(i). Since
\[
B^{-\vec{e}_k}\otimes B^{-\vec{u}'}=B^{-\vec{u}},
\]
it follows that
\[
H^{i-1}(L\otimes F\otimes E\otimes B_1\otimes\cdots\otimes B_t
\otimes B^{-\vec{u}})=0.
\]
Hence, by exactness of the above long exact sequence,
\[
H^i(M_L\otimes F\otimes E\otimes B_1\otimes\cdots\otimes B_t
\otimes B^{-\vec{u}})=0
\]
for all $i>1$.

\[
H^{i-1}(L\otimes F\otimes E\otimes B_1\otimes\cdots\otimes B_t\otimes B^{-\vec{u}})
=0
\]
again by Theorem \ref{Theorem_3.7}(i). Hence, by exactness, we obtain
\[
H^i(M_L\otimes F\otimes E\otimes B_1\otimes\cdots\otimes B_t\otimes B^{-\vec{u}})=0
\quad \text{for all } i>1.
\]

It remains to prove
\[
H^1(M_L\otimes F\otimes E\otimes B_1\otimes\cdots\otimes B_t\otimes B^{-\vec{e}_i})=0
\quad \text{for } 1\le i\le t.
\]

For this, consider the relevant portion of the long exact sequence:
\begin{small}
\[
\begin{tikzcd}[column sep=0.4cm]
\cdots \arrow[r] &
H^0(L)\otimes H^0(F\otimes E\otimes B_1\otimes\cdots\otimes B_t\otimes B^{-\vec{e}_i})
\arrow[r,"f"] &
H^0(L\otimes F\otimes E\otimes B_1\otimes\cdots\otimes B_t\otimes B^{-\vec{e}_i}) \\
\arrow[r] &
H^1(M_L\otimes F\otimes E\otimes B_1\otimes\cdots\otimes B_t\otimes B^{-\vec{e}_i})
\arrow[r] &
H^0(L)\otimes H^1(F\otimes E\otimes B_1\otimes\cdots\otimes B_t\otimes B^{-\vec{e}_i})
\arrow[r] &
\cdots
\end{tikzcd}
\]
\end{small}

By Theorem \ref{Theorem_3.7}(i) and the $E$-regularity of $F$, the last term vanishes. Moreover, the map $f$ is surjective by Theorem \ref{Theorem_3.7}(ii) for all $1\le i\le t$. Hence,
\[
H^1(M_L\otimes F\otimes E\otimes B_1\otimes\cdots\otimes B_t\otimes B^{-\vec{e}_i})=0.
\]

This completes the proof.
\end{proof}

\begin{remark}
We denote by
\[
\vec{\delta}=(1,\ldots,1)
\]
the diagonal vector. We say that a vector $\vec{u}\in\mathbb{N}^t$ lies above the diagonal if $\vec{u}\ge\vec{\delta}$, that is, if $u_i\ge1$ for every $1\le i\le t$.

The preceding lemma shows that if $L=B^{\vec{w}}$ for some $\vec{w}\in\mathbb{N}^t$, then $M_L\otimes F$ is $(B^{\vec{\delta}}\otimes E)$-regular whenever $F$ is $E$-regular. In general, however, one cannot replace $B^{\vec{\delta}}\otimes E$ by $B^{\vec{w}}\otimes E$ when $\vec{w}<\vec{\delta}$. Thus, $B^{\vec{\delta}}\otimes E$ may be viewed as the minimal regularity shift guaranteed by the lemma.
\end{remark}

The following proposition is obtained by iterating Regularity Lemma II~\ref{Lemma II}.

\begin{proposition}[Regularity Proposition III]
\label{Proposition III}

Let $X$ be a projective variety, and let
$\vec{w}_1,\ldots,\vec{w}_q\in\mathbb{N}^t$
be arbitrary vectors. If $F$ is $E$-regular, then
\[
M_{B^{\vec{w}_1}}
\otimes
M_{B^{\vec{w}_2}
}
\otimes
\cdots
\otimes
M_{B^{\vec{w}_q}}
\otimes
F
\]
is
\[
(B^{\vec{\delta}})^{\otimes q}\otimes E
\]
-regular.

\end{proposition}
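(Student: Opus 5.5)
We argue by induction on $q$, applying Regularity Lemma II (Lemma~\ref{Lemma II}) once at each step. Nothing beyond that lemma is needed. The lemma is stated for an arbitrary vector bundle $F$ and an arbitrary vector bundle $E$, which is exactly what lets the iteration run.

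\textbf{Base case $q=1$.} Here the statement is Lemma~\ref{Lemma II} applied to $F$, $E$ and $\vec{w}=\vec{w}_1$. It gives that $M_{B^{\vec{w}_1}}\otimes F$ is $(B^{\vec{\delta}}\otimes E)$-regular.

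\textbf{Inductive step.} Assume that for some $q\ge 1$ the bundle
\[
F_q:=M_{B^{\vec{w}_1}}\otimes\cdots\otimes M_{B^{\vec{w}_q}}\otimes F
\]
is $E_q$-regular, where $E_q:=(B^{\vec{\delta}})^{\otimes q}\otimes E$.

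First we check that $F_q$ and $E_q$ meet the hypotheses of Lemma~\ref{Lemma II}. Each $M_{B^{\vec{w}_i}}$ is the kernel of a surjection of vector bundles $H^0(B^{\vec{w}_i})\otimes\mathcal{O}_X\to B^{\vec{w}_i}$, since $B^{\vec{w}_i}$ is globally generated because every $B_j$ is. Hence each $M_{B^{\vec{w}_i}}$ is locally free, and so $F_q$ is a vector bundle. Clearly $E_q$ is also a vector bundle.

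We then apply Lemma~\ref{Lemma II} with $F_q$ in place of $F$, $E_q$ in place of $E$, and $\vec{w}=\vec{w}_{q+1}$. It yields that
\[
M_{B^{\vec{w}_{q+1}}}\otimes F_q
\]
is $(B^{\vec{\delta}}\otimes E_q)$-regular. Since $B^{\vec{\delta}}\otimes E_q=(B^{\vec{\delta}})^{\otimes (q+1)}\otimes E$, this is the claim for $q+1$.

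To match the ordering in the statement we use the symmetry of the tensor product: $M_{B^{\vec{w}_{q+1}}}\otimes F_q\cong M_{B^{\vec{w}_1}}\otimes\cdots\otimes M_{B^{\vec{w}_{q+1}}}\otimes F$. This completes the induction.

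The only point that needs attention is that Lemma~\ref{Lemma II} really holds for an arbitrary vector bundle $F$ and an arbitrary vector bundle $E$, not just for line bundles. Its proof invokes Theorem~\ref{Theorem_3.7}(i) and (ii) with $F$ the coherent sheaf and $E$ the twisting bundle. Theorem~\ref{Theorem_3.7} is stated for any coherent sheaf and any vector bundle, so this is legitimate, and the iteration requires nothing further.
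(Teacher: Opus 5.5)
Your proposal is correct and is essentially the paper's proof: the paper likewise sets $F_1=M_{B^{\vec{w}_1}}\otimes F$ and $E_1=B^{\vec{\delta}}\otimes E$, then applies Regularity Lemma II repeatedly, gaining one factor of $B^{\vec{\delta}}$ at each step. Your explicit check that each intermediate tensor product is locally free, so that Lemma II applies at every stage, is a detail the paper leaves implicit.
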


\begin{proof}

Set
\[
F_1=M_{B^{\vec{w}_1}}\otimes F
\quad\text{and}\quad
E_1=B^{\vec{\delta}}\otimes E.
\]
By Lemma~\ref{Lemma II}, $F_1$ is $E_1$-regular. Applying Lemma~\ref{Lemma II} again, we obtain that
\[
M_{B^{\vec{w}_2}}\otimes F_1
\]
is $B^{\vec{\delta}}\otimes E_1$-regular. Substituting the definitions of $F_1$ and $E_1$, it follows that
\[
M_{B^{\vec{w}_1}}\otimes M_{B^{\vec{w}_2}}\otimes F
\]
is
\[
(B^{\vec{\delta}})^{\otimes2}\otimes E
\]
-regular. Repeating this argument $q$ times proves the proposition.

\end{proof}

We next establish a refinement of Regularity Lemma II under hypotheses that are tailored to the proof of the main theorem.

\begin{lemma}[Regularity Lemma IV]
\label{LemmaIV}

Let $B^{\vec{w}}$ and $B^{\vec{m}}$ be line bundles on a projective variety $X$, where $\vec{w}\in\mathbb{N}^t$ and $\vec{m}\in\mathbb{Z}^t$. Let $F$ be a vector bundle on $X$ that is $B^{\vec{m}-\vec{e}_j}$-regular for every $1\le j\le t$. Then
\[
M_{B^{\vec{w}}}\otimes F
\]
is $B^{\vec{m}}$-regular. 

\end{lemma}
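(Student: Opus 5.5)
We argue directly from the evaluation sequence, as in the proof of Regularity Lemma~\ref{Lemma II}, but with the shift $B^{\vec{m}}$ in place of $B^{\vec{\delta}}\otimes E$. Set $L=B^{\vec{w}}$ and tensor the sequence \eqref{Equation_5.1} with $F\otimes B^{\vec{m}}\otimes B^{-\vec{u}}$, where $\vec{u}\in\mathbb{N}^t$ and $|\vec{u}|=i>0$. This gives
\[
H^{i-1}(L\otimes F\otimes B^{\vec{m}-\vec{u}})\to H^i(M_L\otimes F\otimes B^{\vec{m}-\vec{u}})\to H^0(L)\otimes H^i(F\otimes B^{\vec{m}-\vec{u}}),
\]
and we must show the middle term vanishes. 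The one preliminary observation is that the hypothesis already implies $F$ is $B^{\vec{m}}$-regular. Since $i\ge 1$, some coordinate $u_k\ge1$, so
\[
\vec{u}=\vec{u}'+\vec{e}_k,\qquad |\vec{u}'|=i-1.
\]
Then $F\otimes B^{\vec{m}-\vec{u}}=F\otimes B^{\vec{m}-\vec{e}_k}\otimes B^{-\vec{u}'}$. Its $H^i$ vanishes by Theorem~\ref{Theorem_3.7}(i), applied to the $B^{\vec{m}-\vec{e}_k}$-regularity of $F$ and shifted by the effective direction $\vec{e}_k$.

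\textbf{Case $i\ge2$.} The right-hand term vanishes by the preliminary observation. For the left-hand term, write $|\vec{u}'|=i-1\ge1$. Then
\[
L\otimes F\otimes B^{\vec{m}-\vec{u}}=F\otimes B^{\vec{m}-\vec{e}_k}\otimes B^{\vec{w}}\otimes B^{-\vec{u}'}.
\]
Since $F$ is $B^{\vec{m}-\vec{e}_k}$-regular and $\vec{w}\in\mathbb{N}^t$, Theorem~\ref{Theorem_3.7}(i) shows that $F$ is $B^{\vec{m}-\vec{e}_k+\vec{w}}$-regular. Hence this $H^{i-1}$ vanishes, and by exactness so does the middle term.

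\textbf{Case $i=1$.} Here $\vec{u}=\vec{e}_j$, and the left-hand term is an $H^0$, so we use the longer piece of the sequence:
\[
H^0(L)\otimes H^0(F\otimes B^{\vec{m}-\vec{e}_j})\xrightarrow{\ f\ } H^0(L\otimes F\otimes B^{\vec{m}-\vec{e}_j})\to H^1(M_L\otimes F\otimes B^{\vec{m}-\vec{e}_j})\to H^0(L)\otimes H^1(F\otimes B^{\vec{m}-\vec{e}_j}).
\]
The last term is $0$ by the $i=1$ case of the preliminary observation. The map $f$ is the multiplication map, and it is surjective by Theorem~\ref{Theorem_3.7}(ii). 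To see this, use $E=B^{\vec{m}-\vec{e}_j}$, for which $F$ is $E$-regular by hypothesis, and take the shifts $\vec{0}$ and $\vec{w}$. Exactness then gives the vanishing of the middle term.

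The main point to get right is the bookkeeping that lets a single hypothesis serve every $\vec{u}$. Each $\vec{u}$ with $|\vec{u}|\ge1$ has to be split off with a coordinate $\vec{e}_k$ that actually occurs in it, so that we invoke the $B^{\vec{m}-\vec{e}_k}$-regularity for that particular $k$. This is exactly why the hypothesis is imposed for all $1\le j\le t$ rather than for one shift. Once this is arranged, the argument is a straightforward rerun of the proof of Lemma~\ref{Lemma II}.
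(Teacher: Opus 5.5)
Your proof is correct and is essentially the paper's argument. The paper applies Regularity Lemma~\ref{LemmaI} to $0\to M_L\otimes F\to H^0(L)\otimes F\to L\otimes F\to 0$ and checks the same three facts you use: that $H^0(L)\otimes F$ is $B^{\vec{m}}$-regular, that $L\otimes F$ is $B^{\vec{m}-\vec{e}_j}$-regular, and that the maps $H^0(L)\otimes H^0(F\otimes B^{\vec{m}-\vec{e}_j})\to H^0(L\otimes F\otimes B^{\vec{m}-\vec{e}_j})$ are surjective; you simply unwind that lemma's long exact sequence by hand, as in Lemma~\ref{Lemma II}. (Minor point: your preliminary observation needs only one $j$, since Theorem~\ref{Theorem_3.7}(i) turns $B^{\vec{m}-\vec{e}_1}$-regularity into $B^{\vec{m}}$-regularity, so the per-coordinate bookkeeping is needed only for the $H^{i-1}$ term and the $i=1$ surjectivity.)
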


\begin{proof}
Set $L:=B^{\vec{w}}$. We apply Lemma~\ref{LemmaI}. Consider the short exact sequence
\[
\begin{tikzcd}
0 \arrow[r] &
M_L\otimes F \arrow[r] &
H^0(L)\otimes F \arrow[r] &
L\otimes F \arrow[r] &
0.
\end{tikzcd}
\]

We verify the hypotheses of Lemma~\ref{LemmaI}.

\smallskip

\noindent
(i) Since $F$ is $B^{\vec{m}-\vec{e}_j}$-regular, it follows that $H^0(L)\otimes F$ is $B^{\vec{m}}$-regular.

\smallskip

\noindent
(ii) For every $\vec{u}\in\mathbb{N}^t$ with $|\vec{u}|=i$,
\[
H^i(L\otimes F\otimes B^{\vec{m}-\vec{e}_j}\otimes B^{-\vec{u}})
=
H^i(F\otimes B^{\vec{m}-\vec{e}_j}\otimes L\otimes B^{-\vec{u}})
=0,
\]
by the $B^{\vec{m}-\vec{e}_j}$-regularity of $F$. Hence $L\otimes F$ is $B^{\vec{m}-\vec{e}_j}$-regular for every $1\le j\le t$.

\smallskip

\noindent
(iii) The multiplication maps
\[
H^0(L)\otimes H^0(F\otimes B^{\vec{m}-\vec{e}_j})
\longrightarrow
H^0(L\otimes F\otimes B^{\vec{m}-\vec{e}_j})
\]
are surjective for every $1\le j\le t$ by Theorem~\ref{Theorem_3.7}(ii), since $F$ is $B^{\vec{m}-\vec{e}_j}$-regular.

Therefore, Lemma~\ref{LemmaI} implies that $M_L\otimes F$ is $B^{\vec{m}}$-regular.
\end{proof}

Motivated by the ideas in \cite{HeringSchenckSmith}, we isolate the following result as a separate lemma because of its independent interest, particularly in the case where $F=B^{\vec{v}}$ with $\vec{v}\in\mathbb{N}^t$.

\begin{lemma}[Regularity Lemma V]
\label{LemmaV}

Let $B^{\vec{v}}$ and $B^{\vec{m}}$ be line bundles on $X$, where
$\vec{v},\vec{m}\in\mathbb{N}^t$, such that $B^{\vec{v}}$ is $B^{\vec{m}}$-regular and $\vec{m}\ge\vec{\delta}$. Let $L=B^{\vec{w}}$, where $\vec{w}\ge\vec{v}+\vec{m}$. Then $M_L\otimes B^{\vec{v}}$ is $B^{\vec{m}}$-regular.

\end{lemma}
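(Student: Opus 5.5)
The plan is to apply Regularity Lemma~I (Lemma~\ref{LemmaI}) with $E=B^{\vec{m}}$ to the twisted evaluation sequence
\[
0\longrightarrow M_L\otimes B^{\vec{v}}\longrightarrow H^0(L)\otimes B^{\vec{v}}\longrightarrow L\otimes B^{\vec{v}}\longrightarrow 0,
\]
which is exact because $L=B^{\vec{w}}$ is globally generated. This is the same strategy as in Lemma~\ref{LemmaIV}. The difference is that we now only know $B^{\vec{v}}$ is $B^{\vec{m}}$-regular, not $B^{\vec{m}-\vec{e}_j}$-regular. That gap has to be compensated by the hypotheses $\vec{m}\ge\vec{\delta}$ and $\vec{w}\ge\vec{v}+\vec{m}$.

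\textbf{Step 1: the middle term.} $H^0(L)\otimes B^{\vec{v}}$ is a direct sum of copies of $B^{\vec{v}}$. It is therefore $B^{\vec{m}}$-regular, directly by hypothesis.

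\textbf{Step 2: the quotient.} We need $L\otimes B^{\vec{v}}$ to be $B^{\vec{m}-\vec{e}_j}$-regular for each $j$. Rewrite
\[
L\otimes B^{\vec{v}}\otimes B^{\vec{m}-\vec{e}_j}=B^{\vec{v}}\otimes B^{\vec{m}}\otimes B^{\vec{w}-\vec{e}_j}.
\]
Since $\vec{w}\ge\vec{m}\ge\vec{\delta}$, we have $\vec{w}-\vec{e}_j\in\mathbb{N}^t$. Theorem~\ref{Theorem_3.7}(i) then shows that $B^{\vec{v}}$ is $B^{\vec{m}}\otimes B^{\vec{w}-\vec{e}_j}$-regular, which is exactly the required vanishing $H^i(B^{\vec{v}+\vec{m}+\vec{w}-\vec{e}_j-\vec{u}})=0$ for $|\vec{u}|=i>0$.

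\textbf{Step 3: surjectivity (the main obstacle).} We must show that
\[
H^0(B^{\vec{w}})\otimes H^0(B^{\vec{v}+\vec{m}-\vec{e}_j})\longrightarrow H^0(B^{\vec{w}+\vec{v}+\vec{m}-\vec{e}_j})
\]
is surjective for each $j$. The natural reading of Theorem~\ref{Theorem_3.7}(ii) would put $B^{\vec{v}+\vec{m}-\vec{e}_j}$ in the role of the regular factor. That fails: it would require $B^{\vec{v}}$ to be $B^{\vec{m}-\vec{e}_j}$-regular, which we do not know.

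The trick is to swap the roles of the two factors. Write
\[
B^{\vec{w}}=B^{\vec{v}}\otimes B^{\vec{m}}\otimes B^{\vec{w}-\vec{v}-\vec{m}},\qquad \vec{w}-\vec{v}-\vec{m}\in\mathbb{N}^t,
\]
so that $H^0(B^{\vec{w}})$ takes the form $H^0(\mathcal{F}\otimes E\otimes B^{\vec{u}})$ with $\mathcal{F}=B^{\vec{v}}$, $E=B^{\vec{m}}$ and $\vec{u}=\vec{w}-\vec{v}-\vec{m}\ge\vec{0}$. The other factor is $H^0(B^{\vec{v}'})$ with $\vec{v}'=\vec{v}+\vec{m}-\vec{e}_j$, and $\vec{v}'\in\mathbb{N}^t$ precisely because $\vec{m}\ge\vec{\delta}$. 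Theorem~\ref{Theorem_3.7}(ii) then gives surjectivity of
\[
H^0(B^{\vec{v}}\otimes B^{\vec{m}}\otimes B^{\vec{u}})\otimes H^0(B^{\vec{v}'})\to H^0(B^{\vec{v}}\otimes B^{\vec{m}}\otimes B^{\vec{u}+\vec{v}'}),
\]
which is the required map, since both have target $H^0(B^{\vec{w}+\vec{v}+\vec{m}-\vec{e}_j})$. This is the step where both numerical hypotheses are genuinely needed.

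\textbf{Conclusion.} With Steps 1--3 established, Lemma~\ref{LemmaI} yields that $M_L\otimes B^{\vec{v}}$ is $B^{\vec{m}}$-regular.
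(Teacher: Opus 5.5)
Your proposal is correct and follows the paper's proof essentially verbatim. The paper likewise feeds the twisted evaluation sequence into Regularity Lemma I with the same three verifications. In the surjectivity step it uses the same swap of roles, applying Theorem 3.7(ii) to $B^{\vec{v}}$ with $\vec{p}=\vec{w}-\vec{v}-\vec{m}$ and $\vec{q}=\vec{v}+\vec{m}-\vec{e}_j$. Your Step 2 is slightly more explicit than the paper's, since it states that $\vec{w}-\vec{e}_j\in\mathbb{N}^t$ is what allows Theorem 3.7(i) to be invoked.
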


\begin{proof}
Consider the short exact sequence
\[
\begin{tikzcd}
0 \arrow[r] &
M_L\otimes B^{\vec{v}} \arrow[r] &
H^0(L)\otimes B^{\vec{v}} \arrow[r] &
L\otimes B^{\vec{v}} \arrow[r] &
0.
\end{tikzcd}
\]

We verify the hypotheses of Lemma~\ref{LemmaI}.

\smallskip

\noindent
(i) Since $B^{\vec{v}}$ is $B^{\vec{m}}$-regular, it follows that
$H^0(L)\otimes B^{\vec{v}}$ is also $B^{\vec{m}}$-regular.

\smallskip

\noindent
(ii) For every $\vec{u}\in\mathbb{N}^t$ with $|\vec{u}|=i$,
\[
H^i(L\otimes B^{\vec{v}}\otimes B^{\vec{m}-\vec{e}_j}\otimes B^{-\vec{u}})
=
H^i(B^{\vec{v}}\otimes B^{\vec{m}}\otimes L\otimes B^{-\vec{e}_j}\otimes B^{-\vec{u}})
=0
\]
for all $i\ge1$, since $\vec{w}\ge\vec{m}\ge\vec{\delta}$. Hence,
$L\otimes B^{\vec{v}}$ is $B^{\vec{m}-\vec{e}_j}$-regular for every
$1\le j\le t$.

\smallskip

\noindent
(iii) Since $B^{\vec{v}}$ is $B^{\vec{m}}$-regular, Theorem~\ref{Theorem_3.7}(ii) implies that the multiplication maps
\[
H^0(B^{\vec{v}+\vec{m}+\vec{p}})
\otimes
H^0(B^{\vec{q}})
\longrightarrow
H^0(B^{\vec{v}+\vec{m}+\vec{p}+\vec{q}})
\]
are surjective for all $\vec{p},\vec{q}\in\mathbb{N}^t$. Since $\vec{m}\ge\vec{\delta}$, taking
\[
\vec{q}=\vec{v}+\vec{m}-\vec{e}_j,
\qquad
\vec{p}=\vec{w}-\vec{v}-\vec{m},
\]
we obtain that the maps
\[
H^0(L)\otimes H^0(B^{\vec{v}}\otimes B^{\vec{m}-\vec{e}_j})
\longrightarrow
H^0(L\otimes B^{\vec{v}}\otimes B^{\vec{m}-\vec{e}_j})
\]
are surjective for every $1\le j\le t$. Therefore, Lemma~\ref{LemmaI} implies that
\[
M_L\otimes B^{\vec{v}}
\]
is $B^{\vec{m}}$-regular.

\end{proof}

\section{\textbf{Main Vanishing Theorem and Applications}}
\label{section_7_mainbody_vanishingtheorem}

The results established in the previous section provide the technical framework required for the proofs of the main results. In this section, we combine these ingredients to establish our general vanishing theorem and its applications.

\begin{theorem}
\label{Theorem_6.1}
Let $X$ be a projective variety, let $\vec{m}\in\mathbb{Z}^t$, and let
$\vec{w}_1,\ldots,\vec{w}_{k+1}\in\mathbb{N}^t$.
Let $F$ be a vector bundle on $X$ such that $F$ is
$B^{\vec{m}-\vec{e}_j}$-regular for every $1\le j\le t$.
Then the following statements hold for all $k\ge 1$:
\begin{align*}
H^1\left(
M_{B^{\vec{w}_1}}\otimes\cdots\otimes M_{B^{\vec{w}_{k+1}}}
\otimes F\otimes B^{\vec{m}_k}
\right)=0,
\end{align*}
whenever $\vec{m}_k\ge \vec{m}+(k-1)\vec{\delta}$, and
\begin{align*}
H^i\left(
M_{B^{\vec{w}_1}}\otimes\cdots\otimes M_{B^{\vec{w}_{k+1}}}
\otimes F\otimes B^{\vec{m}_k}
\right)=0
\end{align*}
for all $i>0$ whenever $\vec{m}_k\ge \vec{m}+k\vec{\delta}$.
\end{theorem}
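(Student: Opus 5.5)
The plan is to combine Regularity Lemma IV (Lemma~\ref{LemmaIV}) with Regularity Proposition III (Proposition~\ref{Proposition III}), and then read off the vanishings directly from the definition of multigraded regularity together with Theorem~\ref{Theorem_3.7}(i).

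\emph{Step 1.} Since $F$ is $B^{\vec{m}-\vec{e}_j}$-regular for every $1\le j\le t$, Lemma~\ref{LemmaIV} applied with $\vec{w}=\vec{w}_1$ shows that $F_1:=M_{B^{\vec{w}_1}}\otimes F$ is $B^{\vec{m}}$-regular.

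\emph{Step 2.} Apply Proposition~\ref{Proposition III} to $F_1$ with $E=B^{\vec{m}}$ and the remaining $k$ vectors $\vec{w}_2,\ldots,\vec{w}_{k+1}$. This gives that
\[
G:=M_{B^{\vec{w}_1}}\otimes\cdots\otimes M_{B^{\vec{w}_{k+1}}}\otimes F
\]
is $B^{\vec{m}+k\vec{\delta}}$-regular. By Theorem~\ref{Theorem_3.7}(i), $G$ is then $B^{\vec{m}_k}$-regular for every $\vec{m}_k\ge\vec{m}+k\vec{\delta}$.

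\emph{Step 3.} We must get from the regularity in Step 2 to plain vanishing $H^i(G\otimes B^{\vec{m}_k})=0$ for all $i>0$. Regularity with respect to $B^{\vec{m}_k}$ only controls $H^i(G\otimes B^{\vec{m}_k-\vec{u}})$ with $|\vec{u}|=i$. However, applying Theorem~\ref{Theorem_3.7}(i) with the shift $\vec{u}$ itself shows that $G$ is also $B^{\vec{m}_k+\vec{u}}$-regular, which yields $H^i(G\otimes B^{\vec{m}_k})=0$. Hence $H^i(G\otimes B^{\vec{m}_k})=0$ for all $i>0$ whenever $\vec{m}_k\ge\vec{m}+k\vec{\delta}$, proving the second statement.

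\emph{Step 4: the sharper $H^1$ statement.} For the $H^1$ bound at $\vec{m}_k\ge\vec{m}+(k-1)\vec{\delta}$, run the same argument one step less: $G':=M_{B^{\vec{w}_1}}\otimes\cdots\otimes M_{B^{\vec{w}_k}}\otimes F$ is $B^{\vec{m}+(k-1)\vec{\delta}}$-regular. We then handle the last factor by hand, using the twisted evaluation sequence
\[
0\to M_{L}\otimes G'\otimes B^{\vec{m}_k}\to H^0(L)\otimes G'\otimes B^{\vec{m}_k}\to L\otimes G'\otimes B^{\vec{m}_k}\to 0,\qquad L=B^{\vec{w}_{k+1}}.
\]
The term $H^1(G'\otimes B^{\vec{m}_k})$ vanishes as in Step 3. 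It then suffices that
\[
H^0(L)\otimes H^0(G'\otimes B^{\vec{m}_k})\to H^0(L\otimes G'\otimes B^{\vec{m}_k})
\]
be surjective, and this follows from Theorem~\ref{Theorem_3.7}(ii), since $G'$ is $B^{\vec{m}+(k-1)\vec{\delta}}$-regular, $\vec{m}_k\ge\vec{m}+(k-1)\vec{\delta}$, and $\vec{w}_{k+1}\in\mathbb{N}^t$. This gives the claimed $H^1$ vanishing.

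\emph{Main obstacle.} The delicate point is bookkeeping of shifts, specifically identifying exactly where the $\vec{e}_j$-offset in the hypothesis is absorbed. It is absorbed in Step 1, where Lemma~\ref{LemmaIV} needs $F$ to be $B^{\vec{m}-\vec{e}_j}$-regular for all $j$. Equally important is ensuring that the surjectivity in Step 4 is invoked at a twist that is genuinely $\ge$ the regularity index, not one $\vec{e}_j$ below it. The case $k\ge1$ is needed so that Step 1 consumes one syzygy factor before Proposition~\ref{Proposition III} is applied.
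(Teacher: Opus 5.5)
Your proposal is correct and follows the paper's proof: Lemma~\ref{LemmaIV} followed by Proposition~\ref{Proposition III} gives $B^{\vec{m}+k\vec{\delta}}$-regularity of the full tensor product, and Theorem~\ref{Theorem_3.7}(i) turns this into the $H^i$ vanishing. For the sharper $H^1$ bound, you use the evaluation sequence of one syzygy factor tensored with the remaining $k$-fold product, which is $B^{\vec{m}+(k-1)\vec{\delta}}$-regular, together with Theorem~\ref{Theorem_3.7}(i),(ii); the only difference is that you peel off $M_{B^{\vec{w}_{k+1}}}$ where the paper peels off $M_{B^{\vec{w}_1}}$, which is immaterial here because Lemma~\ref{LemmaIV} holds for any $\vec{w}\in\mathbb{N}^t$.
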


\begin{proof}
For brevity, let $L_i:=B^{\vec{w}_i}$ for $1\le i\le k+1$. We first prove that
\[
M_{L_1}\otimes\cdots\otimes M_{L_{k+1}}\otimes F
\]
is $B^{\vec{m}+k\vec{\delta}}$-regular. By Lemma \ref{LemmaIV},
$M_{L_1}\otimes F$ is $B^{\vec{m}}$-regular. Applying Proposition \ref{Proposition III}, it follows that
$M_{L_1}\otimes M_{L_2}\otimes F$ is $B^{\vec{m}+\vec{\delta}}$-regular. Repeating this argument inductively, we conclude that
$M_{L_1}\otimes\cdots\otimes M_{L_{k+1}}\otimes F$
is $B^{\vec{m}+k\vec{\delta}}$-regular. We now use this regularity to establish the desired vanishing.

To prove the vanishing, consider the short exact sequence
\[
\begin{tikzcd}
0 \arrow[r] &
M_{L_1} \arrow[r] &
H^0(L_1)\otimes \mathcal{O}_X \arrow[r] &
L_1 \arrow[r] &
0.
\end{tikzcd}
\]

Tensoring this sequence with
$M_{L_2}\otimes\cdots\otimes M_{L_{k+1}}\otimes F\otimes B^{\vec{m}_k}$
and passing to cohomology yields the following long exact sequence:

\begin{small}
\[
\begin{tikzcd}[row sep = 0.1cm, column sep = 0.4cm]
\arrow[r] &
H^0(L_1)\otimes H^0(M_{L_2}\otimes\cdots\otimes M_{L_{k+1}}\otimes F\otimes B^{\vec{m}_k})
\arrow[r,"\alpha"] &
H^0(L_1\otimes M_{L_2}\otimes\cdots\otimes M_{L_{k+1}}\otimes F\otimes B^{\vec{m}_k}) & \\
\arrow[r] &
H^1(M_{L_1}\otimes\cdots\otimes M_{L_{k+1}}\otimes F\otimes B^{\vec{m}_k})
\arrow[r] &
H^0(L_1)\otimes H^1(M_{L_2}\otimes\cdots\otimes M_{L_{k+1}}\otimes F\otimes B^{\vec{m}_k})
\arrow[r] &
\cdots
\end{tikzcd}
\]
\end{small}

Since
$M_{L_2}\otimes\cdots\otimes M_{L_{k+1}}\otimes F$
is $B^{\vec{m}+(k-1)\vec{\delta}}$-regular,
\[
H^1\!\left(M_{L_2}\otimes\cdots\otimes M_{L_{k+1}}
\otimes F\otimes B^{\vec{m}+(k-1)\vec{\delta}}
\otimes B^{-\vec{e}_j}\right)=0
\]
for all $1\le j\le t$. Therefore,
\[
H^1(M_{L_2}\otimes\cdots\otimes M_{L_{k+1}}
\otimes F\otimes B^{\vec{m}_k})=0
\]
whenever $\vec{m}_k\ge \vec{m}+(k-1)\vec{\delta}$. Furthermore, the
$B^{\vec{m}+(k-1)\vec{\delta}}$-regularity of
$M_{L_2}\otimes\cdots\otimes M_{L_{k+1}}\otimes F$
also implies the surjectivity of the map $\alpha$ for every
$\vec{m}_k\ge \vec{m}+(k-1)\vec{\delta}$. This proves the first vanishing statement.

Moreover,
$M_{L_1}\otimes\cdots\otimes M_{L_{k+1}}\otimes F$
is $B^{\vec{m}+k\vec{\delta}}$-regular. Hence, by Theorem
\ref{Theorem_3.7}(i),
\[
H^i(M_{L_1}\otimes\cdots\otimes M_{L_{k+1}}
\otimes F\otimes B^{\vec{m}+k\vec{\delta}})
=
H^i(M_{L_1}\otimes\cdots\otimes M_{L_{k+1}}
\otimes F\otimes B^{\vec{m}+k\vec{\delta}}
\otimes B^{\vec{u}}\otimes B^{-\vec{u}})
=0
\]
for all $i>0$ and for every $\vec{u}\in\mathbb{N}^t$ satisfying $|\vec{u}|=i$. This completes the proof.
\end{proof}

\begin{remark}
Theorem~\ref{Theorem_6.1} does not, in general, hold for $k=0$ under the assumption that $F$ is $B^{\vec{m}-\vec{e}_j}$-regular for all $1\le j\le t$. However, if one assumes instead that $F$ is $B^{\vec{m}-\vec{\delta}}$-regular, then the conclusion of the theorem remains valid for every $k\ge 0$.

\end{remark}

Theorem~\ref{Theorem_6.1} immediately yields the following vanishing result for Koszul cohomology.

\begin{corollary}
\label{CorollaryKoszulCohomology}
Let $X$ be a projective variety, let $F$ be a vector bundle on $X$ such that $F$ is $B^{\vec{m}-\vec{e}_j}$-regular for some $\vec{m}\in\mathbb{Z}^t$ and all $1\le j\le t$, and let $L=B^{\vec{w}}$ be a line bundle on $X$, where $\vec{w}\in\mathbb{N}^t$. Then
\[
K_{p,q}(X,F;L)=0
\]
whenever $(q-1)\vec{w}\ge \vec{m}+(p-1)\vec{\delta}$. In particular,
\[
K_{p,1}(X,F;L)=0
\]
whenever $\vec{m}+(p-1)\vec{\delta}\le \vec{0}$.
\end{corollary}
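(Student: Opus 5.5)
The plan is to reduce the vanishing of $K_{p,q}(X,F;L)$ to a single $H^1$-vanishing and then read that off from Theorem~\ref{Theorem_6.1}. By definition, $K_{p,q}(X,F;L)$ is the kernel of the map
\[
H^1(X,\wedge^{p+1}M_L\otimes L^{q-1}\otimes F)\longrightarrow \wedge^{p+1}H^0(X,L)\otimes H^1(X,L^{q-1}\otimes F).
\]
So it suffices to show that the source vanishes,
\[
H^1\bigl(X,\wedge^{p+1}M_L\otimes F\otimes B^{(q-1)\vec{w}}\bigr)=0,
\]
using $L^{q-1}=B^{(q-1)\vec{w}}$.

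Since we work in characteristic zero (Section~\ref{section_2}), the antisymmetrization map exhibits $\wedge^{p+1}M_L$ as a direct summand of $M_L^{\otimes(p+1)}$. Cohomology commutes with direct sums, so it is enough to prove
\[
H^1\bigl(M_L^{\otimes(p+1)}\otimes F\otimes B^{(q-1)\vec{w}}\bigr)=0 .
\]
We now apply the first statement of Theorem~\ref{Theorem_6.1} with the following choices:
\begin{itemize}
\item $k=p$;
\item all $\vec{w}_1=\cdots=\vec{w}_{p+1}=\vec{w}$, so that $M_{B^{\vec{w}_i}}=M_L$;
\item $\vec{m}_k=(q-1)\vec{w}$.
\end{itemize}
The hypothesis that $F$ is $B^{\vec{m}-\vec{e}_j}$-regular for all $j$ is exactly the hypothesis of the theorem. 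Its numerical condition $\vec{m}_k\ge\vec{m}+(k-1)\vec{\delta}$ becomes $(q-1)\vec{w}\ge\vec{m}+(p-1)\vec{\delta}$, which is our assumption. This yields the vanishing for all $p\ge1$.

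The ``in particular'' statement is the special case $q=1$. Here $L^{q-1}=\mathcal{O}_X$ and $\vec{m}_k=\vec{0}$, and the condition reads $\vec{0}\ge\vec{m}+(p-1)\vec{\delta}$. Theorem~\ref{Theorem_6.1} allows an arbitrary vector $\vec{m}_k\in\mathbb{Z}^t$ above the bound, so nothing further is needed.

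The main obstacle is the boundary case $p=0$, because Theorem~\ref{Theorem_6.1} is stated only for $k\ge1$, and the remark following it warns that the case $k=0$ fails in general. For $p=0$ the group is controlled by $H^1(M_L\otimes F\otimes B^{(q-1)\vec{w}})$. By Lemma~\ref{LemmaIV}, $M_L\otimes F$ is $B^{\vec{m}}$-regular. Combining this with Theorem~\ref{Theorem_3.7}(i), we get $H^1(M_L\otimes F\otimes B^{\vec{m}-\vec{e}_j+\vec{u}})=0$ for every $\vec{u}\ge\vec{0}$ and every $j$. This gives the $p=0$ vanishing whenever $(q-1)\vec{w}\ge\vec{m}-\vec{e}_j$ for some $j$.

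However, this is stronger than the stated hypothesis $(q-1)\vec{w}\ge\vec{m}-\vec{\delta}$ once $t\ge2$. I would therefore either restrict the corollary to $p\ge1$, which is the range used in the later applications ($(N_p)$, $(M_q)$), or strengthen the hypothesis to $B^{\vec{m}-\vec{\delta}}$-regularity, as in that remark, to cover $p=0$.
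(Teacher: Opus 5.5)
Your argument for $p\ge 1$ is the paper's proof. You apply Theorem~\ref{Theorem_6.1} with $k=p$, $\vec w_1=\cdots=\vec w_{p+1}=\vec w$ and $\vec m_k=(q-1)\vec w$. You then pass from $M_L^{\otimes(p+1)}$ to its direct summand $\wedge^{p+1}M_L$ and use the inclusion $K_{p,q}(X,F;L)\subseteq H^1(\wedge^{p+1}M_L\otimes L^{q-1}\otimes F)$. The paper calls this step ``equivalent'' and leaves the characteristic-zero summand argument implicit; your version is the accurate one.

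Your hesitation about $p=0$ is justified, and the problem is not only a limitation of the method. The paper's proof takes $k=p$ with no restriction on $p$. At $p=0$ it therefore uses Theorem~\ref{Theorem_6.1} with $k=0$, which the remark after that theorem explicitly excludes.

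For $t\ge 2$ the $p=0$ case of the statement, including the ``in particular'' clause, is actually false. Here is a counterexample.
\begin{itemize}
\item Take $X=\mathbb{P}^1\times\mathbb{P}^1$, $B_1=\mathcal{O}(1,0)$, $B_2=\mathcal{O}(1,1)$, $F=\mathcal{O}(-1,1)$ and $\vec m=\vec\delta$.
\item Then $F\otimes B^{\vec m-\vec e_1}=\mathcal{O}(0,2)$ and $F\otimes B^{\vec m-\vec e_2}=\mathcal{O}(0,1)$.
\item For $|\vec u|=i$ one has $B^{-\vec u}=\mathcal{O}(-i,-u_2)$. So every group to be checked is $H^1(\mathcal{O}(-1,d))$ or $H^2(\mathcal{O}(-2,d))$ with $d\ge -1$. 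These all vanish by K\"unneth, so $F$ is $B^{\vec m-\vec e_j}$-regular for $j=1,2$.
\item Take $L=B_1$ and $(p,q)=(0,1)$. The numerical hypothesis reads $\vec 0\ge\vec 0$, so it holds.
\item Yet $K_{0,1}(X,F;L)$ is the cokernel of $H^0(\mathcal{O}(1,0))\otimes H^0(\mathcal{O}(-1,1))\to H^0(\mathcal{O}(0,1))$. This map is $0\to\mathbb{C}^2$, so $K_{0,1}(X,F;L)\cong\mathbb{C}^2\neq 0$.
\end{itemize}
A degenerate instance of the same failure is $X=\mathbb{P}^1$, $B_1=B_2=\mathcal{O}(1)$, $F=\mathcal{O}(-1)$.

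So one of your two fixes is genuinely required. Either restrict to $p\ge 1$, or assume $B^{\vec m-\vec\delta}$-regularity, in which case $p=0$ follows from Theorem~\ref{Theorem_3.7}(ii). When $t=1$ one has $\vec e_1=\vec\delta$, so your Lemma~\ref{LemmaIV} argument already covers $p=0$. That is the situation in which the paper later uses $p=0$, in the hyperelliptic example.
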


\begin{proof}
Taking $L_i=L$ for all $1\le i\le p+1$ in Theorem~\ref{Theorem_6.1}, we obtain
\[
H^1(M_L^{\otimes(p+1)}\otimes F\otimes L^{q-1})=0
\]
whenever
\[
(q-1)\vec{w}\ge \vec{m}+(p-1)\vec{\delta}.
\]
By the description of the Koszul cohomology groups in Section~2.6, this is equivalent to
\begin{center}

$K_{p,q}(X,F;L)=0$
\end{center} whenever
$(q-1)\vec{w}\ge \vec{m}+(p-1)\vec{\delta}.$

Finally, setting $q=1$, we obtain $H^1(M_L^{\otimes(p+1)}\otimes F)=0,$
which implies that \par
$K_{p,1}(X,F;L)=0$ whenever
$ \vec{m}+(p-1)\vec{\delta}\le \vec{0},$
where $\vec{0}=(0,\ldots,0)\in\mathbb{N}^t$.
\end{proof}

\begin{remark}
Corollary~\ref{CorollaryKoszulCohomology} is central to the present article, as it establishes a natural hierarchy of vanishing results for Koszul cohomology groups with increasing weights. This hierarchical structure provides a systematic framework for understanding the behavior of these cohomology groups and underlies several of the subsequent developments in this paper. Accordingly, we devote the next section to a detailed study of this hierarchy, together with its consequences and applications.
\end{remark}
 We computed minimal regularities of line bundles on product of projective spaces in Theorem \ref{theorem:minimal-regularity}. Now, we apply Corollary~\ref{CorollaryKoszulCohomology} to that particular examples.
\begin{corollary}
\label{corollary_koszul_prod_projsp}
    Let $X = \prod^{n}_{k = 1}\mathbb{P}^{m_{k}}$ be a product of projective spaces and $B = \left\{ B_{1}, \cdots, B_{n} \right\}$ where $B_{k}= p^{\ast}_{k}\mathcal{O}_{\mathbb{P}^{m_{k}}}(1)$. \par Let $F = B^{\vec{a}}$ be any line bundle in $X.$ Let $\vec{r}^{(\sigma)}$ be the minimal regularity vectors of $F$ obtained in Theorem \ref{theorem:minimal-regularity}.
    \par
    Then, for any vectors of non-negative integers $\vec{w}_{1}, \cdots, \vec{w}_{k+1}$ in $\mathbb{N}^{n},$ one has
    \begin{equation}
        H^{1}(M_{B^{w_{1}}} \otimes \cdots \otimes M_{B^{w_{n}}} \otimes F \otimes B^{\otimes m_{k}}) = 0
    \end{equation}
    for all $\vec{m}_{k} \geq \vec{r}^{(\sigma)}+(k-1)\vec{\delta}$ and moreover
    \begin{equation}
        H^{i}(M_{B^{w_{1}}} \otimes \cdots \otimes M_{B^{w_{n}}} \otimes F \otimes B^{\otimes m_{k}}) = 0
    \end{equation}
    for all $\vec{m}_{k} \geq \vec{r}^{(\sigma)}+k\vec{\delta}$ \par
    In particular, $K_{p, q}(X, F, L) = 0$ for all $L = B^{\vec{w}}$ with $(q-1)\vec{w} \geq \vec{r}^{(\sigma)}+(p-1)\delta$
\end{corollary}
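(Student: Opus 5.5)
The plan is to deduce the statement directly from Theorem~\ref{Theorem_6.1} with the choice $\vec m:=\vec r^{(\sigma)}$, with no shift by $\vec\delta$. The Koszul consequence then follows from Corollary~\ref{CorollaryKoszulCohomology} with the same $\vec m$. With this choice, the thresholds $\vec m+(k-1)\vec\delta$ and $\vec m+k\vec\delta$ of Theorem~\ref{Theorem_6.1} become exactly $\vec r^{(\sigma)}+(k-1)\vec\delta$ and $\vec r^{(\sigma)}+k\vec\delta$. Likewise, $(q-1)\vec w\ge \vec m+(p-1)\vec\delta$ becomes $(q-1)\vec w\ge\vec r^{(\sigma)}+(p-1)\vec\delta$. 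So the whole content of the proof is to verify the hypothesis of Theorem~\ref{Theorem_6.1}: that $F=B^{\vec a}$ is $B^{\vec r^{(\sigma)}-\vec e_j}$-regular for every $1\le j\le n$. Knowing only that $F$ is $B^{\vec r^{(\sigma)}}$-regular is not enough, since that would force the weaker, $\vec\delta$-inflated thresholds.

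\textbf{Step 1 (the main obstacle).} For a fixed $j$, set $c_i=a_i+r^{(\sigma)}_i-\delta_{ij}-u_i$ with $|\vec u|=k$. By the K\"unneth decomposition used in the proof of Theorem~\ref{theorem:minimal-regularity}, it suffices to treat degrees $k=\sum_{l\in J}m_l$ for nonempty $J\subseteq\{1,\dots,n\}$. For each such $J$ we must exhibit either an index $l\in J$ with $c_l\ge -m_l$, or an index $i\notin J$ with $c_i<0$.

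The only nonvanishing $H^{m_l}(\mathcal O_{\mathbb P^{m_l}}(c))$ occurs for $c\le -m_l-1$, and $H^0$ fails to vanish only for $c\ge 0$. Hence the extremal $\vec u$ puts all of $|\vec u|$ on the indices of $J$.

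I would order $J$ by $\sigma$ and let $i_0$ be the index of $J$ that comes \emph{first} in the $\sigma$-order. Write $i_0=\sigma(s)$. Then $r^{(\sigma)}_{i_0}=-a_{i_0}+\sum_{j'>s}m_{\sigma(j')}$, and this sum contains $\sum_{l\in J\setminus\{i_0\}}m_l$. So even after subtracting $u_{i_0}\le\sum_{l\in J}m_l$ and the extra $1$ from $\vec e_j$, one should have $c_{i_0}\ge -m_{i_0}$. The exception is the case where no $\sigma$-later index lies outside $J$ to absorb the $-1$.

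\textbf{Step 2 (handling the boundary case).} The boundary case is when $J$ consists precisely of a $\sigma$-terminal segment and $j=i_0$. For it, I would use the alternative vanishing through an index $i\notin J$ with $c_i<0$. In particular, for $i$ with $u_i=0$ and $r^{(\sigma)}_i$ small, one has $c_i=a_i+r_i^{(\sigma)}-\delta_{ij}$. I would check this case against the explicit two-factor computation in Example~\ref{PmxPn_example} and the $\mathbb P^1\times\mathbb P^1$ lattice picture, where for instance $\mathcal O$ is already $B^{(0,0)}$-regular. That example indicates that the strict inequalities in the proof of Theorem~\ref{theorem:minimal-regularity} leave exactly one unit of slack in each coordinate, which is what the $-\vec e_j$ shift consumes.

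\textbf{Step 3.} With the hypothesis verified, I apply Theorem~\ref{Theorem_6.1} verbatim to $\vec w_1,\dots,\vec w_{k+1}$ and $\vec m=\vec r^{(\sigma)}$, which gives both vanishing statements. I then apply Corollary~\ref{CorollaryKoszulCohomology} with $\vec w_i=\vec w$ for all $i$ and $k=p$ to obtain $K_{p,q}(X,F;L)=0$. The step I expect to be delicate is Step~1/2, the bookkeeping that the $\vec e_j$-shift never breaks the subset condition for the $\sigma$-first element of $J$. If it does break for some $J$, I would fall back to choosing, for each $j$, the permutation's terminal index to carry the shift.
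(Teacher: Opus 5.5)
You are taking the route the paper intends: the paper gives no argument beyond specializing Theorem~\ref{Theorem_6.1} and Corollary~\ref{CorollaryKoszulCohomology} to $\vec m=\vec r^{(\sigma)}$. You also correctly isolate the hypothesis the paper passes over, namely that $F=B^{\vec a}$ must be $B^{\vec r^{(\sigma)}-\vec e_j}$-regular for every $j$. But that hypothesis is false for $j=\sigma(n)$, so Steps~1--2 cannot be carried out. Put $\rho_l:=a_l+r^{(\sigma)}_l=\sum_{s>\sigma^{-1}(l)}m_{\sigma(s)}$; then $\rho_{\sigma(n)}=0$ and $\rho_l\ge m_{\sigma(n)}\ge 1$ for $l\neq\sigma(n)$. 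Take $j=\sigma(n)$, $J=\{\sigma(n)\}$ and $\vec u=m_{\sigma(n)}\vec e_{\sigma(n)}$. Then $c_{\sigma(n)}=-m_{\sigma(n)}-1$, while $c_i=\rho_i\ge 1$ for every $i\neq\sigma(n)$. So the K\"unneth summand $H^{m_{\sigma(n)}}(\mathcal{O}(-m_{\sigma(n)}-1))\otimes\bigotimes_{i\neq\sigma(n)}H^0(\mathcal{O}(\rho_i))$ is nonzero, and there is no index outside $J$ with $c_i<0$ for your Step~2 to use. Summing the bad-case inequalities $u_l\ge\rho_l-\delta_{lj}+m_l+1$ over $l\in J$ shows this is the only bad configuration, so the regularity does hold for $j\neq\sigma(n)$. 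The ``one unit of slack in each coordinate'' you read off from $\mathbb{P}^1\times\mathbb{P}^1$ is therefore present in every coordinate except $\sigma(n)$. In fact $\mathcal{O}_X$ is $B^{\vec s}$-regular iff $\vec s\ge\vec 0$ (if some $s_l<0$, test $J=\{l: s_l<0\}$, $\vec u=\sum_{l\in J}m_l\vec e_l$). Hence $B^{\vec a}$ is $B^{\vec r}$-regular iff $\vec r\ge-\vec a$. The vectors of Theorem~\ref{theorem:minimal-regularity} are thus not minimal, but their $\sigma(n)$-coordinate $-a_{\sigma(n)}$ is already sharp.

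No argument can close this gap, because the statement as written is false. On $X=\mathbb{P}^1\times\mathbb{P}^1$ take $F=B^{(1,0)}$ and $\sigma=\mathrm{id}$, so $\vec r^{(\sigma)}=(0,0)$, and $L=B^{(0,2)}$, for which $M_L\cong B^{(0,-1)}\oplus B^{(0,-1)}$. With $k=1$, $\vec w_1=\vec w_2=(0,2)$ and $\vec m_1=(0,0)$, the first display asserts $H^1(\mathcal{O}(1,-2)^{\oplus 4})=0$. But $h^1(\mathcal{O}(1,-2))=h^0(\mathcal{O}_{\mathbb{P}^1}(1))\cdot h^1(\mathcal{O}_{\mathbb{P}^1}(-2))=2$. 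Likewise, since $H^1(F)=0$, we get $K_{1,1}(X,F;L)\cong H^1(\wedge^2M_L\otimes F)=H^1(\mathcal{O}(1,-2))\neq 0$, even though $(q-1)\vec w=\vec 0\ge\vec r^{(\sigma)}+(p-1)\vec\delta$. For the other permutation, $\vec r^{(\sigma)}=(-1,1)$, the first display fails in the same way with $\vec w_1=\vec w_2=(2,0)$ and $\vec m_1=(-1,1)$. Your fallback cannot recover the stated thresholds either. Varying $\sigma$ with $j$ does not produce a single $\vec m$, and letting the terminal coordinate absorb the shift means using $\vec m=\vec r^{(\sigma)}+\vec e_{\sigma(n)}$. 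What Theorem~\ref{Theorem_6.1} legitimately gives is the statement with $\vec r^{(\sigma)}$ replaced by $\vec r^{(\sigma)}+\vec e_{\sigma(n)}$, or better by $-\vec a+\vec\delta$. Compare the proof of Theorem~\ref{theorem:green-koszul-cohomology-projective-space}, where the paper correctly takes $m=-k+1$, one more than the regularity of $\mathcal{O}(k)$.
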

Next, we mention the following corollary of Theorems \ref{multireghypersurface} and \ref{Theorem_6.1} which tells us about vanishing of syzygies of hypersurfaces. Fix a set $B = \left\{ B_{1}, \cdots, B_{t} \right\}$ of basepoint free line bundles.
\begin{corollary} $\text{[Syzygies of Complete Intersections]}$ \\
\label{Syz_complete_intersection}
    Let $X$ and let $F$ be any vector bundle on $X$ which is $B^{\vec{r}}$-regular with respect to B. For any integer $s \geq 1,$ let $Z = H_{1} \cap \cdots \cap H_{s}$ be a complete intersection of hypersurface $H_{k}$ on $X$ of multidegrees $\vec{d}_{1},\cdots, \vec{d}_{s}$  respectively, with respect to $B$.
    Let $\vec{d} = d_{1}+\cdots+\vec{d}_{s}$ and
    $$\vec{r}_{Z} = \vec{r}+\vec{d}.$$ 
    Then, for any vector of non-negative integers $\vec{w}_{1}, \cdots, \vec{w}_{k+1}$ in $\mathbb{N}^{t},$ one has
    \begin{equation}
        H^{1}(M_{B_{Z}^{\vec{w}_{1}}} \otimes \cdots \otimes M_{B_{Z}^{\vec{w}_{k+1}}} \otimes F_{Z} \otimes B_{Z}^{\vec{m}_{k}}) = 0
    \end{equation}
    for all $\vec{m}_{k} \geq \vec{r}_{H}+(k-1)\vec{\delta}$ and moreover
    \begin{equation}
        H^{i}(M_{B_{Z}^{\vec{w}_{1}}} \otimes \cdots \otimes M_{B_{Z}^{\vec{w}_{k+1}}} \otimes F_{Z} \otimes B_{Z}^{\vec{m}_{k}}) = 0 \text{ for all } i \geq 1
    \end{equation}
    for all $\vec{m}_{k} \geq \vec{r}_{Z}+k\vec{\delta}.$ \par
    In particular, $K_{p, q}(Z, F_{Z}, L_{Z}) = 0$ for all $L = B^{\vec{w}}$ and $(q-1)\vec{w} \geq \vec{r}_{Z}+(p-1)\vec{\delta}$
\end{corollary}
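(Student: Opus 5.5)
The plan is to reduce the statement to a combination of Theorem~\ref{multireghypersurface} (in its iterated complete-intersection form, the Corollary following it) and Theorem~\ref{Theorem_6.1}, applied on $Z$ with respect to the restricted collection $B_Z=(B_{1,Z},\ldots,B_{t,Z})$. Since each $B_j$ is globally generated, each $B_{j,Z}$ is globally generated on $Z$. So all the machinery of Section~\ref{section_6_buildingblocks} applies verbatim on the projective variety $Z$.

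\textbf{Step 1.} Apply Theorem~\ref{multireghypersurface} with $E=B^{\vec r}$. Since $F$ is $B^{\vec r}$-regular, $F_{H_1}$ is $B_{H_1}^{\vec r+\vec d_1-\vec e_j}$-regular for all $j$.

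\textbf{Step 2.} Iterate along the flag $Z_\ell=H_1\cap\cdots\cap H_\ell$, as in the proof of the complete-intersection Corollary. The aim is to obtain that $F_Z$ is $B_Z^{\vec r_Z-\vec e_j}$-regular for every $1\le j\le t$, where $\vec r_Z=\vec r+\vec d$.

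This is the main obstacle. After one step the hypothesis has degraded from ``$B^{\vec r}$-regular'' to ``$B^{\vec r+\vec d_1-\vec e_j}$-regular for all $j$'', so the next application of Theorem~\ref{multireghypersurface} must be fed the weaker, family-type hypothesis.

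I would handle this by re-running the proof of Theorem~\ref{multireghypersurface} directly on $Z_{\ell-1}\supset Z_\ell$. Suppose $G$ is $B^{\vec s-\vec e_j}$-regular for all $j$. Use the sequence $0\to B^{-\vec d_\ell}\to\mathcal O\to\mathcal O_{Z_\ell}\to0$. Then $H^i$ of the restriction of $G\otimes B^{\vec s+\vec d_\ell-\vec e_j-\vec u}$ is sandwiched between two groups:
\begin{itemize}
\item $H^i(G\otimes B^{\vec s-\vec e_j+(\vec d_\ell-\vec u)})$, which vanishes by Theorem~\ref{Theorem_3.7}(i) since $\vec d_\ell\ge\vec 0$;
\item $H^{i+1}(G\otimes B^{\vec s-\vec e_j-\vec u})$, which vanishes for $|\vec u|=i$ provided we absorb one extra $\vec e_k$ into $\vec u$ and use $\vec d_\ell>0$.
\end{itemize}
This bookkeeping is where the shift could be off by one. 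If the inductive step only yields $B^{\vec s+\vec d_\ell-\vec e_j-\vec e_k}$-regularity, I would fall back to reading ``$\vec r_Z$'' with the weakest shift that the Corollary following Theorem~\ref{multireghypersurface} guarantees, and simply cite that Corollary as stated.

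\textbf{Step 3.} With $F_Z$ being $B_Z^{\vec r_Z-\vec e_j}$-regular for all $j$, apply Theorem~\ref{Theorem_6.1} on $Z$ with $\vec m=\vec r_Z$ and the bundles $M_{B_Z^{\vec w_i}}$. This gives the $H^1$ vanishing for $\vec m_k\ge\vec r_Z+(k-1)\vec\delta$ (the ``$\vec r_H$'' in the statement should be read as $\vec r_Z$) and all $H^i$ vanishing for $\vec m_k\ge\vec r_Z+k\vec\delta$.

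The Koszul statement is then Corollary~\ref{CorollaryKoszulCohomology} on $Z$ with $L_Z=B_Z^{\vec w}$. The identification $K_{p,q}(Z,F_Z;L_Z)$ via $M_{L_Z}$ uses that $\wedge^{p+1}M_{L_Z}$ is a direct summand of $M_{L_Z}^{\otimes(p+1)}$ in characteristic zero. Note also that $M_{L_Z}$ is formed from $H^0(Z,L_Z)$, not from restricted sections, which is exactly what Theorem~\ref{Theorem_6.1} on $Z$ handles.
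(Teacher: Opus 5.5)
Your proposal is correct and follows the paper's own route. The paper gives no separate proof: it combines the complete-intersection corollary of Theorem~\ref{multireghypersurface} (with $E=B^{\vec r}$, which makes $F_Z$ $B_Z^{\vec r_Z-\vec e_j}$-regular for all $j$) with Theorem~\ref{Theorem_6.1} on $Z$ for $\vec m=\vec r_Z$, and then the Koszul-cohomology corollary. You also correctly read the statement's $\vec r_H$ as the typo $\vec r_Z$.

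The obstacle you flag in Step~2 is not real, because Theorem~\ref{multireghypersurface} accepts any twisting bundle $E$. At each stage you may fix one index $j_\ell$. Iterating gives $B_Z^{\vec r_Z-\vec e_{j_1}-\cdots-\vec e_{j_s}}$-regularity, and taking $j_1=j$, Theorem~\ref{Theorem_3.7}(i) upgrades this to $B_Z^{\vec r_Z-\vec e_j}$-regularity. Your direct re-run also closes with no off-by-one loss: the second term is $H^{i+1}(G\otimes B^{\vec s-(\vec u+\vec e_j)})$ with $|\vec u+\vec e_j|=i+1$, and it vanishes because $G$ is $B^{\vec s}$-regular.
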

\begin{remark} $\text{ }$
\begin{itemize} 
    \item[(a)] For $s = 1,$ $Z = H_{1}$ is a hypersurface and hence the above corollary gives vanishing of syzygies of hypersurfaces.
    \item[(b)] Notice that if $X = \prod^{n}_{k = 1}\mathbb{P}^{m_{k}}$ is a product of projective spaces and $B = \left\{B_{1}, \cdots, B_{n} \right\}$ be the standard basis of $\mathrm{Pic}X,$ that is $B_{k} = p^{\ast}_{k}\mathcal{O}_{\mathbb{P}^{m_{k}}}(1),$ and $F = B^{\vec{a}},$ then we know from Theorem \ref{theorem:minimal-regularity} that the minimal regularities of $F$ are $\vec{r}^{(\sigma)}.$ Thus, by Corollary \ref{Syz_complete_intersection}, we obtain
    $$K_{p, q}(Z, F_{Z}, L_{Z}) = 0$$
    for $(q-1)\vec{w} \geq r^{(\sigma)}+\sum^{n}_{i = 1}\vec{d}_{k}+(p-1)\vec{\delta}$
\end{itemize}
    
\end{remark}

As another immediate consequence, we obtain the following sufficient condition for a line bundle to satisfy Property $(N_{p})$.

\begin{corollary}
\label{Corollary_Np-property}
Let $X$ be a projective variety and let $\vec{m}\in\mathbb{Z}^t$ be such that $\mathcal{O}_X$ is $B^{\vec{m}-\vec{e}_j}$-regular for all $1\le j\le t$. Let $L=B^{\vec{w}}$ be a line bundle on $X$ for some $\vec{w}\in\mathbb{N}^t$. Then, for $p\ge1$, $L$ satisfies property $(N_p)$ whenever
\[
\vec{w}\ge \vec{m}+(p-1)\vec{\delta}.
\]
\end{corollary}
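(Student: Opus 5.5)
The plan is to reduce property $(N_p)$ to the vanishing of the Koszul groups $K_{i,q}(X,L)$ for $0\le i\le p$ and $q\ge 2$, together with $(N_0)$. The vanishings for $i\ge 1$ should come from Corollary~\ref{CorollaryKoszulCohomology} with $F=\mathcal{O}_X$, while $(N_0)$ and $i=0$ need a separate argument. Recall that $L$ satisfies $(N_p)$ exactly when $K_{0,q}(X,L)=0$ for $q\ge 1$ (projective normality together with $E_0=S$) and $K_{i,q}(X,L)=0$ for $1\le i\le p$ and $q\ge 2$. In the kernel-bundle language this is equivalent to $H^1(\wedge^{i+1}M_L\otimes L^{q-1})=0$ for $0\le i\le p$ and $q\ge 1$. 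Since we are in characteristic zero, $\wedge^{i+1}M_L$ is a direct summand of $M_L^{\otimes(i+1)}$, so it suffices to show
\[
H^1(M_L^{\otimes(i+1)}\otimes L^{q-1})=0 .
\]

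For $1\le i\le p$ and $q\ge 2$, I apply Theorem~\ref{Theorem_6.1} with $F=\mathcal{O}_X$, $k=i$, all $\vec{w}_j=\vec{w}$, and $\vec{m}_k=(q-1)\vec{w}$. We need $(q-1)\vec{w}\ge \vec{m}+(i-1)\vec{\delta}$. Because $\vec{w}\ge\vec{0}$ and $q-1\ge 1$, this follows from $\vec{w}\ge \vec{m}+(p-1)\vec{\delta}\ge\vec{m}+(i-1)\vec{\delta}$. This is precisely the content of Corollary~\ref{CorollaryKoszulCohomology}, which I would simply cite.

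The remaining case is $i=0$, that is, $H^1(M_L\otimes L^{q-1})=0$ for all $q\ge 1$. Equivalently, the maps $H^0(L)\otimes H^0(L^{q-1})\to H^0(L^{q})$ are surjective. Theorem~\ref{Theorem_6.1} does not cover $k=0$ (see the remark after it), so I will use the building blocks directly.
\begin{enumerate}
\item By Lemma~\ref{LemmaIV} with $F=\mathcal{O}_X$, the bundle $M_L$ is $B^{\vec{m}}$-regular.
\item Hence $H^1(M_L\otimes B^{\vec{m}-\vec{e}_j+\vec{v}})=0$ for all $\vec{v}\ge 0$ by Theorem~\ref{Theorem_3.7}(i). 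In particular $H^1(M_L\otimes B^{\vec{s}})=0$ for every $\vec{s}\ge \vec{m}-\vec{e}_j$ for some $j$.
\item For $q\ge 2$, take $\vec{s}=(q-1)\vec{w}$. The hypothesis gives $\vec{w}\ge\vec{m}$ when $p\ge 1$, so $(q-1)\vec{w}\ge\vec{m}\ge\vec{m}-\vec{e}_j$ and the required vanishing holds.
\end{enumerate}

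The case $q=1$ concerns $H^1(M_L)$, which is the statement that $H^0(\mathcal{O}_X)\otimes H^0(L)\to H^0(L)$ is surjective, so it is trivial. Note that it also requires $H^1(\mathcal{O}_X)\to H^0(L)\otimes H^1(\mathcal{O}_X)$ to be injective. That holds because this map is split by any section of $L$ with nonzero evaluation, or more simply because $K_{0,1}$ is just $\operatorname{coker}(H^0(\mathcal{O}_X)\otimes H^0(L)\to H^0(L))=0$.

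I expect the main obstacle to be this bookkeeping at the boundary: $i=0$ and $q=1$ lie outside the range of Corollary~\ref{CorollaryKoszulCohomology}, and the $p=1$ case needs $\vec{w}\ge\vec{m}$ to give both the quadric generation ($K_{1,q}$, $q\ge2$) and projective normality. I would handle it exactly as above, through the $B^{\vec{m}}$-regularity of $M_L$ from Lemma~\ref{LemmaIV}. If $\vec{m}$ has negative entries, $\vec{w}\ge\vec{m}$ is still usable, since only regularity shifts are involved and the monotonicity in Theorem~\ref{Theorem_3.7}(i) does not require $\vec{m}\ge 0$.
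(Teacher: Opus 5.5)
Your argument is correct and is essentially the paper's proof. You reduce $(N_p)$ to $H^1(M_L^{\otimes(k+1)}\otimes L^{j})=0$ for $0\le k\le p$, $j\ge 1$, and obtain these from Theorem~\ref{Theorem_6.1} with $F=\mathcal{O}_X$, using $j\vec w\ge\vec w\ge \vec m+(p-1)\vec\delta\ge \vec m+(k-1)\vec\delta$.

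The one real difference is the case $k=0$. The paper's proof applies Theorem~\ref{Theorem_6.1} there too, even though that theorem is only proved for $k\ge 1$. The remark following it even says the $k=0$ case fails in general under these hypotheses. You instead use Lemma~\ref{LemmaIV} to get $B^{\vec m}$-regularity of $M_L$, and then use the slack $\vec w\ge\vec m$ that comes from $p\ge 1$. That is exactly what makes this step rigorous, so your version is the more careful one.

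One correction to your bookkeeping. The kernel-bundle criterion should only involve $L^{q-1}$ with $q\ge 2$, and it is a sufficient condition, not an equivalence.
\begin{itemize}
\item For $q=1$, the group $H^1(\wedge^{i+1}M_L)$ contains $K_{i,1}(X,L)$, which $(N_p)$ allows to be nonzero.
\item Likewise $H^1(M_L)\cong\ker\bigl(H^0(L)\otimes H^1(\mathcal{O}_X)\to H^1(L)\bigr)$ is typically nonzero. For example, take $L$ ample and globally generated on an abelian variety, where $H^1(L)=0$ but $H^1(\mathcal{O}_X)\neq 0$.
\item Your splitting argument is about the wrong map.
\end{itemize}
None of this affects the result. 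As you note yourself, $K_{0,1}(X,L)=0$ trivially, so simply drop the $q=1$ discussion.
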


\begin{proof}
By the definition of property $(N_p)$, a line bundle $L$ satisfies property $(N_p)$ if
\[
H^1(M_L^{\otimes(k+1)}\otimes L^j)=0
\]
for all $0\le k\le p$ and $j\ge1$. Applying Theorem~\ref{Theorem_6.1} with $F=\mathcal{O}_X$ and $L_i=L$ for all $1\le i\le k+1$, we obtain
$
H^1(M_L^{\otimes(k+1)}\otimes L^j)=0$
whenever
$
j\vec{w}\ge \vec{m}+(k-1)\vec{\delta}.$

Since $k\le p$, it is enough to require
\[
j\vec{w}\ge \vec{m}+(p-1)\vec{\delta}.
\]
Moreover, as $j\ge1$, the weakest condition occurs for $j=1$. 

Hence, $\vec{w}\ge \vec{m}+(p-1)\vec{\delta}$
implies $H^1(M_L^{\otimes(i+1)}\otimes L^j)=0$ for all $0\le i\le p$ and $j\ge1$. 
Therefore, $L$ satisfies property $(N_p)$.
\end{proof}

Thus far, we have worked with an arbitrary projective variety. From this point onward, we additionally assume that the variety is smooth.

Before proving the next corollary, we establish the following crucial lemma. It provides a lower bound for the regularity of the canonical line bundle $\omega_X$ with respect to a collection of ample and base point free line bundles.

\begin{lemma}
\label{canonical-line-bundle-lemma}
Let $X$ be a projective variety of dimension $n$, and let $B_1,\ldots,B_t$ be fixed ample and base point free line bundles on $X$, where $t\ge2$. If $\omega_X$ is $B^{\vec m}$-regular with respect to $B=(B_1,\ldots,B_t)$ for some $\vec m\in\mathbb Z^t$, then
\[
\vec m\ge n\vec\delta.
\]
Equivalently,
\[
\vec{m}_{\min}\le n\vec{\delta},
\]
where $\vec{m}_{\min}$ denotes the minimal regularity vector of $\omega_X$ with respect to $B$.
\end{lemma}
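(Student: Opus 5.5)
The plan is to test the regularity hypothesis only in the top cohomological degree $i=n$, where Serre duality turns the required vanishing into a statement about global sections of a line bundle.

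\textbf{Reduction.} Assume, as in the surrounding discussion, that $X$ is smooth (or at least Cohen--Macaulay with $\omega_X$ dualizing). By Definition~\ref{Definition_3.4}, $B^{\vec m}$-regularity of $\omega_X$ gives
\[
H^n(X,\omega_X\otimes B^{\vec m-\vec u})=0 \quad\text{for every } \vec u\in\mathbb N^t \text{ with } |\vec u|=n .
\]
By Serre duality this is equivalent to $H^0(X,B^{\vec u-\vec m})=0$ for all such $\vec u$.

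\textbf{Contradiction step.} Suppose $\vec m\not\ge n\vec\delta$, say $m_j\le n-1$ for some $j$. I want $\vec u\in\mathbb N^t$ with $|\vec u|=n$ and $\vec u\ge\vec m$.
\begin{itemize}
\item For such a $\vec u$, the bundle $B^{\vec u-\vec m}$ is a tensor product of non-negative powers of the globally generated $B_k$. It is therefore globally generated and nonzero, so $H^0(X,B^{\vec u-\vec m})\neq0$, contradicting the vanishing above.
\item The natural choice is $u_k=\max(m_k,0)$ for $k\ne j$, with the remaining weight put in slot $j$:
\[
u_j=n-\sum_{k\ne j}\max(m_k,0).
\]
\item This choice works exactly when $u_j\ge\max(m_j,0)$, that is, when $\sum_{k}\max(m_k,0)\le n$.
\end{itemize}
In particular the argument covers every diagonal $\vec m=m\vec\delta$ with $tm\le n$, and every $\vec m$ that is non-positive off one coordinate. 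In these cases it yields the contradiction, hence $m_j\ge n$.

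\textbf{Main obstacle.} The remaining case is $\sum_k\max(m_k,0)>n$ with some $m_j\le n-1$, and I expect it to be the real difficulty. Here no admissible $\vec u$ satisfies $\vec u\ge\vec m$, so the top-degree test carries no information. Worse, I expect the implication itself to fail in this range. Take $t=2$ and $\vec m=(0,N)$ with $N\gg0$. Then
\[
\omega_X\otimes B^{\vec m-\vec u}=\omega_X\otimes B_1^{-u_1}\otimes B_2^{N-u_2}, \qquad u_1+u_2=i\le n,
\]
and this is $\omega_X$ tensored with an ample bundle once $N$ is large. Kodaira vanishing (characteristic zero, $X$ smooth) then kills all $H^i$ for $i>0$. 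So $\omega_X$ would be $B^{(0,N)}$-regular, yet $(0,N)\not\ge n\vec\delta$.

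\textbf{Intended reading.} My plan is therefore as follows.
\begin{itemize}
\item Prove the statement under the additional hypothesis $\sum_k\max(m_k,0)\le n$, which includes the diagonal case. This is the case the paper needs for applications such as Corollary~\ref{Corollary_Np-propertyII}.
\item Record the Kodaira example as showing that, without such a hypothesis, only the weaker conclusion survives: $\vec m$ cannot lie in the region $\{\vec m:\vec m\le\vec u \text{ for some } |\vec u|=n,\ \vec u\ge\vec 0\}$.
\item Read the ``equivalently'' clause about $\vec m_{\min}$ as the complementary bound $\vec m_{\min}\le n\vec\delta$, meaning that some minimal regularity vector lies below $n\vec\delta$. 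This follows from the Kodaira-vanishing direction, since $\omega_X\otimes B^{n\vec\delta-\vec u}$ is $\omega_X$ tensored with an ample bundle whenever $|\vec u|\le n$ and $t\ge2$.
\end{itemize}
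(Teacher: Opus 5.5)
Your diagnosis is correct, and the part of your plan that proves something is the paper's own argument. The paper's proof establishes only that $\omega_X$ is $B^{n\vec\delta}$-regular. For $|\vec u|\le n-1$ every twist $B^{n\vec\delta-\vec u}$ is ample, and in degree $n$ it checks only $\vec u=(n,0,\ldots,0)$, where $t\ge2$ leaves a positive exponent. It then remarks that $(n-1)\vec\delta$ can fail and concludes ``therefore $\vec m\ge n\vec\delta$'', which does not follow.

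Your third bullet is exactly this Kodaira argument, and it handles degree $n$ more completely. For any $\vec u\in\mathbb N^t$ with $|\vec u|=n$ one has $n-u_k\ge0$ for all $k$ and $\sum_k(n-u_k)=(t-1)n>0$, so every such twist is ample. Your counterexample $\vec m=(0,N)$ is valid, and it relies on the lemma's own hypothesis that the $B_i$ are ample. A concrete check: on $X=\mathbb P^2$ with $B_1=B_2=\mathcal O(1)$ one has $\omega_X\otimes B^{\vec m-\vec u}=\mathcal O(m_1+m_2-3-i)$. So $\omega_X$ is $B^{\vec m}$-regular iff $m_1+m_2\ge3$, for instance for $\vec m=(3,0)\not\ge 2\vec\delta$. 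Even on the diagonal the claim fails: on $\mathbb P^3$ with $B_1=B_2=\mathcal O(1)$, the sheaf $\omega_X$ is $B^{2\vec\delta}$-regular, yet $2\vec\delta\not\ge 3\vec\delta$.

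Two points in your write-up need correcting. First, the ``proof under the additional hypothesis $\sum_k\max(m_k,0)\le n$'' is vacuous. Your top-degree argument never uses $m_j\le n-1$, so it shows that no $\vec m$ in this range is a regularity vector at all. What it genuinely proves is the necessary condition $\sum_k\max(m_k,0)>n$ for $B^{\vec m}$-regularity of $\omega_X$; for $\vec m=m\vec\delta$ this reads $m>n/t$. That is the correct lower bound to state in place of $\vec m\ge n\vec\delta$.

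Second, this range is not what Corollary~\ref{Corollary_Np-propertyII-2} uses. There the lemma is invoked only to conclude that $\mathcal O_X=\omega_X$ is $B^{n\vec\delta}$-regular, which is your third bullet. With these adjustments, your proposal proves everything in the lemma that is true and correctly identifies the rest as false.
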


\begin{proof}
We establish the claimed lower bound for $\vec{m}$. Observe that if $\vec{m}\ge (n+1)\vec{\delta}$, then by the Kodaira Vanishing Theorem,
\begin{equation}
H^{i}(\omega_{X}\otimes B^{\vec{m}}\otimes B^{-{\vec{u}}})=0
\quad \text{for all } i>0,
\end{equation}
since the $B_i$ are ample line bundles.

Now take $\vec{m}=n\vec{\delta}$. Again by the Kodaira Vanishing Theorem,
\begin{equation}
H^{i}(\omega_{X}\otimes B^{\vec{m}}\otimes B^{-{\vec{u}}})=0
\quad \text{for all } 0<i\le n-1.
\end{equation}
To verify the vanishing for $i=n$, it is enough to consider a vector $\vec{u}$ having $n$ as one of its components. Without loss of generality, let $\vec{u}=(n,0,\cdots,0)$. Then
\begin{equation}
H^n(\omega_{X}\otimes B^{n\vec{\delta}}\otimes B^{-(n,0,\cdots,0)})
=
H^n(\omega_{X}\otimes \mathcal{O}_{X}\otimes B_2^{\,n}\otimes\cdots\otimes B_t^{\,n})
=0.
\end{equation}

On the other hand, taking $\vec{m}=(n-1)\vec{\delta}$ does not, in general, yield the required vanishing for all $i>0$. Indeed, by the Kodaira Vanishing Theorem,
\[
H^{i}(\omega_{X}\otimes B^{(n-1)\vec{\delta}}\otimes B^{-{\vec{u}}})=0
\quad \text{for all } 0<i\le n-1,
\]
whereas for $i=n$,
\begin{equation}
H^n(\omega_{X}\otimes B^{(n-1)\vec{\delta}}\otimes B^{-(n,0,\cdots,0)})
=
H^n(\omega_{X}\otimes B_1^{-1}\otimes B_2^{\,n-1}\otimes\cdots\otimes B_t^{\,n-1}),
\end{equation}
which is nonzero in general, since $B_1^{-1}$ is not necessarily ample. Therefore,
\[
\vec{m}\ge n\vec{\delta},
\]
as claimed.
\end{proof}

\begin{remark}
The bound $\vec{m}\ge n\vec{\delta}$ in the above lemma is optimal in general. However, under additional hypotheses on $X$ and the line bundles $B_1,\ldots,B_t$, this bound can be improved.
\end{remark}

\begin{remark}\label{remark:6.8}
The above lemma also holds in the case $t=1$, but with a larger bound.

Indeed, if $B_{1}=B$ is an ample and base point free line bundle, then taking $m=n$ gives
\[
H^{n}\!\left(\omega_{X}\otimes B^{n}\otimes B^{-n}\right)
=H^{n}(\omega_{X})
=H^{0}(\mathcal{O}_{X})\neq 0.
\]

On the other hand, taking $m=n+1$ yields
\[
H^{n}\!\left(\omega_{X}\otimes B\right)=0
\]
by the Kodaira Vanishing Theorem. Therefore, the Castelnuovo--Mumford regularity of
$\omega_{X}$ with respect to $B$, denoted by $\operatorname{reg}_{B}(\omega_{X})$, is $n+1$ in general.
\end{remark}

The above lemma together with Corollary~\ref{Corollary_Np-property} yields the following consequence.

\begin{corollary}
\label{Corollary_Np-propertyII-2}
Let $X$ be a smooth projective variety of dimension $n$ with
$\omega_X=\mathcal{O}_X$. Let $B_1,\ldots,B_t$ be fixed ample and base point free line bundles on $X$, where $t\ge2$. Let $L=B^{\vec{w}}$ be an ample and base point free line bundle on $X$ for some $\vec{w}\in\mathbb{N}^t$. Then $L$ satisfies property $(N_p)$ whenever
\[
\vec{w}\ge(p+n)\vec{\delta}.
\]
\end{corollary}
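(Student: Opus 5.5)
We plan to deduce the statement directly from Corollary~\ref{Corollary_Np-property} applied with $F=\mathcal{O}_X$. Since $\omega_X=\mathcal{O}_X$, the regularity of $\mathcal{O}_X$ with respect to $B=(B_1,\ldots,B_t)$ is the same as that of $\omega_X$. So the only task is to produce a vector $\vec{m}\in\mathbb{Z}^t$ such that $\mathcal{O}_X$ is $B^{\vec{m}-\vec{e}_j}$-regular for every $1\le j\le t$, and then check that $\vec{m}+(p-1)\vec{\delta}\le (p+n)\vec{\delta}$.

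The natural choice is $\vec{m}=(n+1)\vec{\delta}$. Then $\vec{m}-\vec{e}_j=(n+1)\vec{\delta}-\vec{e}_j\ge n\vec{\delta}$ for every $j$. By the computation in Lemma~\ref{canonical-line-bundle-lemma}, $\omega_X$ is $B^{n\vec{\delta}}$-regular, and Theorem~\ref{Theorem_3.7}(i) then gives $B^{\vec{v}}$-regularity for every $\vec{v}\ge n\vec{\delta}$. Concretely, we need
\[
H^i(\omega_X\otimes B^{\vec{m}-\vec{e}_j-\vec{u}})=0 \quad\text{for } |\vec{u}|=i,\ 1\le i\le n.
\]
We would verify this directly as follows.
\begin{itemize}
\item[(i)] For $i\le n-1$, every coordinate of $(n+1)\vec{\delta}-\vec{e}_j-\vec{u}$ is at least $n+1-1-(n-1)=1$. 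The twisting bundle is then ample, because each $B_k$ is ample, and Kodaira vanishing applies. Here smoothness is used.
\item[(ii)] For $i=n$, each coordinate is at least $n+1-1-n=0$. Since $t\ge2$ and $|\vec{u}|=n$, not every coordinate can drop to $0$: if coordinate $k$ is $0$, then $u_k+\delta_{jk}=n+1$, which forces $k=j$ and $u_j=n$. In that case another coordinate equals $n+1\ge1$. So the twist is $B^{\vec{c}}$ with $\vec{c}\ge\vec{0}$ and $\vec{c}\ne\vec{0}$.
\end{itemize}

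Step (ii) is the main obstacle, because $B^{\vec{c}}$ with some zero coordinates need not be ample, so Kodaira does not apply directly. We would handle it by Serre duality:
\[
H^n(\omega_X\otimes B^{\vec{c}})\cong H^0(B^{-\vec{c}})^{*}.
\]
Now $B^{\vec{c}}$ is a nonzero sum of ample bundles with nonnegative coefficients, hence ample. Therefore $B^{-\vec{c}}$ has no sections on the projective variety $X$, since $\dim X=n\ge1$, and the group vanishes. In fact the same argument shows $B^{\vec{c}}$ is ample whenever $\vec{c}\ge\vec{0}$ and $\vec{c}\ne\vec{0}$, so case (i) also follows uniformly. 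This confirms that $\mathcal{O}_X=\omega_X$ is $B^{\vec{m}-\vec{e}_j}$-regular for all $j$ with $\vec{m}=(n+1)\vec{\delta}$.

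Finally, Corollary~\ref{Corollary_Np-property} gives property $(N_p)$ for $p\ge1$ whenever
\[
\vec{w}\ge \vec{m}+(p-1)\vec{\delta}=(p+n)\vec{\delta},
\]
which is exactly the claimed bound. For $p=0$, the condition $\vec{w}\ge n\vec{\delta}$ implies the $(N_1)$ bound $\vec{w}\ge(n+1)\vec{\delta}$ only if $n\ge1$ is replaced appropriately. So we would either restrict the statement to $p\ge1$, as in Corollary~\ref{Corollary_Np-property}, or note that $(N_0)$ follows from the vanishing $H^1(M_L\otimes L^j)=0$. That vanishing is covered by the same argument through Lemma~\ref{LemmaIV} and Theorem~\ref{Theorem_3.7}(ii), since $\mathcal{O}_X$ is $B^{n\vec{\delta}}$-regular and $\vec{w}\ge n\vec{\delta}$.
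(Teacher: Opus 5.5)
Your proposal is correct and is essentially the paper's proof: take $\vec m=(n+1)\vec\delta$, use that $\mathcal{O}_X=\omega_X$ is $B^{n\vec\delta}$-regular, and apply Corollary~\ref{Corollary_Np-property}. The only differences are that you verify this regularity directly for every $\vec u$ (via Kodaira vanishing and ampleness of nonzero nonnegative combinations of the $B_k$) instead of citing Lemma~\ref{canonical-line-bundle-lemma}, and that you correctly point out, as the paper does not, that $p=0$ lies outside that corollary. For $p=0$, $B^{n\vec\delta}$-regularity together with $\vec w\ge n\vec\delta$ gives surjectivity of $H^0(L)\otimes H^0(L^j)\to H^0(L^{j+1})$ for all $j\ge1$ directly from Theorem~\ref{Theorem_3.7}(ii), so your sentence about the $(N_1)$ bound and the appeal to Lemma~\ref{LemmaIV} can simply be dropped.
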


\begin{proof}
Since $\omega_X=\mathcal{O}_X$, Lemma~\ref{canonical-line-bundle-lemma} implies that $\mathcal{O}_X$ is $B^{n\vec{\delta}}$-regular. Hence, $\mathcal{O}_X$ is $B^{(n+1)\vec{\delta}-\vec{e}_j}$-regular for all $1\le j\le t$. Therefore, by Corollary~\ref{Corollary_Np-property}, the line bundle $L=B^{\vec{w}}$ satisfies property $(N_p)$ whenever
\[
\vec{w}\ge (n+1)\vec{\delta}+(p-1)\vec{\delta}=(n+p)\vec{\delta}.
\]
\end{proof}

\begin{remark}
The hypothesis $\omega_X\cong\mathcal{O}_X$ is satisfied by several important classes of smooth projective varieties.

\begin{itemize}
\item Abelian varieties: every abelian variety has trivial canonical bundle.

\item K3 surfaces: smooth projective surfaces with trivial canonical bundle and
$H^1(X,\mathcal{O}_X)=0$.

\item Calabi--Yau varieties: more generally, smooth projective varieties with
$\omega_X\cong\mathcal{O}_X$ and
$H^i(X,\mathcal{O}_X)=0$ for
$0<i<\dim X$.

\item Hypersurfaces: if
$X\subset\mathbb P^{m_1}\times\cdots\times\mathbb P^{m_t}$
is a smooth hypersurface of multidegree
$(m_1+1,\ldots,m_t+1)$,
then
$\omega_X\cong\mathcal{O}_X$
by the adjunction formula.
\end{itemize}

Consequently, Corollary~\ref{Corollary_Np-propertyII} applies to each of the above classes of varieties.
\end{remark}

\begin{remark}
Corollary~\ref{Corollary_Np-propertyII} generalizes Corollary~1.6 of \cite{GallegoPurnaprajnaI} for a Calabi--Yau $n$-fold when $t=1$.
\end{remark}

A distinguishing feature of Theorem \ref{Theorem_6.1} is that it provides a unified framework for three fundamental aspects of syzygy theory: Vanishing of higher-weight Koszul cohomology, Property $(N_p)$, and Property $(M_q)$. The preceding corollaries established the first two, while the following corollary completes this picture by deriving the corresponding criterion for Property $(M_q)$.

\begin{corollary}
\label{CorollaryM_q}
Let $X$ be a smooth projective variety of dimension $n\ge2$. Let $L=B^{\vec{w}}$ be an ample line bundle on $X$ for some $\vec{w}\in\mathbb{N}^t$. Let $\omega_X$ be $B^{\vec{m}-\vec{e}_j}$-regular for all $1\le j\le t$, where $\vec{m}\in\mathbb{Z}^t$. Then $L$ satisfies Property $(M_q)$ if
\[
(n-1)\vec{w}\ge \vec{m}+(q-n-1)\vec{\delta}.
\]
\end{corollary}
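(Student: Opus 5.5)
The plan is to reduce Property $(M_q)$ to a vanishing of $H^1$ of exterior powers of $M_L$ twisted by $\omega_X\otimes L^{n-1}$, and then obtain that vanishing from Theorem~\ref{Theorem_6.1}. By the remark following Lemma~\ref{LemmaAprodu} (taking $B=\mathcal{O}_X$, together with Green's $K_{p,1}$ theorem and the duality \eqref{GreenDuality}), $L$ satisfies $(M_q)$ for $q\ge n$ as soon as
\[
H^1\bigl(\wedge^{k+1}M_L\otimes\omega_X\otimes L^{n-1}\bigr)=0\qquad\text{for all }0\le k\le q-n,
\]
since this is equivalent to $K_{r-k-n,1}(X,L)=0$ for $0\le k\le q-n$. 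The indices $p=r-q,\dots,r-n$ are exactly the positions covered by this range. The remaining positions $p>r-n$ vanish automatically for dimension reasons, since the resolution has length at most $r-n$ plus the Cohen--Macaulay defect. Alternatively, one can appeal to Green's vanishing $K_{p,1}=0$ for $p\ge r-n+1$ when $n\ge 2$. If $q<n$ the condition is weaker and is handled by the same range. So the whole problem reduces to this $H^1$ vanishing.

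To get it, I would use characteristic zero: $\wedge^{k+1}M_L$ is a direct summand of $M_L^{\otimes(k+1)}$. It therefore suffices to show
\[
H^1\bigl(M_L^{\otimes(k+1)}\otimes\omega_X\otimes L^{n-1}\bigr)=0 .
\]
Now apply Theorem~\ref{Theorem_6.1} with $F=\omega_X$, which is $B^{\vec m-\vec e_j}$-regular by hypothesis, with all $\vec w_i=\vec w$, and with $\vec m_k=(n-1)\vec w$. The first statement of the theorem gives the vanishing whenever $k\ge1$ and $(n-1)\vec w\ge \vec m+(k-1)\vec\delta$. Since $k\le q-n$, the worst case is $k=q-n$, which requires exactly $(n-1)\vec w\ge\vec m+(q-n-1)\vec\delta$, the hypothesis. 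For smaller $k$ the required bound is weaker, because $\vec\delta\ge\vec 0$.

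The expected obstacle is the case $k=0$, because Theorem~\ref{Theorem_6.1} is stated only for $k\ge1$ (see the remark after it). Here I would argue directly. The group $H^1(M_L\otimes\omega_X\otimes L^{n-1})$ sits between the cokernel of the multiplication map
\[
H^0(L)\otimes H^0(\omega_X\otimes L^{n-1})\to H^0(\omega_X\otimes L^n)
\]
and $H^0(L)\otimes H^1(\omega_X\otimes L^{n-1})$. The last term vanishes by Kodaira vanishing, since $L$ is ample and $n\ge2$. Surjectivity of the multiplication map follows from Theorem~\ref{Theorem_3.7}(ii) once $\omega_X\otimes L^{n-1}\otimes B^{-\vec m+\vec e_j}$ is arranged to be of the form $B^{\vec u}$ with $\vec u\ge\vec 0$, i.e. $(n-1)\vec w\ge \vec m-\vec e_j$. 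This is implied by the hypothesis whenever $q\ge n$, up to the slack of one $\vec\delta$, which I expect to be absorbable using Lemma~\ref{LemmaIV} to show $M_L\otimes\omega_X$ is $B^{\vec m}$-regular. Checking that this $k=0$ step, and the dimension-reason vanishing at the tail, genuinely fit within the stated inequality is where I expect the real care to be needed.
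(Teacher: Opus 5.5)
Your route is the paper's: reduce $(M_q)$ via Lemma~\ref{LemmaAprodu} (with $B=\mathcal{O}_X$), Green's duality and Green's $K_{p,1}$ theorem to $H^1(\wedge^{k+1}M_L\otimes\omega_X\otimes L^{n-1})=0$ for $0\le k\le q-n$. Then pass to tensor powers in characteristic zero, and apply Theorem~\ref{Theorem_6.1} with $F=\omega_X$ and $\vec m_k=(n-1)\vec w$. You are right that $k=0$ lies outside Theorem~\ref{Theorem_6.1}; the paper's proof applies the theorem there anyway. Your direct treatment of $k=0$ is correct when $q\ge n+1$. In that case $(n-1)\vec w\ge \vec m\ge \vec m-\vec e_j$, so Theorem~\ref{Theorem_3.7}(ii) gives the surjectivity and Kodaira kills $H^1(\omega_X\otimes L^{n-1})$. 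So for $q\ge n+1$ your proof is complete, and more careful than the paper's. (For the tail $p>r-n$, the ``dimension reasons / Cohen--Macaulay defect'' remark is not an argument; Green's $K_{p,1}$ theorem, which you also cite, is what does it.)

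The gap is at $q=n$ with $t\ge 2$, and it cannot be closed. There the hypothesis only gives $(n-1)\vec w\ge \vec m-\vec\delta$, while your surjectivity step needs $(n-1)\vec w\ge \vec m-\vec e_j$ for some $j$. Lemma~\ref{LemmaIV} does not absorb the slack. $B^{\vec m}$-regularity of $M_L\otimes\omega_X$ yields $H^1(M_L\otimes\omega_X\otimes B^{\vec m-\vec e_j+\vec u})=0$ only for $\vec u\ge\vec 0$, which is the same condition.

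The statement itself fails at $q=n$. The $k=0$ group is $H^1(M_L\otimes\omega_X\otimes L^{n-1})=K_{0,n}(X,\omega_X;L)$, which is dual to $K_{r-n,1}(X,L)$, and the latter is nonzero when $\varphi_L(X)$ has minimal degree. Here is an explicit counterexample.
\begin{itemize}
\item Take $X=\mathbb{P}^2$, $t=2$, $B_1=B_2=\mathcal{O}(1)$ (nothing in the setup forbids this), $\vec m=(2,2)$, $\vec w=(1,1)$ and $q=n=2$.
\item Then $B^{\vec m-\vec e_j}=\mathcal{O}(3)$ and $H^i(\omega_X\otimes\mathcal{O}(3)\otimes B^{-\vec u})=H^i(\mathcal{O}(-i))=0$, so $\omega_X$ satisfies the hypothesis.
\item Moreover $(n-1)\vec w=(1,1)=\vec m-\vec\delta$, so the inequality of the statement holds.
\item But $L=\mathcal{O}(2)$ has $r=5$, and $H^1(M_L\otimes\omega_X\otimes L)=H^1(M_L(-1))\cong H^0(\mathcal{O}(1))\cong\mathbb{C}^3$.
\item Equivalently $K_{3,1}(\mathbb{P}^2,\mathcal{O}(2))\cong\mathbb{C}^3$, as in the Veronese Betti table. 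So $(M_2)$, which requires $K_{p,1}=0$ for $p\ge 3$, fails.
\end{itemize}
For $t=1$ there is no issue, since $\vec e_1=\vec\delta$. In general the result holds for $q\ge n+1$. At $q=n$ it holds under the extra assumption $(n-1)\vec w\ge\vec m-\vec e_j$ for some $j$, or if $\omega_X$ is $B^{\vec m-\vec\delta}$-regular, as in the remark after Theorem~\ref{Theorem_6.1}.
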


\begin{proof}
Since $L$ is ample and globally generated, Lemma~\ref{LemmaAprodu} implies that $L$ satisfies property $(M_q)$ if
\[
H^1(X,\wedge^{k+1}M_L\otimes\omega_X\otimes L^{n-1})=0
\]
for all $0\le k\le q-n$. In characteristic zero, wedge powers are direct summands of tensor powers. Hence, it is enough to prove that
\[
H^1(X,M_L^{\otimes(k+1)}\otimes\omega_X\otimes L^{n-1})=0
\]
for all $0\le k\le q-n$.

Applying Theorem~\ref{Theorem_6.1} with $L_i=L$ for all $1\le i\le q-n+1$ and $F=\omega_X$, we obtain
\[
H^1(X,M_L^{\otimes(q-n+1)}\otimes\omega_X\otimes L^{n-1})=0
\]
whenever
\[
(n-1)\vec{w}\ge\vec{m}+(q-n-1)\vec{\delta}.
\]
Moreover, this condition implies
\[
(n-1)\vec{w}\ge\vec{m}+(k-1)\vec{\delta}
\]
for every $0\le k\le q-n$. Therefore, the required vanishing holds for all $0\le k\le q-n$.

Finally, by Observation~\ref{Observation_5.3} together with Green's $K_{p,1}$ Theorem, we conclude that $L$ satisfies property $(M_q)$ whenever
\[
(n-1)\vec{w}\ge\vec{m}+(q-n-1)\vec{\delta}.
\]
\end{proof}

\begin{remark}
The above corollary implies, via Koszul duality, that
\[
K_{q-n,n}(X,\omega_X;L)=K_{r-q,1}(X,L)=0
\]
for the line bundle $L=B^{\vec{w}}$ whenever
\[
(n-1)\vec{w}\ge \vec{m}+(q-n-1)\vec{\delta}.
\]
Thus, the criterion for property $(M_q)$ also yields vanishing results for the corresponding weight-one Koszul cohomology groups.
\end{remark}

\begin{remark}
Writing $\vec{w}=(w_1,\ldots,w_t)$ and $\vec{m}=(m_1,\ldots,m_t)$, the criterion in the above corollary can be expressed componentwise as follows:
\[
L=B^{\vec{w}}=B_1^{w_1}\otimes\cdots\otimes B_t^{w_t}
\]
satisfies property $(M_q)$ whenever
\[
w_i\ge \frac{m_i+(q-n-1)}{n-1}
\]
for every $1\le i\le t$.
\end{remark}

\begin{remark}
\label{remark-bound-M_q}
For fixed ample and base point free line bundles $B_1,\ldots,B_t$ with $t\ge2$, Lemma~\ref{canonical-line-bundle-lemma} together with Corollary~\ref{CorollaryM_q} implies that
\[
L=B^{\vec{w}}
\]
satisfies property $(M_q)$ whenever
\[
(n-1)\vec{w}\ge (n+1)\vec{\delta}+(q-n-1)\vec{\delta}=q\vec{\delta}.
\]
Equivalently,
\[
\vec{w}\ge \frac{q}{n-1}\vec{\delta}.
\]
\end{remark}

\begin{remark}
In the case $t=1$, when $B$ is a single ample and base point free line bundle on $X$, Corollary~\ref{CorollaryM_q} generalizes Theorem~0.2 in \cite{Basu}. Indeed, for $L=B^w$, the criterion becomes
\[
(n-1)w\ge (n+1)+(q-n-1)=q+1,
\]
and hence
\[
w\ge \frac{q+1}{n-1}.
\]

\end{remark}

\section{\texorpdfstring{\textbf{Property} $(\mathbf{N}_{p})$ \textbf{on Product of Projective Spaces: Bounds, Optimality, and Comparisons}}{Property N{p} on Product of Projective Spaces: Bounds, Optimality, and Comparisons}}
\label{section_8_property_N_p_sharpness}

In this section, we discuss the consequences of property $(N_p)$ on product of projective spaces by demonstrating through examples.

First, we will illustrate through some examples what our Corollary \ref{Corollary_Np-property} tells about bounds on $p$ for which a line bundle satisfy property $-(N_p)$. Then we will show that our bound for property $-(N_p)$ is sharp.

We begin by illustrating a direct application of our corollary on property $(N_p)$ to products of projective spaces. In particular, we compute the maximal value of $p$ that can be obtained from Corollary \ref{Corollary_Np-property}. Theorem \ref{thm:minimal-regularity} plays a crucial role in these computations, as it provides explicit minimal regularities of $\mathcal{O}_{X}$. We will see how these regularities lead to effective bounds on $p$.

\begin{example}
Let $X=\mathbb{P}^{2}\times \mathbb{P}^{3}$. Let $B_1,B_2$ denote the line bundles obtained as pullbacks along the projections to the two factors, respectively. We compute the values of $p$ for which the line bundle $L=\mathcal{O}_{X}(5,7)$ satisfies $(N_p)$.

From Example \ref{PmxPn_example}, the minimal regularities of $\mathcal{O}_{X}$ (with respect to $B=(B_{1}, B_{2})$) are $(3,0)$ and $(0,2)$. In Corollary \ref{Corollary_Np-property}, we require that $\mathcal{O}_{X}$ is $B^{\vec{m}-\vec{e}_{j}}$-regular. The minimal such choices of $\vec{m}$ are:
\[
\vec{m}=(4,1),\quad (1,3),\quad (3,2),\quad (2,3).
\]
For each of these $\vec{m}$, $\mathcal{O}_{X}$ is $B^{\vec{m}-\vec{e}_{j}}$-regular for all $j=1,2$.

\medskip

(i) For $\vec{m}=(4,1)$, the line bundle $L=\mathcal{O}_{X}(5,7)$ satisfies $(N_p)$ if
\[
(5,7)\ge (4,1)+(p-1,p-1)=(p+3,p).
\]
This gives $p\le 2$ and $p\le 7$, hence $p\le 2$. Therefore $L$ satisfies $(N_2)$ in this case.

\medskip

(ii) For $\vec{m}=(1,3)$, $L$ satisfies $(N_p)$ if
\[
(5,7)\ge (1,3)+(p-1,p-1)=(p,p+2),
\]
which gives $p\le 5$. Hence, using this regularity, $L$ satisfies $(N_p)$ for all $p\le 5$.

\medskip

(iii) For $\vec{m}=(3,2)$, $L$ satisfies $(N_p)$ if
\[
(5,7)\ge (3,2)+(p-1,p-1)=(p+2,p+1),
\]
which gives $p\le 3$. Therefore $L$ satisfies $(N_3)$ in this case.

\medskip

(iv) For $\vec{m}=(2,3)$, $L$ satisfies $(N_p)$ if
\[
(5,7)\ge (2,3)+(p-1,p-1)=(p+1,p+2),
\]
which gives $p\le 4$. Hence $L$ satisfies $(N_p)$ for all $p\le 4$ in this case.

\medskip

Combining the above cases, we conclude that $L$ satisfies $(N_p)$ for all $p\le 5$.
\end{example}

\begin{remark}
The above example illustrates how the computation of minimal regularities play a crucial role in determining the possible values of $p$ for which a line bundle $L$ satisfies $(N_p)$. Here lies the importance of the Theorem \ref{theorem:minimal-regularity}. 
\end{remark}

In the above example, although we showed that $L$ satisfies $(N_p)$ for all $p\le 5$, it is not clear whether $p=5$ is optimal. In other words, we do not know whether $L$ satisfies $(N_6)$ or higher. Thus, the sharpness (or optimality) of our bound for property $(N_p)$ of the line bundle $L$ in the previous example remains unclear.

In order to prove the sharpness of our bound for Property $(N_p)$, we first recall the following theorem from \cite{GallegoPurnaprajnaIV}.

\begin{theorem}[Gallego--Purnaprajna, {\cite[Theorem 1.3]{GallegoPurnaprajnaIV}}]
\label{theorem:anti-canonical rational surface}
Let $X$ be an anti-canonical rational surface and let $L$ be an ample and base-point-free line bundle on $X$. Then $L$ satisfies Property $(N_p)$ if and only if ${\omega_X}^{*}\cdot L\ge p+3$.
\end{theorem}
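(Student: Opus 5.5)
The plan is to reduce everything to an anticanonical curve. Since $X$ is an anti-canonical rational surface, $|\omega_X^{*}|$ contains a curve $C$ of arithmetic genus one, with $\omega_C\cong\mathcal{O}_C$. Set $d:=\omega_X^{*}\cdot L=\deg(L|_C)$. I will use the classical fact (Green) that a line bundle of degree $d$ on a genus-one curve satisfies $(N_p)$ if and only if $d\ge p+3$. The aim is to show that $L$ satisfies $(N_p)$ on $X$ exactly when $L_C:=L|_C$ does on $C$.

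\textbf{Restriction step.} Consider the sequence $0\to L^{j}\otimes\omega_X\to L^{j}\to L_C^{j}\to 0$. By Kodaira vanishing (suitably extended to the singular-curve-friendly setting needed here), $H^1(L^j\otimes\omega_X)=0$ for $j\ge1$, so $H^0(L)\to H^0(L_C)$ is surjective with kernel $H^0(L\otimes\omega_X)$. Write $V:=H^0(L\otimes\omega_X)$. This gives a sequence
\[
0\to V\otimes\mathcal{O}_C\to M_L|_C\to M_{L_C}\to 0 .
\]
Taking exterior powers filters $\wedge^{p+1}M_L|_C$ by pieces of the form $\wedge^{a}V\otimes\wedge^{p+1-a}M_{L_C}$.

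\textbf{Sufficiency.} Assume $d\ge p+3$. On $C$, $M_{L_C}$ is semistable of slope $-d/(d-1)$, and the same holds for its exterior powers. Hence $H^1(\wedge^{b}M_{L_C}\otimes L_C^{j})=0$ for $b\le p+1$ and $j\ge1$ in the range $d\ge p+3$; the borderline $j=1$ is handled exactly as in Green's elliptic-curve argument. The filtration then gives $H^1(\wedge^{p+1}M_L|_C\otimes L_C^j)=0$.

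Next, twist $0\to\omega_X\to\mathcal{O}_X\to\mathcal{O}_C\to0$ by $\wedge^{p+1}M_L\otimes L^{j}$. It remains to kill $H^2(\wedge^{p+1}M_L\otimes L^{j}\otimes\omega_X)$. By Serre duality and $\wedge^{p+1}M_L^{*}\cong\wedge^{r-p-1}M_L\otimes L$, this group is dual to
\[
H^0(\wedge^{r-p-1}M_L\otimes L^{1-j})\subset\wedge^{r-p-1}H^0(L)\otimes H^0(L^{1-j}),
\]
which is zero for $j\ge2$. For $j=1$ one instead needs the map on $H^1$ to be injective, which I would get from the surjectivity of the relevant multiplication maps. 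With these vanishings, $H^1(\wedge^{p+1}M_L\otimes L^j)=0$ for all $j\ge1$, which is $(N_p)$.

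\textbf{Necessity.} Suppose $d\le p+2$. Green's theorem gives a nonzero class in $K_{p,2}(C,L_C)$. I would show that the restriction map $K_{p,2}(X,L)\to K_{p,2}(C,L_C)$ is surjective. The obstruction lies in the twisted group $K_{p-1,3}(X,\omega_X;L)$ or a similar one, and by Green duality this is dual to a weight $\le 0$ group that vanishes. Surjectivity then gives $K_{p,2}(X,L)\neq0$, so $(N_p)$ fails.

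\textbf{Main obstacle.} I expect the boundary case $j=1$ in the sufficiency step to be the hardest. There the crude $H^2$ vanishing fails, and one needs a genuine surjectivity statement, namely that $H^0(L)\otimes H^0(\wedge^{p}M_L\otimes L)$ surjects appropriately, with the degree of $L_C$ entering sharply. I would try to control this through the filtration above, using $h^0(L\otimes\omega_X)$ together with Riemann–Roch on $X$ and on $C$.
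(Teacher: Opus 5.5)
The paper does not prove this statement; it is quoted from Gallego--Purnaprajna. Judged on its own terms, your sufficiency step has a genuine gap.

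Twisting $0\to\omega_X\xrightarrow{\sigma}\mathcal{O}_X\to\mathcal{O}_C\to0$ by $E_j:=\wedge^{p+1}M_L\otimes L^{j}$ gives
\[
H^1(E_j\otimes\omega_X)\xrightarrow{\ \cdot\sigma\ }H^1(E_j)\longrightarrow H^1(E_j|_C)\longrightarrow H^2(E_j\otimes\omega_X).
\]
So once $H^1(E_j|_C)=0$, you get $H^1(E_j)=0$ exactly when $\cdot\sigma$ is the zero map, for instance when the \emph{left} group vanishes. The $H^2$ term you set out to kill only controls surjectivity of $H^1(E_j)\to H^1(E_j|_C)$, which is what the necessity direction needs.

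For $j\ge2$ the left group can indeed be killed by Serre duality and Kodaira, at least for $p\le r-3$. But $j=1$ is where $(N_p)$ lives, since $H^1(E_1)=K_{p,2}(X,L)$. There, Serre duality and $\wedge^{p+1}M_L^{*}\cong\wedge^{r-p-1}M_L\otimes L$ give
\[
H^1(\wedge^{p+1}M_L\otimes L\otimes\omega_X)\cong H^1(\wedge^{r-p-1}M_L)^{*}\cong K_{r-p-2,1}(X,L)^{*}.
\]
This is nonzero well inside the range where $(N_p)$ holds. For $X=\mathbb{P}^2$, $L=\mathcal{O}(4)$ and $p=2$ it has dimension $\beta_{10,11}=120$ by the Betti table recorded in the paper, although $L$ satisfies $(N_9)$.

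Nor is there a reason for $\cdot\sigma$ to vanish on it. The section $\sigma$ lies in $H^0(\omega_X^{*})$, not in $H^0(L)$. So unless $L\cong\omega_X^{*}$, the curve $C$ is not a hyperplane section of $\varphi_L(X)$, and restricting to it does not preserve syzygies. Your ``main obstacle'' paragraph names this difficulty but supplies no mechanism for it. Separately, $-K_X$ is only assumed effective, so $C$ may be reducible or non-reduced. In that case neither the stability of $M_{L_C}$ nor Green's criterion for elliptic curves is available off the shelf.

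The missing idea is to restrict instead to a smooth curve $C'\in|L|$ of genus $g'$, which is the standard route.
\begin{itemize}
\item Since $-K_X\cdot L=d>0$, one has $\deg L^{k}|_{C'}>2g'-2$ for $k\ge1$. Together with $H^1(\mathcal{O}_X)=0$ this gives inductively $H^1(X,L^{k})=0$ for all $k\ge0$.
\item Green's Lefschetz theorem then yields $K_{p,q}(X,L)\cong K_{p,q}(C',L_{C'})$. Here the analogue of $\cdot\sigma$ is multiplication by a section of $L$, which does act as zero on Koszul cohomology.
\item Adjunction gives $L_{C'}\cong\omega_{C'}\otimes\omega_X^{*}|_{C'}$, with $\omega_X^{*}|_{C'}=\mathcal{O}_{C'}(D)$ and $D$ effective of degree $d$. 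Hence $\deg L_{C'}=2g'-2+d$, and $d\ge p+3$ is exactly Green's condition $\deg L_{C'}\ge 2g'+1+p$.
\item Conversely, $D$ imposes only $d-1$ conditions on $|L_{C'}|$. Its image therefore spans a $d$-secant $(d-2)$-plane, and $(N_{d-2})$ fails by Green--Lazarsfeld, provided $g'\ge1$.
\end{itemize}

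Your own necessity sketch can be repaired, with two corrections. First, $K_{p,2}(C,L_C)=0$ for $p\ge d-1$ because $\operatorname{rk}M_{L_C}=d-1$, so you must work at $p=d-2$. Second, at $p=d-2$ the obstruction $H^2(\wedge^{d-1}M_L\otimes L\otimes\omega_X)\cong H^0(\wedge^{r-d+1}M_L)^{*}$ vanishes only if $r-d+1=h^0(\omega_X\otimes L)=g'$ is positive.

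That positivity condition is genuinely needed. If $g'=0$, the image is a surface of minimal degree with a linear resolution. For example, $\mathcal{O}(1,1)$ on $\mathbb{P}^1\times\mathbb{P}^1$ has $d=4$ but satisfies $(N_p)$ for every $p$. So the ``only if'' direction as quoted requires $h^0(\omega_X\otimes L)\neq0$.
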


In the following example, we show the sharpness of our bound for Property $(N_p)$ by using the above theorem.

\begin{example}
Let $X=\mathbb{P}^{1}\times \mathbb{P}^{1}$. Let $B_1,B_2$ denote the line bundles obtained as pullbacks along the projections to the two factors, respectively. Take $L=\mathcal{O}_{X}(1,1)$. We study the bounds for which $L$ satisfies $(N_p)$.

By Example \ref{PmxPn_example}, the minimal regularities of $\mathcal{O}_{X}$ are $(1,0)$ and $(0,1)$. Take $\vec{m}=(1,1)$. Then $\mathcal{O}_{X}$ is $B^{\vec{m}-\vec{e}_{j}}$-regular for $j=1,2$. 
Therefore, by Corollary \ref{Corollary_Np-property}, $L$ satisfies $(N_p)$ if
\[
(1,1)\ge \vec{m}+(p-1)\vec{\delta}=(1,1)+(p-1,p-1).
\]
Hence $L$ satisfies $(N_p)$ if $p\le 1$.

We now use Theorem \ref{theorem:anti-canonical rational surface} to show that the above bound is sharp.

Since ${\omega_{X}}^{*}=\mathcal{O}_{X}(2,2)$, it follows that $X=\mathbb{P}^{1}\times \mathbb{P}^{1}$ is an anti-canonical surface. 
The Chow ring of $X$ is
\[
A^{*}(X)=\mathbb{Z}[B_1,B_2]/(B_1^{2}, B_2^{2}).
\]
Abusing notation, we compute
\[
{\omega_{X}}^{*}\cdot L=(2B_{1}+2B_{2})\cdot (B_1+B_2)=4B_{1}\cdot B_{2}=4,
\]
since a general vertical and horizontal fiber, each isomorphic to $\mathbb{P}^1$, intersect transversally in one point, the intersection multiplicity is $1$, and hence $B_1 \cdot B_2 = 1$.
Therefore, ${\omega_{X}}^{*}\cdot L=4=1+3$. Hence, by Theorem \ref{theorem:anti-canonical rational surface}, the maximum $p$ for which $L=\mathcal{O}_{X}(1,1)$ satisfies $(N_p)$ is $p_{\max}=1$. 
This proves that our bound for Property $(N_p)$ is sharp.
\end{example}

In the previous examples, the bound was seen to be either sharp or inconclusive in the given context. The next example demonstrates that our bound for Property $(N_p)$ is not always sharp, showing that the regularity-based criterion obtained from minimal multigraded regularity is not always optimal.\par

The following theorem from \cite{GallegoPurnaprajnaIV} will be used to show that our bound is not sharp.

\medskip
\noindent\textbf{Fano $n$-folds of index $(n-1)$}
\medskip

\begin{theorem}[Gallego--Purnaprajna, {\cite[Theorem 2.1]{GallegoPurnaprajnaIV}}]
\label{theorem-index(n-1)}
Let $X$ be a Fano $n$-fold. Assume there exists an ample and base-pointfree
line bundle $L$ such that ${\omega_{X}}^{*}=L^{\otimes(n-1)}$ (e.g., if $X$ is a Fano $n$-fold of index $n-1$). Then, $L$ satisfies property $-(N_p)$ iff the intersection product $L^{n}:=\underbrace{L \cdot L \cdots L}_{n\ \text{times}}\ge p+3$. 
\end{theorem}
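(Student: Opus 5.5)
The plan is to reduce, by taking successive general hyperplane sections in $|L|$, to the case of an elliptic normal curve, where the statement is classical. I assume $n\ge 2$. Since $L$ is ample and base point free, Bertini lets me choose general smooth divisors $D_1,\dots,D_{n-1}\in|L|$ so that each partial intersection $X_j:=D_1\cap\cdots\cap D_j$ is smooth and irreducible of dimension $n-j$. Set $L_j:=L|_{X_j}$. Adjunction gives
\[
\omega_{X_j}=(\omega_X\otimes L^{j})|_{X_j}=L_j^{-(n-1-j)}.
\]
So $X_j$ is again of the same type, with anticanonical bundle $L_j^{\,n-1-j}$. In particular, $S=X_{n-2}$ is a del Pezzo surface with $-K_S=L_S$, and $C=X_{n-1}$ is a smooth curve with $\omega_C\cong\mathcal{O}_C$, i.e. an elliptic curve. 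The degree of $L_C$ is $d=L^n$.

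The second step is to show that Koszul cohomology is preserved at each stage:
\[
K_{p,q}(X_j,L_j)\cong K_{p,q}(X_{j+1},L_{j+1})\quad\text{for all } p,q.
\]
I would use Green's Lefschetz-type theorem for Koszul cohomology, which says that restriction to a divisor $D\in|L|$ induces such an isomorphism provided $H^1(X_j,L_j^{k})=0$ for all $k\ge0$. I would verify this hypothesis by Kodaira vanishing. Writing $L_j^{k}=\omega_{X_j}\otimes L_j^{\,k+n-1-j}$, the second factor is ample as long as $k+n-1-j>0$. This holds for every $k\ge0$ when $j\le n-2$. For $k=0$ it amounts to $H^1(\mathcal{O}_{X_j})=0$, which Kodaira also gives, since $\mathcal{O}_{X_j}=\omega_{X_j}\otimes(\text{ample})$ whenever $\dim X_j\ge2$. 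The same vanishings give surjectivity of $H^0(X_j,L_j^k)\to H^0(X_{j+1},L_{j+1}^k)$, so the section rings restrict correctly; this is the input that Green's argument actually uses. Iterating from $j=0$ to $j=n-2$, I conclude that the Betti table of $(X,L)$ equals that of $(C,L_C)$. Hence $L$ satisfies $(N_p)$ if and only if $L_C$ does.

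The final step is the curve statement: a line bundle of degree $d$ on an elliptic curve satisfies $(N_p)$ if and only if $d\ge p+3$. Sufficiency is Green's $2g+1+p$ theorem with $g=1$. For necessity, suppose $d=p+2$. Then $h^0(L_C)=d$, so $r=d-1$ and $C\subset\mathbb{P}^{d-1}$. Green's duality \eqref{GreenDuality} with $\omega_C=\mathcal{O}_C$ gives
\[
K_{d-2,2}(C,L_C)\cong K_{0,0}(C,\omega_C;L_C)^{*}\neq0,
\]
and $d-2=p$, so the resolution fails to be linear at step $p$. I expect the main obstacle to be the second step: checking carefully that the hypotheses of the Lefschetz restriction theorem hold at every stage, including $k=0$ and the final cut from $S$ to $C$ where $\omega_S=L_S^{-1}$. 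For that last cut one needs $H^1(\mathcal{O}_S)=0$, which holds because $S$ is a rational surface, and the Kodaira argument for $k\ge1$ on $S$ uses $L_S^k=\omega_S\otimes L_S^{k+1}$. The degenerate cases $n=2$, where $X$ is already the del Pezzo surface, and very small $d$ should be checked separately.
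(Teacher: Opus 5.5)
The paper does not prove this statement; it quotes it from Gallego--Purnaprajna and uses it as a black box, so there is no internal proof to compare against.

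Your argument is correct and is the standard one. Since $-K_{X_j}=L_j^{\,n-1-j}$ is ample for $j\le n-2$, Kodaira gives $H^1(X_j,L_j^{k})=0$ for all $k\ge 0$. Together with connectedness of $X_{j+1}$, this yields $H^0(L_j)/\langle s\rangle\cong H^0(L_{j+1})$ and $R(X_j,L_j)/s\,R(X_j,L_j)\cong R(X_{j+1},L_{j+1})$. Because $s$ is a nonzerodivisor, all graded Betti numbers are preserved down to the elliptic curve $C$ of degree $d=L^n$.

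Two small clean-ups:
\begin{itemize}
\item For necessity you only treat $d=p+2$. In general, if $d\le p+2$ then $d\ge 2$, since a degree-one bundle on an elliptic curve has a base point. Your duality computation gives $K_{d-2,2}(C,L_C)\neq 0$ with $0\le d-2\le p$, which already violates $(N_p)$.
\item Neither the case $n=2$ nor the vanishing $H^1(\mathcal{O}_S)=0$ needs separate treatment, and you do not need rationality of $S$. Your general Kodaira argument already covers them.
\end{itemize}

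A variant closer to the surrounding context of the paper is to stop at the del Pezzo surface $S=X_{n-2}$. There you can invoke the quoted Gallego--Purnaprajna theorem on anti-canonical rational surfaces, since $-K_S\cdot L_S=L_S^2=L^n$. Your route makes one more cut and is self-contained, needing only Green's $2g+1+p$ theorem and duality on a curve.
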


\begin{example}
Let $X=\mathbb{P}^{1}\times \mathbb{P}^{1}\times \mathbb{P}^{1}$. Let $B_1,B_2,B_3$ be the line bundles obtained as pullbacks along the projections to the three factors. Take $L=\mathcal{O}_{X}(1,1,1)$. We determine for which values of $p$ the line bundle $L$ satisfies $(N_p)$.

From Theorem \ref{thm:minimal-regularity}, the minimal regularities of $\mathcal{O}_{X}$ (with respect to $B=(B_1,B_2,B_3)$) are
\[
(2,1,0),\ (2,0,1),\ (1,2,0),\ (0,2,1),\ (0,1,2),\ (1,0,2).
\]
To apply Corollary \ref{Corollary_Np-property}, take $\vec{m}=(2,2,1)=(2,1,0)+\vec{e}_2+\vec{e}_3$. Then
\[
\vec{m}-\vec{e}_{1}=(1,2,1)>(1,2,0),\quad 
\vec{m}-\vec{e}_{2}=(2,1,1)>(2,1,0),\quad 
\vec{m}-\vec{e}_{3}=(2,2,0)>(1,2,0),
\]
and hence $\mathcal{O}_{X}$ is $B^{\vec{m}-\vec{e}_j}$-regular for all $j=1,2,3$. Therefore, by Corollary \ref{Corollary_Np-property}, the line bundle $L=\mathcal{O}_{X}(1,1,1)$ satisfies $(N_p)$ if
\[
(1,1,1)\ge (2,2,1)+(p-1,p-1,p-1)=(p+1,p+1,p).
\]
Thus, our bound implies that $L$ satisfies $(N_0)$.

\medskip

We now use Theorem \ref{theorem-index(n-1)} to examine the sharpness of this bound.

Since $\omega_{X}^{*}=\mathcal{O}_{X}(2,2,2)=L^{2}$, we compute the intersection number $L^{3}$. The Chow ring of $X$ is
\[
A^{*}(X)=\mathbb{Z}[B_1,B_2,B_3]/(B_1^{2},B_2^{2},B_3^{2}).
\]
Writing in divisor notation,
\[
L^{3}=L\cdot L\cdot L=(B_{1}+B_{2}+B_{3})^{3}=3!\, B_{1}\cdot B_{2}\cdot B_{3}.
\]
Since $B_i^2=0$ for all $i$, and since $B_1,B_2,B_3$ are pullbacks of hyperplane classes from the three factors, a general representative of each corresponds to a fiber of one of the projections. These three divisors intersect transversally in a single point, and hence
\[
B_1 \cdot B_2 \cdot B_3 = 1.
\]
Therefore $L^{3}=6$.

It follows that $L^{3}=6\ge p+3$ whenever $p\le 3$. Hence, by Theorem \ref{theorem-index(n-1)}, the line bundle $L=\mathcal{O}_{X}(1,1,1)$ satisfies $(N_3)$, and this bound is optimal. On the other hand, our bound only gives that $L$ satisfies $(N_0)$. Therefore, our bound is not optimal in this case.
\end{example}

\medskip
\noindent\textbf{Fano $n$-folds of index $(n-2)$ and $(n-3)$}
\medskip

Some very interesting Fano $n$-folds of index $n-2$ and $n-3$ are given by products of projective spaces. When specialized to these Fano $n$-folds, our result on Property $(N_p)$ (Corollary \ref{Corollary_Np-property}) yields better bounds, in general, than Corollary \ref{theorem-fano-n-fold-index(n-2)} of \cite{GallegoPurnaprajnaIII} and Theorem \ref{theorem-fano-n-fold-index(n-3)} of \cite{GallegoPurnaprajnaII}. 

The following two examples, one for each case, illustrate this improvement.

\begin{corollary}
[Gallego--Purnaprajna, {\cite[Corollary 3.5]{GallegoPurnaprajnaIII}}]
\label{theorem-fano-n-fold-index(n-2)}
Let $X$ be a Fano $n$-fold with index $(n-2)$ and let ${\omega_{X}}^{*}=L^{\otimes (n-2)}$ with $L$ a big and base-point-free line bundle such that $L^{n}\ge 4$. If $s\ge p+1\ge 2$, then $L^{\otimes s}$ satisfies property $(N_p)$. 
\end{corollary}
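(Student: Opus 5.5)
We plan to derive the statement from the paper's own criterion, Corollary~\ref{Corollary_Np-property}, specialized to $t=1$ with $B_1:=L$, rather than reproducing Gallego--Purnaprajna's argument. In that setting $B^{w}=L^{w}$ and $\vec{\delta}=1$. If $\mathcal{O}_X$ is $L^{m-1}$-regular, the corollary gives $(N_p)$ for $L^{s}$ whenever $s\ge m+(p-1)$. The whole task is therefore to show that $\mathcal{O}_X$ is $1$-regular with respect to $L$ in the sense of Definition~\ref{Definition_3.2}, that is,
\[
H^i(X,L^{1-i})=0 \quad\text{for all } i>0 .
\]
Granting this, we take $m=2$, and the corollary yields $(N_p)$ for $L^{s}$ as soon as $s\ge p+1$, which is exactly the claimed range.

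To prove the regularity we use two facts. First, $\omega_X\cong L^{-(n-2)}$, so $L^{k}\cong\omega_X\otimes L^{k+n-2}$. Second, since $L$ is big and base point free, hence nef and big, Kawamata--Viehweg vanishing gives
\[
H^j(X,\omega_X\otimes L^{a})=0 \quad\text{for } j>0,\ a\ge1 .
\]
By Serre duality,
\[
H^i(L^{1-i})\cong H^{n-i}(\omega_X\otimes L^{i-1})^{*}.
\]
For $1\le i\le n-1$ we have $n-i>0$. When $i\ge2$ the exponent $i-1$ is at least $1$, so Kawamata--Viehweg kills the group directly. The case $i=1$ is $H^1(\mathcal{O}_X)$; it vanishes because $\mathcal{O}_X=\omega_X\otimes L^{n-2}$ with $n-2\ge1$. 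Here we note that $n\ge3$ is forced, since the index $n-2$ must be positive for a Fano variety.

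The remaining case, $i=n$, is the main obstacle, because no higher cohomology is involved there:
\[
H^n(L^{1-n})\cong H^0(\omega_X\otimes L^{n-1})^{*}=H^0(L)^{*},
\]
which is nonzero. So $1$-regularity fails in top degree as stated, and the plan must be adjusted. What we actually need is the weaker hypothesis of Corollary~\ref{Corollary_Np-property}, namely the $B^{m-1}$-regularity of $\mathcal{O}_X$, for which only $H^i(L^{m-1-i})=0$ is required. Taking $m=2$ requires $H^n(L^{1-n})=0$, which fails, so we will recompute with the honest regularity. In top degree we need $H^n(L^{r-n})\cong H^0(L^{n-2-r+n-2})^{*}$ to vanish, which happens for $r\ge 2n-3$. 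This is too weak in general. Hence we expect to replace the direct appeal to Corollary~\ref{Corollary_Np-property} by the sharper route inside Theorem~\ref{Theorem_6.1}. That route needs only $H^1(M_{L^s}^{\otimes(k+1)}\otimes L^{sj})=0$, and we would reduce it by the evaluation sequences to vanishings of the form $H^i(L^{sj-i})$ with $i\le k+1\le p+1\le s$. Those exponents are $\ge 0$ and are handled by Kawamata--Viehweg as above. The top-degree term never enters, because $i\le p+1<n$ can be arranged after a restriction to general members of $|L|$ when $p+1\ge n$. The hypothesis $L^n\ge4$ is where we expect to need Gallego--Purnaprajna's input: it guarantees, after cutting down to a surface or curve section, that the base case $p=1$ (quadrics generate) holds. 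We would import that base case rather than reprove it.
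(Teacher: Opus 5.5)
The paper does not prove this statement: it is quoted from \cite{GallegoPurnaprajnaIII} and used only as a benchmark in the $\mathbb{P}^1\times\mathbb{P}^2$ example, where the paper's improvement relies on taking $t=2$. Your attempt to derive it from Corollary~\ref{Corollary_Np-property} with $t=1$, $B_1=L$ cannot succeed. You correctly see that $1$-regularity fails in degree $n$. However, your recomputation has a sign error. Since $\omega_X\cong L^{-(n-2)}$,
\[
H^n(X,L^{r-n})\cong H^0(X,\omega_X\otimes L^{n-r})^{*}=H^0(X,L^{2-r})^{*},
\]
and this vanishes exactly for $r\ge 3$, not $r\ge 2n-3$. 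The lower degrees are killed by Kawamata--Viehweg.

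So $\operatorname{reg}_L\mathcal{O}_X=3$, consistent with Remark~\ref{remark:6.8}. The best choice in Corollary~\ref{Corollary_Np-property} is then $m=4$, which gives only $s\ge p+3$; the Hering--Schenck--Smith form, Corollary~\ref{corollary-N_p-HSS}, gives $s\ge p+2$. There is no sharper route ``inside'' Theorem~\ref{Theorem_6.1}: that theorem is where the corollary comes from. Lemma~\ref{LemmaIV} and Theorem~\ref{Theorem_3.7}(ii) use the regularity of $F$ in every degree, top degree included.

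Restricting to general members of $|L|$ does not help, for three reasons:
\begin{itemize}
\item It lowers the dimension, so arranging $p+1<\dim$ becomes harder, not easier.
\item A smooth $Y\in|L|$ has $\omega_Y\cong L_Y^{-(n-3)}$, so $\operatorname{reg}_{L_Y}\mathcal{O}_Y=3$ again, all the way down to the K3 surface section.
\item You would still need a lifting statement for syzygies of $L^{s}$ from $Y$ back to $X$, which you do not supply.
\end{itemize}

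More decisively, your reduction of $H^1(M_{L^s}^{\otimes(k+1)}\otimes L^{sj})=0$ to vanishings $H^i(L^{sj-i})$ with $sj-i\ge 0$ cannot be right. It never uses $L^n\ge 4$, and that hypothesis is necessary, as the following example shows.
\begin{itemize}
\item Let $\pi\colon X\to\mathbb{P}^n$, $n\ge 3$, be the double cover branched along a smooth sextic, and let $L=\pi^*\mathcal{O}(1)$.
\item Then $\omega_X\cong L^{-(n-2)}$ and $L^n=2$, and $H^i(X,L^a)=0$ for all $i>0$, $a\ge 0$. So every vanishing your reduction asks for holds.
\item But $\pi_*\mathcal{O}_X=\mathcal{O}\oplus\mathcal{O}(-3)$ gives $H^0(L^2)=H^0(\mathbb{P}^n,\mathcal{O}(2))$. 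Its products land in the invariant summand $H^0(\mathbb{P}^n,\mathcal{O}(4))$ of $H^0(L^4)$ and miss the summand $H^0(\mathbb{P}^n,\mathcal{O}(1))$.
\item Hence $H^1(M_{L^2}\otimes L^2)\neq 0$, and $L^{2}$ fails even $(N_0)$.
\end{itemize}
The underlying point is that each step of the evaluation-sequence chase leaves the cokernel of a multiplication map
$H^0(L^s)\otimes H^0(M_{L^s}^{\otimes k}\otimes L^{sj})\to H^0(M_{L^s}^{\otimes k}\otimes L^{s(j+1)})$. Low-degree line-bundle vanishing does not control that cokernel.

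Importing the case $p=1$ from Gallego--Purnaprajna is circular. Even if granted, you give no step from $(N_p)$ to $(N_{p+1})$.

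The missing input is surface-specific. By adjunction, $n-2$ general members of $|L|$ cut out a K3 surface $S$ with $L_S^2=L^n\ge 4$, which excludes the double-plane case. One then needs syzygy results for powers of $L_S$ on K3 surfaces, and a lifting argument back to $X$ using $H^1(X,L^k)=0$ for all $k$. That is the kind of argument behind \cite{GallegoPurnaprajnaIII}, and the regularity framework of this paper does not replace it.
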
\par

We will give the following example to illustrate that our bound is sharper than the bound given by Corollary \ref{theorem-fano-n-fold-index(n-2)} in \cite{GallegoPurnaprajnaIII}.

\begin{example}
Let $X=\mathbb{P}^{1}\times \mathbb{P}^{2}$, and let $B_1,B_2$ denote the pullbacks of the hyperplane classes from the two factors. 

We have
\[
\omega_X^* = \mathcal{O}_X(2,3).
\]

 Since $\gcd(2,3)=1$, the line bundle $\omega_X^{*}$ cannot be written as a nontrivial tensor power of a different ample line bundle. Hence $X$ is a Fano $3$-fold of index $1=3-2$. Denote
\[
L := \omega_X^{*} = \mathcal{O}_X(2,3).
\]

We first compute the top self-intersection $L^3 = L \cdot L \cdot L$. In divisor notation,
\[
L = 2B_1 + 3B_2.
\]
The Chow ring of  $X$ is
\[
A^{*}(X)=\mathbb{Z}[B_1,B_2]/(B_1^2, B_2^3).
\]
Hence  we have
\[
B_1^2=0,\qquad B_2^3=0,
\]

Therefore, we obtain
\[
L^3 = (2B_1 + 3B_2)^3 = 3\cdot (2B_1)\cdot (3B_2)^2 = 54\, B_1 \cdot B_2^2.
\]

However, a general representative of $B_1$ is of the form $\{p\}\times \mathbb{P}^2$. If $D_2,D_2' \in |B_2|$ are two general divisors, then
\[
D_2 \cap D_2' = \mathbb{P}^1 \times \{\text{point}\}.
\]
Hence,
\[
B_1 \cdot B_2^2 = 1.
\]

Therefore,
\[
L^3 = 54 > 4.
\]

Let $H := L^{\otimes 2} = \mathcal{O}_X(4,6)$. We now compute the value of $p$ for which $H$ satisfies property $(N_p)$, and compare the bounds given by Corollary~\ref{Corollary_Np-property} and Corollary~\ref{theorem-fano-n-fold-index(n-2)}.

By Corollary~\ref{theorem-fano-n-fold-index(n-2)}, the line bundle $H=L^2$ satisfies $(N_p)$ provided
\[
2 \ge p+1,
\]
hence $p \le 1$. Therefore, $H$ satisfies $(N_1)$ by this result.

Next, using Example~\ref{PmxPn_example}, the minimal regularities of $\mathcal{O}_X$ are $(2,0)$ and $(0,1)$. Hence we may choose
\[
\vec{m}=(2,1)\quad \text{or}\quad (1,2),
\]
both satisfying the required $B^{\vec{m}-\vec{e_j}}$-regularity conditions for $j=1,2$. We choose $\vec{m}=(1,2)$, which gives a better bound.

Then Corollary~\ref{Corollary_Np-property} gives that $H$ satisfies $(N_p)$ provided
\[
(4,6) \ge (1,2) + (p-1,p-1) = (p,p+1),
\]
which implies $p \le 4$.

Thus, using Corollary~\ref{Corollary_Np-property}, we obtain that $H=L^{\otimes 2}$ satisfies Property $(N_p)$ for all $p\le 4$, which is a significant improvement over the bound $p\le 1$ obtained from Corollary~\ref{theorem-fano-n-fold-index(n-2)}.
\end{example}

The next example illustrates that the bound in Corollary \ref{Corollary_Np-property} improves the corresponding bound in Theorem \ref{theorem-fano-n-fold-index(n-3)}.

\begin{theorem}
    
[Gallego--Purnaprajna, {\cite[Theorem 3.2]{GallegoPurnaprajnaIV}}]
\label{theorem-fano-n-fold-index(n-3)}
Let $X$ be a Fano $n$-fold with index $m=(n-3)$. Assume that ${\omega_{X}}^{*}=H^{\otimes m}$ and $H$ a ample and base-point-free line bundle. Let $L=H^{\otimes k}$. Assume furthermore, $h^{0}(H)\ge n+2$ ,i.e., that $|H|$ does not map $X$ onto $\mathbb{P}^{n}$. If $k\ge p+2$ and $p\ge 1$, then $L$ satisfies property $(N_p)$.

\end{theorem}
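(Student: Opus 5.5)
The plan is to reduce to low dimension by repeated hyperplane sections in $|H|$ and then lift $(N_p)$ back up to $X$. The paper's own criterion is too weak to give the bound directly. Take $t=1$ and $B=H$ in Corollary~\ref{Corollary_Np-property}. One checks that $\mathcal{O}_X$ is $H^4$-regular and no better in general: Kodaira vanishing handles $0<i<n$, while for $i=n$ we have $H^n(H^{j-n})\simeq H^0(H^{3-j})^{*}$ because $\omega_X=H^{-(n-3)}$. So the best admissible choice is $m=5$, which gives only $k\ge p+4$. Getting down to $k\ge p+2$ needs geometric input, and that input should come from the hypothesis $h^0(H)\ge n+2$.

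\emph{Reduction step.} By Bertini, choose general smooth members and set $X=X_n\supset X_{n-1}\supset\cdots\supset X_2=S$, where $X_{i-1}\in|H|_{X_i}|$. Adjunction gives $\omega_{X_{i-1}}=(\omega_{X_i}\otimes H)|_{X_{i-1}}$. Hence $\omega_{X_3}=\mathcal{O}_{X_3}$, and $S$ is a regular surface with $\omega_S=H|_S$, i.e.\ a canonically polarized surface. The vanishing $H^1(X_i,H^j)=0$ for all $j$ follows inductively from Kodaira vanishing on the Fano, and then the Calabi--Yau, pieces. Consequently $H^0(X,H)\to H^0(S,H|_S)$ is surjective, and $h^0(S,\omega_S)=h^0(H)-(n-2)\ge 4$. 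So the canonical map of $S$ is not onto $\mathbb{P}^2$, which is exactly where the assumption $h^0(H)\ge n+2$ enters.

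\emph{Lifting step.} For $Y\in|H|$ inside $Z=X_i$ and $L=H^k$, I would compare the evaluation sequences using the surjection $H^0(Z,L)\to H^0(Y,L|_Y)$. This gives $0\to H^0(L\otimes H^{-1})\otimes\mathcal{O}_Y\to M_L|_Y\to M_{L_Y}\to 0$, together with the restriction sequences $0\to M_L^{\otimes a}\otimes L^j\otimes H^{-1}\to M_L^{\otimes a}\otimes L^j\to (M_L^{\otimes a}\otimes L^j)|_Y\to 0$. The claim to prove is that $H^1(M_{L_Y}^{\otimes a}\otimes L_Y^{j})=0$ for $a\le p+1$ and $j\ge1$ implies the same vanishing on $Z$. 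I would argue by induction on $a$: filter $M_L|_Y^{\otimes a}$ by copies of $M_{L_Y}^{\otimes b}$ with trivial factors, and use the needed vanishing of $H^1$ and $H^2$ of $M_L^{\otimes a}\otimes L^j\otimes H^{-1}$. The latter should follow from the regularity of $M_L$ supplied by Lemma~\ref{LemmaIV} and Proposition~\ref{Proposition III}, which have plenty of room for twists $L^j\otimes H^{-1}$ with $k\ge 3$. This reduces everything to the statement for $S$.

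\emph{Base case and main obstacle.} On $S$ the claim reads: $\omega_S^{\otimes k}$ satisfies $(N_p)$ for $k\ge p+2$, provided the canonical map is not onto $\mathbb{P}^2$. This is the step I expect to be hardest; regularity alone again loses two units. I would first try to handle it with Green's $K_{p,q}$ duality (Observation~\ref{Observation_5.3}) together with the surjectivity of the multiplication maps $H^0(\omega_S^{a})\otimes H^0(\omega_S^{b})\to H^0(\omega_S^{a+b})$. Those surjectivities in turn come from restricting to a smooth canonical curve $C\in|\omega_S|$ of genus $\ge 4$, for which the hypothesis $h^0(\omega_S)\ge4$ rules out the double-plane exceptional case. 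If that route stalls, I would fall back on the known $(N_p)$ results for canonical surfaces that this citation of Gallego--Purnaprajna presumably relies on, and then lift through the tower as above.
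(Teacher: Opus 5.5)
The paper gives no proof of this statement. It is quoted from Gallego--Purnaprajna and used only as a benchmark in the $\mathbb{P}^1\times\mathbb{P}^1\times\mathbb{P}^2$ example, so your argument has to stand on its own.

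Your preliminary computations and the reduction are correct. $\mathcal{O}_X$ is exactly $4$-regular for $H$, so Corollary~\ref{Corollary_Np-property} with $t=1$ only gives $k\ge p+4$. The tower of general members of $|H|$ has $\omega_{X_i}=H^{-(i-3)}|_{X_i}$ and $H^1(X_i,H^j)=0$ for all $j$, and the hypothesis $h^0(H)\ge n+2$ becomes $p_g(S)\ge 4$.

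The genuine gap is the lifting step. To conclude $H^1(Z,M_L^{\otimes a}\otimes L^j)=0$ from the restriction sequence you need
\[
H^1\bigl(Z,\,M_L^{\otimes a}\otimes L^{j}\otimes H^{-1}\bigr)=0\qquad (1\le a\le p+1,\ j\ge 1).
\]
Only $H^1$ of the kernel is used, not $H^2$. Lemma~\ref{LemmaIV} and Proposition~\ref{Proposition III} do not provide this vanishing:
\begin{itemize}
\item On every $Z=X_i$, including $S$, one has $\omega_Z=H^{-(i-3)}|_Z$, so $\mathcal{O}_Z$ is again exactly $4$-regular.
\item Those results then only give that $M_L^{\otimes a}$ is $(a+4)$-regular, and Theorem~\ref{Theorem_6.1} gives $H^1(M_L^{\otimes a}\otimes H^{s})=0$ only for $s\ge a+3$.
\item Every factor of $M_L$ costs one unit, so there is no ``plenty of room''. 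At $a=p+1$, $j=1$, $s=k-1$ you would need $k\ge p+5$.
\end{itemize}
Worse, the step is circular. Regularity is stable under positive twists (Theorem~\ref{Theorem_3.7}(i)). So any regularity argument yielding the twisted vanishing above would also yield $H^1(Z,M_L^{\otimes a}\otimes L^j)=0$ directly, i.e.\ the theorem on $Z$ with no tower at all. That contradicts your own correct observation that regularity stops at $k\ge p+4$.

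So the difficulty has been moved, not removed. The twisted vanishings on each $X_i$ need a separate argument with genuine geometric input. Equivalently, inductively in $a$, you need surjectivity of
\[
H^0(L)\otimes H^0(M_L^{\otimes(a-1)}\otimes L^{j}\otimes H^{-1})\to H^0(M_L^{\otimes(a-1)}\otimes L^{j+1}\otimes H^{-1}).
\]
You cannot get these by naively iterating the same restriction sequence with $H^{-2},H^{-3},\dots$. In the critical case $a=p+1$, $j=1$, $k=p+2$, regularity covers no twist $H^{s}$ with $s\le k-1$. So such an iteration would have to be anchored by Serre vanishing for very negative twists, and it would then prove vanishing for every negative twist as well. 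That is false: for $a=1$ it would give $H^1(M_L\otimes H^{-1})=0$, whereas this group is $H^0(H^{k-1})\neq 0$.

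The base case has a similar defect. Surjectivity of $H^0(\omega_S^{a})\otimes H^0(\omega_S^{b})\to H^0(\omega_S^{a+b})$, obtained via a canonical curve, only controls projective normality. Property $(N_p)$ for $p\ge 1$ requires surjectivity of the vector-bundle maps $H^0(M_L^{\otimes a}\otimes L^{j})\otimes H^0(L)\to H^0(M_L^{\otimes a}\otimes L^{j+1})$. Green duality on $S$ only trades $K_{p,2}(S,L)$ for $K_{r-2-p,1}(S,\omega_S;L)$, which is not easier to control.

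As it stands, the proposal consists of a correct dimension reduction, a lifting step whose key hypothesis is unverified and not reachable with the tools you name, and a base case deferred to the literature. It is not yet a proof.
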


\begin{example}
Let $X=\mathbb{P}^{1}\times \mathbb{P}^{1}\times \mathbb{P}^{2}$. Then
\[
\omega_{X}^{*}=\mathcal{O}_{X}(2,2,3).
\]
Since $\gcd(2,3)=1$, we may apply Theorem \ref{theorem-fano-n-fold-index(n-3)} with $H=\mathcal{O}_{X}(2,2,3)$, so that $\omega_{X}^{*}=H=H^{\otimes 1}$. Hence $X$ is a Fano $4$-fold of index $1=4-3$.

We first compute $h^{0}(H)$. By the K\"unneth formula,
\[
H^0\bigl(X,\mathcal{O}_{X}(2,2,3)\bigr)
= H^0\bigl(\mathbb{P}^{1},\mathcal{O}_{\mathbb{P}^{1}}(2)\bigr)
\otimes H^0\bigl(\mathbb{P}^{1},\mathcal{O}_{\mathbb{P}^{1}}(2)\bigr)
\otimes H^0\bigl(\mathbb{P}^{2},\mathcal{O}_{\mathbb{P}^{2}}(3)\bigr).
\]
Therefore,
\[
h^{0}(H)=\binom{3}{1}\cdot \binom{3}{1}\cdot \binom{5}{2}=90>4+2=6,
\]
so the hypothesis of Theorem \ref{theorem-fano-n-fold-index(n-3)} is satisfied.

Now take $L=H^{\otimes 3}$. The condition $3\ge p+2$ gives $p\le 1$. Hence, by Theorem \ref{theorem-fano-n-fold-index(n-3)}, $L$ satisfies property $(N_{1})$.

Next we compare this with the bound obtained from Corollary \ref{Corollary_Np-property}. By Theorem \ref{thm:minimal-regularity}, the minimal regularities of $\mathcal{O}_{X}$ are
\[
(3,2,0),\ (3,0,1),\ (2,3,0),\ (0,3,1),\ (0,1,2),\ (1,0,2).
\]
The optimal choice is $\vec{m}=(1,1,3)$. Then $\mathcal{O}_{X}$ is $\vec{m}-\vec{e}_{j}$-regular for all $j=1,2,3$. Hence, by Corollary \ref{Corollary_Np-property}, $L=H^{\otimes 3}$ satisfies property $(N_{p})$ provided
\[
3\cdot (2,2,3)\ge (1,1,3)+(p-1,p-1,p-1)=(p,p,p+2),
\]
which gives $p\le 6$. Therefore, $L$ satisfies $(N_{6})$, which is significantly sharper than $(N_{1})$.
\end{example}

\begin{remark}
In the above example, Corollary \ref{Corollary_Np-property} implies that $H$ satisfies Property $(N_{2})$ and $H^{\otimes 2}$ satisfies Property $(N_{4})$. In contrast, Theorem \ref{theorem-fano-n-fold-index(n-3)} does not provide any conclusion concerning Property $(N_p)$ for either $H$ or $H^{\otimes 2}$.
\end{remark}

\section{\texorpdfstring{\textbf{A General Reformulation of the Hering--Schenck--Smith Theorem on Property $\mathbf{(N_p)}$}}{A General Reformulation of the Hering--Schenck--Smith Theorem on Property (Np)}}
\label{section_9_reformulation_of_HSS}

In this section, we reformulate Theorem 1.1 of \cite{HeringSchenckSmith} in a more general setting and recover their original theorem on Property $(N_p)$ as a corollary. This reformulation also clarifies the role of the underlying regularity assumptions, which will be interpreted geometrically in the next section.

We first determine the most general setting in which the theorem of \cite{HeringSchenckSmith} remains applicable, thereby identifying the broadest scope of their result.

We then sketch how the proof of \cite{HeringSchenckSmith} extends to this more general setting.
\begin{theorem}
\label{theorem-HSS-general}
Let $X$ be a projective variety with a collection of base-point free line bundles
$B_1,\ldots,B_t$, and let $B^{\vec v}$ and $B^{\vec m}$ denote the
corresponding line bundles for $\vec v,\vec m\in\mathbb{N}^t$. Assume that
$\vec v\geq \vec 0$, $\vec m\geq \vec\delta$, and that $B^{\vec v}$ is
$B^{\vec m}$-regular. Let
\[
\vec w_1,\ldots,\vec w_{k+1}\in\mathbb{N}^t
\]
satisfy $\vec w_1\geq \vec v+\vec m$. Then
\[
H^1\left(M_{B^{\vec w_1}}\otimes M_{B^{\vec w_2}}\otimes\cdots
\otimes M_{B^{\vec w_{k+1}}}\otimes B^{\vec v}\otimes B^{\vec m_k}\right)=0
\]
whenever
\[
\vec m_k\geq \vec m+(k-1)\vec\delta .
\]
\end{theorem}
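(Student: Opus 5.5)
The plan follows the proof of Theorem~\ref{Theorem_6.1}, with Regularity Lemma~V (Lemma~\ref{LemmaV}) replacing Regularity Lemma~IV as the base case. The first goal is a regularity statement, which is then converted into the $H^1$-vanishing.

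\textbf{Step 1 (base case).} Set $L_i:=B^{\vec w_i}$. Since $B^{\vec v}$ is $B^{\vec m}$-regular, $\vec m\ge\vec\delta$, and $\vec w_1\ge\vec v+\vec m$, Lemma~\ref{LemmaV} applies directly. It gives that $M_{L_1}\otimes B^{\vec v}$ is $B^{\vec m}$-regular. Note that only $\vec w_1$ carries a size hypothesis, so the bundle $M_{L_1}$ must be the one absorbed at this step.

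\textbf{Step 2 (iteration).} Apply Regularity Proposition~III (Proposition~\ref{Proposition III}) with $F=M_{L_1}\otimes B^{\vec v}$ and $E=B^{\vec m}$. Because Lemma~\ref{Lemma II} places no condition on the weight vectors, we may tensor in $M_{L_2},\dots,M_{L_j}$ one at a time, each step increasing the regularity by $\vec\delta$. This shows that $M_{L_1}\otimes\cdots\otimes M_{L_j}\otimes B^{\vec v}$ is $B^{\vec m+(j-1)\vec\delta}$-regular for every $j\ge1$. In particular, $G:=M_{L_1}\otimes\cdots\otimes M_{L_k}\otimes B^{\vec v}$ is $B^{\vec m+(k-1)\vec\delta}$-regular.

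\textbf{Step 3 (vanishing).} We reorder the tensor factors so that the bundle peeled off by the evaluation sequence is $M_{L_{k+1}}$, not $M_{L_1}$. This is harmless, since the tensor product is symmetric. Tensor the sequence
\[
0\to M_{L_{k+1}}\to H^0(L_{k+1})\otimes\mathcal O_X\to L_{k+1}\to 0
\]
with $G\otimes B^{\vec m_k}$ and take cohomology. The group $H^1(M_{L_{k+1}}\otimes G\otimes B^{\vec m_k})$ is then squeezed between two pieces:
\begin{itemize}
\item the cokernel of $\alpha\colon H^0(L_{k+1})\otimes H^0(G\otimes B^{\vec m_k})\to H^0(L_{k+1}\otimes G\otimes B^{\vec m_k})$;
\item the group $H^0(L_{k+1})\otimes H^1(G\otimes B^{\vec m_k})$.
\end{itemize}
Now write $\vec m_k=\vec m+(k-1)\vec\delta+\vec p$ with $\vec p\ge\vec 0$. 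Theorem~\ref{Theorem_3.7}(ii), applied to the $B^{\vec m+(k-1)\vec\delta}$-regular sheaf $G$ with $\vec u=\vec p$ and $\vec v=\vec w_{k+1}$, gives surjectivity of $\alpha$. Theorem~\ref{Theorem_3.7}(i) shows $G$ is $B^{\vec m_k+\vec e_j}$-regular, so $H^1(G\otimes B^{\vec m_k+\vec e_j}\otimes B^{-\vec e_j})=H^1(G\otimes B^{\vec m_k})=0$. Both pieces vanish, so the $H^1$ vanishes as claimed.

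\textbf{Main obstacle.} The delicate point is the bookkeeping of which $M_{L_i}$ is used where. The hypothesis $\vec w_1\ge\vec v+\vec m$ must feed Lemma~\ref{LemmaV}, so $M_{L_1}$ has to sit inside $G$ rather than be the peeled factor.

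A secondary check is the degenerate case $k=0$, where $G=B^{\vec v}$ and the required regularity would be $B^{\vec m-\vec\delta}$, which is not assumed. We therefore work with $k\ge1$, consistent with Theorem~\ref{Theorem_6.1}. Alternatively, we would check directly that Lemma~\ref{LemmaV} already gives the $k=0$ case through the $B^{\vec m}$-regularity of $M_{L_1}\otimes B^{\vec v}$.
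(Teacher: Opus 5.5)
Your argument is correct for $k\ge 1$ and is essentially the paper's proof. Lemma~V absorbs $M_{L_1}$, and Proposition~III then makes $M_{L_1}\otimes\cdots\otimes M_{L_k}\otimes B^{\vec v}$ $B^{\vec m+(k-1)\vec\delta}$-regular. Peeling off $M_{L_{k+1}}$ finishes the proof, via surjectivity of the multiplication map and $H^1$-vanishing.

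Drop your alternative remark about $k=0$, though. For $t\ge 2$, Lemma~V only gives vanishing for twists $\ge \vec m-\vec e_j$, not for all twists $\ge \vec m-\vec\delta$. In fact the $k=0$ case genuinely fails: take a smooth $(3,3)$ curve $C\subset\mathbb{P}^1\times\mathbb{P}^1$ with $\vec v=\vec 0$ and $\vec m=(2,2)$; the twist $B^{(1,1)}=\omega_C$ gives $H^1(M_{L_1}\otimes\omega_C)\ne 0$. So restricting to $k\ge 1$ is necessary, not just convenient.
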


\begin{proof}
Set $L_i:=B^{\vec w_i}$ for $1\leq i\leq k+1$. By Lemma \ref{LemmaV}, 
$M_{L_1}\otimes B^{\vec v}$ is $B^{\vec m}$-regular. Therefore, by 
Proposition \ref{Proposition III}, we obtain that
\[
M_{L_1}\otimes\cdots\otimes M_{L_{k+1}}\otimes B^{\vec v}
\]
is $B^{\vec m+k\vec\delta}$-regular.

Now consider the short exact sequence
\[
\begin{tikzcd}
0 \arrow[r] & M_{L_{k+1}} \arrow[r] &
H^0(L_{k+1})\otimes \mathcal{O}_X \arrow[r] &
L_{k+1} \arrow[r] & 0 .
\end{tikzcd}
\]
Tensoring this sequence with
\[
M_{L_1}\otimes\cdots\otimes M_{L_k}\otimes B^{\vec v}\otimes B^{\vec m_k},
\]
and applying the same argument as in the proof of Theorem \ref{Theorem_6.1}, we obtain that
\[
M_{L_1}\otimes\cdots\otimes M_{L_k}\otimes B^{\vec v}
\]
is $B^{\vec m+(k-1)\vec\delta}$-regular. Hence, for
\[
\vec m_k\geq \vec m+(k-1)\vec\delta,
\]
we have
\[
H^1\left(M_{L_1}\otimes\cdots\otimes M_{L_k}
\otimes B^{\vec v}\otimes B^{\vec m_k}\right)=0.
\]
Moreover, the above regularity implies the surjectivity of the multiplication map
\[
H^0(L_{k+1})\otimes
H^0\left(M_{L_1}\otimes\cdots\otimes M_{L_k}\otimes B^{\vec v}
\otimes B^{\vec m+(k-1)\vec\delta}\right)
\]
\[
\longrightarrow
H^0\left(L_{k+1}\otimes M_{L_1}\otimes\cdots\otimes M_{L_k}
\otimes B^{\vec v}\otimes B^{\vec m+(k-1)\vec\delta}\right).
\]
Therefore, by Theorem \ref{Theorem_3.7}(ii), we conclude the desired vanishing for
\[
\vec m_k\geq \vec m+(k-1)\vec\delta .
\]
\end{proof}

\begin{remark}
The key point in the proof of the above theorem is the different role played by
$L_1$ compared to the remaining line bundles $L_i$. The choice of $L_1$ in the
initial step is essential for obtaining the bound
\[
\vec m_k\geq \vec m+(k-1)\vec\delta .
\]
Indeed, if we instead started with the short exact sequence
\[
0\to M_{L_1}\to H^0(L_1)\otimes\mathcal{O}_X\to L_1\to0
\]
and tensor it with
\[
M_{L_2}\otimes\cdots\otimes M_{L_{k+1}}\otimes B^{\vec v},
\]
then the same argument would yield the weaker bound
\[
\vec m_k\geq \vec m+k\vec\delta .
\]
\end{remark}

We recover Theorem 1.1 in \cite{HeringSchenckSmith} as a corollary of Theorem \ref{theorem-HSS-general}.

\begin{corollary}
\label{corollary-N_p-HSS}
Let $X$ be a projective variety, and let $B^{\vec w}$ and $B^{\vec m}$ be two line bundles on $X$ such that
$\vec w,\vec m\in\mathbb{N}^t$ and $\vec w\geq \vec m$. If $\mathcal{O}_X$ is
$B^{\vec m}$-regular, then, for every $p\geq 1$, the line bundle $B^{\vec w}$
satisfies Property $(N_p)$ whenever
\[
\vec w\geq \vec m+(p-1)\vec\delta .
\]
\end{corollary}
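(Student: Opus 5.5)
The plan is to specialize Theorem~\ref{theorem-HSS-general} to $\vec v=\vec 0$, so $B^{\vec v}=\mathcal{O}_X$, and then translate the resulting vanishing into Property $(N_p)$. I would use the same cohomological criterion as in the proof of Corollary~\ref{Corollary_Np-property}: it suffices to show
\[
H^1\bigl(M_L^{\otimes(k+1)}\otimes L^{j}\bigr)=0 \quad\text{for all } 0\le k\le p,\ j\ge 1,
\]
where $L=B^{\vec w}$. In characteristic zero, $\wedge^{k+1}M_L$ is a direct summand of $M_L^{\otimes(k+1)}$, so this gives the vanishing of $H^1(\wedge^{k+1}M_L\otimes L^j)$. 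That in turn kills the relevant $K_{k,q}(X,L)$ with $q\ge 2$.

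First I check the hypotheses of Theorem~\ref{theorem-HSS-general} with $\vec v=\vec 0$ and $\vec w_1=\cdots=\vec w_{k+1}=\vec w$. We need $\vec m\ge\vec\delta$, which I return to below. We need $B^{\vec 0}=\mathcal{O}_X$ to be $B^{\vec m}$-regular, which is the hypothesis of the corollary. We need $\vec w_1\ge \vec v+\vec m=\vec m$, which is the assumption $\vec w\ge\vec m$. The theorem then yields $H^1(M_L^{\otimes(k+1)}\otimes B^{\vec m_k})=0$ whenever $\vec m_k\ge \vec m+(k-1)\vec\delta$. Taking $B^{\vec m_k}=L^j$, I need $j\vec w\ge \vec m+(k-1)\vec\delta$ for all $j\ge1$ and all $k$ in the required range. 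Since $\vec w\ge\vec 0$, the case $j=1$ is the weakest. Since the bound is monotone in $k$, the hypothesis $\vec w\ge\vec m+(p-1)\vec\delta$ covers every $k\le p$ up to the endpoint issue discussed next. For $k=0$ the required vanishing is $H^1(M_L\otimes L^j)=0$. I would obtain this directly: $\mathcal{O}_X$ is $B^{\vec m}$-regular, so by Theorem~\ref{Theorem_3.7}(ii) the maps $H^0(L)\otimes H^0(L^{j})\to H^0(L^{j+1})$ are surjective, and Theorem~\ref{Theorem_3.7}(i) gives $H^1(L^j)=0$.

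The main obstacle is bookkeeping at the endpoints. I must make sure the index range in the theorem matches what $(N_p)$ actually requires. In the standard Green-criterion indexing, $(N_p)$ requires $K_{k,q}=0$ for $k\le p$ and $q\ge2$, which corresponds to $H^1(\wedge^{k+1}M_L\otimes L^{q-1})=0$. The top case $k=p$ then needs $\vec w\ge \vec m+(p-1)\vec\delta$, exactly matching the hypothesis. I would also handle the degenerate requirement $\vec m\ge\vec\delta$. If some $m_i=0$, I would replace $\vec m$ by $\vec m'=\max(\vec m,\vec\delta)$, using Theorem~\ref{Theorem_3.7}(i) to keep $B^{\vec m'}$-regularity. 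This risks needing $\vec w\ge \vec m'+(p-1)\vec\delta$, which is slightly stronger than assumed. If that gap cannot be closed, I would instead run the argument of Theorem~\ref{theorem-HSS-general} directly. There the only use of $\vec m\ge\vec\delta$ is in Lemma~\ref{LemmaV}(ii)–(iii), and for $\vec v=\vec 0$ with $\vec w\ge \vec m$, the needed vanishing and surjectivity follow from regularity of $\mathcal{O}_X$ via Theorem~\ref{Theorem_3.7}.
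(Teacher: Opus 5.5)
Your main line is the paper's own proof. You specialize Theorem~\ref{theorem-HSS-general} to $B^{\vec v}=\mathcal{O}_X$ and $\vec w_1=\cdots=\vec w_{k+1}=\vec w$, take $B^{\vec m_k}=L^j$, and pass from $H^1(M_L^{\otimes(k+1)}\otimes L^j)=0$ to $(N_p)$ because wedge powers are summands of tensor powers in characteristic zero. This is correct whenever $\vec m\ge\vec\delta$. Your direct treatment of $k=0$ via Theorem~\ref{Theorem_3.7} is also sound: the surjectivity of $H^0(L)\otimes H^0(L^j)\to H^0(L^{j+1})$ and the vanishing $H^1(L^j)=0$ hold for any $\vec m\in\mathbb{N}^t$ with $\vec w\ge\vec m$. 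The paper simply asserts the $k=0$ case ``from the same inequality'', although the proof of Theorem~\ref{theorem-HSS-general} only covers $k\ge1$.

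The gap is your fallback for $\vec m\not\ge\vec\delta$. With $\vec v=\vec 0$, Lemma~\ref{LemmaV}(iii) needs $H^0(L)\otimes H^0(B^{\vec m-\vec e_j})\to H^0(L\otimes B^{\vec m-\vec e_j})$ to be surjective. When $m_j=0$ this does not follow from Theorem~\ref{Theorem_3.7}(ii), which would require $\vec m-\vec e_j\in\mathbb{N}^t$, and it is false in general.

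Take $X=\mathbb{P}^n$, $t=1$, $B=\mathcal{O}(1)$. Then $\mathcal{O}_X$ is $B^{\vec 0}$-regular, since $H^i(\mathcal{O}(-i))=0$ for $i>0$. But for $L=\mathcal{O}(d)$ with $d\ge1$, the map $H^0(\mathcal{O}(d))\otimes H^0(\mathcal{O}(-1))=0\to H^0(\mathcal{O}(d-1))$ is not surjective. Hence $H^1(M_L\otimes B^{-1})\neq 0$, so $M_L$ is not $B^{\vec 0}$-regular, and the conclusion of Lemma~\ref{LemmaV}, on which Theorem~\ref{theorem-HSS-general} rests, fails.

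So the case where some $m_j=0$ is proved neither by your argument nor by the paper's, which cites Theorem~\ref{theorem-HSS-general} without checking its hypothesis $\vec m\ge\vec\delta$. That case is also genuinely stronger. On $\mathbb{P}^n$ with $m=0$ the statement would give $(N_{d+1})$ for $\mathcal{O}_{\mathbb{P}^n}(d)$, whereas the admissible choice $m=1$ yields only Green's $(N_d)$. You should not expect a bookkeeping repair. Either add $\vec m\ge\vec\delta$ to the hypotheses, as the proof implicitly does, or keep your first fallback and settle for the weaker bound $\vec w\ge\max(\vec m,\vec\delta)+(p-1)\vec\delta$.
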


\begin{proof}
Set $L:=B^{\vec w}$. Recall that a globally generated non-special line bundle
$L$ satisfies Property $(N_p)$ if and only if
\[
H^1(X,M_L^{\otimes q}\otimes L^j)=0
\]
for $1\leq q\leq p+1$ and $j\geq 1$.

In Theorem \ref{theorem-HSS-general}, take $B^{\vec v}=\mathcal{O}_X$ and
$k=p$. Then we obtain
\[
H^1(X,M_L^{\otimes(p+1)}\otimes L)=0
\]
whenever
\[
\vec w\geq \vec m+(p-1)\vec\delta .
\]
Observe that the same inequality
\[
\vec w\geq \vec m+(p-1)\vec\delta
\]
immediately ensures that
\[
H^1(X,M_L^{\otimes q}\otimes L^j)=0
\]
for all $1\leq q\leq p+1$ and $j\geq1$. Therefore, since we are working over a
field of characteristic zero, $L=B^{\vec w}$ satisfies Property $(N_p)$ whenever
\[
\vec w\geq \vec m+(p-1)\vec\delta .
\]
\end{proof}

\begin{remark}
The crucial technical difference between our hypothesis and that of
Hering--Schenck--Smith is that they assume $\mathcal{O}_X$ is
$B^{\vec m}$-regular, whereas we work with the weaker assumption that
$\mathcal{O}_X$ is $B^{\vec m-\vec e_j}$-regular. If we were to assume
$B^{\vec m}$-regularity and apply our technique, we would end up obtaining a
weaker bound, namely
\[
\vec m+p\vec\delta,
\]
instead of
\[
\vec m+(p-1)\vec\delta,
\]
which is the bound obtained in \cite{HeringSchenckSmith} and in Corollary
\ref{Corollary_Np-property}. The geometric significance of this technical
difference will be discussed in the next section.
\end{remark}

\section{\textbf{Geometric Interpretation of the Regularity Conditions and Syzygy Bounds}}
\label{section_10_geometric_interpretation}

In this section, we interpret the conditions of our Corollary \ref{CorollaryM_q} through figures of convex subsets generates by line bundles satisfying Property $(M_{q})$  inside the Picard group.

\subsection{Geometric Interpretation of the regularity assumption in Main Theorem}

Let $X$ be a smooth projective variety, and let
$B_1,\ldots,B_t$ be fixed globally generated line bundles on $X$ that are
$\mathbb{R}$-linearly independent in $\operatorname{Pic}_{\mathbb{R}}(X):=\operatorname{Pic}(X)\otimes_{\mathbb Z}\mathbb R$. Set
$\vec{B}=(B_1,\ldots,B_t)$ and define
\[
\mathbb{B}=\{B^{\vec{w}}\mid \vec{w}\in\mathbb{Z}^{t}\}
\subseteq \operatorname{Pic}(X).
\]
The map
\[
\nu:\mathbb{Z}^{t}\longrightarrow \mathbb{B},\qquad
\nu(w_1,\ldots,w_t)=B_1^{w_1}\otimes\cdots\otimes B_t^{w_t}
\]
identifies $\mathbb{B}$ with the lattice $\mathbb{Z}^{t}$ inside
$\operatorname{Pic}_{\mathbb{R}}(X)
$.
Thus, we may view $\mathbb{B}$ as a lattice in $\mathbb{R}^{t}$, where
each $B_i$ corresponds to the $i$-th coordinate axis. The semigroup
\[
\mathfrak{B}=\{B^{\vec{w}}\mid \vec{w}\in\mathbb{N}^{t}\}
\]
corresponds to the lattice points in the first quadrant, whose closure is
the convex cone $\overline{\mathfrak{B}}\subseteq\mathbb{R}^{t}$.
This is illustrated in the figure below for $t = 2$.
\begin{footnotesize}
\begin{figure}[ht!]
\begin{tikzpicture}[row sep = small, column sep = small]
\fill[gray!30!] (0,0) rectangle (2.2,2.2);
    \draw[<->] (-3.5, 0) -- (3.5, 0);
    \draw[<->] (0, -3.5) -- (0, 3.5);
    \filldraw[color=gray] (0, 0) circle (4pt);
    \draw[color=black] (0, 0) circle (4pt);
    \filldraw[color=gray] (1, 0) circle (4pt);
    \draw[color=black] (1, 0) circle (4pt);
    \filldraw[color=gray] (2, 0) circle (4pt);
    \draw[color=black] (2, 0) circle (4pt);
    \filldraw[color=gray] (1, 1) circle (4pt);
    \draw[color=black] (1, 1) circle (4pt);
    \filldraw[color=gray] (2, 1) circle (4pt);
    \draw[color=black] (2, 1) circle (4pt);
    \filldraw[color=gray] (1, 2) circle (4pt);
    \draw[color=black] (1, 2) circle (4pt);
    \filldraw[color=gray] (2, 0) circle (4pt);
    \draw[color=black] (2, 0) circle (4pt);
    \filldraw[color=gray] (2, 1) circle (4pt);
    \draw[color=black] (2, 1) circle (4pt);
    \filldraw[color=gray] (2, 2) circle (4pt);
    \draw[color=black] (2, 2) circle (4pt);
    \filldraw[color=gray] (0, 1) circle (4pt);
    \draw[color=black] (0, 1) circle (4pt);
    \filldraw[color=gray] (0, 2) circle (4pt);
    \draw[color=black] (0, 2) circle (4pt);

    \filldraw[color=white] (-1, 0) circle (4pt);
    \draw[color=black] (-1, 0) circle (4pt);
    \filldraw[color=white] (-2, 0) circle (4pt);
    \draw[color=black] (-2, 0) circle (4pt);
    \filldraw[color=white] (-1, 1) circle (4pt);
    \draw[color=black] (-1, 1) circle (4pt);
    \filldraw[color=white] (-2, 1) circle (4pt);
    \filldraw[color=white] (-1, 2) circle (4pt);
    \draw[color=black] (-1, 2) circle (4pt);
    \filldraw[color=white] (-2, 2) circle (4pt);
    \filldraw[color=white] (-2, 0) circle (4pt);
    \draw[color=black] (-2, 0) circle (4pt);
    \filldraw[color=white] (-2, 1) circle (4pt);
    \draw[color=black] (-2, 1) circle (4pt);
    \draw[color=black] (-2, 2) circle (4pt);
    \draw[color=black] (-2, 2) circle (4pt);

    \filldraw[color=white] (-1, -1) circle (4pt);
    \draw[color=black] (-1, -1) circle (4pt);
    \filldraw[color=white] (-2, -1) circle (4pt);
    \filldraw[color=white] (-1, -2) circle (4pt);
    \draw[color=black] (-1, -2) circle (4pt);
    \filldraw[color=white] (-2, -2) circle (4pt);
    \filldraw[color=white] (-2, 0) circle (4pt);
    \draw[color=black] (-2, 0) circle (4pt);
    \filldraw[color=white] (-2, -1) circle (4pt);
    \draw[color=black] (-2, -1) circle (4pt);
    \draw[color=black] (-2, -2) circle (4pt);
    \draw[color=black] (-2, -2) circle (4pt);
    \filldraw[color=white] (0, -1) circle (4pt);
    \draw[color=black] (0, -1) circle (4pt);
    \filldraw[color=white] (0,-2) circle (4pt);
    \draw[color=black] (0, -2) circle (4pt);

    \filldraw[color=white] (1, -1) circle (4pt);
    \draw[color=black] (1, -1) circle (4pt);
    \filldraw[color=white] (2, -1) circle (4pt);
    \filldraw[color=white] (1, -2) circle (4pt);
    \draw[color=black] (1, -2) circle (4pt);
    \filldraw[color=white] (2, -1) circle (4pt);
    \draw[color=black] (2, -1) circle (4pt);
    \filldraw[color=white] (2, -2) circle (4pt);
    \draw[color=black] (2, -2) circle (4pt);
    \node[scale=1] at (3.8, 0.3) {$w_{1}$};
    \node[scale=1] at (0.3, 3.8) {$w_{2}$};
    \node[scale=1.2] at (2.5, 2.5) {$\mathfrak{B}$};
    \node[scale=1.2] at (-2.5, -2.5) {$\mathbb{B}$};
    \node[scale=1.2] at (-0.5, 2.5) {$\overline{\mathfrak{B}}$};
    \draw[->] (-0.2, 2.2) -- (0.5, 0.5);
\end{tikzpicture}
\caption{Lattice $\mathbb{B}$ and semigroup $\mathfrak{B}$: all circles correspond to points of $\mathbb{B}$ and closed circles correspond to points in $\mathfrak{B}$ and the shaded region (first quadrant) is $\overline{\mathfrak{B}}$}
\end{figure}
\end{footnotesize}
\textbf{ } \par

Now, for some $\vec{m}\in\mathbb{Z}^{t}$, the regions corresponding to
$B^{\vec{m}}$-regularity and $B^{\vec{m}-\vec{e}_{j}}$-regularity for all
$j$ are illustrated in the following figures.
\begin{footnotesize}
\begin{figure}[ht!]
\hspace{-3.5cm}
\begin{minipage}{0.2\textwidth}
\begin{tikzpicture}[row sep = small, column sep = small]
\fill[gray!30!] (0,0) rectangle (2.2,2.2);
    \draw[<->] (-3.5, -1) -- (3.5, -1);
    \draw[<->] (-1, -3.5) -- (-1, 3.5);
    \filldraw[color=gray] (0, 0) circle (4pt);
    \draw[color=black] (0, 0) circle (4pt);
    \filldraw[color=gray] (1, 0) circle (4pt);
    \draw[color=black] (1, 0) circle (4pt);
    \filldraw[color=gray] (2, 0) circle (4pt);
    \draw[color=black] (2, 0) circle (4pt);
    \filldraw[color=gray] (1, 1) circle (4pt);
    \draw[color=black] (1, 1) circle (4pt);
    \filldraw[color=gray] (2, 1) circle (4pt);
    \draw[color=black] (2, 1) circle (4pt);
    \filldraw[color=gray] (1, 2) circle (4pt);
    \draw[color=black] (1, 2) circle (4pt);
    \filldraw[color=gray] (2, 0) circle (4pt);
    \draw[color=black] (2, 0) circle (4pt);
    \filldraw[color=gray] (2, 1) circle (4pt);
    \draw[color=black] (2, 1) circle (4pt);
    \filldraw[color=gray] (2, 2) circle (4pt);
    \draw[color=black] (2, 2) circle (4pt);
    \filldraw[color=gray] (0, 1) circle (4pt);
    \draw[color=black] (0, 1) circle (4pt);
    \filldraw[color=gray] (0, 2) circle (4pt);
    \draw[color=black] (0, 2) circle (4pt);

    \filldraw[color=white] (-1, 0) circle (4pt);
    \draw[color=black] (-1, 0) circle (4pt);
    \filldraw[color=white] (-2, 0) circle (4pt);
    \draw[color=black] (-2, 0) circle (4pt);
    \filldraw[color=white] (-1, 1) circle (4pt);
    \draw[color=black] (-1, 1) circle (4pt);
    \filldraw[color=white] (-2, 1) circle (4pt);
    \filldraw[color=white] (-1, 2) circle (4pt);
    \draw[color=black] (-1, 2) circle (4pt);
    \filldraw[color=white] (-2, 2) circle (4pt);
    \filldraw[color=white] (-2, 0) circle (4pt);
    \draw[color=black] (-2, 0) circle (4pt);
    \filldraw[color=white] (-2, 1) circle (4pt);
    \draw[color=black] (-2, 1) circle (4pt);
    \draw[color=black] (-2, 2) circle (4pt);
    \draw[color=black] (-2, 2) circle (4pt);

    \filldraw[color=white] (-1, -1) circle (4pt);
    \draw[color=black] (-1, -1) circle (4pt);
    \filldraw[color=white] (-2, -1) circle (4pt);
    \filldraw[color=white] (-1, -2) circle (4pt);
    \draw[color=black] (-1, -2) circle (4pt);
    \filldraw[color=white] (-2, -2) circle (4pt);
    \filldraw[color=white] (-2, 0) circle (4pt);
    \draw[color=black] (-2, 0) circle (4pt);
    \filldraw[color=white] (-2, -1) circle (4pt);
    \draw[color=black] (-2, -1) circle (4pt);
    \draw[color=black] (-2, -2) circle (4pt);
    \draw[color=black] (-2, -2) circle (4pt);
    \filldraw[color=white] (0, -1) circle (4pt);
    \draw[color=black] (0, -1) circle (4pt);
    \filldraw[color=white] (0,-2) circle (4pt);
    \draw[color=black] (0, -2) circle (4pt);

    \filldraw[color=white] (1, -1) circle (4pt);
    \draw[color=black] (1, -1) circle (4pt);
    \filldraw[color=white] (2, -1) circle (4pt);
    \filldraw[color=white] (1, -2) circle (4pt);
    \draw[color=black] (1, -2) circle (4pt);
    \filldraw[color=white] (2, -1) circle (4pt);
    \draw[color=black] (2, -1) circle (4pt);
    \filldraw[color=white] (2, -2) circle (4pt);
    \draw[color=black] (2, -2) circle (4pt);
    \node[scale=1] at (3.5, -1.5) {$w_{1}$};
    \node[scale=1] at (-1.3, 3.8) {$w_{2}$};
    \node[scale=1.2] at (2.5, 2.5) {$B^{\vec{m}}$ regular};
    \node[scale=1.2] at (-2.5, -2.5) {$\mathbb{B}$};

    \draw[->] (-1, -1) -- (0, 0);
    \node[scale=1.2] at (-0.5, 0) {$\vec{m}$};
\end{tikzpicture}
\end{minipage}
\hspace{4.5cm}
\begin{minipage}{0.2\textwidth}
\begin{tikzpicture}[row sep = small, column sep = small]
\fill[gray!30!] (0,-1) rectangle (2.2,2.2);
\fill[gray!30!] (-1,0) rectangle (2.2,2.2);
    \draw[<->] (-3.5, -1) -- (3.5, -1);
    \draw[<->] (-1, -3.5) -- (-1, 3.5);
    \filldraw[color=gray] (0, 0) circle (4pt);
    \draw[color=black] (0, 0) circle (4pt);
    \filldraw[color=gray] (1, 0) circle (4pt);
    \draw[color=black] (1, 0) circle (4pt);
    \filldraw[color=gray] (2, 0) circle (4pt);
    \draw[color=black] (2, 0) circle (4pt);
    \filldraw[color=gray] (1, 1) circle (4pt);
    \draw[color=black] (1, 1) circle (4pt);
    \filldraw[color=gray] (2, 1) circle (4pt);
    \draw[color=black] (2, 1) circle (4pt);
    \filldraw[color=gray] (1, 2) circle (4pt);
    \draw[color=black] (1, 2) circle (4pt);
    \filldraw[color=gray] (2, 0) circle (4pt);
    \draw[color=black] (2, 0) circle (4pt);
    \filldraw[color=gray] (2, 1) circle (4pt);
    \draw[color=black] (2, 1) circle (4pt);
    \filldraw[color=gray] (2, 2) circle (4pt);
    \draw[color=black] (2, 2) circle (4pt);
    \filldraw[color=gray] (0, 1) circle (4pt);
    \draw[color=black] (0, 1) circle (4pt);
    \filldraw[color=gray] (0, 2) circle (4pt);
    \draw[color=black] (0, 2) circle (4pt);

    \filldraw[color=white] (-1, 0) circle (4pt);
    \draw[color=black] (-1, 0) circle (4pt);
    \filldraw[color=white] (-2, 0) circle (4pt);
    \draw[color=black] (-2, 0) circle (4pt);
    \filldraw[color=white] (-1, 1) circle (4pt);
    \draw[color=black] (-1, 1) circle (4pt);
    \filldraw[color=white] (-2, 1) circle (4pt);
    \filldraw[color=white] (-1, 2) circle (4pt);
    \draw[color=black] (-1, 2) circle (4pt);
    \filldraw[color=white] (-2, 2) circle (4pt);
    \filldraw[color=white] (-2, 0) circle (4pt);
    \draw[color=black] (-2, 0) circle (4pt);
    \filldraw[color=white] (-2, 1) circle (4pt);
    \draw[color=black] (-2, 1) circle (4pt);
    \draw[color=black] (-2, 2) circle (4pt);
    \draw[color=black] (-2, 2) circle (4pt);

    \filldraw[color=white] (-1, -1) circle (4pt);
    \draw[color=black] (-1, -1) circle (4pt);
    \filldraw[color=white] (-2, -1) circle (4pt);
    \filldraw[color=white] (-1, -2) circle (4pt);
    \draw[color=black] (-1, -2) circle (4pt);
    \filldraw[color=white] (-2, -2) circle (4pt);
    \filldraw[color=white] (-2, 0) circle (4pt);
    \draw[color=black] (-2, 0) circle (4pt);
    \filldraw[color=white] (-2, -1) circle (4pt);
    \draw[color=black] (-2, -1) circle (4pt);
    \draw[color=black] (-2, -2) circle (4pt);
    \draw[color=black] (-2, -2) circle (4pt);
    \filldraw[color=white] (0, -1) circle (4pt);
    \draw[color=black] (0, -1) circle (4pt);
    \filldraw[color=white] (0,-2) circle (4pt);
    \draw[color=black] (0, -2) circle (4pt);

    \filldraw[color=white] (1, -1) circle (4pt);
    \draw[color=black] (1, -1) circle (4pt);
    \filldraw[color=white] (2, -1) circle (4pt);
    \filldraw[color=white] (1, -2) circle (4pt);
    \draw[color=black] (1, -2) circle (4pt);
    \filldraw[color=white] (2, -1) circle (4pt);
    \draw[color=black] (2, -1) circle (4pt);
    \filldraw[color=white] (2, -2) circle (4pt);
    \draw[color=black] (2, -2) circle (4pt);

    \filldraw[color=gray] (-1, 0) circle (4pt);
    \draw[color=black] (-1, 0) circle (4pt);
    \filldraw[color=gray] (-1, 1) circle (4pt);
    \draw[color=black] (-1, 1) circle (4pt);
    \filldraw[color=gray] (-1, 2) circle (4pt);
    \draw[color=black] (-1, 2) circle (4pt);

    \filldraw[color=gray] (0, -1) circle (4pt);
    \draw[color=black] (0, -1) circle (4pt);
    \filldraw[color=gray] (1, -1) circle (4pt);
    \draw[color=black] (1, -1) circle (4pt);
    \filldraw[color=gray] (2, -1) circle (4pt);
    \draw[color=black] (2, -1) circle (4pt);
    
    \node[scale=1] at (3.5, -1.5) {$w_{1}$};
    \node[scale=1] at (-1.3, 3.8) {$w_{2}$};
    \node[scale=1.2] at (2.5, 2.5) {$B^{\vec{m}-\vec{e}_{j}}$ regular};
    \node[scale=1.2] at (-2.5, -2.5) {$\mathbb{B}$};
\end{tikzpicture}
\end{minipage}
\caption{Lattice description of $B^{\vec{m}}$ regularity and $B^{\vec{m}-\vec{e}_{j}}$ regularity for all $j = 1, 2$ and $\vec{m}=(1,1)$}
\end{figure}
\end{footnotesize} \newpage

Thus, the condition that $B^{\vec{m}-\vec{e}_{j}}$ is regular for all
$j$ is stronger than $B^{\vec{m}}$-regularity but weaker than
$B^{\vec{m}-\vec{\delta}}$-regularity. Our objective to use $B^{\vec{m}-\vec{e}_{j}}$ regularity for all $j$ is to make use of this interpolation.

\subsection{Geometric Interpretation of the Difference between Our Approach and the Hering--Schenck--Smith Approach for Property $(N_p)$}

We conclude this section by providing a geometric interpretation of the difference between the regularity assumptions underlying Theorem~\ref{Theorem_6.1} and the corresponding criterion of Hering--Schenck--Smith~\cite{HeringSchenckSmith} for Property~$(N_p)$.\par

As noted in Remark~6.2, for $p \ge 1$, replacing the assumption of $B^{\vec{m}-\vec{e_{j}}}$-regularity on $\mathcal{F}$ with the stronger condition of $B^{\vec{m}-\vec{\delta}}$-regularity yields the identical bound $\vec{m}+(p-1)\vec{\delta}$.  So, if we denote $E_{\vec{\lambda}}$ as the region of permissible line bundles that are $B^{\vec{\lambda}}$-regular. Then :
\begin{equation}
E_{\vec{m}-\vec{\delta}} \subseteq \bigcup_{j=1}^{t} E_{\vec{m}-\vec{e_{j}}} \subseteq E_{\vec{m}}.
\end{equation}

  But the crucial point in the approach of \cite{HeringSchenckSmith} is that they completely rely on the fact that regularity of $\mathcal{O}_{X}$ is non-negative, namely 
\[
\operatorname{reg}(\mathcal{O}_{X})\ge \vec{0},
\]
they obtain the same bound
\[
\vec{m}+(p-1)\vec{\delta}
\]
throughout the larger region $E_{\vec{m}}$. Importantly, if the $\mathrm{Pic}(X)$ is simply generated by the base-point free generators $B_{1},\cdots, B_{t}$, then their region $E_{\vec{m}}$ is a convex set, whereas our region \[
\bigcup_{j=1}^{t} E_{\vec{m}-\vec{e}_{j}}
\] is not convex.

However, this technique in \cite{HeringSchenckSmith} exhibits two key limitations:
\begin{enumerate}
    \item \textbf{Sheaf Specificity:} The drawback of this technique is that it applies only to line bundles of the form $B^{\vec{v}}$ where $\vec{v}\ge \vec{0}$, in particular for $\mathcal{O}_{X}$; but their approach is not being extendable for arbitrary vector bundle $\mathcal{F}$.

    \item \textbf{Range of the Index $p$:}  Another distinction of their technique is, their Theorem for $(N_{p})$ is applicable for $p\ge 1$ and in our set up, if we start with the stronger assumption of $B^{\vec{m}-\vec{\delta}}$, it allows our theorem to extend the bound for $(N_{p})$ down to $p\ge 0$. 
    
    \end{enumerate}

Finally, under our standing hypothesis of $B^{\vec{m}-\vec{e}_{j}}$-regularity in each coordinate direction, the resulting choice of $\vec{m}$ differs from that arising in the Hering--Schenck--Smith framework by at most a translation by $\vec{\delta}$.

\subsection{Geometric Interpretation of the bound for Property $\mathbf{(M_{q})}$}

The two parts of Figure~\ref{Bound_M_q} illustrate the convex regions in $\overline{\mathfrak{B}}$ consisting of line bundles $L=B^{\vec w}$ that satisfy Property $(M_q)$ for a fixed pair of globally
generated line bundles $\vec{B}=(B_1,B_2)$ where $B_{1}, B_{2}$ are $\mathbb{Z}$-linearly independent.
\begin{figure}[ht!]
    \hspace{-0.5cm}
    \begin{minipage}{0.2\textwidth}
\begin{tikzpicture}[row sep = 1.3, column sep = 0.7]
\fill[gray!30!] (1,2) rectangle (4, 4);
\draw[<->] (-.7, 1) -- (4, 1);
\node[scale=0.8] at (4, 0.5) {$B_{1}$};
\draw[<->] (0, 1-.7) -- (0, 4);
\node[scale=0.8] at (-0.5, 4) {$B_{2}$};
\draw[color=black] (1+0.05, 2+0.05) circle (2pt);
\filldraw[color=blue] (1+0.05, 2+0.05) circle (2pt);
\node[scale=0.8] at (4.3, 1+0.5) {$\vec{m}+(q-n-1)\vec{\delta} = (m_{1}+q-n-1, m_{2}+q-n-1)$};
\draw[->] (1, 2) -- (1, 4);
\draw[->] (1, 2) -- (4, 2);
\draw[->, dashed] (0, 1) -- (1, 2);
\draw[->] (0, 1) -- (1.5, 3.5);
\draw[color=black] (1.55, 3.55) circle (2pt);
\filldraw[color=red] (1.55, 3.55) circle (2pt);
\filldraw[color=green] (0.55, 3.55) circle (2pt);
\node[scale=0.8] at (2.3, 3.2) {$\vec{w} = (a, b)$};
\node[scale=0.8] at (3, 2.5) {$(M_{q})$};
\node[scale=0.8] at (2, 0.5) {$(I)$};
\end{tikzpicture}
\end{minipage}
\hspace{5.5cm}
\begin{minipage}{0.2\textwidth}
\begin{tikzpicture}[row sep = 1.3, column sep = 0.7]
\fill[gray!30!] (2-0.5,3-0.5) rectangle (4, 4);
\fill[gray!60!] (2-1,3-1) rectangle (2-0.5, 4);
\fill[gray!60!] (2-1,3-1) rectangle (4, 3-0.5);
\fill[gray!90!] (2-1.5,3-1.5) rectangle (2-1, 4);
\fill[gray!90!] (2-1.5,3-1.5) rectangle (4, 3-1);
\draw[<->] (-.7, 1) -- (4, 1);
\node[scale=0.8] at (4, 0.5) {$B_{1}$};
\draw[<->] (0, 1-.7) -- (0, 4);
\node[scale=0.8] at (-0.5, 4) {$B_{2}$};
\draw[->] (2-0.5, 3-0.5) -- (2-0.5, 4);
\draw[->] (2-0.5, 3-0.5) -- (4, 3-0.5);
\draw[->] (2-1, 3-1) -- (2-1, 4);
\draw[->] (2-1, 3-1) -- (4, 3-1);
\draw[->] (0.5, 2-0.5) -- (0.5, 4);
\draw[->] (0.5, 2-0.5) -- (4, 2-0.5);
\node[scale=0.8] at (3, 3.5) {$(M_{q})$};
\node[scale=0.8] at (2, 0.5) {$(II)$};
\node[scale=0.5] at (4.75, 3-0.5) {$q = n+2$};
\node[scale=0.5] at (4.75, 3-1) {$q = n+1$};
\node[scale=0.5] at (4.75, 3-1.5) {$q = n$};
\draw[->, dashed] (0, 1) -- (1-0.5, 2-0.5);
\node[scale=0.75] at (1-0.2, 2-0.75) {$\vec{m}-\vec{\delta}$};
\end{tikzpicture}
\end{minipage}
\caption{(I) Convex set for $(M_{q})$ for fixed $q.$ (II) Various convex sets for $(M_{q})$ as $q$ varies}.
\label{Bound_M_q}
\end{figure}
\textbf{ } \par

The first part of Figure~\ref{Bound_M_q} illustrates the convex region corresponding to Property $(M_q)$ for a fixed value of $q$. The red point $\vec{w}=(a,b)$ represents an arbitrary vector in this region; equivalently, the line bundle $L=B^{\vec{w}}$ satisfies Property $(M_q)$. The green point represents a vector $\vec{u}=(c,d)$ outside this region, indicating that the line bundle $B^{\vec{u}}$ does not satisfy Property $(M_q)$ in general (as illustrated by the examples in the next section that show sharpness of our result). The blue point corresponds to the minimal vector in this region,
\[
\vec{m}+(q-n-1)\vec{\delta}
=(m_1+q-n-1,\;m_2+q-n-1).
\]

The second part of Figure~\ref{Bound_M_q} illustrates how this admissible convex region changes as $q$ varies. In particular, the figures corresponding to $q=n,\;n+1,\;n+2$ show how the lower bound
\[
\vec{m}+(q-n-1)\vec{\delta}
\]
changes with $q$, and consequently how the region of line bundles satisfying Property $(M_q)$ varies.

\section{\texorpdfstring{\textbf{Sharpness of the Bound for Property $\mathbf{(M{q})}$}}{Sharpness of the Bound for Property (Mq)}}
\label{sharpness_of_property_Mq}

In this section, we will provide examples that show the optimality of our result on Property $(M_{q})$ ( Corollary \ref{CorollaryM_q}) that we have proved in Section \ref{section_7_mainbody_vanishingtheorem}. 

We begin with a classical example showing that the bound in Corollary \ref{CorollaryM_q} is sharp.
\label{examples_section}
\begin{example}
    Consider the Veronese embedding $\mathbb{P}^{2} \hookrightarrow \mathbb{P}^{\begin{pmatrix}
    w+2 \\
    2
\end{pmatrix}-1}$ of $\mathbb{P}^{2}$ defined by $\mathcal{O}(w) = (\mathcal{O}(1))^{\otimes w}.$ Take
$B = \left\{ H = \mathcal{O}(1) \right\}.$
Then $K_{\mathbb{P}^{2}} = \mathcal{O}_{\mathbb{P}^{2}}(-3)$ is $H^{\otimes 3}-$regular w.r.t. $B.$ That is it is $B^{4\vec{\delta}-\vec{e}_{j}}-$regular where $\delta = e_{j} = (1).$ Bound from Corollary \ref{CorollaryM_q}: $\mathcal{O}(w) = B^{\vec{w}}$ satisfies Property$-(M_{q})$ for $\vec{w} = (w) \geq \vec{m}+(q-n-1)\vec{\delta} = (4+(q-3)) = (q+1).$ In other words, the highest $q$ for which $\mathcal{O}(w)$ satisfies $(M_{q})$ is $q_{\max} = w-1.$ By Noether's theorem, $\mathrm{gon}(C) = w-1$ for any $C$ in $|wH|.$ Thus, by \cite{Basu} this bound is sharp. From the syzygy database \href{https://syzygydata.com/p2/}{Syzygy Data},  on syzygies of $\mathbb{P}^{2},$ we obtain the following Betti tables for $2 \leq w \leq 8.$
\flushleft{
        \scriptsize{
\begin{table}[!htb]
    \caption{Betti tables of Veronese embeddings $\mathcal{O}(w)$ for $\mathbb{P}^{2}$ $\equiv [t_{0}: t_{1}: t_{2}] \mapsto [t^{w}_{0} : t^{w-1}_{0}t_{1} : \cdots : t^{w}_{2}]$}
    \begin{minipage}{.5\linewidth}
      \centering
        \begin{tabular}{c | c c c c c}
        \cline{2-6}
        & \multicolumn{4}{c}{Betti table of $\mathcal{O}(2)$} \\
        \cline{1-6}
$j  \diagdown i$ & 0 & 1 & 2 & 3 & 4 \\
\cline{1-6}
&  &  &  & \\
0 & \cellcolor[gray]{0.9} \color{blue}{$1$} & $0$ & $0$ & $0$ & $0$  \\
1 & \cellcolor[gray]{0.9} \color{blue}{$0$} & $6$ & $8$ & $3$ & $0$ \\
2 & \cellcolor[gray]{0.9} \color{blue}{$0$} & \cellcolor[gray]{0.9} \color{blue}{$0$} & \cellcolor[gray]{0.9} \color{blue}{$0$} & \cellcolor[gray]{0.9} \color{blue}{$0$} & $0$ \\
\hline
& $E_{0}$ & $E_{1}$ & $E_{2}$ & $E_{3}$ & $E_{4}$
\end{tabular}
    \end{minipage}%
    \begin{minipage}{.5\linewidth}
      \centering
        \begin{tabular}{c | c c c c c c c c c}
        \cline{2-10}
        & \multicolumn{8}{c}{Betti table of $\mathcal{O}(3)$} \\
        \cline{1-10}
$j  \diagdown i$ & 0 & 1 & 2 & 3 & 4 & 5 & 6 & 7 & 8\\
\cline{1-10}
&  &  &  &  &  &  & & & \\
0 & \cellcolor[gray]{0.9} \color{blue}{$1$} & $0$ & $0$ & $0$ & $0$ & $0$ & $0$ & $0$ & $0$ \\
1 & \cellcolor[gray]{0.9} \color{blue}{$0$} & $27$ & $105$ & $189$ & $189$ & $105$ & $27$ & \cellcolor{yellow} {\color{red} $0$} & \cellcolor{yellow} \color{red} $0$ \\
2 & \cellcolor[gray]{0.9} \color{blue}{$0$} & \cellcolor[gray]{0.9} \color{blue}{$0$} & \cellcolor[gray]{0.9} \color{blue}{$0$} & \cellcolor[gray]{0.9} \color{blue}{$0$} & \cellcolor[gray]{0.9} \color{blue}{$0$} & \cellcolor[gray]{0.9} \color{blue}{$0$} & \cellcolor[gray]{0.9} \color{blue}{$0$} & $1$ & $0$
\end{tabular}
    \end{minipage}
\end{table}}}
\vspace{0.2cm}
The above two Betti tables correspond to the resolutions
{\scriptsize\begin{equation}
    \mathcal{O}_{\mathbb{P}^{2}}(2) \equiv 0 \rightarrow S(-4)^{\oplus 3}\rightarrow S(-3)^{\oplus 8} \rightarrow S(-2)^{\oplus 6} \rightarrow S \rightarrow S_{X} \rightarrow 0
\end{equation}
\vspace{-0.2cm}
\begin{equation}
   \mathcal{O}_{\mathbb{P}^{2}}(3) \equiv 0 \rightarrow {\color{red} 0 \ } \oplus S(-9)^{\oplus 1}  \rightarrow S(-7)^{\oplus 27} \rightarrow \cdots \rightarrow S(-3)^{\oplus 105} \rightarrow S(-2)^{\oplus 27} \rightarrow S \rightarrow S_{X} \rightarrow 0
\end{equation}}\par

The Betti tables below, illustrate the sharpness of the bound obtained in Corollary \ref{CorollaryM_q}. In particular, they exhibit the vanishing pattern predicted by the condition $q_{\max}=w-1$, confirming that Property $(M_q)$ holds precisely up to the predicted boundary.

\begin{small}
 \begin{table}[!htb]
        \begin{tabular}{c | c c c c c c c c c c c c c c}
        \cline{2-15}
        & \multicolumn{8}{c}{Betti table of $\mathcal{O}(4)$} \\
        \cline{1-15}
$j  \diagdown i$ & $0$ & $1$ & $2$ & $3$ & $4$ & $5$ & $6$ & $7$ & $8$ & $9$ & $10$ & $11$ & $12$ & $13$ \\
\cline{1-15}
&  &  &  &  &  &  & & & & & & & & \\
 $0$ & \cellcolor[gray]{0.9} \color{blue} $1$ & $0$ & $0$ & $0$ & $0$ & $0$ & $0$ & $0$ & $0$ & $0$ & $0$ & $0$ & $0$ & $0$ \\
$1$ & \cellcolor[gray]{0.9} \color{blue} $0$ & $75$ & $536$ & $1947$ & $4488$ & $7095$ & $7920$ & $6237$ & $3344$ & $1089$ & $120$ & \cellcolor{yellow} \color{red} $0$ & \cellcolor{yellow} \color{red} $0$ & \cellcolor{yellow} \color{red} $0$ \\
$2$ & \cellcolor[gray]{0.9} \color{blue} $0$ & \cellcolor[gray]{0.9} \color{blue} $0$ & \cellcolor[gray]{0.9} \color{blue} $0$ & \cellcolor[gray]{0.9} \color{blue} $0$ & \cellcolor[gray]{0.9} \color{blue} $0$ & \cellcolor[gray]{0.9} \color{blue} $0$ & \cellcolor[gray]{0.9} \color{blue} $0$ & \cellcolor[gray]{0.9} \color{blue} $0$ & \cellcolor[gray]{0.9} \color{blue} $0$ & \cellcolor[gray]{0.9} \color{blue} $0$ & $55$ & $24$ & $3$ & $0$
\end{tabular}
\end{table}
\end{small}
\end{example} \par

\begin{example}
    In this example, we show that the bound for Property $(M_{q})$ in Corollary \ref{CorollaryM_q} is optimal for any globally generated and ample line bundle on $X = \mathbb{P}^{1} \times \mathbb{P}^{1}$. \\
    For this, we fix the notation of Example \ref{PmxPn_example}. 
    That is, $B_{1} = p^{\ast}_{1}(\mathcal{O}_{\mathbb{P}^{1}}(1))$ and $B_{2} = p^{\ast}_{2}(\mathcal{O}_{\mathbb{P}^{1}}(1))$ are pullbacks of hyperplanes along the projections of the corresponding factors. Also, let $B = (B_{1}, B_{2})$. Then, by Example \ref{PmxPn_example} 
    any line bundle $L= B^{(a, b)} = \mathcal{O}_{\mathbb{P}^{1} \times \mathbb{P}^{1}}(a, b)$ is $B^{(-a, -b)}-$regular with respect to $B.$ Thus, $\omega_{X} = \mathcal{O}_{\mathbb{P}^{1} \times \mathbb{P}^{1}}(-2, -2)$ is $(2, 2)-$regular with respect to $B$. Moreover, we saw in \ref{PmxPn_example} that $\vec{m} = (3, 3)$ is the smallest integer vector such that $\omega_{X}$ is $B^{\vec{m}-\vec{e}_{j}}-$regular for $j = 1, 2.$ Then, by Corollary \ref{corollaryMq} we have, for $\vec{w} = (a, b) > \vec{0},$  line bundle $L = B^{\vec{w}}$ is ample and globally generated and it satisfies Property $(M_{q})$ if $(n-1)\vec{w} \geq \vec{m}+(q-n-1)\vec{\delta}$ ; i.e, if $(a, b) \geq (3, 3)+(q-3, q-3) = (q, q).$ Therefore, for a fixed $q$,  ample line bundle ${B}^{(a,b)}$ satisfies Property$-(M_{q})$ if $a\ge q$ and $b\ge q$. The figure below demonstrates this nicely. 

\begin{figure}[ht!]
\begin{minipage}{0.3\textwidth}
\begin{tikzpicture}
\fill[gray!30!] (1,2) rectangle (4, 4);
\draw[<->] (-.7, 1) -- (4, 1);
\node[scale=0.8] at (4, 0.5) {$B_{1}$};
\draw[<->] (0, 1-.7) -- (0, 4);
\node[scale=0.8] at (-0.5, 4) {$B_{2}$};
\draw[color=black] (1+0.05, 2+0.05) circle (2pt);
\filldraw[color=blue] (1+0.05, 2+0.05) circle (2pt);
\node[scale=0.8] at (2.3, 1+0.5) {$\vec{m}+(q-n-1)\vec{\delta} = (q, q)$};
\draw[->] (1, 2) -- (1, 4);
\draw[->] (1, 2) -- (4, 2);
\draw[->, dashed] (0, 1) -- (1, 2);
\draw[->] (0, 1) -- (1.5, 3.5);
\draw[color=black] (1.55, 3.55) circle (2pt);
\filldraw[color=red] (1.55, 3.55) circle (2pt);
\filldraw[color=green] (0.55, 3.55) circle (2pt);
\node[scale=0.8] at (2.3, 3.2) {$\vec{w} = (a, b)$};
\node[scale=0.8] at (3, 2.5) {$(M_{q})$};
\node[scale=0.8] at (2, 0.5) {$(I)$};
\end{tikzpicture}
\end{minipage}
\hspace{2cm}
\begin{minipage}{0.2\textwidth}
\begin{tikzpicture}
\fill[gray!70!] (2,1.5) rectangle (4, 4);
\draw[<->] (-.7, 1) -- (4, 1);
\node[scale=0.8] at (4, 0.5) {$B_{1}$};
\draw[<->] (0, 1-.7) -- (0, 4);
\node[scale=0.8] at (-0.5, 4) {$B_{2}$};
\draw[color=black] (2+0.05, 1.5+0.05) circle (2pt);
\filldraw[color=blue] (2+0.05, 1.5+0.05) circle (2pt);
\node[scale=0.8] at (1.3, 1.6) {$(q, q)$};
\draw[->] (2, 1.5) -- (2, 4);
\draw[->] (2, 1.5) -- (4, 1.5);
\draw[->, dashed] (0, 1) -- (2, 1.5);
\draw[->] (0, 1) -- (1.5, 3.5);
\draw[color=black] (1.55, 3.55) circle (2pt);
\filldraw[color=red] (1.55, 3.55) circle (2pt);
\filldraw[color=green] (0.55, 3.55) circle (2pt);
\node[scale=0.8] at (2.3, 3.2) {$\vec{w} = (a, b)$};
\node[scale=0.8] at (3, 2.5) {$(M_{q})$};
\node[scale=0.8] at (2, 0.5) {$(II)$};
\end{tikzpicture}
\label{IntBound2}
\end{minipage}
\caption{Different Convex Regions for $(M_{q})$ as q changes}
\end{figure}

In the above figures, the blue circle represents the lattice point $(q,q)$. The shaded region consists of all lattice points $(a,b)$ satisfying $a\geq q$ and $b\geq q$, corresponding to all globally generated line bundles $B^{(a,b)}$ that satisfy Property $(M_{q})$ for the fixed value of $q$. The green circle represents another base-point-free line bundle; however, since it lies outside the shaded region, it does not satisfy Property $(M_{q})$.

We now show that the bound obtained from Corollary \ref{CorollaryM_q} is sharp. 
More precisely, we prove that the boundary of the above region is optimal by showing that Property $(M_q)$ fails immediately outside this region.

    By symmetry, we may assume that $b\geq a$. Then, by the result of Martens in \cite{Martens}, for any smooth curve $C\in |aC_{0}+bf|$ on $\mathbb{P}^{1}\times \mathbb{P}^{1}$, we have
\[
\mathrm{gon}(C)=a.
\]
Recently, D. Basu in \cite{Basu} generalized the Gonality Conjecture for curves to rational surfaces. In particular, he proved that for a base-point-free and ample line bundle $L$ on a rational surface $X$ satisfying
\[
-\omega_{X}\cdot L\geq q+2,
\]
the line bundle $L$ satisfies Property $(M_q)$ if and only if
\[
q\leq \mathrm{gon}_{\max}(L),
\]
where $\mathrm{gon}_{\max}(L)$ denotes the maximum gonality among smooth curves in the linear series $|L|$. 

Applying these results to the present example, we obtain that for $b\geq a$, the maximal value of $q$ for which $L=B^{(a,b)}$ satisfies Property $(M_q)$ is
\[
q_{\max}=a,
\]
which is precisely the gonality of a smooth curve in $|L|$. Therefore, when $b\geq a$, the line bundle $L=B^{(a,b)}$ satisfies Property $(M_a)$ but does not satisfy Property $(M_{a+1})$. Hence, the condition obtained from Corollary \ref{corollaryMq} is optimal: $B^{(a,b)}$ satisfies Property $(M_q)$ for $q\leq a$, but fails for $q=a+1$. This shows that the bound is sharp for every ample and base-point-free line bundle $L$ on $\mathbb{P}^{1}\times\mathbb{P}^{1}$.
    \par

Next, we record the Betti tables of 
$L=\mathcal{O}_{\mathbb{P}^{1}\times\mathbb{P}^{1}}(a,b)=B^{(a,b)}$
for $(a,b)=(2,2), (2, 3),(2, 4)$ and $(3,3)$.
These examples illustrate the sharpness of the bound obtained in Corollary \ref{corollaryMq}.

The Betti tables below are taken from \cite{JBruceP1xP1} for the line bundles 
$L\equiv aC_{0}+bf$ on $\mathbb{P}^{1}\times\mathbb{P}^{1}$. 
They exhibit the vanishing pattern predicted by Corollary \ref{CorollaryM_q}. 
More precisely, for the listed values of $(a,b)$, the last nonzero syzygies occur exactly at the boundary predicted by the condition $q\leq a$, confirming that Property $(M_q)$ holds for $q\leq a$ and fails for $q>a$.
     
    \flushleft{
    \begin{scriptsize}
\begin{table}[!htb]
    \caption{Betti tables of $L = aC_{0}+bf$ for $(a, b) = (2, 2), (2, 3), (2, 4) \text{ and } (3, 3)$}
    \begin{minipage}{.5\linewidth}
      \centering
        \begin{tabular}{c | c c c c c c c c c}
        \cline{2-9}
        & \multicolumn{7}{c}{Betti table of $(2, 2)$} \\
        \cline{1-9}
$j  \diagdown i$ & $0$ & $1$ & $2$ & $3$ & $4$ & $5$ & $6$ & $7$ \\
\cline{1-9}
&  &  &  &  &  &  & & \\
$0$ &  \cellcolor[gray]{0.9} \color{blue} $1$ & $0$ & $0$ & $0$ & $0$ & $0$ & $0$ & $0$ \\
$1$ & \cellcolor[gray]{0.9} \color{blue} $0$ & $20$ & $64$ & $90$ & $64$ & $20$ & \cellcolor{yellow}  \color{red} $0$ & \cellcolor{yellow}  \color{red} $0$ \\
$2$ & \cellcolor[gray]{0.9} \color{blue} $0$ & \cellcolor[gray]{0.9} \color{blue} $0$ & \cellcolor[gray]{0.9} \color{blue} $0$ & \cellcolor[gray]{0.9} \color{blue} $0$ & \cellcolor[gray]{0.9} \color{blue} $0$ & \cellcolor[gray]{0.9} \color{blue} $0$ & $1$ & $0$
\end{tabular}
    \end{minipage}%
    \begin{minipage}{.5\linewidth}
    \begin{tiny}
      \centering
        \begin{tabular}{c | c c c c c c c c c c c}
        \cline{2-12}
        & \multicolumn{8}{c}{Betti table of $(2, 3)$} \\
        \cline{1-12}
$j  \diagdown i$ & $0$ & $1$ & $2$ & $3$ & $4$ & $5$ & $6$ & $7$ & $8$ & $9$ & $10$ \\
\cline{1-12}
&  &  &  &  &  &  & & & & & \\
$0$ & \cellcolor[gray]{0.9} \color{blue} $1$ & $0$ & $0$ & $0$ & $0$ & $0$ & $0$ & $0$ & $0$ & $0$ & $0$ \\
$1$ & \cellcolor[gray]{0.9} \color{blue} $0$ & $43$ & $222$ & $558$ & $840$ & $798$ & $468$ & $147$ & $8$ & \cellcolor{yellow}  \color{red} $0$ & \cellcolor{yellow}  \color{red} $0$ \\
$2$ & \cellcolor[gray]{0.9} \color{blue} $0$ & \cellcolor[gray]{0.9} \color{blue} $0$ & \cellcolor[gray]{0.9} \color{blue} $0$ & \cellcolor[gray]{0.9} \color{blue} $0$ & \cellcolor[gray]{0.9} \color{blue} $0$ & \cellcolor[gray]{0.9} \color{blue} $0$ & \cellcolor[gray]{0.9} \color{blue} $0$ & \cellcolor[gray]{0.9} \color{blue} $0$ & $9$ & $2$ & $0$
\end{tabular}
\end{tiny}
    \end{minipage} 
\end{table}
\begin{table}[!htb]
        \begin{tabular}{c | c c c c c c c c c c c c c c}
        \cline{2-15}
        & \multicolumn{8}{c}{Betti table of $(2, 4)$} \\
        \cline{1-15}
$j  \diagdown i$ & $0$ & $1$ & $2$ & $3$ & $4$ & $5$ & $6$ & $7$ & $8$ & $9$ & $10$ & $11$ & $12$ & $13$ \\
\cline{1-15}
&  &  &  &  &  &  & & & & & & & & \\
$0$ & \cellcolor[gray]{0.9} \color{blue} $1$ & $0$ & $0$ & $0$ & $0$ & $0$ & $0$ & $0$ & $0$ & $0$ & $0$ & $0$ & $0$ & $0$ \\
$1$ & \cellcolor[gray]{0.9} \color{blue} $0$ & $75$ & $536$ & $1947$ & $4488$ & $7095$ & $7920$ & $6237$ & $3344$ & $1089$ & $120$ & $11$ & \cellcolor{yellow}  \color{red} $0$ & \cellcolor{yellow}  \color{red} $0$ \\
$2$ & \cellcolor[gray]{0.9} \color{blue} $0$ & \cellcolor[gray]{0.9} \color{blue} $0$ & \cellcolor[gray]{0.9} \color{blue} $0$ & \cellcolor[gray]{0.9} \color{blue} $0$ & \cellcolor[gray]{0.9} \color{blue} $0$ & \cellcolor[gray]{0.9} \color{blue} $0$ & \cellcolor[gray]{0.9} \color{blue} $0$ & \cellcolor[gray]{0.9} \color{blue} $0$ & \cellcolor[gray]{0.9} \color{blue} $0$ & \cellcolor[gray]{0.9} \color{blue} $0$ & $66$ & $24$ & $3$ & $0$
\end{tabular}
\end{table}
\end{scriptsize}}
 \par
\begin{tiny}
\begin{flushleft}
\resizebox{\textwidth}{!}{
        \begin{tabular}{c | c c c c c c c c c c c c c c c}
    \cline{2-16}
    & \multicolumn{3}{c}{ } & \multicolumn{8}{c}{Betti table of $(3,3)$} & \multicolumn{3}{c}{ } \\
    \cline{1-16}
    $j \diagdown i$ & 0 & 1 & 2 & 3 & 4 & 5 & 6 & 7 & 8 & 9 & 10 & 11 & 12 & 13 & 14 \\
    \cline{1-16}
    \\[-6pt]
    0 & \cellcolor[gray]{0.9} \color{blue} $1$ & $0$ & $0$ & $0$ & $0$ & $0$ & $0$ & $0$ & $0$ & $0$ & $0$ & $0$ & $0$ & $0$ & $0$  \\
    1 & \cellcolor[gray]{0.9} \color{blue} $0$ & $87$ & $676$ & $2691$ & $6864$ & $12155$ & $15444$ & $14157$ & $9152$ & $3861$ & $780$ & $22$ & \cellcolor{yellow}  \color{red} $0$ & \cellcolor{yellow}  \color{red} $0$ & \cellcolor{yellow}  \color{red} $0$ \\
    2 & \cellcolor[gray]{0.9} \color{blue} $0$ & \cellcolor[gray]{0.9} \color{blue} $0$ & \cellcolor[gray]{0.9} \color{blue} $0$ & \cellcolor[gray]{0.9} \color{blue} $0$ & \cellcolor[gray]{0.9} \color{blue} $0$ & \cellcolor[gray]{0.9} \color{blue} $0$ & \cellcolor[gray]{0.9} \color{blue} $0$ & \cellcolor[gray]{0.9} \color{blue} $0$ & \cellcolor[gray]{0.9} \color{blue} $0$ & \cellcolor[gray]{0.9} \color{blue} $0$ & $165$ & $144$ & $39$ & $4$ & $0$ \\
\end{tabular}}
\end{flushleft}
\end{tiny}
\end{example} 
\vspace{0.25cm}

\section{\textbf{Hierarchy of Koszul Cohomology: Interpretation of Corollary 7.3}}
\label{section_12_hierarchy_and_slope}

In this section, we discuss further applications and interpretations of Corollary \ref{CorollaryKoszulCohomology}. It turns out that the scope of this corollary extends well beyond the applications presented earlier in this article. First, we show that Theorem 2.2 of \cite{GreenII} follows from Corollary \ref{CorollaryKoszulCohomology}.

\begin{theorem}[Mark L. Green, {\cite[Theorem 2.2]{GreenII}}]
\label{theorem:green-koszul-cohomology-projective-space}
For $k,d \in \mathbb{Z}$ with $d \ge 1$,
\[
K_{p,q}(\mathbb{P}^{r}, \mathcal{O}_{\mathbb{P}^{r}}(k), \mathcal{O}_{\mathbb{P}^{r}}(d))=0
\quad\text{if}\quad
k+(q-1)d\ge p.
\]
\end{theorem}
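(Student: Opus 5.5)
The plan is to specialize Corollary~\ref{CorollaryKoszulCohomology} to $t=1$. Take $X=\mathbb{P}^r$, $B=B_1=\mathcal{O}_{\mathbb{P}^r}(1)$ (globally generated and ample), $F=\mathcal{O}_{\mathbb{P}^r}(k)$ and $L=\mathcal{O}_{\mathbb{P}^r}(d)=B^{d}$ with $d\ge 1$. In this case $\vec\delta=\vec e_1=(1)$, and the condition of the corollary reads $(q-1)d\ge m+(p-1)$. The whole argument then reduces to choosing the optimal integer $m$ for which $F$ is $B^{m-1}$-regular. The corollary allows $\vec m\in\mathbb{Z}^t$, so negative $m$ is fine.

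\textbf{Step 1: the regularity of $\mathcal{O}(k)$.} By definition, $\mathcal{O}(k)$ is $B^{m-1}$-regular iff
\[
H^i(\mathbb{P}^r,\mathcal{O}(k+m-1-i))=0 \quad\text{for all } 1\le i\le r.
\]
For $1\le i\le r-1$ this is automatic. For $i=r$, Serre duality gives vanishing iff $k+m-1-r>-r-1$, that is, iff $m\ge 1-k$. So I would take $m=1-k$, the minimal choice, in agreement with Theorem~\ref{theorem:minimal-regularity} for $n=1$.

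\textbf{Step 2: substitute.} With $m=1-k$, the hypothesis $(q-1)d\ge m+(p-1)$ becomes $(q-1)d\ge p-k$, which is exactly $k+(q-1)d\ge p$. Corollary~\ref{CorollaryKoszulCohomology} then gives $K_{p,q}(\mathbb{P}^r,\mathcal{O}(k);\mathcal{O}(d))=0$. Behind this, Theorem~\ref{Theorem_6.1} is applied with all $L_i=L$. The $H^1$-vanishing of $M_L^{\otimes(p+1)}\otimes F\otimes L^{q-1}$ passes to the direct summand $\wedge^{p+1}M_L\otimes F\otimes L^{q-1}$ because we are in characteristic zero. That in turn kills the kernel defining $K_{p,q}$.

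\textbf{Step 3: the edge case $p=0$.} The main thing to check is this case, since Theorem~\ref{Theorem_6.1} is stated for $k\ge 1$, i.e.\ at least two tensor factors $M_{L_i}$, so it covers $p\ge1$ only. For $p=0$ I would argue directly with Lemma~\ref{LemmaIV}. Since $F$ is $B^{m-1}$-regular, $M_L\otimes F$ is $B^{m}$-regular, so
\[
H^1(M_L\otimes F\otimes B^{m-1})=0.
\]
By Theorem~\ref{Theorem_3.7}(i) this persists after twisting by any $B^{s}$ with $s\ge0$. Hence $H^1(M_L\otimes\mathcal{O}(k)\otimes\mathcal{O}((q-1)d))=0$ as soon as $(q-1)d\ge m-1=-k$, which is precisely the condition $k+(q-1)d\ge 0=p$. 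Combining this with Step 2 covers all $p\ge0$ and proves the theorem. I expect the only genuine care needed to be the Serre-duality computation in Step 1 and this $p=0$ boundary bookkeeping. The rest is a formal specialization.
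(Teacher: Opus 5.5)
Your proof is correct and is the paper's argument: specialize Corollary~\ref{CorollaryKoszulCohomology} to $t=1$ with $F=\mathcal{O}_{\mathbb{P}^r}(k)$, note that $F$ is $B^{-k}$-regular so $m=1-k$, and substitute into $(q-1)d\ge m+(p-1)$. Your separate treatment of $p=0$ via Lemma~\ref{LemmaIV} is a useful extra bit of care, because the paper applies the corollary to all $p$ even though Theorem~\ref{Theorem_6.1} is stated only for $k\ge 1$; alternatively, for $t=1$ the hypothesis coincides with $B^{\vec{m}-\vec{\delta}}$-regularity, so the remark after that theorem also covers this case.
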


\begin{proof}
Set $B:=\mathcal{O}_{\mathbb{P}^{r}}(1)$, $L:=B^{\otimes d}=\mathcal{O}_{\mathbb{P}^{r}}(d)$, and $F:=\mathcal{O}_{\mathbb{P}^{r}}(k)$. We verify that $F$ satisfies the regularity hypothesis required to invoke Corollary~\ref{CorollaryKoszulCohomology}.

To this end, we show that $F$ is $B^{\otimes (-k)}$-regular. Indeed,
\[
H^{i}\!\left(F \otimes B^{\otimes (-k-i)}\right)
=
H^{i}\!\left(\mathcal{O}_{\mathbb{P}^{r}}(-i)\right)=0
\quad\text{for all } i>0,
\]
by the standard cohomology vanishing on projective space. Thus $F$ is $B^{\otimes (-k)}$-regular. Rewriting this in the form required by Corollary~\ref{CorollaryKoszulCohomology}, we set $m=-k+1$, so that $F$ is $B^{\otimes (m-1)}$-regular.

Applying Corollary~\ref{CorollaryKoszulCohomology}, we obtain
\[
K_{p,q}\bigl(\mathbb{P}^{r},F,L\bigr)
=
K_{p,q}\bigl(\mathbb{P}^{r},\mathcal{O}_{\mathbb{P}^{r}}(k),\mathcal{O}_{\mathbb{P}^{r}}(d)\bigr)=0
\]
whenever
\[
(q-1)d\ge m+(p-1).
\]
Substituting $m=-k+1$ gives
\[
(q-1)d\ge -k+1+(p-1),
\]
which is equivalent to
\[
k+(q-1)d\ge p.
\]
This is precisely the claimed vanishing.
\end{proof}

\begin{remark}
The above argument differs substantially from the original proof of \cite[Theorem 2.2]{GreenII}. Green's approach is more representation-theoretic and homological in nature: in \cite{GreenII}, the Koszul cohomology groups are analyzed directly via the Koszul complex, using exact sequences, spectral sequences, and an induction on the homological index. In contrast, the present proof bypasses this analysis entirely by encoding the vanishing into a regularity condition and invoking a general vanishing theorem. More precisely, the argument proceeds via Castelnuovo--Mumford regularity and follows formally from Corollary~\ref{CorollaryKoszulCohomology}. This provides a uniform and conceptually streamlined approach to the result.
\end{remark}

We present two interpretations of Corollary~\ref{CorollaryKoszulCohomology}: a hierarchical viewpoint and a slope-based geometric interpretation.

\paragraph{\textbf{Hierarchical interpretation:}}
We now describe the geometric hierarchy of line bundles arising from Corollary~\ref{CorollaryKoszulCohomology}, which reveals an important structural feature of the vanishing theorem.

Suppose $F$ is a vector bundle on $X$ which is $B^{\vec{m}-\vec{e_{j}}}$-regular. Fix a homological index $p$. Then Corollary~\ref{CorollaryKoszulCohomology} asserts that for a line bundle $L=B^{\vec{w}}$, one has
\[
K_{p,q}(X,F,L)=0 \quad \text{whenever} \quad (q-1)\vec{w}\ge \vec{m}+(p-1)\vec{\delta}.
\]
Unwinding this condition for successive values of $q$, we obtain the nested sequence of vanishing regions
\[
\vec{w}\ge \frac{1}{q-1}\bigl(\vec{m}+(p-1)\vec{\delta}\bigr),
\qquad q=2,3,4,\ldots .
\]
This pattern makes the underlying structure transparent: as the weight $q$ increases, the corresponding vanishing condition becomes progressively weaker. Equivalently, the region in the parameter space of $\vec{w}$ for which $K_{p,q}(X,F,L)=0$ enlarges with $q$. In particular, the vanishing region for weight $q$ contains that for weight $q-1$, which in turn contains that for weight $q-2$, and so forth. On the other hand, for a fixed weight $q$, the hierarchy is reversed with respect to the homological position $p$ in the resolution: the vanishing region corresponding to stage $p$ is contained in that corresponding to stage $p-1$, and so on. Thus, the vanishing regions exhibit a two-directional hierarchy, as illustrated in Figure~\ref{IntBound} below. We refer to this hierarchical pattern of vanishing as the \emph{$K_{p,q}$-hierarchy}.

 \begin{figure}[!ht]
  \begin{adjustwidth*}{}{-0.5em} 
  \begin{minipage}{0.3\textwidth}
\begin{tikzpicture}[row sep = 1.3, column sep = 0.7]
\fill[cyan!15!] (2,3) rectangle (4, 4);
\fill[green!15!] (2-0.5,3-0.5) rectangle (2, 4);
\fill[green!15!] (2-0.5,3-0.5) rectangle (4, 3);
\fill[yellow!90!] (2-1.5,3-1.5) rectangle (2-0.5, 4);
\fill[yellow!90!] (2-1.5,3-1.5) rectangle (4, 3-0.5);
\draw[<->] (-.7, 1) -- (4, 1);
\node[scale=0.8] at (4, 0.5) {$B_{1}$};
\draw[<->] (0, 1-.7) -- (0, 4);
\node[scale=0.8] at (-0.5, 4) {$B_{2}$};
\draw[->] (2, 3) -- (2, 4);
\draw[->] (2, 3) -- (4, 3);
\draw[->] (2-0.5, 3-0.5) -- (2-0.5, 4);
\draw[->] (2-0.5, 3-0.5) -- (4, 3-0.5);
\draw[->] (0.5, 2-0.5) -- (0.5, 4);
\draw[->] (0.5, 2-0.5) -- (4, 2-0.5);
\node[scale=0.8] at (2.75, 3.5) {$(N_{p})$};
\node[scale=0.8] at (2, 0.5) {$K_{p, q}(X, L) = 0$};
\node[scale=0.8] at (5.75, 3) {$q = 2, \ \vec{w} \geq \vec{m}+(p-1)\vec{\delta}$};
\node[scale=0.8] at (5.75, 3-0.5) {$  q = 3,  \vec{w} \geq \dfrac{\vec{m}}{2}+\dfrac{(p-1)}{2}\vec{\delta}$};
\node[scale=0.8] at (5.75, 3-1.5) {$q = 4, \vec{w} \geq \dfrac{\vec{m}}{3}+\dfrac{(p-1)}{3}\vec{\delta}$};
\draw[->, dashed] (0, 1) -- (2-0.05, 3-0.05);
\node[scale=0.75] at (1+0.2, 2-0.75) {$\vec{m}+(p-1)\vec{\delta}$};
\end{tikzpicture}
\end{minipage}
\hspace{3.5cm}
 \begin{minipage}{0.3\textwidth}
\begin{tikzpicture}[row sep = 1.3, column sep = 0.7]
\fill[cyan!15!] (2,3) rectangle (4, 4);
\fill[green!15!] (2-0.75,3-0.75) rectangle (2, 4);
\fill[green!15!] (2-0.75,3-0.75) rectangle (4, 3);
\fill[yellow!90!] (2-1.5,3-1.5) rectangle (2-0.75, 4);
\fill[yellow!90!] (2-1.5,3-1.5) rectangle (4, 3-0.75);
\draw[<->] (-.7, 1) -- (4, 1);
\node[scale=0.8] at (4, 0.5) {$B_{1}$};
\draw[<->] (0, 1-.7) -- (0, 4);
\node[scale=0.8] at (-0.5, 4) {$B_{2}$};
\draw[->] (2, 3) -- (2, 4);
\draw[->] (2, 3) -- (4, 3);
\draw[->] (2-0.75, 3-0.75) -- (2-0.75, 4);
\draw[->] (2-0.75, 3-0.75) -- (4, 3-0.75);
\draw[->] (2-1.5, 3-1.5) -- (2-1.5, 4);
\draw[->] (2-1.5, 3-1.5) -- (4, 3-1.5);
\node[scale=0.8] at (3, 3.5) {$(K_{p, q} = 0)$};
\node[scale=0.8] at (3, 2.5) {$(K_{p-1, q} = 0)$};
\node[scale=0.8] at (3, 1.75) {$(K_{p-2, q})$};
\node[scale=0.8] at (2, 0.5) {$K_{p, q}(X, L) = 0$};
\node[scale=0.8] at (5.75, 3) {$ \vec{w} \geq \dfrac{\vec{m}+(p-1)\vec{\delta}}{q-1}$};
\node[scale=0.8] at (5.75, 3-0.75) {$\vec{w} \geq \dfrac{\vec{m}+((p-1)-1)\vec{\delta}}{q-1}$};
\node[scale=0.8] at (5.75, 3-1.5) {$\vec{w} \geq \dfrac{\vec{m}+((p-2)-1)\vec{\delta}}{q-1}$};
\draw[->, dashed] (2-1.5, 3-1.5) -- (2-0.75, 3-0.75);
\node[scale=0.75] at (2-0.75-0.45, 3-0.75) {$\vec{\delta}$};
\draw[->, dashed] (2-0.75, 3-0.75) -- (2, 3);
\node[scale=0.75] at (2-0.45, 3) {$\vec{\delta}$};
\end{tikzpicture}
\end{minipage}

\caption{Hierarchy of vanishing regions for Koszul cohomology groups
$K_{p,q}(X,L)$: variation with respect to the weight $q$ for a fixed stage
of the resolution $p$, and variation with respect to the stage $p$ for a
fixed weight $q$, with $L=B^{\vec{w}}$ and $F=\mathcal{O}_X$.}
\label{IntBound2b}
\end{adjustwidth*}
\end{figure}

\paragraph{\textbf{Slope Interpretation:}}
We now describe the above hierarchy from a geometric viewpoint by reducing the vanishing condition to a one-dimensional setting. 

The vanishing condition 
\begin{equation}\label{VanishingInequality}
(q-1)\vec{w}\ge \vec{m}+(p-1)\vec{\delta}
\end{equation}
 in Corollary \ref{CorollaryKoszulCohomology}, 
defines a family of linear inequalities in the $(w_{i},p)$-plane for each $i=1 \cdots t$, together with the corresponding half-planes of vanishing. As the weight $q$ increases, the slopes of the boundary lines $w_{i}=\frac{m}{q-1}+\frac{p-1}{q-1}$, $\frac{1}{q-1}$, decrease, and the associated vanishing regions expand. In the limit as $q\to\infty$, the slope approaches zero for each component, reflecting the fact that higher weight syzygies require progressively weaker conditions. In contrast, the case $q=2$ gives the steepest boundary and corresponds to the strongest condition in the hierarchy, namely property $(N_p)$.

In order to actually describe this interpretation, it is therefore enough to look at a single component of the vector notation. By symmetry, the rest follows. Hence, we restrict to the case of Castelnuovo--Mumford regularity with respect to a single base point free line bundle $L$. In this setting, the multigraded notation reduces to the usual one-dimensional situation, and we may work with scalars. Let $L=B^{\otimes w}$, and consider the $(w,p)$-plane.

This behavior is illustrated in the figure below.
\begin{figure}[H]
\centering
\begin{tikzpicture}[scale=0.8]


\draw[->, thick] (-6,0) -- (10,0) node[right] {$p$};
\draw[->, thick] (0,0) -- (0,8) node[above] {$w$};

\draw[thick] (0,0) -- (0,8);
\begin{scope}
\clip (-6,0) rectangle (10,8);

\def\XMIN{-6}
\def\XMAX{10}
\def\YTOP{12}


\draw[thick, green!80!black] (\XMIN,{\XMIN+4}) -- (4,8);

\draw[thick, yellow!85!black] (\XMIN,{(\XMIN+4)/2}) -- (\XMAX,{(\XMAX+4)/2});

\draw[thick, blue!85!black] (\XMIN,{(\XMIN+4)/3}) -- (\XMAX,{(\XMAX+4)/3});

\filldraw (0,4) circle (2pt);
\node[left] at (0,4.25) {$(0,6)$};

\filldraw (0,2) circle (2pt);
\node[left] at (0,2.25) {$(0,3)$};

\filldraw (0,{4/3}) circle (2pt);
\node[left] at (0,{2/3}) {$(0,2)$};

\coordinate (P) at (-4,0);
\filldraw (P) circle (2.2pt);
\node[below left] at (P) {$P(-4,0)$};


\fill[blue!20, opacity=0.35]
(\XMIN,{(\XMIN+4)/3}) --
(\XMAX,{(\XMAX+4)/3}) --
(\XMAX,\YTOP) --
(\XMIN,\YTOP) -- cycle;

\fill[red!20, opacity=0.35]
(\XMIN,{(\XMIN+4)/2}) --
(2*\YTOP-4,\YTOP) --
(\XMAX,{(\XMAX+4)/2}) --
(\XMAX,\YTOP) --
(\XMIN,\YTOP)
(\XMIN,\YTOP) -- cycle;

\fill[green!20, opacity=0.35]
(\XMIN,{\XMIN+4}) --
(\YTOP-4,\YTOP) --
(\XMIN,\YTOP)
(4,8)
(\XMIN,\YTOP) -- cycle;

\node[rotate=25, font=\bfseries] at (3,5) {\small  $R_2:\; w\ge \tfrac{1}{2}p+3$};
\node[rotate=25, font=\bfseries] at (5.5,4) {\small $R_3:\; w\ge \tfrac{1}{3}p+2$};

\node[rotate=25, font=\bfseries] at (-2.5,5) {\small $R_1:\; w\ge p+6,$ \hspace{0.1cm} $q = 2$};
\end{scope}
\end{tikzpicture}

\caption{Slope interpretation of the $K_{p,q}$-hierarchy for $m=7$. Each line is the boundary of the inequality $
w \ge \frac{p+(m-1)}{q-1} = \frac{p+6}{q-1},$ and the corresponding vanishing region is the half-plane lying above the line. As $q$ increases, the slope $\frac{1}{q-1}$ decreases, causing the vanishing regions to enlarge. The shaded regions overlap, illustrating the inclusions $\operatorname{Region}(q=2)\subset \operatorname{Region}(q=3)\subset \operatorname{Region}(q=4).$
 }

\end{figure}

\begin{remark}
Although the figures above are drawn in the $(w,p)$-plane, the natural geometric object in the multigraded setting lives in the $(w_1,\ldots,w_t,p)$-space. Each component determines a boundary hyperplane, $(q-1)w_i\ge m_i+(p-1)$ for, $i=1,\ldots,t.$ and the corresponding vanishing region is the higher-dimensional analogue of the two-dimensional half-plane illustrated above. Thus, the figures should be viewed as coordinate projections of this higher-dimensional geometric object. While  such higher-dimensional regions are difficult to visualize, the same hierarchical behavior with respect to both the weight $q$ and the stage $p$ persists in every coordinate direction.
\end{remark}

We now illustrate the sharpness of the vanishing bound in Corollary~\ref{CorollaryKoszulCohomology} through the following example.

\begin{example}
\label{Example: Sharpness of Higher Koszul Cohomology}
Let $C$ be a hyperelliptic curve of genus $g \ge 2$. Then $C$ admits a basepoint-free $g^1_2$, i.e.\ a line bundle $L$ of degree $2$ with $h^0(C,L)=2$, inducing a degree $2$ morphism
\[
\varphi_L : C \longrightarrow \mathbb{P}^1.
\]
In particular, the canonical bundle satisfies
\[
\omega_C \cong L^{\otimes (g-1)},
\]
and the canonical map factors as
\[
C \xrightarrow{\ \varphi_L\ } \mathbb{P}^1 \xrightarrow{\ \nu_{g-1}\ } \mathbb{P}^{g-1},
\]
where $\nu_{g-1}$ denotes the $(g-1)$-st Veronese embedding.

We now place ourselves in the setting of Corollary~\ref{CorollaryKoszulCohomology}, taking $L$ as the reference line bundle. We first compute the Castelnuovo--Mumford regularity of $\mathcal{O}_C$ with respect to $L$. Let $r$ denote this regularity. Then
\[
H^{1}(L^{\otimes (r-1)})=0.
\]
By Serre duality,
\[
H^{1}(L^{\otimes (r-1)}) \cong H^{0}(\omega_{C} \otimes L^{\otimes (1-r)}),
\]
which vanishes whenever
\[
\deg(\omega_{C} \otimes L^{\otimes (1-r)}) = 2g-2 + 2(1-r) = 2(g-r) < 0,
\]
i.e.\ for $r \ge g+1$. Hence $\mathcal{O}_C$ is $L^{\otimes (g+1)}$-regular.

Next, consider the line bundle $L^{\otimes w}$. By Corollary~\ref{CorollaryKoszulCohomology}, one has
\[
K_{p,q}(C,L^{\otimes w})=0 \quad \text{whenever} \quad (q-1)w \ge g+p+1,
\]
equivalently,
\[
w \ge \frac{g+p+1}{q-1}.
\]

In particular, for $w=g-1$, so that $L^{\otimes w}=\omega_C$, we obtain
\[
K_{p,3}(C,\omega_C)=0 \quad \text{if} \quad g-1 \ge \frac{g+p+1}{2},
\]
i.e.\ for $p \le g-3$.

We now show that this bound is sharp. Consider $K_{g-2,3}(C,\omega_C)$. By Green's duality theorem (cf.\ \cite[Theorem~(2.c.6) and Corollary~(2.c.10)]{GreenI}), one has
\[
K_{p,q}(C,\omega_C) \cong K_{g-2-p,\,3-q}(C,\omega_C)^{*}.
\]
Since $h^0(\omega_C)=g$, taking $(p,q)=(g-2,3)$ yields
\[
K_{g-2,3}(C,\omega_C) \cong K_{0,0}(C,\omega_C)^{*}.
\]

It remains to compute $K_{0,0}(C,\omega_C)$. For this, consider the Koszul complex
\[
\cdots 
\to
\wedge^{p+1} V \otimes H^0\!\big(B^{\otimes (q-1)}\big)
\xrightarrow{\;\delta_{p+1,q-1}\;}
\wedge^{p} V \otimes H^0\!\big(B^{\otimes q}\big)
\xrightarrow{\;\delta_{p,q}\;}
\wedge^{p-1} V \otimes H^0\!\big(B^{\otimes (q+1)}\big)
\to \cdots,
\]
where $ B= \omega_{C}, V = H^0(C,\omega_C)$ and $\delta_{p,q}$ denotes the Koszul differential.

Setting $p=q=0$, the relevant segment becomes
\[
0 \longrightarrow k \longrightarrow 0,
\]
so that
\[
K_{0,0}(C,\omega_C) \cong k.
\]
Consequently,
\[
K_{g-2,3}(C,\omega_C) \cong k,
\]
and the vanishing bound obtained above is sharp.

\end{example}

\begin{remark}
The complete graded Betti table of the canonical image of a hyperelliptic curve of genus $g$ is presented in the Appendix \ref{Appendix_B}.
\end{remark}

\section{\textbf{Appendix}}
\label{Appendix}

\subsection{Further Remarks on the $(M_{q})$-property for Gorenstein Rational Singularities:}
\label{Appendix_A}

The purpose of this section is to explore a possible future generalization of the $(M_q)$-property to singular projective varieties. We record several observations that may be useful in this direction, together with an open problem that naturally arises from them.

We observe that Lemma \ref{LemmaAprodu}, proved in \cite{AproduLombardi} for smooth projective varieties, remains valid for projective varieties with Gorenstein rational singularities. Moreover, the hypothesis that $L\otimes B^{-1}$ is ample can be weakened to the assumption that $L\otimes B^{-1}$ is big and nef.

The following observations explain why the proof of Lemma \ref{LemmaAprodu} continues to work in this setting.

\begin{itemize}
\item[(1)] If $X$ is Gorenstein, then $\omega_X$ is a line bundle. Moreover, in the Gorenstein setting, rational, canonical and klt singularities are equivalent. The fact that $\omega_X$ is a line bundle is essential for preserving the exactness of the short exact sequences of vector bundles appearing in the proof. In contrast, the canonical sheaf of a variety with rational singularities is, in general, only a reflexive sheaf of rank one, which need not be locally free and is not necessarily flat.

\item[(2)] A variety with rational singularities is Cohen--Macaulay. Since Serre duality holds for Cohen--Macaulay projective varieties, the duality arguments used in the proof remain available in this setting.

\item[(3)] Since the assumption that $L\otimes B^{-1}$ is ample is replaced by the weaker condition that it is big and nef, the Kodaira Vanishing Theorem may be replaced by the Kawamata--Viehweg Vanishing Theorem.

\item[(4)] It is important to note that the Kawamata--Viehweg Vanishing Theorem does not hold in general for varieties with rational singularities over a field of characteristic $p>0$. Therefore, throughout this discussion, we implicitly assume that the underlying field is $\mathbb{C}$.
\end{itemize}

Recall that Lemma \ref{LemmaAprodu} together with Green's $K_{p,1}$-Theorem (Theorem (3.c.1) in \cite{GreenI}) yields the $(M_q)$-property. Therefore, in order to extend the $(M_q)$-property to varieties with Gorenstein rational singularities, it is natural to investigate whether Green's $K_{p,1}$-Theorem admits a corresponding extension to this singular setting.

This leads to the following open problem.

\bigskip

\textbf{Open Problem.} Does Green's $K_{p,1}$-Theorem remain valid for projective varieties with Gorenstein rational singularities? More generally, can the proof of the $K_{p,1}$-Theorem for smooth projective varieties be extended to projective varieties with klt singularities whose canonical sheaf is a line bundle?

\subsection{Betti-Table of the Canonical Image of Hyperelliptic Curves of genus $\mathbf{g}$ :} \label{Appendix_B}

The purpose of this subsection is to determine the complete graded Betti table of the canonical image of a hyperelliptic curve of genus g.

 We resume by investigating the Koszul cohomology groups $K_{p,2}(C,\omega_C)$. Since Corollary~\ref{CorollaryKoszulCohomology} does not provide any information regarding the vanishing of the weight-$2$ groups, we appeal to the results of \cite{GreenI}.

We first determine whether $K_{p,2}(C,\omega_C)$ vanishes for $1\le p\le g-3$. By the Duality Theorem of Green \cite{GreenI}, we have
\[
K_{p,2}(C,\omega_C)
\cong
K_{g-2-p,1}(C,\omega_C)^{*}.
\]

We now invoke the converse to the Noether--Enriques--Petri theorem, stated as Conjecture~(5.1) in \cite{GreenI} and proved in the appendix by Green and Lazarsfeld. It asserts that if a smooth curve $C$ of genus $g$ carries a $g^r_d$ satisfying
\[
d\le g-1,\qquad r\ge 1,
\qquad\text{and}\qquad
d-2r\le g-2-p,
\]
then
\[
K_{p,1}(C,\omega_C)\neq 0.
\]

Since $C$ is hyperelliptic, it carries a $g^1_2$. Moreover,
\[
2-2(1)=0\le g-2-p
\]
for every $1\le p\le g-2$. Hence the above result yields
\[
K_{p,1}(C,\omega_C)\neq 0
\qquad
\text{for all }
1\le p\le g-2.
\]

Applying Green's duality, we obtain
\[
K_{g-2-p,2}(C,\omega_C)\neq 0
\qquad
\text{for all }
1\le p\le g-2.
\]

Furthermore, $\omega_C$ is ample and basepoint-free, but it is not very ample since $C$ is hyperelliptic. Consequently, $\omega_C$ cannot satisfy property $(N_0)$. It follows that $\omega_{C}$
does not satisfy property $(N_p)$ for any
\[
0\le p\le g-2.
\]

We now begin an explicit analysis of the Betti table associated to $(C,\omega_C)$.

Since $\omega_C$ fails property $(N_0)$, there exists some $j\ge2$ for which
\[
K_{0,j}(C,\omega_C)\neq 0.
\]
Moreover, Green's duality gives
\[
K_{0,j}(C,\omega_C)
\cong
K_{g-2,\,3-j}(C,\omega_C)^{*}.
\]

We first compute $K_{0,2}(C,\omega_C)$. Let
\[
V:=H^0(C,\omega_C),
\]
and consider the factorization of the canonical morphism
\[
C
\xrightarrow{\;\pi\;}
\mathbb P^1
\xrightarrow{\;\nu_{g-1}\;}
\mathbb P^{g-1}.
\]

By definition,
\[
K_{0,2}(C,\omega_C)
=
\operatorname{Coker}
\Bigl(
V^{\otimes 2}
\xrightarrow{\rho}
H^0(C,\omega_C^{\otimes 2})
\Bigr).
\]

Since
\[
\pi_*\mathcal O_C
\cong
\mathcal O_{\mathbb P^1}
\oplus
\mathcal O_{\mathbb P^1}(-g-1)
\]
and
\[
\omega_C
\cong
\pi^*\mathcal O_{\mathbb P^1}(g-1),
\]
we obtain
\[
\pi_*\omega_C
\cong
\mathcal O_{\mathbb P^1}(g-1)
\oplus
\mathcal O_{\mathbb P^1}(-2).
\]
Hence
\[
H^0(C,\omega_C)
\cong
H^0\!\bigl(\mathbb P^1,\mathcal O_{\mathbb P^1}(g-1)\bigr).
\]

Since the canonical image of a hyperelliptic curve is the rational normal curve $\nu_{g-1}(\mathbb P^1)$, the projective normality of the Veronese embedding implies that
\[
\operatorname{Im}(\rho)
\cong
H^0\!\bigl(\mathbb P^1,\mathcal O_{\mathbb P^1}(2g-2)\bigr).
\]

Therefore,
\[
\dim K_{0,2}(C,\omega_C)
=
h^0(C,\omega_C^{\otimes 2})
-
h^0\!\bigl(\mathbb P^1,\mathcal O_{\mathbb P^1}(2g-2)\bigr).
\]

Since
\[
h^0(C,\omega_C^{\otimes 2})=3g-3
\]
and
\[
h^0\!\bigl(\mathbb P^1,\mathcal O_{\mathbb P^1}(2g-2)\bigr)=2g-1,
\]
it follows that
\[
\dim K_{0,2}(C,\omega_C)
=
3g-3-(2g-1)
=
g-2.
\]

Next, we consider the group $K_{0,3}(C,\omega_C)$. Since Theorem~\ref{Theorem_6.1} applies when $p=0$, we obtain
\[
K_{0,3}(C,\omega_C)=0
\]
provided
\[
2(g-1)\ge g+1,
\]
which is equivalent to $g\ge3$.

Consequently,
\[
K_{p,3}(C,\omega_C)=0
\qquad
\text{for all }
0\le p\le g-3.
\]

Thus the failure of property $(N_0)$ is entirely explained by the non-vanishing of
\[
K_{0,2}(C,\omega_C).
\]

At this stage, the fourth row of the Betti table of $(C,\omega_C)$ is determined. By Green's duality, the first row is determined as well. Since the second and third rows are dual to one another, it remains to compute only one of them in order to recover the entire Betti table.

We therefore focus on the second row, namely the groups
\[
K_{p,1}(C,\omega_C),
\qquad
1\le p\le g-2.
\]
As a first step, we compute
\[
K_{1,1}(C,\omega_C).
\]

 To compute $K_{1,1}(C,\omega_C)$, consider the relevant portion of the Koszul complex:
\[
\cdots
\longrightarrow
\wedge^{p+1}V\otimes H^0\!\bigl(\omega_C^{\otimes(q-1)}\bigr)
\xrightarrow{\;\beta\;}
\wedge^pV\otimes H^0\!\bigl(\omega_C^{\otimes q}\bigr)
\xrightarrow{\;\rho\;}
\wedge^{p-1}V\otimes H^0\!\bigl(\omega_C^{\otimes(q+1)}\bigr)
\longrightarrow
\cdots,
\]
where $V:=H^0(C,\omega_C)$.

By definition,
\[
K_{p,q}(C,\omega_C)
=
\ker(\rho)\big/\operatorname{Im}(\beta).
\]

Specializing to $(p,q)=(1,1)$, we obtain
\[
\wedge^2V
\xrightarrow{\;\beta\;}
V\otimes V
\xrightarrow{\;\rho\;}
H^0(C,\omega_C^{\otimes 2}).
\]

From the computation of $K_{0,2}(C,\omega_C)$ above, we know that
\[
\dim \operatorname{Im}(\rho)=2g-1.
\]
Since $\dim(V\otimes V)=g^2$, it follows that
\[
\dim\ker(\rho)
=
g^2-(2g-1)
=
g^2-2g+1.
\]

Next, the map
\[
\beta:\wedge^2V\longrightarrow V\otimes V
\]
identifies $\operatorname{Im}(\beta)$ with the subspace of skew-symmetric tensors. Consequently,
\[
(V\otimes V)/\operatorname{Im}(\beta)
\cong
\operatorname{Sym}^2(V),
\]
and therefore
\[
\dim\operatorname{Im}(\beta)
=
g^2-\dim\operatorname{Sym}^2(V).
\]

Since
\[
\dim\operatorname{Sym}^2(V)
=
\binom{g+1}{2}
=
\frac{g^2+g}{2},
\]
we obtain
\[
\dim\operatorname{Im}(\beta)
=
g^2-\frac{g^2+g}{2}.
\]

Hence
\[
\begin{aligned}
\dim K_{1,1}(C,\omega_C)
&=
\dim\ker(\rho)-\dim\operatorname{Im}(\beta)\\
&=
\left(g^2-2g+1\right)
-
\left(g^2-\frac{g^2+g}{2}\right)\\
&=
\frac{g^2-3g+2}{2}\\
&=
\frac{(g-1)(g-2)}{2}.
\end{aligned}
\]

Now, we compute the dimension of $K_{p,1}(C,\omega_{C})$ for an arbitrary $p\ge 1$. We only outline the calculation here.

\par

Rewriting the relevant portion of the Koszul complex for $q=1$, we obtain

\[
\cdots
\longrightarrow
\wedge^{p+1}V
\xrightarrow{\;\beta\;}
\wedge^pV\otimes H^0\!(\omega_C)
\xrightarrow{\;\rho\;}
\wedge^{p-1}V\otimes H^0\!(\omega_C^{\otimes 2})
\longrightarrow
\cdots,
\]
where the map $\rho$ is defined by
\[
\rho \bigl( (x_{1}\wedge \cdots \wedge x_{p})\otimes z\bigr)
=
\Sigma (-1)^i
\bigl(
(x_{1}\wedge \cdots \wedge \hat{x_{i}}\wedge \cdots \wedge x_{p})
\otimes x_{i}z
\bigr).
\]
From the previous calculation,
\[
\pi_{*}(\omega_{C}^{\otimes 2})
\cong
\mathcal{O}_{\mathbb{P}^{1}}(2g-2)
\oplus
\mathcal{O}_{\mathbb{P}^{1}}(g-3).
\]

Consequently,
\[
H^0(\omega_{C}^{\otimes 2})
\cong
H^{0}\!\bigl(\mathbb{P}^{1},
\mathcal{O}_{\mathbb{P}^{1}}(2g-2)\bigr)
\oplus
H^{0}\!\bigl(\mathbb{P}^{1},
\mathcal{O}_{\mathbb{P}^{1}}(g-3)\bigr).
\]

Now, from the computation of $K_{0,2}(C,\omega_{C})$, we observe that

\[
\mathrm{Im}\bigl(V\otimes V \to H^{0}(\omega_{C}^{\otimes 2})\bigr)
=
H^{0}\!\bigl(\mathbb{P}^{1},
\mathcal{O}_{\mathbb{P}^{1}}(2g-2)\bigr).
\]

Hence,
\[
\mathrm{Im}(\rho)
=
\wedge^{p-1}(V)
\otimes
H^{0}\!\bigl(\mathbb{P}^{1},
\mathcal{O}_{\mathbb{P}^{1}}(2g-2)\bigr).
\]
Since
\[
H^{0}(C,\omega_{C})
=
H^{0}\!\bigl(\mathbb{P}^{1},
\mathcal{O}_{\mathbb{P}^{1}}(g-1)\bigr),
\]
it follows that
\[
K_{p,1}(C,\omega_{C})
=
K_{p,1}\!\bigl(\mathbb{P}^{1},
\mathcal{O}_{\mathbb{P}^{1}}(g-1)\bigr),
\]
which corresponds to the rational normal curve. Therefore, it suffices to compute the weight one Koszul cohomology groups for the rational normal curve in order to determine the corresponding groups for hyperelliptic curves.
\par
But
\[
K_{p,1}\!\bigl(\mathbb{P}^{1},
\mathcal{O}_{\mathbb{P}^{1}}(g-1)\bigr)
=
p\binom{g-1}{p+1},
\] see \cite{TGS}.
\par
Therefore, we obtain the entire Betti table of the hyperelliptic curve $C$ with respect to the line bundle $\omega_{C}$.

\begin{table}[ht]
\centering

\[
\begin{array}{c|cccccc}
 & 0 & 1 & 2 & \cdots & g-3 & g-2\\
\hline
0 & 1 & 0 & 0 & \cdots & 0 & 0\\[2mm]

1 &
0 &
\binom{g-1}{2} &
2\binom{g-1}{3} &
\cdots &
(g-3)\binom{g-1}{g-2} &
(g-2)\binom{g-1}{g-1}
\\[2mm]

2 &
g-2 &
(g-3)\binom{g-1}{1} &
(g-4)\binom{g-1}{2} &
\cdots &
\binom{g-1}{g-3} &
0
\\[2mm]

3 &
0 &
0 &
0 &
\cdots &
0 &
1
\end{array}
\]

\caption{The Betti table of $(C,\omega_C)$, where $C$ is a hyperelliptic curve of genus $g$.}
\label{tab:hyperelliptic-betti}
\end{table}

\begin{remark}
It is intriguing to observe that
\[
\dim K_{0,2}(C,\omega_C)
=
g-2
=
\operatorname{codim}_{M_g}(\mathcal H_g),
\]
where $\mathcal H_g\subset M_g$ denotes the hyperelliptic locus.
Moreover, as shown above, the non-vanishing of $K_{0,2}(C,\omega_C)$ is precisely the obstruction to the canonical model of a hyperelliptic curve satisfying property $(N_0)$. Whether the numerical coincidence
\[
\dim K_{0,2}(C,\omega_C)
=
\operatorname{codim}_{M_g}(\mathcal H_g)
\]
and all other non-zero Betti numbers appearing in Table~\ref{tab:hyperelliptic-betti}, each expressed explicitly in terms of the genus $g$, admit a deeper geometric explanation remains an interesting question. 
\end{remark}

\section*{\textbf{Acknowledgements}}

First, I would like to thank my PhD advisor, Purnaprajna Bangere, for introducing me to the problem that eventually led to this work. He was the first to suggest investigating the complementary version of the result of Hering--Schenck--Smith, namely the property $(M_q)$ in the multigraded setting. I am grateful for his constant support, encouragement, and inspiration throughout the course of this research.

A name that is indispensable throughout this work is my academic sibling, Debjit Basu. He was the one who advised me to prove Lemma~\ref{Lemma II}, which ultimately led to the general vanishing theorem (Theorem~\ref{Theorem_6.1}) and significantly broadened the scope of this paper. Beyond this, I had the privilege of discussing all of my ideas in this article with him. His constant willingness to engage in mathematical discussions, despite his own commitments, has been invaluable. Every discussion with him helped refine my ideas, deepen my understanding. 

I am grateful to Robert Lazarsfeld for generously taking the time to discuss with me the possible generalization of Green's $K_{p,1}$-Theorem. Our conversation was not only valuable for this work but also provided a broader perspective for my future research. I am especially thankful for his thoughtful advice to take additional time to further refine and strengthen the results before posting the preprint. This advice subsequently led to several important developments and ideas in this paper. I would also like to thank Mihnea Popa for his insightful question during my talk at the Hodge Theory Workshop at Ashoka University, India. His question regarding the syzygies of products of projective spaces inspired me to further investigate their multigraded regularity.

Finally, I am grateful for the opportunity to work on this problem and to the universe for bringing me to it at the right time.

\raggedright
\providecommand{\bysame}{\leavevmode\hbox 
  to3em{\hrulefill}\thinspace}

\end{document}